\def\defthm#1#2#3#4{
  \newtheorem{#1}[theorem]{#3}
  \newtheorem*{#1*}{#3}
  \newtheorem{#2}[theorem]{#4}
  \newtheorem*{#2*}{#4}
  \crefname{#1}{#3}{#4}
  \crefname{#2}{#4}{#4}  
}
\newtheoremstyle{mythm}% 
{10pt}% Space above 
{}% Space below 
{\itshape}% Body font 
{}% Indent amount 
{\bf}%  Theorem head font 
{.}% Punctuation after theorem head 
{.5em}% Space after theorem head 
{}% 
\newtheoremstyle{mydef}% 
{10pt}% Space above 
{3pt}% Space below 
{}% Body font 
{}% Indent amount 
{\bf}%  Theorem head font 
{.}% Punctuation after theorem head 
{.5em}% Space after theorem head 
{}% 
\newtheoremstyle{myrmk}% 
{10pt}% Space above 
{3pt}% Space below 
{}% Body font 
{}% Indent amount 
{\bf}%  Theorem head font 
{.}% Punctuation after theorem head 
{.5em}% Space after theorem head 
{}% 
\theoremstyle{mythm}
\newtheorem{theorem}{Theorem}[section]
\newtheorem*{theorem*}{Theorem}
\theoremstyle{mydef}
\theoremstyle{myrmk}
\newcommand{\blank}{\mbox{\hspace{3pt}\underline{\ \ }\hspace{2pt}}}
\newcommand{\pbs}{\scalebox{1.5}{\rlap{$\cdot$}$\lrcorner$}}
\newcommand{\adj}{\rotatebox[origin=c]{180}{$\vdash$}\hspace{0.1pc}}
\newcommand*{\phv}{\makebox[1.5ex]{\textbf{$\cdot$}}}
\title{The $(\infty,2)$-category of internal $(\infty,1)$-categories}
\author{Raffael Stenzel}
\renewcommand\footnotemark{}
\begin{document}
\maketitle

\begin{abstract}
We define and study the $(\infty,2)$-category $\mathbf{Cat}_{\infty}(\mathcal{C})$ of 
$(\infty,1)$-categories internal to a general $(\infty,1)$-category $\mathcal{C}$ via an associated externalization construction.

In the first part, we show various formal closure properties of $\mathbf{Cat}_{\infty}(\mathcal{C})$ regarding limits, tensors, 
cotensors and internal mapping objects under the assumption of various suitable closure properties of $\mathcal{C}$. In particular, we 
show that $\mathbf{Cat}_{\infty}(\mathcal{C})$ defines a cartesian closed full sub-$\infty$-cosmos of the $\infty$-cosmos
$\mathbf{Fun}(\mathcal{C}^{op},\mathbf{Cat}_{\infty})$ of $\mathcal{C}$-indexed $(\infty,1)$-categories under suitable assumptions on
$\mathcal{C}$. We furthermore characterize the objects of $\mathbf{Cat}_{\infty}(\mathcal{C})$ 
by means of a Yoneda lemma that expresses indexed diagrams of internal shape over $\mathcal{C}$ in terms of an
$(\infty,1)$-categorical totalization.

In the second part, we relate the general theory developed to this point to results in the model categorical literature. We show that 
every model category $\mathbb{M}$ gives rise to a ``hands-on'' $\infty$-cosmos $\mathbf{Cat}_{\infty}(\mathbb{M})$
directly by restriction of the Reedy model structure on $\mathbb{M}^{\Delta^{op}}$. We then define a corresponding right 
derived model categorical externalization functor, and use it to show that the $(\infty,1)$-categorical and the model categorical 
constructions correspond to one another whenever
$\mathbb{M}$ is a suitable model category. 
\end{abstract}

\section{Introduction}\label{secintro}

\begin{terminology}
For the sake of readability and conformity with the common conventions, the term ``$\infty$-category`` shall mean
``$(\infty,1)$-category'' throughout this paper.
\end{terminology}

\subsection{A brief recollection of internal category theory}

The theory of ordinary categories is essentially the systematic study of the 2-category $\mathbf{Cat}$ of (small) categories, functors, 
and natural transformations. That is, the study of its properties and structures.
The non-trivial 2-categorical structure allows to define and study notions such as adjunctions, monads and their 
algebras, functor categories and categories of presheaves in particular, Kan lifts and extensions as well as many others. Via the 
practice of formal category theory as pioneered by the Australian School, these notions can be abstracted so to be defined and studied 
in any suitably rich 2-category. In this spirit, we recall that ordinary category theory is category theory internal to the category
$\mathrm{Set}$ of sets. And in fact, for every base category $\mathcal{C}$ the category $\mathrm{Cat}(\mathcal{C})$ of internal 
categories in $\mathcal{C}$ 
comes equipped with a canonical structure of a 2-category $\mathbf{Cat}(\mathcal{C})$ as well. Its objects are the internal 
categories, its morphisms are the internal functors, and its 2-cells are the internal natural transformations for which a formula can be 
written down by hand (see e.g.\ \cite[Definition 7.2.1]{jacobsttbook}). For $\mathcal{C}=\mathrm{Set}$, the 2-category
$\mathbf{Cat}(\mathrm{Set})$ recovers exactly the 2-category $\mathbf{Cat}$ of (ordinary) categories as a special case. As obvious as 
this is to the contemporary category theorist, the existence of this 2-categorical structure on $\mathrm{Cat}(\mathcal{C})$ is a
non-trivial observation in as much as the base category $\mathcal{C}$ has no non-trivial 2-categorical structure to begin with. In this 
sense, this structure is not inherited from $\mathcal{C}$ but rather is created ex-nihilo by means of $\mathcal{C}$.
Furthermore, various structural properties of the base $\mathcal{C}$ -- which in conjunction enable one to think of $\mathcal{C}$ as a 
suitably rich theory or even an ambient universe of abstract sets itself -- imply various structural properties of the 2-category
$\mathbf{Cat}(\mathcal{C})$, which in conjunction allow for an increasingly convenient study of the theory of categories internal 
to $\mathcal{C}$.
%construct and study theories of abstractions of various notions such as 
%adjunctions, monads, Kan extensions etc.\ internal to $\mathcal{C}$. 

Given that every base category $\mathcal{C}$ is $\mathrm{Set}$-enriched by definition itself, it follows that the 2-categorical 
structure on $\mathrm{Cat}(\mathrm{Set})$ is universal among the 2-categorical structures on $\mathrm{Cat}(\mathcal{C})$ over 
arbitrary bases $\mathcal{C}$
%, in as much as the 2-categorical structure $\mathbf{Cat}(\mathcal{C})$ over any $\mathcal{C}$ is induced 
%by that of $\mathbf{Cat}$
in the following way. The left exact Yoneda embedding $y\colon\mathcal{C}\rightarrow\mathrm{Fun}(\mathcal{C}^{op},\mathrm{Set})$ induces 
a functorial push-forward
\[y_{\ast}\colon\mathrm{Cat}(\mathcal{C})\rightarrow\mathrm{Fun}(\mathcal{C}^{op},\mathrm{Cat}(\mathrm{Set}))\]
which is called the externalization functor in \cite{jacobsttbook}. It takes an internal category $X\in\mathrm{Cat}(\mathcal{C})$ to the 
$\mathcal{C}$-indexed category $y_{\ast}(X)$ which evaluates an object $C\in\mathcal{C}$ at the category whose objects are
$C$-indexed generalized elements of objects in $X$ -- i.e.\ elements in the set $\mathcal{C}(C,X_0)$ -- and whose morphisms are
$C$-indexed generalized elements of morphisms in $X$ - i.e.\ elements in the set $\mathcal{C}(C,X_1)$. The externalization functor
can be enhanced to a functor
\begin{align}\label{equ1extintro}
\mathrm{Ext}\colon\mathbf{Cat}(\mathcal{C})\rightarrow\mathbf{Fun}(\mathcal{C}^{op},\mathbf{Cat})
\end{align}
of 2-categories, where the codomain is canonically $\mathrm{Cat}$-enriched by virtue of the 2-categorical structure of $\mathbf{Cat}$.
This 2-functor is locally fully-faithful-and-essentially-surjective (and furthermore preserves lots of additional structure as well) as 
is shown in \cite[Section 7.3]{jacobsttbook}. The $\mathcal{C}$-indexed categories contained in the essential image of the 
externalization functor are commonly referred to as the ``small'' categories over $\mathcal{C}$. Thus, the 2-category of internal 
categories in $\mathcal{C}$ is equivalent to the 2-category of small indexed categories over $\mathcal{C}$.

\subsection{Internal \texorpdfstring{$(\infty,1)$-}{higher }category theory and outline of the paper}

Fundamentally, all we said about category theory in the prior section remains true for $\infty$-category theory -- the 
latter being the study of the $(\infty,2)$-category $\mathbf{Cat}_{\infty}$ of $\infty$-categories, homotopy-coherent functors,
homotopy-coherent natural transformations, and their higher homotopies. 
For every base $\infty$-category $\mathcal{C}$ there is an $\infty$-category $\mathrm{Cat}_{\infty}(\mathcal{C})$ of internal
$\infty$-categories, which in this generality will be defined in Section~\ref{secpre}. Once more, it is true that the $\infty$-category
$\mathrm{Cat}_{\infty}$ of $\infty$-categories is equivalent to the $\infty$-category $\mathrm{Cat}_{\infty}(\mathcal{S})$ of
$\infty$-categories internal to the $\infty$-category $\mathcal{S}$ of spaces. This equivalence has been enhanced to an
$(\infty,2)$-categorical equivalence in \cite[Proposition E.2.2]{riehlverityelements}. In Section~\ref{secformal} we 
will more generally define an $(\infty,2)$-categorical structure $\mathbf{Cat}_{\infty}(\mathcal{C})$ on $\mathrm{Cat}_{\infty}(\mathcal{C})$ 
for every $\infty$-categorical base $\mathcal{C}$, which recovers $\mathbf{Cat}_{\infty}$ up to equivalence in case
$\mathcal{C}=\mathcal{S}$. While it may be trickier to define such an $(\infty,2)$-categorical structure on
$\mathrm{Cat}_{\infty}(\mathcal{C})$ by hand 
directly than it is to define a 2-categorical structure on $\mathrm{Cat}(\mathcal{C})$ by hand in the 1-categorical case, we will use 
the 2-equivalence (\ref{equ1extintro}) as motivation to \emph{define} the $(\infty,2)$-category $\mathbf{Cat}_{\infty}(\mathcal{C})$ 
representably by means of an $\infty$-categorical externalization construction together with the canonically induced 
$(\infty,2)$-categorical structure on $\mathrm{Fun}(\mathcal{C}^{op},\mathrm{Cat}_{\infty})$ from that of $\mathbf{Cat}_{\infty}$. That 
means we identify internal $\infty$-categories with small $\mathcal{C}$-indexed $\infty$-categories to do so. This identification in 
particular induces a functorial choice of mapping $\infty$-categories on the collection of internal $\infty$-categories essentially by 
definition. Whenever $\mathcal{C}$ has finite limits, we will however also describe equivalent ``enhanced mapping $\infty$-categories'' on 
$\mathrm{Cat}_{\infty}(\mathcal{C})$ in the sense of \cite{ghnlaxlimits} that are expressed in terms of the mapping spaces of
$\mathrm{Cat}_{\infty}(\mathcal{C})$ directly.

The canonical $(\infty,2)$-categorical structure $\mathbf{Fun}(\mathcal{C}^{op},\mathbf{Cat}_{\infty})$ on
$\mathrm{Fun}(\mathcal{C}^{op},\mathrm{Cat}_{\infty})$ induced by that of $\mathbf{Cat}_{\infty}$ is very rich in 
structure. To make this precise, we will make use of Riehl and Verity's framework of $\infty$-cosmoses from \cite{riehlverityelements} 
which will be recalled in Section~\ref{secpre}. In essence, an $\infty$-cosmos $\mathbf{C}$ is a quasi-categorically enriched fibration 
category of cofibrant objects (or at least with cofibrant replacements) that has enough $\mathrm{Cat}_{\infty}$-enriched limits to set 
up an internal theory of a plethora of $\infty$-categorical structures. An
$\infty$-cosmos $\mathbf{C}$ provides a convenient framework to do formal $\infty$-category theory in the (finitely) complete
$(\infty,2)$-category associated to $\mathbf{C}$, very much in style of how model categories -- or Brown's fibration categories more 
generally -- provide a convenient framework to do (fragments of) formal homotopy theory in their associated (finitely) complete
$\infty$-category. The main benefit of knowing that a given $(\infty,2)$-category $\mathbf{C}$ has an $\infty$-cosmological presentation is 
that Riehl and Verity have generated a large amount of general results and constructions in \cite{riehlverityelements} which apply to all 
(or, respectively, large classes of) $\infty$-cosmoses. In particular, these results and constructions can be directly ``loaded'' into such
$\mathbf{C}$ so they do not have to be reconstructed time and time again. This paper will therefore not develop any particular formal
$\infty$-categorical notions inside $\mathbf{Cat}_{\infty}(\mathcal{C})$. Rather, among others, it will give criteria on the base
$\mathcal{C}$ that allow to apply the results of \cite{rvyoneda, riehlverityelements} to $\mathbf{Cat}_{\infty}(\mathcal{C})$.

Therefore, first and foremost, we use that the $(\infty,2)$-category $\mathbf{Cat}_{\infty}$ is (presented by) an $\infty$-cosmos, and that
$\mathbf{Fun}(\mathcal{C}^{op},\mathbf{Cat}_{\infty})$ is so as well for every $\infty$-category $\mathcal{C}$. The main results are the following.

\begin{theorem*}[Theorem~\ref{corcosmos1}]
The $(\infty,2)$-category $\mathbf{Cat}_{\infty}(\mathcal{C})$ is
\begin{enumerate}
\item a full finitary sub-$\infty$-cosmos of $\mathbf{Fun}(\mathcal{C}^{op},\mathbf{Cat}_{\infty})$ whenever $\mathcal{C}$ is left exact. 
It thus defines an $\infty$-cosmos in the weaker sense of \cite{rvyoneda}.
\item a full sub-$\infty$-cosmos of $\mathbf{Fun}(\mathcal{C}^{op},\mathbf{Cat}_{\infty})$ that is closed under all small limits (and 
exponentials) whenever $\mathcal{C}$ is complete (and cartesian closed). It thus defines a (cartesian closed) $\infty$-cosmos in the 
sense of \cite{riehlverityelements}.
\end{enumerate}
\end{theorem*}

In style of Street's results \cite[Section 4]{streetintcat} for the 2-category of categories internal to a 1-category, we obtain the 
following hierarchy of regularity properties of $\mathbf{Cat}_{\infty}(\mathcal{C})$.

\begin{theorem*}[Theorem~\ref{corcosmos2} and Proposition~\ref{prop1oc2topos}]\label{thm2}
Let $\mathcal{C}$ be an $\infty$-category and let
\[\mathrm{Ext}\colon\mathbf{Cat}_{\infty}(\mathcal{C})\hookrightarrow\mathbf{Fun}(\mathcal{C}^{op},\mathbf{Cat}_{\infty})\]
be the canonical embedding of $(\infty,2)$-categories.
\begin{enumerate}
\item Suppose $\mathcal{C}$ is (finitely) complete. Then $\mathbf{Cat}_{\infty}(\mathcal{C})$ is (finitely) $(\infty,2)$-complete, and
$\mathrm{Ext}$ preserves these limits.
%Then $\mathbf{Cat}_{\infty}(\mathcal{C})$ is
%(finitarily) $(\infty,2)$-complete and $\mathrm{Ext}$ is (finitarily) continuous in the following sense: 
%For every $(\infty,2)$-category $\mathbf{J}$ (with finitely many objects and such that
%$\mathrm{Hom}_{\mathbf{J}}(x,y)\in\mathbf{QCat}^{\mathrm{fin}}$ for all $x,y\in\mathbf{J}$), every simplicially enriched (pointwise finite) 
%weight $W\colon \mathbf{J}\rightarrow\mathbf{QCat}$, and every simplicially enriched
%diagram $F\colon \mathbf{J}\rightarrow\mathbf{Cat}_{\infty}(\mathcal{C})$, the $W$-weighted homotopy limit
%$\mathrm{holim}^W(\mathrm{Ext}(F))$ is again contained in $\mathbf{Cat}_{\infty}(\mathcal{C})$.
\item Suppose $\mathcal{C}$ is countably complete and cartesian closed. Then the underlying $\infty$-category
$\mathrm{Cat}_{\infty}(\mathcal{C})$ is cartesian closed and $\mathrm{Ext}$ preserves exponentials.
\item Suppose $\mathcal{C}$ is finitary lextensive. Then $\mathbf{Cat}_{\infty}(\mathcal{C})$ is strongly corepresentable (in a sense dual to 
that of Gray \cite{graymidwest}, see Definition~\ref{defcorep}).
%\item Suppose $\mathcal{C}$ is an $\infty$-topos. Then $\mathbf{Cat}_{\infty}(\mathcal{C})$ is an $(\infty,1)$-localic $(\infty,2)$-topos (in 
%the sense of Definition~\ref{def1loc2topos}). 
\item Suppose $\mathcal{C}$ is an $\infty$-topos. Then $\mathrm{Cat}_{\infty}(\mathcal{C})$ is an $(\infty,1)$-localic $(\infty,2)$-topos.
\end{enumerate}
\end{theorem*}

\begin{remark*}
Part 4 of Theorem~\ref{thm2} is first and foremost listed as a reference statement that the other parts are to be compared 
against. Depending on the definition of ``$(\infty,1)$-localic $(\infty,2)$-topos'' the statement is in fact trivial
\cite[Definition 9.2.1]{am_2topos}. We will however give a definition of an $(\infty,1)$-localic $(\infty,2)$-topos in 
style of Rezk's model toposes \cite{rezkhtytps} that is more meaningful in context of Theorem~\ref{corcosmos2}.
\end{remark*}

\begin{corollary*}
Every complete $\infty$-category $\mathcal{C}$ has 
\begin{itemize}
\item internal comma $\infty$-categories, and hence in particular internal $\infty$-categories of morphisms, internal $\infty$-categories 
of (co)cones, and internal (co)slice $\infty$-categories.
\item an internal theory of (fibered) diagram $\infty$-categories, together with a theory of (fibered) internal adjunctions, of
(fibered) internal equivalences, and of limits and colimits of diagrams. This comprises many characterizations thereof, as well as 
theorems about preservation, reflection, creation and (co)finality of such.
\item an internal theory of (co)cartesian fibrations, discrete fibrations, two-sided fibrations and their modules. This comprises various 
versions of the internal Yoneda lemma.
\item an internal theory of Kan extensions and Kan lifts, both global and pointwise.
\item an internal theory of pointed and stable $\infty$-categories together with a theory of loop and suspension functors for such.
\item internal $\infty$-categories of algebras for monads on internal $\infty$-categories, together with a corresponding monadicity 
theorem.
\end{itemize}
\end{corollary*}
\begin{proof}
This is \cite{riehlverityelements} by way of Theorem~\ref{corcosmos1}.
\end{proof}

Various parts of the Corollary already hold when $\mathcal{C}$ is finitely complete, see \cite{rvyoneda}.
The Corollary in particular applies to every presentable $\infty$-category $\mathcal{C}$; for instance it applies to any $\infty$-topos, 
but for another instance it also applies to the $\infty$-category $\mathrm{Cat}_{(\infty,n)}$ of $(\infty,n)$-categories for any $n\geq 0$ 
\cite{bsp_inf_n}. 
It however applies as well to Clausen and Scholze's $\infty$-category of condensed spaces \cite{cs_condensed}, which is not presentable. It 
further applies to the $\infty$-category 
of spectra, the $\infty$-category of symmetric monoidal $\infty$-categories, the $\infty$-category of $\infty$-operads, and many others. 
This has the potential to be useful in various ways as the notion of internal $\infty$-category in particular captures both that of
$E_1$-monoid (in cartesian monoidal structures) as well as that of internal equivalence relation for instance.
%whchi contains both $E_1$-monoids as well as $\mathrm{StCat}$ of stable $\infty$-categories 

The $(\infty,2)$-categorical limits in $\mathbf{Cat}_{\infty}(\mathcal{C})$ in Theorem~\ref{thm2}.1 are inherited directly from the
$(\infty,2)$-category $\mathbf{Fun}(\mathcal{C}^{op},\mathbf{Cat}_{\infty})$ whenever they exist. The $(\infty,2)$-categorical colimits -- 
such as the tensors in Theorem~\ref{corcosmos2}.3 -- however are generally not preserved by the embedding of
$\mathbf{Cat}_{\infty}(\mathcal{C})$ in
$\mathbf{Fun}(\mathcal{C}^{op},\mathbf{Cat}_{\infty})$. This is a variation of the fact that coproducts -- i.e.\ Set-enriched tensors -- are 
generally not preserved by the Yoneda embedding either.
In context of this comparison, we additionally state a corresponding Yoneda lemma (Proposition~\ref{lemmaextyoneda}) which expresses
the $\infty$-category of natural transformations out of a small indexed $\infty$-category in terms of a corresponding $\infty$-categorical 
totalization. This is useful for instance to show that the $\infty$-category of internal presheaves over any internal $\infty$-category 
in an $\infty$-topos is again an $\infty$-topos (Corollary~\ref{corintpretopoi}).
%Lastly, in Section~\ref{secsubfib}, we give an alternative definition of $\mathbf{Cat}_{\infty}(\mathcal{C})$ in terms of the
%$\infty$-cosmos $\mathbf{Cart}(\mathcal{C})$ of cartesian fibrations over $\mathcal{C}$. However, if $\mathcal{C}$ is 
%(finitely) complete, this gives rise to a (finitary) $\infty$-cosmological structure which is equivalent to that of 
%Theorem~\ref{corcosmos1} in a weak sense only.

In Section~\ref{secmodcat} we point out that constructions parallel to those of Section~\ref{secformal} can be found in the literature 
regarding combinatorial and left proper model categories. This is the case implicitly in Hovey's textbook \cite{hovey} and subsequently
in Dugger's work on internal $\infty$-groupoids \cite{duggersimp}. This is more explicitly the case in Riehl and Verity's 
construction of the $\infty$-cosmos of ``Rezk-objects'' in such a model category, see \cite[Proposition 2.2.9]{rvyoneda} and
\cite[Proposition E.3.7]{riehlverityelements}. 
The aim of Section~\ref{secmodcat} is to show that the underlying $(\infty,2)$-categorical structures derived from 
\cite{duggersimp} and \cite[Proposition 2.2.9]{rvyoneda} reduce exactly to the ones studied in 
Section~\ref{secformal}. Therefore, we first observe that most of the relevant constructions in the context of such model categories can 
in fact be carried out for
all model categories. We thus define the $\infty$-cosmos $\mathbf{Cat}_{\infty}(\mathbb{M})$ of internal $\infty$-categories in any 
model category $\mathbb{M}$ (in the weaker sense of \cite{rvyoneda} in this generality only in as much as not all objects are 
necessarily cofibrant). This recovers the $\infty$-cosmoses obtained from \cite{duggersimp} and \cite[Proposition 2.2.9]{rvyoneda} 
whenever $\mathbb{M}$ is left proper and combinatorial. We then construct a right derived externalization functor which is suitably 
continuous, enriched and exact, and which furthermore recovers the $\infty$-categorical externalization functor from 
Section~\ref{secformal} whenever $\mathbb{M}$ is a suitable model category. This in particular shows that the $(\infty,2)$-category
$\mathbf{Cat}_{\infty}(\mathbb{M})$ recovers that of Section~\ref{secformal}, which allows for a straight-forward proof of 
Proposition~\ref{prop1oc2topos}.

\paragraph{Related literature}

A comprehensive development of internal (1-)category theory is provided among others by \cite{streetintcat}, \cite[Section 7]{jacobsttbook},
\cite[Section B2]{elephant}, and \cite{bourkethesis}.
Many fundamental $\infty$-categorical constructions internal to a presheaf $\infty$-topos have been developed in
\cite{shahparahctI,shahparahctII,barwicketalparahct}. Many fundamental and further $\infty$-categorical constructions internal to a general 
$\infty$-topos have been developed in a series of papers by Martini \cite{martini_yoneda}, and Martini and Wolf 
\cite{martiniwolf_lim,martiniwolf_straight,martiniwolf_pres,martiniwolf_ihtt}.

\paragraph{Future work}
Given a general 2-category $\mathbf{C}$, Bourke \cite{bourkethesis} gave a list of internal criteria that characterizes precisely when
$\mathbf{C}$ is equivalent to the 2-category $\mathbf{Cat}(\mathcal{C})$ of internal categories for some category $\mathcal{C}$. In case 
these criteria are satisfied, the category $\mathcal{C}$ is given by the category of discrete objects in
$\mathbf{C}$. An analogous list of criteria is expected to characterize $(\infty,2)$-categories $\mathbf{C}$ as the $(\infty,2)$-category
$\mathbf{Cat}_{\infty}(\mathcal{C})$ of internal categories for an $\infty$-category $\mathcal{C}$.
Such an internal characterization will be left open however.

\section{Preliminaries}\label{secpre}

\begin{notation*}
In all of the following, $\mathbf{S}$ will denote the simplicially enriched category of simplicial sets. The simplicially enriched 
category $\mathbf{S}$ equipped with the standard model structure for Kan complexes will be denoted by $(\mathbf{S},\mathrm{Kan})$, and 
with the Joyal model structure for quasi-categories by $(\mathbf{S},\mathrm{QCat})$. The category of simplicially enriched categories 
will be denoted by $\mathbf{S}\text{-Cat}$.
Variables ranging over $\infty$-categories (and over ordinary categories in particular) will be italized, variables and constructions 
ranging over (mostly simplicially) enriched categories will be bold faced, variables ranging over categories equipped with additional 
homotopical structure will be denoted by blackboard letters. 
We denote the quasi-category of small $\infty$-categories (for any of the equivalent models of $\infty$-category theory) by
$\mathrm{Cat}_{\infty}$. We denote the quasi-category of small spaces by $\mathcal{S}$. The exponential of two quasi-categories
$\mathcal{C}$, $\mathcal{D}$ in $\mathbf{S}$ will (most of the times) be denoted by $\mathrm{Fun}(\mathcal{C},\mathcal{D})$. The
quasi-category $\hat{\mathcal{C}}$ denotes the quasi-category $\mathrm{Fun}(\mathcal{C}^{op},\mathcal{S})$ of presheaves over
$\mathcal{C}$.
\end{notation*}

\subsection{\texorpdfstring{$(\infty,2)$}{Higher 2}-categories and \texorpdfstring{$\infty$}{infinity}-cosmoses}

As far as this paper is concerned, an $(\infty,2)$-category is a simplicially enriched category $\mathbf{C}$ whose hom-objects are 
quasi-categories. These are called ``strict $(\infty,2)$-categories'' in \cite[Section 5.5.8]{kerodon}. Accordingly, a functor of
$(\infty,2)$-categories is simply a simplicially enriched functor between $(\infty,2)$-categories. The associated homotopy theory 
is equivalent to other models of $(\infty,2)$-category theory via \cite[Theorem 0.0.3 and Remark 0.0.4]{lgood}.
%together with \cite[Proposition 3.1.5.3, Remark A.3.2.6]{luriehtt}.

\begin{example}[$\infty$-categories]\label{expleinftycatsareinfty2}
The model category $(\mathbf{S},\mathrm{QCat})$ is Quillen equivalent to the category $\mathbf{S}\text{-Cat}$ of 
(small) simplicially enriched categories equipped with the Bergner model structure via the homotopy coherent nerve
$N_{\Delta}\colon\mathbf{S}\text{-Cat}\rightarrow\mathbf{S}$ and its Quillen left adjoint $\mathfrak{C}$ \cite{cordiernerve, luriehtt}. 
In particular, the $\infty$-category of $\infty$-categories (however presented) is equivalent to the homotopy $\infty$-category 
of $(\mathbf{S}\text{-Cat},\text{Bergner})$. Furthermore, the fibrant objects in
$(\mathbf{S}\text{-Cat},\text{Bergner})$ are exactly the simplicially enriched categories whose hom-objects are Kan complexes. 
Thus, the fibrant models of $\infty$-categories in $(\mathbf{S}\text{-Cat},\text{Bergner})$ are exactly those $(\infty,2)$-categories whose 
hom-quasi-categories are Kan complexes.
%We add however that this identification ought to be made with some care, given that the fibrant objects in
%$(\mathbf{S}\text{-Cat},\text{Bergner})$ are generally not cofibrant.
\end{example}		

To recall Riehl and Verity's definition of $\infty$-cosmoses, as well as to treat varying themes thereof which arise in 
Sections~\ref{secformal} and \ref{secmodcat} simultaneously, we only ask the reader to recall Brown's notion of a category of fibrant 
objects \cite{brown}. We will refer to such as \emph{fibration categories} following \cite{szumilococomplqcats}, and further recall the 
notion of exact functors between fibration categories \cite[Definition 1.6]{szumilococomplqcats}. The dual theory is that of
cofibration categories and exact functors between such. We denote the underlying 
fibration (cofibration) category of a model category $\mathbb{M}$ by $\mathbb{M}^f$ ($\mathbb{M}^c$). An object in a fibration category 
is cofibrant if it has the left lifting property against all trivial fibrations. One defines fibrant objects in cofibration categories 
dually. 

\begin{definition}\label{def_fibcatcof}
Say that a fibration category $\mathbb{F}=(\mathcal{F},W,F)$ is a \emph{fibration category with cofibrant replacements} if for all 
objects $A\in\mathbb{F}$ there is a trivial fibration $\mathbb{L}A\twoheadrightarrow A$ whose domain is cofibrant. A 
fibration category $\mathbb{F}$ is a \emph{fibration category of cofibrant objects} if all its objects are 
cofibrant. 
\end{definition}

\begin{definition}\label{defmoncofcat}
Let $\mathbb{C}$ be a cofibration category such that the underlying category of $\mathbb{C}$ is equipped with a monoidal structure
$\otimes$. Say that $(\mathbb{C},\otimes)$ is a monoidal cofibration category if for every pair $j\colon A\rightarrow B$,
$k\colon C\rightarrow D$ of cofibrations in $\mathbb{C}$ the co-gap map
$j\hat{\otimes}k\colon A\otimes D\sqcup_{A\otimes C} B\otimes C \rightarrow B\otimes D$ is again a cofibration in $\mathcal{C}$ which is 
trivial whenever either $j$ or $k$ is trivial.
\end{definition}

One may want to further add cocontinuity conditions on the functor $\otimes\colon\mathbb{C}\times\mathbb{C}\rightarrow\mathbb{C}$ in 
Definition~\ref{defmoncofcat}, however we won't need such for the few definitions to follow in this generality.

\begin{example}\label{explecofcatkappasmallss}
The cofibration category $(\mathbf{S},\mathrm{QCat})^c$ is monoidal via its cartesian product. Let
$\mathbf{S}^{\text{fin}}\subset\mathbf{S}$ be the full subcategory spanned by those simplicial sets which are weakly equivalent in
$(\mathbf{S},\mathrm{QCat})$ to a simplicial set with only finitely many non-degenerate simplices. We will refer to such simplicial sets 
simply as \emph{finite}. Then $\mathbf{S}^{\text{fin}}$ inherits a canonical structure of a monoidal cofibration category
$(\mathbf{S}^{\text{fin}},\mathrm{QCat})^c$ by reflection of that from $(\mathbf{S},\mathrm{QCat})^c$.
%We say that a quasi-category is finite if it is contained in $\mathbf{S}^{\text{hfin}}$. 
%More generally, we can construct a cofibration category $(\mathbf{S}^{\text{h}\kappa},\mathrm{QCat})^c$ for any regular cardinal 
%\kappa$.
\end{example}

\begin{definition}\label{defenrfibcat}
Let $(\mathbb{C},\otimes)$ be a monoidal cofibration category,
%let $K\subseteq\mathbb{C}$ be a collection of objects, and which contains the initial object, is closed under the monoidal structure, under pushouts
and let $\mathbf{F}$ be a cotensored $(\mathbb{C},\otimes)$-enriched category. %which has all cotensors with objects contained in $K$. 
Furthermore, suppose $\mathbb{F}_0=(\mathbf{F}_0,W,F)$ is a fibration category structure on the underlying category $\mathbf{F}_0$ of
$\mathbf{F}$. 
Say that $\mathbb{F}=(\mathbf{F},W,F)$ is a $\mathbb{C}$-enriched fibration category if for every fibration
$p\colon X\twoheadrightarrow Y$ in $\mathbb{F}_0$ and every cofibration $j\colon A\rightarrow B$ in $\mathbb{C}$, 
the gap map $p^j\colon X^B\rightarrow X^A\times_{Y^A} Y^B$ is a fibration in $\mathbb{F}_0$, which is trivial whenever 
either $p$ or $j$ is trivial.
%In case $K=\mathrm{Ob}(\mathbb{C})$, we simply say that $\mathbb{F}$ is a $\mathbb{C}$-enriched fibration category.
\end{definition}

\begin{example}\label{expleenrmodtofibcat}
If $\mathbb{M}$ is a monoidal model category and $\mathbb{N}=(\mathcal{N},C,W,F)$ is a model category whose underlying category has a bitensored $\mathbb{M}$-enrichment
$\mathbf{N}$, then $(\mathbf{N},C,W,F)$ is an $\mathbb{M}$-enriched model category \cite[Section A.3.1]{luriehtt} if and only if
$(\mathbf{N}^f,W,F)$ is an $\mathbb{M}^c$-enriched fibration category.
\end{example}

%Furthermore, the representable
%$\mathbb{F}(\cdot,B)\colon\mathbb{F}^{h}\rightarrow\mathbb{C}$ preserves equivalences between cofibrant objects.
%The equivalence class of the derived hom-object $\mathbb{F}(A,B)^{h}$ does not depend on the choice of cofibrant replacement.
%
%
%\begin{definition}
%Let $\mathbb{F}$ be a $\mathbb{C}$-enriched fibration category. We define the \emph{derived hom-object} of two objects
%$A,B\in\mathbb{F}$ to be the fibrant hom-object $\mathbb{F}(A,B)^{h}:=\mathbb{F}(\mathbb{L}A,B)$ in $\mathbb{C}$ for $\mathbb{L}A$ any 
%cofibrant replacement of $A$ in $\mathbb{F}$.
%\end{definition}

\begin{definition}\label{defCexact}
A $\mathbb{C}$-enriched functor $G\colon \mathbb{F}_1\rightarrow\mathbb{F}_2$ between $\mathbb{C}$-enriched fibration categories
is \emph{$\mathbb{C}$-exact} if its underlying functor is an exact functor of underlying fibration categories, and $G$ preserves all
$\mathbb{C}$-cotensors. If $\mathbb{F}_1$ has all small products as well as countably sequential limits of fibrations, say 
$G$ is \emph{transfinitely $\mathbb{C}$-exact} if $G$ furthermore preserves those limits.
\end{definition}

We note that Definition~\ref{defCexact} only depends on the underlying monoidal category of $\mathbb{C}$ (without its cofibration 
structure) when domain and codomain are known to be $\mathbb{C}$-enriched. Therefore, we will suppress the cofibration structure in this 
context whenever notationally convenient. Clearly, the composition of two $\mathbb{C}$-exact functors is again $\mathbb{C}$-exact.

\begin{definition}
Let $(\mathbb{C},\otimes)$ be a monoidal cofibration category, and let $\mathbb{F}$ be a $\mathbb{C}$-enriched fibration 
category. We say that $\mathbb{F}$ is \emph{cartesian closed} if for all objects $A\in\mathbb{F}$ there is a $\mathbb{C}$-enriched 
adjunction $A\times (\cdot)\adj(\cdot)^A$ such that each right adjoint $(\cdot)^A\colon\mathbf{F}\rightarrow\mathbf{F}$ preserves both 
fibrations and trivial fibrations.
\end{definition}

\begin{remark}\label{remfibcatcc}
A $\mathbb{C}$-enriched fibration category $\mathbb{F}$ is cartesian closed if and only if the underlying category $\mathbf{F}_0$ is 
cartesian closed, and for each $A\in\mathbf{F}$ the right adjoint $(\cdot)^A\colon\mathbb{F}\rightarrow\mathbb{F}$ is (transfinitely)
$\mathbb{C}$-exact.
\end{remark}

\begin{example}[$\infty$-cosmoses]\label{explecosmosalt}
A $(\mathbf{S},\mathrm{QCat})^c$-enriched fibration category $\mathbb{F}$ of cofibrant objects which 
has countable sequential limits of fibrations and small products is exactly an $\infty$-cosmos (``of cofibrant objects'') in the sense 
of \cite{riehlverityelements}. One direction is immediate, the other direction is \cite[Example C.1.3]{riehlverityelements}
(and \cite[Lemma C.1.9]{riehlverityelements}). Any such $(\mathbf{S},\mathrm{QCat})^c$-enriched fibration category $\mathbb{F}$ is 
cartesian closed if and only if it is so as an $\infty$-cosmos in the sense of \cite[Definition 1.2.23]{riehlverityelements}.
A transfinitely $(\mathbf{S},\mathrm{QCat})^c$-exact functor between $\infty$-cosmoses is exactly a cosmological functor in the sense 
of \cite[Definition 1.3.1]{riehlverityelements}.
\end{example}

\begin{example}\label{expleqcatmcs}
In particular, every $(\mathbf{S},\mathrm{QCat})$-enriched model category $\mathbb{M}$ in which all fibrant objects are cofibrant has an 
underlying $\infty$-cosmos $\mathbb{M}^f$ of fibrant objects as defined in \cite{riehlverityelements}. Whenever such a model category is 
furthermore cartesian closed (as a simplicially enriched model category),
then so is its underlying $\infty$-cosmos $\mathbb{M}^f$. In particular, both model categories $(\mathbf{S},\mathrm{QCat})$ and
$(\mathbf{S},\mathrm{Kan})$ have underlying cartesian closed $\infty$-cosmoses (of cofibrant objects), which we short-handedly denote by 
$\mathbf{QCat}$ and $\mathbf{Kan}$ respectively. The former is an $\infty$-cosmos of $(\infty,1)$-categories in the sense of 
\cite{riehlverityelements}. It in fact is the reference structure for the definition of such $\infty$-cosmoses, and up to equivalence 
the only one. Accordingly, the latter is the $\infty$-cosmos of discrete $(\infty,1)$-categories
\cite[Propositions 1.2.12 and 6.1.6]{riehlverityelements}. In particular, its own hom-quasi-categories are Kan complexes, and so
$\mathbf{Kan}$ is an $\infty$-category in the sense of Example~\ref{expleinftycatsareinfty2}.
\end{example}

\begin{remark}\label{remfibcatsinfty2}
If $\mathbb{F}_1$ is a $(\mathbf{S},\mathrm{QCat})^c$-enriched fibration category and $A\in\mathbb{F}_1$ is cofibrant, one computes that 
the hom-object $\mathbb{F}_1(A,B)\in\mathbb{C}$ is a quasi-category for all $B\in\mathbb{F}_1$. In particular, the simplicially 
enriched full subcategory $\mathbb{F}_1^c\subseteq\mathbb{F}_1$ spanned by the cofibrant objects is an $(\infty,2)$-category. Suppose
$\mathbb{F}_2$ is some other $(\mathbf{S},\mathrm{QCat})^c$-enriched fibration category which exhibits a simplicially enriched cofibrant 
replacement functor $\lambda\colon\mathbb{F}_2\rightarrow\mathbb{F}_2^c$. Then every simplicially enriched functor
$G\colon\mathbb{F}_1\rightarrow\mathbb{F}_2$ induces a functor
\[\mathbb{F}_1^c\hookrightarrow\mathbb{F}_1\xrightarrow{G}\mathbb{F}_2\xrightarrow{\lambda}\mathbb{F}_2^c\]
of $(\infty,2)$-categories. In particular, every $\infty$-cosmos is an $(\infty,2)$-category, and every cosmological functor between
$\infty$-cosmoses is a functor of $(\infty,2)$-categories.
\end{remark}

Subsequently, to set things up, in this section we use the $\infty$-cosmos $\mathbf{QCat}$ of quasi-categories as a reference structure 
for $\infty$-category theory. We make a note about model independence concerning the rest of the paper in the end of this section.

The model category $(\mathbf{S}\text{-Cat},\text{Bergner})$ itself is not cartesian closed as a model category (see e.g.\
\cite[Remark 4.5.8]{bergnerbook}), which is one of the reasons that it is rarely worked with as a model for $\infty$-category theory in 
practice. Yet, the category $\mathbf{S}\text{-Cat}$ itself is cartesian closed indeed, and there are various special cases in which the 
simplicially enriched exponential $\mathbf{Fun}(\mathbf{C},\mathbf{D})$ of two simplicially enriched categories $\mathbf{C}$,
$\mathbf{D}$ does help to compute the corresponding $\infty$-categorical exponential after all. 
Such a case is given whenever the codomain $\mathbf{D}$ comes equipped with a $(\mathbf{S},\mathrm{QCat})$-enriched model structure, as 
is the case in the following example.

\begin{example}\label{expleqcatiscosmos}
For any small simplicially enriched category $\mathbf{C}$, the exponential $\mathbf{Fun}(\mathbf{C}^{op},\mathbf{S})$ inherits an 
injective and a projective model structure from both $(\mathbf{S},\mathrm{QCat})$ and $(\mathbf{S},\mathrm{Kan})$ each. The simplicially
enriched category $\mathbf{Fun}(\mathbf{C}^{op},\mathbf{S})$ is both tensored and cotensored over $\mathbf{S}$, all four model 
structures are $(\mathbf{S},\mathrm{QCat})$-enriched, and in the case of the injective model structures also all objects are cofibrant.
We thus obtain underlying $\infty$-cosmoses
\[\mathbf{Fun}(\mathbf{C}^{op},\mathbf{QCat}):=\mathbf{Fun}(\mathbf{C}^{op},(\mathbf{S},\mathrm{QCat}))_{\mathrm{inj}}^f\]
and
\[\mathbf{Fun}(\mathbf{C}^{op},\mathbf{Kan}):=\mathbf{Fun}(\mathbf{C}^{op},(\mathbf{S},\mathrm{Kan}))_{\mathrm{inj}}^f.\]
If we denote by $\mathcal{C}$ the associated quasi-category of $\mathbf{C}$ (i.e.\ $\mathcal{C}$ is equivalent to the simplicial nerve 
of a fibrant replacement of $\mathbf{C}$ in the Bergner model structure), the former is the $\infty$-cosmos of $\mathcal{C}$-indexed
quasi-categories. The latter is the $\infty$-cosmos of presheaves over $\mathcal{C}$ (which in fact is an $\infty$-category in the sense of Example~\ref{expleinftycatsareinfty2}).
By construction, the $(\infty,2)$-categorical structure $\mathbf{Fun}(\mathbf{C}^{op},\mathbf{QCat})$ on the collection of
$\mathcal{C}$-indexed quasi-categories is induced by the $(\infty,2)$-categorical structure of $\mathbf{QCat}$ itself. In particular, 
its 1-cells are homotopy-coherent natural transformations between functors, and its 2-cells are homotopy-coherent modifications between 
1-cells.

The underlying quasi-category (i.e.\ the simplicial nerve) of $\mathbf{Fun}(\mathbf{C}^{op},\mathbf{QCat})$ is the quasi-category
$\mathrm{Fun}(\mathcal{C}^{op},N_{\Delta}(\mathbf{QCat}))$. The underlying quasi-category of
$\mathbf{Fun}(\mathbf{C}^{op},\mathbf{Kan})$ is the quasi-category $\hat{\mathcal{C}}$ of presheaves over $\mathcal{C}$.
\end{example}

\begin{lemma}\label{lemmacqcatiscc}
For all small simplicially enriched categories $\mathbf{C}$, both $(\mathbf{S},\mathrm{QCat})$-enriched model categories
$\mathbf{Fun}(\mathbf{C}^{op},(\mathbf{S},\mathrm{QCat}))_{\mathrm{inj}}$ and
$\mathbf{Fun}(\mathbf{C}^{op},(\mathbf{S},\mathrm{Kan}))_{\mathrm{inj}}$ are cartesian closed. In particular, the $\infty$-cosmoses
$\mathbf{Fun}(\mathbf{C}^{op},\mathbf{QCat})$ and $\mathbf{Fun}(\mathbf{C}^{op},\mathbf{Kan})$ are cartesian closed.
\end{lemma}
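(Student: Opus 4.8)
The plan is to establish cartesian closedness of the two injective model structures and then invoke Example~\ref{expleqcatmcs} (together with Remark~\ref{remfibcatcc}) to descend to the $\infty$-cosmoses. Since by Example~\ref{expleqcatmcs} a $(\mathbf{S},\mathrm{QCat})$-enriched model category in which all fibrant objects are cofibrant has a cartesian closed underlying $\infty$-cosmos as soon as the model category is cartesian closed as a simplicially enriched model category, the second sentence of the lemma is a formal consequence of the first. (In the injective structures all objects are cofibrant, as recorded in Example~\ref{expleqcatiscosmos}, so the hypothesis applies.) Hence the real content is the first sentence: that $\mathbf{Fun}(\mathbf{C}^{op},(\mathbf{S},\mathrm{QCat}))_{\mathrm{inj}}$ and $\mathbf{Fun}(\mathbf{C}^{op},(\mathbf{S},\mathrm{Kan}))_{\mathrm{inj}}$ are cartesian closed as model categories.

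First I would recall what has to be checked. The functor category $\mathbf{Fun}(\mathbf{C}^{op},\mathbf{S})$ is cartesian closed as a plain category: the internal hom $[F,G]$ is computed by the usual end/limit formula, $[F,G](c) = \mathrm{Fun}(\mathbf{C}(-,c)\times F, G)$ over $\mathbf{C}^{op}$, using that $\mathbf{S}$ is cartesian closed and complete. To upgrade this to a cartesian closed model structure one must verify the pushout-product axiom: for injective cofibrations $i\colon A\to B$ and $j\colon C\to D$, the pushout-product $i\,\hat\times\, j$ is an injective cofibration, trivial if either factor is. Since injective cofibrations are exactly the objectwise monomorphisms, this is checked objectwise in $c\in\mathbf{C}$, where the pushout-product of objectwise cofibrations is again an objectwise cofibration because the cartesian product in $\mathbf{S}$ preserves monomorphisms and pushouts of monomorphisms along any map. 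The triviality half of the axiom, however, is not objectwise: an injective trivial cofibration is an objectwise monomorphism that is also a pointwise weak equivalence, and one must know that $(i\,\hat\times\, j)$ is a pointwise weak equivalence whenever, say, $j$ is. For this I would argue objectwise again: $(i\,\hat\times\, j)_c$ is the pushout-product of $i_c$ and $j_c$ in $\mathbf{S}$ for the model structure in question (Joyal or Kan), and both $(\mathbf{S},\mathrm{QCat})$ and $(\mathbf{S},\mathrm{Kan})$ are themselves cartesian closed model categories, so $(i\,\hat\times\, j)_c$ is a (trivial) cofibration there; in particular it is a weak equivalence, and pointwise weak equivalences are precisely the weak equivalences of the injective structure. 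This gives the pushout-product axiom.

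To conclude cartesian closedness of the model structure one also needs that each $(-)^A = [A,-]$ is a right Quillen functor, equivalently (by adjunction) that $A\times(-)$ is left Quillen, equivalently that $A\times(-)$ preserves (trivial) injective cofibrations — but this is the special case of the pushout-product axiom with $i = \varnothing\to A$, so nothing new is required. Alternatively, one observes that cartesian closedness of an $\mathbf{S}$-enriched model category in the sense used here is exactly the statement that the cartesian monoidal structure makes it a monoidal model category for which the unit is cofibrant, and that follows from the pushout-product axiom just verified (the unit, the terminal presheaf, is an objectwise point, hence injectively cofibrant as all objects are). I would then remark that the enrichment compatibility — that these cartesian closed model structures are moreover $(\mathbf{S},\mathrm{QCat})$-enriched model structures, which is already asserted in Example~\ref{expleqcatiscosmos} — is what licenses the passage through Example~\ref{expleqcatmcs}.

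The main obstacle is the triviality clause of the pushout-product axiom for the Joyal (quasi-category) case: objectwise monomorphisms that are pointwise Joyal equivalences are not generated by a tractable set in as transparent a way as the Kan case, so one genuinely wants to lean on the fact that $(\mathbf{S},\mathrm{QCat})$ is itself a cartesian closed model category and that the weak equivalences of the injective model structure are detected pointwise. Everything else is a routine objectwise transfer of the monoidal model structure on $\mathbf{S}$ through the diagram category, using that injective (trivial) cofibrations and injective weak equivalences are both characterized objectwise, and the reduction of the $\infty$-cosmos statement to the model-categorical one via Example~\ref{expleqcatmcs}.
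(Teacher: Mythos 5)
Your proposal is correct and follows essentially the same route as the paper: both verify the (trivial) cofibration / pushout-product condition pointwise, using that $(\mathbf{S},\mathrm{QCat})$ and $(\mathbf{S},\mathrm{Kan})$ are cartesian closed model categories and that injective cofibrations and weak equivalences are detected objectwise, and then descend to the $\infty$-cosmoses via Example~\ref{expleqcatmcs}. The only point the paper makes more explicit than you do is that each adjunction $F\times(\cdot)\adj(\cdot)^F$ is a \emph{simplicially enriched} adjunction (because $F\times(\cdot)$ preserves simplicial tensors, the tensor $G\otimes J$ being the product $G\times c_J$ with the constant diagram), which is the precise content of the ``enrichment compatibility'' you appeal to when invoking Example~\ref{expleqcatmcs}.
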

\begin{proof}
We do the case for $\mathbf{Fun}(\mathbf{C}^{op},(\mathbf{S},\mathrm{QCat}))_{\mathrm{inj}}$; the other is completely analogous.
We recall that the underlying category of $\mathbf{Fun}(\mathbf{C}^{op},\mathbf{S})$ is cartesian closed. Its products are computed 
pointwise, and the exponential $G^F$ for a pair of functors $F,G\colon\mathbf{C}^{op}\rightarrow\mathbf{S}$ evaluates an object 
$C\in\mathbf{C}$ at the simplicial set $\mathrm{Nat}(yC\times F,G)$ of simplicially enriched natural transformations. Each adjunction 
$ F\times(\cdot)\adj (\cdot)^F$ is a simplicially enriched adjunction by \cite[Proposition 3.7.10]{riehlcthty}, because the left 
adjoints $F\times (\cdot)$ commute with simplicial tensors (as the tensor $G\otimes J$ with a simplicial set $J$ can be computed as the 
product $G\times c_J$ where $c_J$ is the constant functor with value $J$). Furthermore, the model category
$\mathbf{Fun}(\mathbf{C}^{op},(\mathbf{S},\mathrm{QCat}))_{\mathrm{inj}}$ is cartesian closed as well. Indeed, the product functor
\[(\cdot)\times(\cdot)\colon\mathbf{Fun}(\mathbf{C}^{op},(\mathbf{S},\mathrm{QCat}))_{\mathrm{inj}}\times\mathbf{Fun}(\mathbf{C}^{op},(\mathbf{S},\mathrm{QCat}))_{\mathrm{inj}}\rightarrow\mathbf{Fun}(\mathbf{C}^{op},(\mathbf{S},\mathrm{QCat}))_{\mathrm{inj}}\]
is a left Quillen bifunctor as is verified pointwise using that $(\mathbf{S},\mathrm{QCat})$ is cartesian closed. That means that
$\mathbf{Fun}(\mathbf{C}^{op},(\mathbf{S},\mathrm{QCat}))_{\mathrm{inj}}$ is cartesian closed as stated. It follows that 
the $\infty$-cosmos $\mathbf{Fun}(\mathbf{C}^{op},\mathbf{QCat})$ is cartesian closed by Example~\ref{expleqcatmcs}.
\end{proof}

\begin{definition}
Let $\mathbf{C}$ be an $(\infty,2)$-category. The full sub-$(\infty,2)$-category generated by a collection 
$D\subseteq\mathbf{C}$ of objects is the full simplicial subcategory $\mathbf{D}$ of $\mathbf{C}$ spanned by $D$. That is, the
$(\infty,2)$-category $\mathbf{D}$ given by the objects in $D$ and the hom-quasi-categories $\mathbf{D}(C,D):=\mathbf{C}(C,D)$. 
\end{definition}

\begin{definition}\label{defsubcosmoses}
Suppose $\mathbb{F}$ is a $(\mathbf{S},\mathrm{QCat})^c$-enriched fibration category. If $D$ is a collection of objects in 
$\mathbb{F}$ such that the inclusion $\mathbf{D}\subseteq\mathbb{F}$ is replete with respect to the class of weak equivalences in
$\mathbb{F}$, and such that $D$ contains the terminal object, is closed under pullbacks of fibrations as well as under all simplicial 
cotensors with finite simplicial sets, we say that $\mathbf{D}$ equipped with the canonical fibration category structure 
inherited from $\mathbb{F}$ is a \emph{full finitary sub-$\infty$-cosmos} of $\mathbb{F}$. 
If $\mathbb{F}$ has all countable sequential limits of fibrations and small products, and $D$ is furthermore closed under countable 
sequential limits of fibrations, under small products as well as under all simplicial cotensors, we say that $\mathbf{D}$ equipped with 
the canonical fibration category structure inherited from $\mathbb{F}$ is a \emph{full sub-$\infty$-cosmos} of $\mathbb{F}$.
\end{definition}

\begin{example}
Every full finitary sub-$\infty$-cosmos of a $(\mathbf{S},\mathrm{QCat})^c$-enriched fibration category is an $\infty$-cosmos in the weaker 
sense of \cite{rvyoneda}. Every full sub-$\infty$-cosmos of an $\infty$-cosmos in the sense of Example~\ref{explecosmosalt} is again an 
$\infty$-cosmos in the sense of Example~\ref{explecosmosalt}.
\end{example}

\begin{example}
The collection of finite quasi-categories defines a full finitary sub-$\infty$-cosmos $\mathbf{QCat}^{\mathrm{fin}}$ in $\mathbf{QCat}$.
\end{example}

We have seen in Example~\ref{expleqcatiscosmos} that for every small simplicial category $\mathbf{C}$ the $(\infty,2)$-category
$\mathbf{Fun}(\mathbf{C}^{op},\mathbf{QCat})$ is an $\infty$-cosmos. In the course of the following sections we will be interested in a 
uniform treatment of large simplicial categories $\mathbf{C}$ as well. 

\begin{notation}\label{notation_univ}
For the following, we fix three Grothendieck universes so to be able to talk about 
a large (and locally small) simplicial category $\mathbf{QCat}$ of small quasi-categories, as well as a superlarge
simplicial category $\mathbf{QCAT}$ of large quasi-categories. We denote the superlarge category of large simplicial sets by
$\mathbf{S}^+$.
%We denote by $\mathbf{KAN}$ the corresponding superlarge simplicial category of large spaces.
%then for any large $\infty$-category $\mathcal{C}$ we denote by $\hat{\mathcal{C}}$ the superlarge $\infty$-category $\mathrm{Fun}(\mathcal{C}^{op},\mathcal{S}^+)$.
The plain term ``$\infty$-category'' will henceforth refer to ``large $\infty$-category''.
%The latter will in this case also be denoted by $\hat{\mathcal{C}}$.
\end{notation}

%\begin{remark}\label{rem_size}
%Some statements in this section will be phrased for small or locally small $\infty$-categories $\mathcal{C}$ (in relation to the large
%$\infty$-category $\mathrm{Cat}_{\infty}$) only, however they apply to large $\infty$-categories (in relation to the 
%superlarge $\infty$-category $\mathrm{CAT}_{\infty}$) without exception.
%\end{remark}

\begin{example}\label{exple_size}
The superlarge simplicial category $\mathbf{QCAT}$ is a $(\mathbf{S}^+,\mathrm{QCat})^c$-enriched fibration category of cofibrant objects 
which has countable sequential limits of fibrations and large products (i.e.\ it is an $\infty$-cosmos in the universe of large
quasi-categories). The embedding of simplicial categories $\mathbf{QCat}\hookrightarrow\mathbf{QCAT}$ exhibits $\mathbf{QCat}$ as an
$\infty$-cosmos that inherits all its $\infty$-cosmological structure from the latter (i.e.\ it is a full small sub-$\infty$-cosmos). More 
generally, for every (large) simplicial category $\mathbf{C}$, 
the superlarge simplicial category $\mathbf{Fun}(\mathbf{C}^{op},\mathbf{QCAT})$ is a $(\mathbf{S}^+,\mathrm{QCat})^c$-enriched fibration 
category of cofibrant objects with countable sequential limits of fibrations and large products as well. The embedding
\[\mathbf{Fun}(\mathbf{C}^{op},\mathbf{QCat})\hookrightarrow\mathbf{Fun}(\mathbf{C}^{op},\mathbf{QCAT})\]
hence equips the large simplicial category $\mathbf{Fun}(\mathbf{C}^{op},\mathbf{QCat})$ with the structure of a
$(\mathbf{S}^+,\mathrm{QCat})^c$-enriched fibration category of cofibrant objects with countable sequential limits of fibrations and small 
products even when $\mathbf{C}$ is large. This is precisely the $\infty$-cosmological structure of Example~\ref{expleqcatiscosmos} whenever $\mathbf{C}$ is small.
\end{example}

\begin{remark}\label{remscbicat}
Just as every $\infty$-category $\mathbf{C}$ in the sense of Example~\ref{expleinftycatsareinfty2} has an underlying 
quasi-category given by its associated simplicial nerve $N_{\Delta}(\mathbf{C})$, every $(\infty,2)$-category $\mathbf{C}$ has 
an underlying $\infty$-bicategory given by its associated scaled simplicial nerve $N_{\Delta}^{\mathrm{sc}}(\mathbf{C})$
\cite[Section 4]{lgood}\footnote{To be more precise, the scaled simplicial nerve is really evaluated at the simplicial category
$\mathbf{C}^{\natural}$ whose hom-quasi-categories are canonically marked by their equivalences.}.
It will be used in Theorem~\ref{corcosmos2} only once implicitly to deduce closure of the $(\infty,2)$-category
$\mathbf{Cat}_{\infty}(\mathcal{C})$ under general $(\infty,2)$-limits under the given assumptions of Theorem~\ref{corcosmos1}.
\end{remark}

\subsection{The underlying \texorpdfstring{$\infty$-}{higher }category of \texorpdfstring{an $(\infty,2)$}{a higher 2}-category}\label{secsubformalunderqcat}

We recall that the canonical inclusion $\iota\colon\mathcal{S}\hookrightarrow\mathrm{Cat}_{\infty}$ of quasi-categories has a right 
adjoint $(\cdot)^{\simeq}\colon\mathrm{Cat}_{\infty}\rightarrow\mathcal{S}$. It assigns to an $\infty$-category $\mathcal{C}$ its core
$\mathcal{C}^{\simeq}\hookrightarrow\mathcal{C}$, which is essentially defined by the universal property it satisfies by virtue of the 
adjunction $\iota\adj(\cdot)^{\simeq}$. In $\mathrm{Cat}_{\infty}$ presented as the homotopy quasi-category of
$(\mathbf{S},\mathrm{QCat})$, the core $\mathcal{C}^{\simeq}\subseteq\mathcal{C}$ of 
a (small) $\infty$-category $\mathcal{C}\in\mathrm{Cat}_{\infty}$ has an analytical construction: it is given by the largest 
Kan complex contained in $\mathcal{C}$. This assignment is functorial, and in fact is induced by a right Quillen functor
$k^!\colon(\mathbf{S},\mathrm{QCat})\rightarrow(\mathbf{S},\mathrm{Kan})$ which comes together with a natural trivial fibration
$k^!(\mathcal{C})\overset{\sim}{\twoheadrightarrow}\mathcal{C}^{\simeq}$ for all quasi-categories $\mathcal{C}$
\cite[Section 1]{jtqcatvsss}. If $I\Delta^n$ denotes the nerve of the free groupoid generated by $[n]$, the functor $k^!$ is given by 
the formula $k^!(S)_n=\mathbf{S}(I\Delta^n,S)$. 

\begin{definition}\label{defpith}
The \emph{pith} (or the \emph{$(\infty,1)$-core}) of an $(\infty,2)$-category $\mathbf{C}$ is the simplicially enriched category
$\underline{\mathbf{C}}$ given by $\mathrm{Ob}(\underline{\mathbf{C}})=\mathrm{Ob}(\mathbf{C})$ and
$\underline{\mathbf{C}}(X,Y)=\mathbf{C}(X,Y)^{\simeq}$ for all $X,Y\in\mathcal{C}$. A quasi-category $\mathcal{C}$ is \emph{the 
underlying quasi-category} of $\mathbf{C}$ if
%$U(\mathcal{C})$
it is equivalent to the underlying quasi-category $N_{\Delta}(\underline{\mathbf{C}})$. 
\end{definition}

The terminology ``pith'' follows \cite{kerodon}, the terminology ``$(\infty,1)$-core'' follows \cite{riehlverityelements}.
The underlying quasi-category of an $(\infty,2)$-category $\mathbf{C}$ is defined to be an equivalence-invariant notion given that, 
first, in models of $\infty$-category theory other than $\mathbf{QCat}$ it generally may only be defined up to equivalence in the first 
place, and second, it is characterized by the following universal property, of which the pith itself is an analytical instantiation.

\begin{proposition}
For any $(\infty,2)$-category $\mathbf{C}$ and any quasi-category $\mathcal{D}$, the canonical inclusion
$\iota\colon\underline{\mathbf{C}}\rightarrow\mathbf{C}$ induces an equivalence
\[N_{\Delta}(\iota)_{\ast}\colon\mathrm{Fun}(\mathcal{D},N_{\Delta}(\underline{\mathbf{C}}))\rightarrow\mathrm{Fun}(\mathcal{D},N_{\Delta}(\mathbf{C}))\]
of quasi-categories.
\end{proposition}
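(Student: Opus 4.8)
The plan is to reduce the statement to a known adjunction of quasi-categories applied hom-wise, together with the compatibility of the homotopy-coherent nerve with exponentials. First I would observe that since $\mathbf{C}$ is an $(\infty,2)$-category, i.e.\ a simplicially enriched category whose hom-objects are quasi-categories, the inclusion $\iota\colon\underline{\mathbf{C}}\hookrightarrow\mathbf{C}$ is a simplicially enriched functor which is the identity on objects and which on each hom-object is the inclusion $k^!(\mathbf{C}(X,Y))\overset{\sim}{\twoheadrightarrow}\mathbf{C}(X,Y)^{\simeq}\hookrightarrow\mathbf{C}(X,Y)$ of the core (up to the natural trivial fibration recalled before the statement). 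The key structural input is that for a Kan complex $K$ and a quasi-category $Q$, the inclusion $K^{\simeq}=K\hookrightarrow Q$ is ``maximally Kan'' in the sense that it witnesses the counit of $\iota\adj(\cdot)^{\simeq}$; concretely, for any Kan complex $J$ the map $\mathrm{Fun}(J,K)\to\mathrm{Fun}(J,Q)$ is an equivalence (indeed an isomorphism onto a union of components), since any functor from a Kan complex into $Q$ factors through the core, and homotopies do as well.

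Next I would promote this hom-wise fact to a statement about mapping simplicial sets of simplicially enriched functor categories. For simplicially enriched categories $\mathbf{A},\mathbf{B}$, the exponential $\mathbf{Fun}(\mathbf{A},\mathbf{B})$ in $\mathbf{S}\text{-Cat}$ has vertices the simplicial functors and its hom-simplicial-sets built as ends out of the $\mathbf{B}(-,-)$; taking $\mathbf{A}=\mathfrak{C}[\mathcal{D}]$ (so that $\mathrm{Fun}(\mathcal{D},N_{\Delta}(\mathbf{C}))\simeq N_\Delta(\mathbf{Fun}(\mathfrak{C}[\mathcal{D}],\mathbf{C}))$ via the $\mathfrak{C}\dashv N_\Delta$ adjunction, using that $\mathbf{C}$ has quasi-categorical homs so $N_\Delta(\mathbf{C})$ is a quasi-category and the exponential is computed correctly), the comparison map in the statement is identified, up to equivalence of quasi-categories, with $N_\Delta$ applied to the simplicial functor $\iota_*\colon\mathbf{Fun}(\mathfrak{C}[\mathcal{D}],\underline{\mathbf{C}})\to\mathbf{Fun}(\mathfrak{C}[\mathcal{D}],\mathbf{C})$ induced by post-composition with $\iota$. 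So it suffices to show $\iota_*$ is a Dwyer--Kan equivalence, i.e.\ that it is homotopically fully faithful and essentially surjective.

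For homotopic fullness-and-faithfulness one computes, for simplicial functors $F,G\colon\mathfrak{C}[\mathcal{D}]\to\underline{\mathbf{C}}$, the hom-simplicial-set $\mathbf{Fun}(\mathfrak{C}[\mathcal{D}],\underline{\mathbf{C}})(F,G)$ as an end/limit of the hom-objects $\underline{\mathbf{C}}(Fc,Gc)=\mathbf{C}(Fc,Gc)^{\simeq}$, which are Kan complexes; since the inclusions $\mathbf{C}(Fc,Gc)^\simeq\hookrightarrow\mathbf{C}(Fc,Gc)$ are each inclusions of cores and these ends are (homotopy) limits of Kan complexes mapping into (homotopy) limits of quasi-categories along core inclusions, the induced map on ends is again an equivalence onto the ``core-like'' part — here I would either argue directly that a natural transformation landing in $\mathbf{C}$ componentwise factors through $\underline{\mathbf{C}}$ as soon as it does so on objects (which it does by construction of $\underline{\mathbf{C}}$), giving an isomorphism of the relevant hom-sets, or invoke that $(\cdot)^\simeq$ preserves the limits defining the end because it is a right adjoint. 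Essential surjectivity is the statement that every simplicial functor $\mathfrak{C}[\mathcal{D}]\to\mathbf{C}$ is, up to equivalence in $\mathbf{Fun}(\mathfrak{C}[\mathcal{D}],\mathbf{C})$, isomorphic to one factoring through $\underline{\mathbf{C}}$; but $\mathfrak{C}[\mathcal{D}]$ is a free-ish simplicial category and such a functor in fact lands strictly in $\underline{\mathbf{C}}$ on objects automatically, with the only subtlety being the hom-level data — the main obstacle I anticipate is precisely this bookkeeping, making sure the ``core'' construction on hom-objects is compatible with the (homotopy) limits defining mapping spaces of $\mathbf{S}\text{-Cat}$-exponentials, which is why I would lean on the adjunction $\iota\adj(\cdot)^\simeq$ and the fact that $k^!$ is a right Quillen functor rather than on a hands-on simplex-chasing argument. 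Once $\iota_*$ is a Dwyer--Kan equivalence, applying the homotopy-coherent nerve (a right Quillen functor between the Bergner and Joyal model structures) yields the asserted equivalence of quasi-categories.
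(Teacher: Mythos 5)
Your proposal does not go through: its two central inputs are false as stated. First, the claim that ``$\mathbf{C}$ has quasi-categorical homs so $N_{\Delta}(\mathbf{C})$ is a quasi-category'' is wrong; $N_{\Delta}(\mathbf{C})$ is a quasi-category when the hom-objects are Kan complexes, while for mere quasi-category homs it is only an $(\infty,2)$-category in Kerodon's sense (already the Duskin nerve of a $2$-category with a non-invertible $2$-cell is not a quasi-category). So the identification $\mathrm{Fun}(\mathcal{D},N_{\Delta}(\mathbf{C}))\simeq N_{\Delta}(\mathbf{Fun}(\mathfrak{C}[\mathcal{D}],\mathbf{C}))$ cannot be justified the way you indicate (and $\mathfrak{C}$ preserves products only up to weak equivalence, a further problem for that step). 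Second, the ``key structural input'' fails: for the core inclusion $Q^{\simeq}\hookrightarrow Q$ and a Kan complex $J$, the map $\mathrm{Fun}(J,Q^{\simeq})\rightarrow\mathrm{Fun}(J,Q)$ is an isomorphism onto the core $\mathrm{Fun}(J,Q)^{\simeq}$, not onto a union of components, and it is not an equivalence --- take $J=\Delta^{0}$, where it is the core inclusion itself. Only the statement on vertices, $\mathrm{Hom}(J,Q^{\simeq})\cong\mathrm{Hom}(J,Q)$, is true; higher simplices of $\mathrm{Fun}(J,Q)$ include non-invertible transformations that do not factor through the core. Consequently the centerpiece of your plan --- that $\iota_{\ast}\colon\mathbf{Fun}(\mathfrak{C}[\mathcal{D}],\underline{\mathbf{C}})\rightarrow\mathbf{Fun}(\mathfrak{C}[\mathcal{D}],\mathbf{C})$ is a DK-equivalence --- is not available: for $\mathcal{D}=\Delta^{0}$ it is literally $\underline{\mathbf{C}}\rightarrow\mathbf{C}$, which is a DK-equivalence only when every hom of $\mathbf{C}$ is already Kan. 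Your essential-surjectivity claim breaks down exactly at the hom-level bookkeeping you defer: a simplicial functor $\mathfrak{C}[\Delta^{2}]\rightarrow\mathbf{C}$ may send the generating edge of $\mathfrak{C}[\Delta^{2}](0,2)\cong\Delta^{1}$ to a non-invertible edge of a hom quasi-category, i.e.\ a non-thin $2$-simplex of $N_{\Delta}(\mathbf{C})$, and such a functor neither lands in $\underline{\mathbf{C}}$ nor is equivalent to one that does.

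This is also a genuinely different route from the paper's, which makes no enriched functor-category computation at all: it identifies $N_{\Delta}(\underline{\mathbf{C}})$ with the pith $\mathrm{Pith}(N_{\Delta}(\mathbf{C}))$ via Kerodon's Corollary 5.5.8.8 and then concludes by Kerodon's pith/thin-simplex formalism (Remark 5.5.7.6). The phenomenon your argument stumbles on --- that a simplicial map out of a quasi-category into $N_{\Delta}(\mathbf{C})$ can a priori involve non-invertible (non-thin) $2$-cell data, so it behaves like a lax rather than a pseudo functor --- is precisely the delicate point here, and it is what the thinness conditions in the cited Kerodon results are designed to control; it cannot be dispatched by applying the core adjunction hom-wise. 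Any repair of your approach would have to confront which maps $\mathcal{D}\times\Delta^{n}\rightarrow N_{\Delta}(\mathbf{C})$ carry $2$-simplices to thin ones, i.e.\ engage with the pith directly as the paper does, rather than reduce to a DK-equivalence statement that is false already at $\mathcal{D}=\Delta^{0}$.
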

\begin{proof}
By \cite[Corollary 5.5.8.8]{kerodon} the inclusion
$N_{\Delta}(\iota)\colon N_{\Delta}(\underline{\mathbf{C}})\rightarrow N_{\Delta}(\mathbf{C})$ is isomorphic to the inclusion
$\mathrm{Pith}(N_{\Delta}(\mathbf{C}))\hookrightarrow N_{\Delta}(\mathbf{C})$ as defined in \cite[Section 5.5.5]{kerodon}.
It follows that $N_{\Delta}(\iota)_{\ast}$ is an isomorphism by \cite[Remark 5.5.7.6]{kerodon}.
\end{proof}

\begin{example}\label{expleunderqcatCindqcat}
The underlying quasi-category of $\mathbf{QCat}$ is the quasi-category $\mathrm{Cat}_{\infty}$ of (small) quasi-categories. For any
quasi-category $\mathcal{C}$, the underlying quasi-category of $\mathbf{Fun}(\mathfrak{C}(\mathcal{C})^{op},\mathbf{QCat})$ is the 
quasi-category $\mathrm{Fun}(\mathcal{C}^{op},\mathrm{Cat}_{\infty})$. Analogously, as
$\underline{\mathbf{Fun}(\mathfrak{C}(\mathcal{C})^{op},\mathbf{Kan}})=\mathbf{Fun}(\mathfrak{C}(\mathcal{C})^{op},\mathbf{Kan})$, the 
underlying quasi-category of $\mathbf{Fun}(\mathfrak{C}(\mathcal{C})^{op},\mathbf{Kan})$ is the quasi-category
$\hat{\mathcal{C}}$ of presheaves over $\mathcal{C}$.
\end{example}

\begin{lemma}\label{lemmacartclosed}
The underlying quasi-category $\mathcal{C}:=N_{\Delta}(\underline{\mathbb{M}^{cf}})$ of a cartesian closed
$(\mathbf{S},\mathrm{QCat})$-enriched model category $\mathbb{M}$ is cartesian closed. The exponential of two bifibrant objects in
$\mathbb{M}$ represents the corresponding exponential in $\mathcal{C}$. In particular, the $\infty$-category
$\mathrm{Fun}(\mathcal{C}^{op},\mathrm{Cat}_{\infty})$ is cartesian closed for every small $\infty$-category $\mathcal{C}$.
\end{lemma}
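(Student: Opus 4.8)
The plan is to reduce the statement to the cartesian closedness of a suitable enriched model category and then invoke the already-established machinery. First I would recall that the core functor $(\cdot)^{\simeq}\colon\mathrm{Cat}_\infty\to\mathcal{S}$ is right adjoint to $\iota$, so in particular it preserves products and cotensors; hence the pith $\underline{\mathbf{F}}$ of a cartesian closed $(\mathbf{S},\mathrm{QCat})$-enriched fibration category again has a cartesian closed underlying structure in a suitable sense. But the cleanest route is to work model-categorically: the underlying quasi-category $\mathcal{C}=N_\Delta(\underline{\mathbb{M}^{cf}})$ is, by definition and by Example~\ref{expleqcatmcs}, the underlying quasi-category of the $\infty$-cosmos $\mathbb{M}^f$, and the derived mapping spaces in $\mathcal{C}$ are computed by the hom-spaces of bifibrant objects in $\mathbb{M}$.

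Next I would argue that cartesian closedness of $\mathbb{M}$ as an $(\mathbf{S},\mathrm{QCat})$-enriched model category gives, for each bifibrant $A$, a Quillen adjunction $A\times(\cdot)\dashv(\cdot)^A$ whose right adjoint preserves fibrations and trivial fibrations between fibrant objects; deriving this adjunction yields an adjunction of quasi-categories $\mathcal{C}\times(\cdot)\dashv(\cdot)^A$ on the underlying $\infty$-category, because the product bifunctor is left Quillen (so its left-derived functor preserves the relevant universal property) and the internal hom of bifibrant objects is already homotopically meaningful. Here the key point is that on bifibrant objects the $1$-categorical exponential $B^A$ in $\mathbb{M}$ is fibrant and represents the homotopy-correct exponential: for bifibrant $A,B,C$ we have $\mathbb{M}(C, B^A)\simeq\mathbb{M}(C\times A, B)$ simplicially, and $C\times A$ is cofibrant while $B^A$ is fibrant, so this identity passes to the homotopy category and to all derived mapping spaces. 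That establishes that $\mathcal{C}$ is cartesian closed and that the exponential of two bifibrant objects represents the exponential in $\mathcal{C}$, giving the first two sentences.

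For the final sentence I would apply this to a concrete presentation: by Lemma~\ref{lemmacqcatiscc} the $(\mathbf{S},\mathrm{QCat})$-enriched model category $\mathbf{Fun}(\mathfrak{C}(\mathcal{C})^{op},(\mathbf{S},\mathrm{QCat}))_{\mathrm{inj}}$ is cartesian closed, and by Example~\ref{expleqcatiscosmos} together with Example~\ref{expleunderqcatCindqcat} its underlying quasi-category is $\mathrm{Fun}(\mathcal{C}^{op},\mathrm{Cat}_\infty)$; hence the general statement just proven applies verbatim. One small caveat to address is that $\mathrm{Fun}(\mathcal{C}^{op},(\mathbf{S},\mathrm{QCat}))_{\mathrm{inj}}$ need not have all objects fibrant, so ``bifibrant'' must genuinely be used rather than ``cofibrant'' as in the $\infty$-cosmos of cofibrant objects; but the core functor argument and Example~\ref{expleqcatmcs} are set up precisely to handle the fibration category $\mathbb{M}^f$ of fibrant objects, so this is not an obstruction.

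The main obstacle I anticipate is the second step: verifying that the $1$-categorical exponential adjunction, which a priori is only Quillen, descends to a genuine adjunction of quasi-categories on the underlying $\infty$-category, and in particular that no cofibrant or fibrant replacement is needed on either side when one restricts to bifibrant objects. This requires the observation that the left adjoint $A\times(\cdot)$ commutes with simplicial tensors (the tensor with a simplicial set $J$ being the product with the constant object $c_J$, exactly as in the proof of Lemma~\ref{lemmacqcatiscc}), so that the Quillen adjunction is simplicially enriched and therefore induces an adjunction of hom-quasi-categories, which is all that is needed to conclude cartesian closedness at the level of the underlying $(\infty,1)$-category. Everything else is bookkeeping with the already-cited results of Riehl--Verity and the identification of underlying quasi-categories from Example~\ref{expleunderqcatCindqcat}.
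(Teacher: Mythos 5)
Your proposal is correct and takes essentially the same route as the paper: for bifibrant $A$ one derives the simplicially enriched Quillen adjunction $A\times(\cdot)\dashv(\cdot)^A$ to an adjunction of underlying quasi-categories, observes that the exponential of bifibrant objects is already homotopically correct, and deduces the final claim from Lemma~\ref{lemmacqcatiscc} together with the identification of the underlying quasi-category. The only step the paper makes explicit that you pass over with ``preserves the relevant universal property'' is that the derived functor of $A\times(\cdot)$ really computes the $\infty$-categorical product, which the paper justifies by noting that $\times\colon\mathbb{M}\times\mathbb{M}\rightarrow\mathbb{M}$ is right Quillen adjoint to the diagonal; your derived-mapping-space comparison on bifibrant objects uses exactly this standard fact implicitly, so this is a presentational gloss rather than a gap.
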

\begin{proof}
Given an object $A\in\mathcal{C}$, we want to show that the functor $A\times(\cdot)\colon\mathcal{C}\rightarrow\mathcal{C}$ has a right 
adjoint. We may assume that $A\in\mathbb{M}$ is bifibrant. As the ordinary product functor
$A\times(\cdot)\colon\mathbb{M}\rightarrow\mathbb{M}$ is a (simplicial) left Quillen functor with right adjoint $(\cdot)^A$, it induces 
a left adjoint $\mathrm{Ho}_{\infty}(A\times(\cdot))\colon\mathcal{C}\rightarrow\mathcal{C}$ (see e.g.\
\cite[Theorem 2.1]{mazelgeequadj}; the more basic proof of \cite[Proposition 5.2.4.6]{luriehtt} applies to this context however as 
well). It thus suffices to show that the endofunctor
$\mathrm{Ho}_{\infty}(A\times(\cdot))\colon\mathcal{C}\rightarrow\mathcal{C}$ computes the $\infty$-categorical product
$A\times(\cdot)$. This however follows from the fact that the (simplicially enriched) product functor
$\times\colon\mathbb{M}\times\mathbb{M}\rightarrow\mathbb{M}$ is the right Quillen adjoint to the diagonal $\Delta\colon\mathbb{M}\rightarrow\mathbb{M}\times\mathbb{M}$.
The second statement follows directly from Lemma~\ref{lemmacqcatiscc}.
%More precisely, $\Delta\colon\mathbb{M}\rightarrow\mathbb{M}\times\mathbb{M}$ has right adjoint $\times$ whose right derived is homotopy product by definition. Its a right Quillen functor and hence gives product functor on $\mathcal{C}$ by def.
\end{proof}

Lastly, for any given quasi-category $\mathcal{C}$ there may be many $(\infty,2)$-categories $\mathbf{C}$ such that
$\mathcal{C}$ is the underlying quasi-category of $\mathbf{C}$. Each such induces an enhanced mapping
$\infty$-category functor on $\mathcal{C}$ in the sense of \cite{ghnlaxlimits}, whose definition we recall alongside its associated notions 
of tensors and cotensors.

\begin{definition}[{\cite[Definition 6.1]{ghnlaxlimits}}]\label{defenhmapqcats}
A mapping $\infty$-category functor for a quasi-category $\mathcal{C}$ is a functor
\[\mathrm{Map}_{\mathcal{C}}\colon\mathcal{C}^{op}\times\mathcal{C}\rightarrow\mathrm{Cat}_{\infty}\]
together with an equivalence from the composite
$\mathcal{C}^{op}\times\mathcal{C}\xrightarrow{\mathrm{Map}_{\mathcal{C}}}\mathrm{Cat}_{\infty}\xrightarrow{(\cdot)^{\simeq}}\mathcal{S}$ to the mapping space 
functor $\mathcal{C}(\phv,\phv)\colon\mathcal{C}^{op}\times\mathcal{C}\rightarrow\mathcal{S}$ of $\mathcal{C}$.
\end{definition}

Furthermore, we briefly introduce the concept of relative right (and left) adjoints in this context, and show that corresponding left 
(right) adjoints can be picked functorially whenever they exist. 
Therefore, let $F\colon\mathcal{D}\rightarrow\mathcal{E}$ be a functor between $\infty$-categories. Then, for any $\infty$-category 
$\mathcal{C}$ we obtain an associated functor defined as the following composition.
\[\mathcal{E}(F(\cdot),\blank(\cdot))\colon\mathrm{Fun}(\mathcal{C},\mathcal{E})\xrightarrow{y_{\ast}}\mathrm{Fun}(\mathcal{C},\hat{\mathcal{E}})\xrightarrow{(F^{\ast})_{\ast}}\mathrm{Fun}(\mathcal{C},\hat{\mathcal{D}})\xrightarrow{\simeq}\mathrm{Fun}(\mathcal{C}\times\mathcal{D}^{op},\mathcal{S})\xrightarrow{\simeq}\mathrm{Fun}(\mathcal{D}^{op},\mathrm{Fun}(\mathcal{C},\mathcal{S}))
\]
\begin{definition}
Let $F\colon\mathcal{D}\rightarrow\mathcal{E}$ be a functor between $\infty$-categories. A functor
$R\colon\mathcal{C}\rightarrow\mathcal{E}$ is an \emph{$F$-relative right adjoint} if for every object $D\in\mathcal{D}$, the copresheaf 
$\mathcal{E}(F(D),R(\cdot))\colon\mathcal{C}\rightarrow\mathcal{S}$ is corepresentable.
A functor $L\colon\mathcal{C}\rightarrow\mathcal{E}$ is an \emph{$F$-relative left adjoint} if
$L^{op}\colon\mathcal{C}^{op}\rightarrow\mathcal{E}^{op}$ is a $F^{op}$-relative right adjoint.
\end{definition}

As the Yoneda embedding $\mathcal{C}^{op}\rightarrow\mathrm{Fun}(\mathcal{C},\mathcal{S})$ is fully faithful, so is the push-forward
\[y_{\ast}\colon\mathrm{Fun}(\mathcal{D}^{op},\mathcal{C}^{op})\rightarrow\mathrm{Fun}(\mathcal{D}^{op},\mathrm{Fun}(\mathcal{C},\mathcal{S})).\]
It hence defines an equivalence between the $\infty$-category $\mathrm{Fun}(\mathcal{D}^{op},\mathcal{C}^{op})$
and the full sub-$\infty$-category $\mathrm{Fun}(\mathcal{D}^{op},y[\mathcal{C}^{op}])\subseteq\mathrm{Fun}(\mathcal{D}^{op},\mathrm{Fun}(\mathcal{C},\mathcal{S}))$ of functors
$K\colon\mathcal{D}^{op}\rightarrow\mathrm{Fun}(\mathcal{C},\mathcal{S})$ such that $K(D)\colon\mathcal{C}\rightarrow\mathcal{S}$ is 
corepresentable for all objects $D\in\mathcal{D}$. This defines the full sub-$\infty$-category
\[\xymatrix{
\mathrm{FRAdj}(\mathcal{C},\mathcal{E})\ar[rr]\ar@{}[d]|{\rotatebox[origin=c]{-90}{$\subseteq$}}\ar@{}[drr]|(.3){\pbs} & & \mathrm{Fun}(\mathcal{D}^{op},y[\mathcal{C}^{op}])\ar@{}[d]|{\rotatebox[origin=c]{-90}{$\subseteq$}} \\
\mathrm{Fun}(\mathcal{C},\mathcal{E})\ar[rr]_(.4){\mathcal{E}(F(\cdot),\blank(\cdot))} & & \mathrm{Fun}(\mathcal{D}^{op},\mathrm{Fun}(\mathcal{C},\mathcal{S}))
}\]
of $\mathcal{C}$-indexed $F$-relative right adjoints. The composition
\begin{align}\label{equdefpla}
\mathrm{FRAdj}(\mathcal{C},\mathcal{E})\rightarrow\mathrm{Fun}(\mathcal{D}^{op},y[\mathcal{C}^{op}])\xrightarrow{\simeq}\mathrm{Fun}(\mathcal{D},\mathcal{C})^{op}
\end{align}
associates to any given $F$-relative right adjoint functor $R\colon\mathcal{C}\rightarrow\mathcal{E}$ an associated $F$-left adjoint
$L\colon\mathcal{D}\rightarrow\mathcal{C}$. The case for $F$-relative left adjoints is entirely dual.

The following two definitions are straight-forward relativizations of \cite[Definition 6.5]{ghnlaxlimits} and \cite[Definition 8.2]{ghnlaxlimits}.

\begin{definition}\label{defenhmaptensors}
Suppose $\mathcal{C}$ is a quasi-category with a mapping $\infty$-category functor $\mathrm{Map}_{\mathcal{C}}$. Let 
$\iota\colon K\hookrightarrow\mathrm{Cat}_{\infty}$ be the inclusion of a full sub-quasi-category. We say that
$(\mathcal{C},\mathrm{Map}_{\mathcal{C}})$ is $K$-tensored
if for every object $C\in\mathcal{C}$ the functor $\mathrm{Map}_{\mathcal{C}}(C,\phv)\colon\mathcal{C}\rightarrow\mathrm{Cat}_{\infty}$ 
has a $\iota$-relative left adjoint $(\cdot)\otimes C\colon K\rightarrow\mathcal{C}$; in this case these adjoints determine an 
essentially unique functor $\otimes\colon K\times\mathcal{C}\rightarrow\mathcal{C}$ via the composition (\ref{equdefpla}).
\end{definition}

\begin{definition}\label{defenhmapcotensors}
Suppose $\mathcal{C}$ is a quasi-category with a mapping $\infty$-category functor $\mathrm{Map}_{\mathcal{C}}$. Let 
$\iota\colon K\hookrightarrow\mathrm{Cat}_{\infty}$ be the inclusion of a full sub-quasi-category. We say that
$(\mathcal{C},\mathrm{Map}_{\mathcal{C}})$ is $K$-cotensored
if for every object $C\in\mathcal{C}$ the functor
$\mathrm{Map}_{\mathcal{C}}(\phv,C)\colon\mathcal{C}\rightarrow\mathrm{Cat}_{\infty}^{op}$ has a $\iota$-relative right adjoint
$C^{(\cdot)}\colon K^{op}\rightarrow\mathcal{C}$; in this case these adjoints determine an essentially unique functor
$(\cdot)^{(\cdot)}\colon K^{op}\times\mathcal{C}\rightarrow\mathcal{C}$ via the composition (\ref{equdefpla}).
\end{definition}

\begin{example}\label{exple_cosm_cotensors}
As stated in \cite[Lemma 6.2]{ghnlaxlimits}, every $(\infty,2)$-category $\mathbf{C}$ canonically equips its underlying quasi-category
$\mathcal{C}$ with a mapping $\infty$-category functor that sends a pair $(C,D)$ to the quasi-category of maps $\mathbf{C}(C,D)$.
If $\mathbf{C}$ is an $\infty$-cosmos, then its simplicially enriched cotensors represent cotensors for this enhanced mapping functor in 
the sense of Definition~\ref{defenhmapcotensors} for $K=\mathrm{Cat}_{\infty}$.
If $\mathbf{D}$ is a finitary sub-$\infty$-cosmos of $\mathbf{C}$, then its simplicially enriched cotensors represent cotensors for this 
enhanced mapping functor in the sense of Definition~\ref{defenhmapcotensors} for $K=\mathrm{Cat}_{\infty}^{\mathrm{fin}}$ the full
sub-quasi-category spanned by the finite quasi-categories.
\end{example}

\begin{example}\label{exple_sub_cotensors}
Suppose $\iota\colon K\hookrightarrow\mathrm{Cat}_{\infty}$ is the inclusion of a full sub-quasi-category.
Suppose $\mathcal{C}$ is a $K$-(co)tensored $(\infty,2)$-category, and $\mathcal{D}$ is a full sub-$(\infty,2)$-category of $\mathcal{C}$. 
If for all $D\in\mathcal{D}$ and all $J\in K$ the object $J\otimes D$ ($D^J$) is again contained in $\mathcal{D}$, then $\mathcal{D}$ is 
also $K$-(co)tensored.
\end{example}

\subsection{Notes on model independence}\label{secsubformalmodelindep}

For the sake of the development of the general theory in Section~\ref{secformal}, the reader may choose to work in any 
of the standard models ``$\mathbf{Cat}_{\infty}$'' of $\infty$-category theory. More precisely, any
%cartesian closed
$\infty$-cosmos $\mathbf{Cat}_{\infty}$ of $(\infty,1)$-categories \cite[Section E.2]{riehlverityelements} will do. That is, any
%cartesian closed
$\infty$-cosmos $\mathbf{Cat}_{\infty}$ such that the underlying quasi-category functor
$U\colon\mathbf{Cat}_{\infty}\rightarrow\mathbf{QCat}$ is a cosmological equivalence. We will furthermore (for the most part implicitly) 
work with an $\infty$-cosmos $\mathbf{CAT}_{\infty}$ of (super)large $\infty$-categories, so that there is an $\infty$-category
$\mathrm{Cat}_{\infty}\in\mathbf{CAT}_{\infty}$ of small (large) $\infty$-categories such that $U(\mathrm{Cat}_{\infty})$ is equivalent to 
the underlying quasi-category of $\mathbf{Cat}_{\infty}$. Similarly, there is an $\infty$-category
$\mathcal{S}\in\mathbf{CAT}_{\infty}$ of small $\infty$-groupoids.
%We note that for any $\infty$-cosmos $\mathbf{Cat}_{\infty}$ of $(\infty,1)$-categories, the quasi-category $N_{\Delta}(\mathbf{Cat}_{\infty})\simeq N_{\Delta}(\mathbf{QCat})$ is cartesian closed. 
We will omit a formal proof of model independence of the results in this paper, and instead merely appeal to the observation that all 
constructions in Section~\ref{secformal} exist in (the underlying quasi-category of) any given model $\mathbf{Cat}_{\infty}$, and 
further are preserved by the equivalence $U$. Thus, in fact, to use all references to the literature directly, it is simplest and most 
straight-forward to work in the model $\mathbf{Cat}_{\infty}:=\mathbf{QCat}$ of quasi-categories itself (as well as with the
$\infty$-cosmos $\mathbf{CAT}_{\infty}:=\mathbf{QCAT}$ of large quasi-categories accordingly).
In the following, we will do so implicitly so to not over-indulge in ornamented notation by carrying around the underlying 
quasi-category functor $U\colon\mathbf{Cat}_{\infty}\rightarrow\mathbf{QCat}$ at all times.

\begin{notation*}
Accordingly, for an $\infty$-category $\mathcal{C}$ the
$\infty$-cosmos $\mathbf{Fun}(\mathcal{C}^{op},\mathbf{Cat}_{\infty})$ will denote the $\infty$-cosmos
$\mathbf{Fun}(\mathfrak{C}(\mathcal{C})^{op},\mathbf{QCat})$.\\
\end{notation*}

The choice of the homotopy theory of quasi-categories as an ambient theory of $(\infty,1)$-categories is legitimized by the fact that  
it presents (and even qualifies) the homotopy theory of $\infty$-categories.
In the same sense, the homotopy theory of complete Segal spaces presents (and qualifies) the homotopy theory of internal
$\infty$-categories in the $\infty$-category $\mathcal{S}$ of spaces.  Consequently, the homotopy theory of complete Segal objects in an 
$\infty$-category $\mathcal{C}$ as to be studied in Section~\ref{secformal} presents (and qualifies) the theory of internal
$\infty$-categories in $\mathcal{C}$. This is the presentation of the theory of internal $\infty$-categories this paper is primarily 
concerned with. There however may be other (equivalent) models of internal $\infty$-category 
theory within any given ambient $\infty$-cosmos of $(\infty,1)$-categories. One may for instance synthesize other 
models of $\infty$-category theory within a general base $\mathcal{C}$, take the notion of Segal categories 
\cite{dkshcdiags} for example. We therefore will largely stick with the explicit terminology of ``complete Segal objects'' rather than 
that of ``internal $\infty$-categories'' in technical contexts, as there is no apparent reason why all such presentations would 
exhibit all (not necessarily equivalence-invariant) structural results we aim to prove to the same extent. This may be compared to the 
fact that not all presentations of the theory of $(\infty,1)$-categories are equally well-behaved; take the category of simplicial 
categories with the Bergner model structure, or even the category of relative categories equipped with the Barwick-Kan model structure 
\cite{bkrelcat} for example.

\section{The formal theory of internal \texorpdfstring{$\infty$-}{higher }categories}\label{secformal}

In this section we fix an $\infty$-category $\mathcal{C}$. We recall the following associated notions.

\begin{notation}
We denote the $\infty$-category $\mathrm{Fun}(N(\Delta^{op}),\mathcal{C})$ of 
simplicial objects in $\mathcal{C}$ by $s\mathcal{C}$. For $n\geq 0$ and a subset $J\subseteq [n]$ of 
cardinality $j$, we denote by $d^J\colon [j]\rightarrow [n]$ the corresponding inclusion of linear orders with image $J$, and for a simplicial 
object $X\in s\mathcal{C}$, by $d_J\colon X_n\rightarrow X_{j}$ the corresponding simplicial operator. 
\end{notation}

Given that $(\infty,2)$-category theory is the theory of $\mathrm{Cat}_{\infty}$-enriched $\infty$-categories, it is at times 
useful to think of $\infty$-category theory itself as the theory of $\mathcal{S}$-enriched categories. In this context, we briefly recall 
that functors $W\colon\mathcal{I}\rightarrow\mathcal{S}$ out of a small $\infty$-category $\mathcal{I}$ are sometimes referred to as
($\mathcal{I}$-indexed) weights. Given a diagram $F\colon\mathcal{I}\rightarrow\mathcal{C}$, the $\mathcal{S}$-weighted limit
$\{W,F\}\in\mathcal{C}$ is given by the ordinary limit of the composition
\[\mathrm{Un}(W)\overset{\pi_W}{\twoheadrightarrow}\mathcal{I}\xrightarrow{F}\mathcal{C}\]
whenever it exists. Here, the functor $\pi_W$ denotes the left fibration associated to $W$ by way of the Unstraightening construction
\cite[Section 3.2]{luriehtt}. We refer to \cite{rovelliweights} for a more thorough study of $\mathcal{S}$-weighted limits and colimits 
(\cite[Theorem 2.33]{rovelliweights} in particular).

\begin{example}\label{exple_weightedhomspace}
Suppose $\mathcal{I}$ is a small $\infty$-category, and $F,G\colon\mathcal{I}^{op}\rightarrow\mathcal{S}$ are presheaves. Then
\begin{align}
\notag \{F,G\} & \simeq\mathrm{lim}\left( \mathrm{Un}(F)\overset{\pi_F}{\twoheadrightarrow}\mathcal{I}^{op}\xrightarrow{G}\mathcal{C} \right) \\
\label{exple_weightedhomspace_1}
 & \simeq \mathrm{lim}\left(\mathrm{Un}(F)\overset{\pi_F}{\twoheadrightarrow}\mathcal{I}^{op}\xrightarrow{y^{op}}\hat{\mathcal{I}}^{op}
\xrightarrow{\hat{\mathcal{I}}(\phv,G)}\mathcal{C} \right) \\
\label{exple_weightedhomspace_2}
 & \simeq \hat{\mathcal{I}}(\mathrm{colim}\left(\mathrm{Un}(F)^{op}\overset{\pi_F^{op}}{\twoheadrightarrow}\mathcal{I}\xrightarrow{y}\hat{\mathcal{I}}\right),G) \\
\label{exple_weightedhomspace_3}
 & \simeq \hat{\mathcal{I}}(F,G).
\end{align}
Here, Line~(\ref{exple_weightedhomspace_1}) follows directly the Yoneda lemma. Line~(\ref{exple_weightedhomspace_2}) follows from the fact 
that the representable $\hat{\mathcal{I}}(\phv,G)$ preserves limits. In Line~(\ref{exple_weightedhomspace_2}), the functor
$\pi_F^{op}$ is the right fibration associated to $F$ under the Unstraightening construction. In other words, $\mathrm{Un}(F)$ is the
$\infty$-category of elements of $F$. Hence, $F$ is the colimit of the composition $y\circ\pi_F^{op}$ as the Yoneda embedding generates
$\hat{\mathcal{I}}$ under colimits \cite[Lemma 5.1.5.3]{luriehtt}.
Thus, the weighted limit $\{F,G\}\in\mathcal{S}$ computes the mapping space $\hat{\mathcal{I}}(F,G)$ of the presheaf $\infty$-category
$\hat{\mathcal{I}}$.
\end{example}

From Example~\ref{exple_weightedhomspace} one can directly derive the universal property of weighted limits.

\begin{lemma}\label{lemma_weights}
Let $F\colon\mathcal{I}\rightarrow\mathcal{C}$ be a diagram and $W\colon\mathcal{I}\rightarrow\mathcal{C}$ be a weight such that the weighted limit $\{W,F\}\in\mathcal{C}$ exists. Then for every object $C\in\mathcal{C}$ there is a natural equivalence
\[\mathcal{C}(C,\{W,F\})\simeq\hat{\mathcal{I}}(W,\mathcal{C}(C,F(\cdot)))\]
of spaces.
\end{lemma}

\begin{proof}
There are natural equivalences as follows.
\begin{align}
\notag \mathcal{C}(C,\{W,F\}) & \simeq \mathcal{C}(C,\mathrm{lim}\left(\mathrm{Un}(W)\overset{\pi_W}{\twoheadrightarrow}\mathcal{I}\xrightarrow{F}\mathcal{C}\right)) \\
\label{equ_univpropweight1} & \simeq \mathrm{lim}\left(\mathrm{Un}(W)\overset{\pi_W}{\twoheadrightarrow}\mathcal{I}\xrightarrow{F}\mathcal{C}\xrightarrow{\mathcal{C}(C,\phv)}\mathcal{S}\right)\\
\notag  & \simeq\{W,\mathcal{C}(C,F(\cdot))\} \\
\label{equ_univpropweight2} & \simeq \hat{\mathcal{I}}(W,\mathcal{C}(C,F(\cdot)))
\end{align}
Line~(\ref{equ_univpropweight1}) follows from the fact that corepresentable presheaves preserve all limits. 
Line~(\ref{equ_univpropweight2}) is an instance of Example~\ref{exple_weightedhomspace}.
\end{proof}

\begin{example}\label{exmple_ends}
A weighted limit construction that will be central to the calculations in this paper is given by the end
$\int_{\mathcal{I}}F:=\{\mathrm{Tw}(\mathcal{I}),F\}$ associated to a functor
$F\colon\mathcal{I}^{op}\times\mathcal{I}\rightarrow\mathcal{C}$. Here, the corresponding weight is given by the twisted arrow
$\infty$-category $\mathrm{Tw}(\mathcal{I})\twoheadrightarrow\mathcal{I}^{op}\times\mathcal{I}$ of $\mathcal{I}$
\cite[Definition 2.6]{ghnlaxlimits}. For instance, for any pair of functors $F,G\colon\mathcal{I}\rightarrow\mathcal{C}$ between
$\infty$-categories, the hom-space $\mathrm{Fun}(\mathcal{I},\mathcal{C})(F,G)$ of natural transformations from $F$ to $G$ is is naturally 
equivalent to the end $\int_{\mathcal{I}}\mathcal{C}(F(\cdot),G(\cdot))$ \cite[Proposition 5.1]{ghnlaxlimits}.
\end{example}

\subsection{The \texorpdfstring{$\infty$-}{higher }category of internal \texorpdfstring{$\infty$-}{higher }categories}\label{subsecinf1}

In the following we give a straight-forward generalization of the definition of complete Segal objects in left exact
$\infty$-categories from \cite{rasekhunivalence, rasekhcart}. Therefore, let $\sigma_n\colon S^n\hookrightarrow\Delta^n$ be the
$n$-spine \cite{jtqcatvsss}. Considering $\sigma_n$ as a natural transformation of presheaves over the category $\Delta$, we obtain a 
fibered inclusion
\[\xymatrix{
\mathrm{El}(S_n)\ar@{^(->}[rr]^{(\sigma_n)_{\ast}}\ar@/_/@{->>}[dr]_{p_{S_n}} & & \mathrm{El}(\Delta^n)\ar@/^/@{->>}[dl]^{p_{\Delta_n}} \\
 & \Delta & 
}\]
of the corresponding categories of elements over $\Delta$. Since $\Delta^n$ is the representable at $[n]\in\Delta$, the fibration
$p_{\Delta^n}$ is just the domain fibration $s\colon\Delta_{/[n]}\twoheadrightarrow\Delta$. In particular
$\mathrm{id}_{[n]}\in\mathrm{El}(\Delta^n)$ is terminal, and so for every functor $X\colon\Delta\rightarrow\mathcal{C}^{op}$ we obtain a 
colimiting cocone $X\circ p_{\Delta^n}\rightarrow c(X_n)$ in $\mathcal{C}^{op}$. Here,
$c(X_n)\colon \mathrm{El}(\Delta^n)\rightarrow\mathcal{C}^{op}$ denotes the constant functor with value $X_n$.
Restriction of this cocone along the inclusion $(\sigma_n)_{\ast}$ (and taking opposites) yields a cone 
$X_n\rightarrow X\circ p_{S_n}^{op}$ for every simplicial object $X\colon\Delta^{op}\rightarrow\mathcal{C}$.

\begin{definition}\label{defSegalobjects}
A Segal object in an $\infty$-category $\mathcal{C}$ is a simplicial object $X\in s\mathcal{C}$ such that its associated cone
$X_n\rightarrow X\circ p_{S_n}^{op}$ is a limit cone. We let $\mathrm{S}(\mathcal{C})\subset s\mathcal{C}$ denote the full
sub-$\infty$-category of Segal objects in $\mathcal{C}$.
\end{definition}

Equivalently, a simplicial object $X\in\mathcal{C}^{\Delta^{op}}$ is a Segal object if the cone $X_n\rightarrow X\circ p_{S_n}^{op}$ 
exhibits $X_n$ as the $\mathcal{S}$-weighted limit $\{S_n,X\}$ for the weight
$S_n\colon\Delta^{op}\rightarrow\mathrm{Set}\hookrightarrow\mathcal{S}$. 

In the same vein, let $\mathrm{Equiv}\colon\Delta^{op}\rightarrow\mathrm{Set}$ be the following version of the free living biinvertible 
arrow (with no implicit equations imposed on its edges).
\[\xymatrix{
 & 1\ar@/^/[dr]\ar@/^/@{=}[rr] & & 1\\
0\ar@/_/@{=}[rr]\ar@/^/[ur]\ar@{-->}[urrr] & & 0\ar@/_/[ur] & 
}\]
The terminal map $!\colon\mathrm{Equiv}\rightarrow\Delta^0$ again induces a functor of associated fibrations as follows.
\[\xymatrix{
\mathrm{El}(\mathrm{Equiv})\ar[rr]^{(!)_{\ast}}\ar@/_/@{->>}[dr]_{p_{\mathrm{Equiv}}} & & \mathrm{El}(\Delta^0)\ar@/^/@{->>}[dl]^{p_{\Delta_0}} \\
 & \Delta & 
}\]
For every simplicial object $X\colon\Delta^{op}\rightarrow\mathcal{C}$, we obtain a cone $X_0\rightarrow X\circ p_{\mathrm{Equiv}}^{op}$.

\begin{definition}\label{defcompleteness}
A Segal object $X$ in an $\infty$-category $\mathcal{C}$ is complete if its associated cone $X_0\rightarrow X\circ p_{\mathrm{Equiv}}^{op}$ is a limit cone. 
We let $\mathrm{Cat}_{\infty}(\mathcal{C})\subset s\mathcal{C}$ denote the full
sub-$\infty$-category of complete Segal objects in $\mathcal{C}$.
\end{definition}

Equivalently, a Segal object $X\in\mathcal{C}^{\Delta^{op}}$ is complete if the cone
$X_0\rightarrow X\circ p_{\mathrm{Equiv}}^{op}$ exhibits $X_0$ as the $\mathcal{S}$-weighted limit 
$\{\mathrm{Equiv},X\}$ for the weight $\mathrm{Equiv}\colon\Delta^{op}\rightarrow\mathrm{Set}\hookrightarrow\mathcal{S}$. 

\begin{remark}\label{remcsspb}
Whenever $\mathcal{C}$ has pullbacks, this definition of complete Segal objects coincides with the more familiar definition of complete 
Segal objects e.g.\ from \cite[Definition 7.91]{rasekhcart}. Indeed, in this case the morphisms
$\{\sigma_n,X\}\colon\{\Delta^n,X\}\rightarrow\{S_n,X\}$ between weighted limits are precisely the Segal maps 
$X_n\rightarrow X_1\times_{X_0}\dots\times_{X_0} X_1$ associated to $X$. Furthermore, let $\mathrm{Zig}\text{-}\mathrm{zag}$ be the 
category representing the free living zig-zag of the following form.
\begin{align*}
\mathrm{Zig}\text{-}\mathrm{zag}:= & 
\begin{gathered}
\xymatrix{
 & 1\ar@/^/[dr]\ar@/^/[rr] & & 3\\
0\ar@/_/[rr] & & 2 & 
}
\end{gathered}
\end{align*}
Whenever $\mathcal{C}$ has pullbacks, to every Segal object $X$ in $\mathcal{C}$ (in fact to every $X\in s\mathcal{C}$) 
we may associate, first, the object
$\mathrm{Zig}\text{-}\mathrm{zag}(X):=\{\mathrm{Zig}\text{-}\mathrm{zag},X\}$ given by the pullback
$X_1\prescript{d_1}{}{\times}_{X_0}^{d_1}X_1\prescript{d_0}{}{\times}_{X_0}^{d_0}X_1$ of zig-zags in $X$, and second, the object
$\mathrm{Equiv}(X)\subseteq X_3$ of internal equivalences in $X$ (or more precisely the object of edges together with a left and a right 
inverse in $X$) defined as the following pullback.
\[\xymatrix{
\mathrm{Equiv}(X)\ar[r]\ar[d]\ar@{}[dr]|(.3){\pbs} & X_3\ar[d]^{(d_{\{0,2\}},d_{\{1,2\}},d_{\{1,3\}})} \\
X_1\ar[r]_(.35){(s_0d_0,1,s_0d_1)} & \mathrm{Zig}\text{-}\mathrm{zag}(X)
}\]
Then $\mathrm{Equiv}(X)\simeq\{\mathrm{Equiv},X\}$, and the cone in Definition~\ref{defcompleteness} corresponds precisely to the 
accordingly factorized degeneracy $s_0\colon X_0\rightarrow\mathrm{Equiv}(X)$.
\end{remark}

For all what follows, we will assume that the $\infty$-category $\mathcal{C}$ is locally small.
In this case, the Yoneda embedding $y\colon\mathcal{C}\rightarrow\hat{\mathcal{C}}$ induces 
a functor $sy\colon s\mathcal{C}\rightarrow s\hat{\mathcal{C}}$ by postcomposition. The $\infty$-category $s\hat{\mathcal{C}}$ of 
simplicial objects in turn is equivalent to the $\infty$-category $\mathrm{Fun}(\mathcal{C}^{op},s\mathcal{S})$ of $\mathcal{C}$-indexed 
simplicial spaces simply by virtue of Currying. As the Yoneda embedding is left exact, it restricts to functors
$sy\colon\mathrm{S}(\mathcal{C})\rightarrow\mathrm{S}(\hat{\mathcal{C}})$ and
$sy\colon\mathrm{Cat}_{\infty}(\mathcal{C})\rightarrow\mathrm{Cat}_{\infty}(\hat{\mathcal{C}})$.

\begin{lemma}\label{lemmasegalinternal}
Let $X\in s\mathcal{C}$ be a simplicial object. Then the following are equivalent.
\begin{enumerate}
\item  $X$ is a (complete) Segal object in $\mathcal{C}$.
\item $sy(X)$ is a (complete) Segal object in $\hat{\mathcal{C}}$.
\item $X$ is representably a (complete) Segal space, i.e.\ for all $C\in\mathcal{C}$ the simplicial space $sy(X)(C)$ is a (complete) Segal 
space.
\end{enumerate}
\end{lemma}
\begin{proof}
This following directly from the fact that the Yoneda embedding is both left exact and conservative, together with the fact that 
both limits and equivalences in functor $\infty$-categories are computed pointwise.

%Regarding completeness, the push-forward $sy\colon s\mathcal{C}\rightarrow\mathrm{Fun}(\mathcal{C}^{op},s\mathcal{S})$ takes the 
%degeneracy $s_0\colon X_0\rightarrow\mathrm{Equiv}(X)$ to a natural transformation
%$y(s_0)\colon y(X_0)\rightarrow y(\mathrm{Equiv}(X))$ of presheaves which evaluates an object $C$ at the degeneracy
%$s_0\colon sy(X)(C)_0\rightarrow\mathrm{Equiv}(sy(X)(C))$ associated to the Segal space $sy(X)(C)$ again because $y$ is left exact.
%Since $y$ is furthermore conservative, and equivalences in $\mathrm{Fun}(\mathcal{C}^{op},s\mathcal{S})$ are exactly the pointwise 
%equivalences, it follows that the degeneracy $s_0\colon X_0\rightarrow\mathrm{Equiv}(X)$ is an equivalence in $\mathcal{C}$ if and only 
%if for every $C\in\mathcal{C}$ the degeneracy $s_0\colon sy(X)(C)_0\rightarrow\mathrm{Equiv}(sy(X)(C))$ is an equivalence in
%$\mathcal{S}$. Thus, the Segal object $X$ in $\mathcal{C}$ is complete if and only if the degeneracy
%$s_0\colon X_0\rightarrow\mathrm{Equiv}(X)$ is an equivalence as stated.
\end{proof}

\begin{definition}
A Segal groupoid in $\mathcal{S}$ is a Segal space $X$ such that the structure map $d_{\{1,2\}}\colon\mathrm{Equiv}(X)\rightarrow X_1$ 
is an equivalence in $\mathcal{S}$. A Segal groupoid in $\mathcal{C}$ is a Segal object $X$ in
$\mathcal{C}$ such that each $sy(X)(C)$ is a Segal groupoid in $\mathcal{S}$. We denote the full sub-$\infty$-category of Segal groupoids
in $\mathrm{S}(\mathcal{C})$ by $\mathrm{G}(\mathcal{C})$. We denote the full sub-$\infty$-category of complete Segal groupoids (say, 
internal $\infty$-groupoids) in $\mathrm{S}(\mathcal{C})$ by $\mathrm{Gpd}_{\infty}(\mathcal{C})$. 
\end{definition}

Again by virtue of left exactness and conservativity of the Yoneda embedding, a (complete) Segal object $X$ in a locally small
$\infty$-category $\mathcal{C}$ with pullbacks is a (complete) Segal groupoid if and only if the morphism
$d_{\{1,2\}}\colon\mathrm{Equiv}(X)\rightarrow X_1$ in $\mathcal{C}$ is an equivalence.

Given any simplicial set $K$, the $\infty$-category $s\mathcal{C}$ of simplicial objects in $\mathcal{C}$ has all $K$-shaped limits 
whenever $\mathcal{C}$ does so. Furthermore, these limits are computed pointwise in $\mathcal{C}$ \cite[Corollary 5.1.2.3]{luriehtt}. In 
particular, $s\mathcal{C}$ is left exact whenever $\mathcal{C}$ is.

\begin{lemma}\label{lemmascolimits}
The full sub-$\infty$-categories $\mathrm{S}(\mathcal{C})$, $\mathrm{Cat}_{\infty}(\mathcal{C})$, $\mathrm{G}(\mathcal{C})$ and
$\mathrm{Gpd}_{\infty}(\mathcal{C})$ of $s\mathcal{C}$ are each closed under all limits that exist in $s\mathcal{C}$. 
In particular, they are all left exact (complete) whenever $\mathcal{C}$ is left exact (complete).
\end{lemma}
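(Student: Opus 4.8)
The plan is to reduce each of the four closure claims to a pointwise statement via the Yoneda embedding $y\colon\mathcal{C}\to\hat{\mathcal{C}}$, exactly as in the proof of Lemma~\ref{lemmasegalinternal}. First I would recall that limits in $s\mathcal{C}$ are computed pointwise (\cite[Corollary 5.1.2.3]{luriehtt}), so that for a diagram $D\colon I\to\mathrm{S}(\mathcal{C})$ (resp.\ landing in $\mathrm{Cat}_\infty(\mathcal{C})$, $\mathrm{G}(\mathcal{C})$, $\mathrm{Gpd}_\infty(\mathcal{C})$) whose limit $L:=\lim_I D$ exists in $s\mathcal{C}$, it suffices to check that $L$ again satisfies the relevant condition. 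Each of the defining conditions is either (i) the assertion that a certain cone $L_n\to L\circ p_{S_n}^{op}$ is limiting (Segal), or (ii) representable completeness/groupoidness, i.e.\ a condition on $sy(L)(C)$ for every $C\in\mathcal{C}$.

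For $\mathrm{S}(\mathcal{C})$: since the forgetful functor $s\mathcal{C}\to\mathcal{C}$ evaluating at $[n]$ preserves all limits, and limits commute with limits, the cone $L_n\to L\circ p_{S_n}^{op}$ is the limit over $I$ of the cones $D(i)_n\to D(i)\circ p_{S_n}^{op}$, each of which is limiting by hypothesis; hence so is the cone for $L$. More precisely, one uses that $L_n=\lim_i D(i)_n$ and $\{S_n,L\}=\lim_i\{S_n,D(i)\}$ (the weighted limit $\{S_n,-\}$ is itself a limit construction and so commutes with the $I$-indexed limit), and the Segal maps assemble into a natural transformation of $I$-diagrams which is a pointwise equivalence, hence an equivalence on limits. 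This shows $\mathrm{S}(\mathcal{C})$ is closed under all limits existing in $s\mathcal{C}$.

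For $\mathrm{Cat}_\infty(\mathcal{C})$, $\mathrm{G}(\mathcal{C})$, $\mathrm{Gpd}_\infty(\mathcal{C})$: having established that $L$ is a Segal object, I would then apply $sy\colon s\mathcal{C}\to\mathrm{Fun}(\mathcal{C}^{op},s\mathcal{S})$. Because $y$ is left exact and $\mathrm{Fun}(\mathcal{C}^{op},s\mathcal{S})$ has limits computed pointwise, $sy(L)(C)=\lim_i sy(D(i))(C)$ in $s\mathcal{S}$ for each $C\in\mathcal{C}$. Now $L$ is complete (resp.\ a Segal groupoid, resp.\ a complete Segal groupoid) iff $sy(L)(C)$ is a complete Segal space (resp.\ Segal groupoid, resp.\ complete Segal groupoid) for every $C$; so it remains to observe that complete Segal spaces, Segal groupoids, and complete Segal groupoids are each closed under limits in $s\mathcal{S}$. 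That last fact is classical: completeness is the condition that $s_0\colon X_0\to\mathrm{Equiv}(X)$ be an equivalence and the groupoid condition that $d_{\{1,2\}}\colon\mathrm{Equiv}(X)\to X_1$ be an equivalence, and both $X_0\to\mathrm{Equiv}(X)$ and $\mathrm{Equiv}(X)\to X_1$ are, on a Segal space, built from finite limits, so these maps commute with the $I$-indexed limit and a pointwise equivalence of towers yields an equivalence on the limit. (Alternatively one cites that the localizations $\mathrm{CSS}\hookrightarrow s\mathcal{S}$ etc.\ are reflective, hence closed under limits.) Finally, ``in particular they are all left exact (complete) whenever $\mathcal{C}$ is'' follows since $s\mathcal{C}$ is then left exact (complete) by the remark just preceding the lemma, and each subcategory inherits exactly those limits.

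The main obstacle — really the only subtle point — is the bookkeeping that the weighted-limit construction $\{W,-\}$ for a fixed weight $W$ (be it $S_n$, $\mathrm{Equiv}$, or $\mathrm{Zig}\text{-}\mathrm{zag}$) commutes with $I$-indexed limits, so that the relevant structure maps for $L$ are genuinely the $I$-limits of those for the $D(i)$; once this is in hand, ``a limit of equivalences is an equivalence'' closes each case. When $\mathcal{C}$ lacks pullbacks one cannot phrase completeness via the internal object $\mathrm{Equiv}(X)$, but the representable formulation via $sy$ sidesteps this entirely, which is why I route the completeness and groupoid arguments through $\hat{\mathcal{C}}$ rather than through $\mathcal{C}$ directly.
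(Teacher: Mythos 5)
Your proposal is correct and matches the paper's own argument: the paper's proof is precisely the observation that each defining condition asserts that a certain cone built from the $\Delta^{op}$-structure maps is limiting (so closure follows from pointwise limits in $s\mathcal{C}$ and limits commuting with limits), together with the alternative representable argument that $\mathrm{S}(\mathcal{S})$, $\mathrm{Cat}_{\infty}(\mathcal{S})$, $\mathrm{G}(\mathcal{S})$, $\mathrm{Gpd}_{\infty}(\mathcal{S})$ are reflective in $s\mathcal{S}$. You simply combine the two (cone argument for the Segal condition, representable/reflectivity argument for completeness and groupoidness), which is a legitimate and essentially identical route, just spelled out in more detail.
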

\begin{proof}
Each of these full sub-$\infty$-categories of $s\mathcal{C}$ is spanned by a class of objects defined by requiring that a certain cone 
defined in terms of structure maps coming from $\Delta^{op}$ is limiting in $\mathcal{C}$. Since limits in $s\mathcal{C}$ are computed 
pointwise, and limits in $\mathcal{C}$ commute with one another, these objects are closed under all limits in $s\mathcal{C}$.

Alternatively, one can argue that each of these notions can be defined representably, and that the 
full sub-$\infty$-categories $\mathrm{S}(\mathcal{S})$, $\mathrm{Cat}_{\infty}(\mathcal{S})$, $\mathrm{G}(\mathcal{S})$ and
$\mathrm{Gpd}_{\infty}(\mathcal{S})$ of $s\mathcal{S}$ are all reflective and hence complete.
\end{proof}

\subsection{The \texorpdfstring{$(\infty,2)$-}{higher 2-}category of internal \texorpdfstring{$\infty$-}{higher }categories}\label{subsecext}

We continue assuming that $\mathcal{C}$ is locally small. Then the Yoneda embedding
$sy\colon s\mathcal{C}\rightarrow s\hat{\mathcal{C}}$ further restricts to functors
\begin{align}\label{equ_ext_yon}
sy\colon\mathrm{Cat}_{\infty}(\mathcal{C})\rightarrow\mathrm{Cat}_{\infty}(\hat{\mathcal{C}})\simeq\mathrm{Fun}(\mathcal{C}^{op},
\mathrm{Cat}_{\infty}(\mathcal{S}))
\end{align}
from complete Segal objects  in $\mathcal{C}$ to complete Segal objects in $\hat{\mathcal{C}}$ (or, equivalently, to
$\mathcal{C}$-indexed complete Segal spaces, respectively). The same applies to (complete) Segal groupoids.
Lastly, the $\infty$-category $\mathrm{Cat}_{\infty}(\mathcal{S})$ of complete Segal spaces exhibits an equivalence to the
$\infty$-category $\mathrm{Cat}_{\infty}$ of $\infty$-categories given by the ``underlying $\infty$-category'' functor
\[U\colon\mathrm{Cat}_{\infty}(\mathcal{S})\xrightarrow{\simeq}\mathrm{Cat}_{\infty}.\]
It was constructed in \cite[Section 4]{jtqcatvsss} as an accordingly right derived horizontal projection of simplicial spaces.
The same horizontal projection induces an equivalence
$U\colon\mathrm{Gpd}_{\infty}(\mathcal{S})\xrightarrow{\simeq}\mathcal{S}$ between complete Segal groupoids in
$\infty$-groupoids and $\infty$-groupoids, see \cite[Section 6]{bergner2} and \cite[Theorem 6.6]{rs_bspaces}.

\begin{definition}\label{defext}
The externalization functor $\mathrm{Ext}\colon\mathrm{Cat}_{\infty}(\mathcal{C})\rightarrow\mathrm{Fun}(\mathcal{C}^{op},\mathrm{Cat}_{\infty})$ associated to $\mathcal{C}$ is given by the composition
\[\mathrm{Cat}_{\infty}(\mathcal{C})\xrightarrow{sy}\mathrm{Fun}(\mathcal{C}^{op},\mathrm{Cat}_{\infty}(\mathcal{S}))\underset{\simeq}{\xrightarrow{U_{\ast}}}\mathrm{Fun}(\mathcal{C}^{op},\mathrm{Cat}_{\infty}).\]
\end{definition}

The externalization functor associated to $\mathcal{C}$ extends the Yoneda embedding associated to $\mathcal{C}$ in the sense that the 
following diagram can be shown to commute in both directions.

\begin{align}\label{diagyonedaext}
\begin{gathered}
\xymatrix{
\mathcal{C}\ar@{^(->}[r]^(.35)y\ar@/_/@{^(->}[d]_{c} & \mathrm{Fun}(\mathcal{C}^{op},\mathcal{S})\ar@/_/@{^(->}[d] \\
\mathrm{Cat}_{\infty}(\mathcal{C})\ar[r]_(.35){\mathrm{Ext}}\ar@/_/[u]_{(\cdot)_0} & \mathrm{Fun}(\mathcal{C}^{op},\mathrm{Cat}_{\infty})\ar@/_/[u]_{(\cdot)^{\simeq}}
}
\end{gathered}
\end{align}
Here, the left vertical inclusion $c$ assigns to an object $C\in\mathcal{C}$ the constant simplicial object in $\mathcal{C}$ with value
$C$, and the right vertical inclusion is the push-forward with the canonical inclusion
$\mathcal{S}\hookrightarrow\mathrm{Cat}_{\infty}$. The inclusion
$c\colon\mathcal{C}\hookrightarrow \mathrm{Cat}_{\infty}(\mathcal{C})$ furthermore factors through an equivalence
$c\colon\mathcal{C}\xrightarrow{\sim}\mathrm{Gpd}_{\infty}(\mathcal{C})$, given that a simplicial object
$X\in\mathcal{C}^{\Delta^{op}}$ is a complete Segal groupoid if and only if it is constant.

\begin{definition}\label{defsmallness}
A $\mathcal{C}$-indexed $\infty$-category $\mathcal{E}$ is \emph{small} if it lies in the essential image of the externalization 
functor, i.e.\ if there is a complete Segal object $X\in\mathrm{Cat}_{\infty}(\mathcal{C})$ such that
$\mathrm{Ext}(X)\simeq\mathcal{E}$. We denote the full sub-$(\infty,2)$-category of
$\mathbf{Fun}(\mathcal{C}^{op},\mathbf{Cat}_{\infty})$ spanned by the small $\mathcal{C}$-indexed $\infty$-categories by
$\mathbf{Cat}_{\infty}(\mathcal{C})$, and refer to it as the $(\infty,2)$-category of internal $\infty$-categories in $\mathcal{C}$.
\end{definition}

We recall that if $\mathcal{C}$ is large then the $(\infty,2)$-category $\mathbf{Fun}(\mathcal{C}^{op},\mathbf{Cat}_{\infty})$ is defined 
by way of Example~\ref{exple_size}. Definition~\ref{defsmallness} is justified by the following straight-forward lemma.

\begin{lemma}\label{corsegalyonedaff}
The externalization functor
$\mathrm{Ext}\colon\mathrm{Cat}_{\infty}(\mathcal{C})\rightarrow\mathrm{Fun}(\mathcal{C}^{op},\mathrm{Cat}_{\infty})$ is fully faithful 
and preserves all limits that exist in $\mathrm{Cat}_{\infty}(\mathcal{C})$. In particular,
$\mathrm{Cat}_{\infty}(\mathcal{C})$ is the underlying $\infty$-category of $\mathbf{Cat}_{\infty}(\mathcal{C})$ as defined in 
Section~\ref{secsubformalunderqcat}. 
\end{lemma}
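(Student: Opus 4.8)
The plan is to decompose the statement along the two functors $sy$ and $U_\ast$ out of which $\mathrm{Ext}$ is built (Definition~\ref{defext}), and then to deduce the ``in particular'' clause from Definition~\ref{defsmallness} together with the description of the pith of a full sub-$(\infty,2)$-category from Section~\ref{secsubformalunderqcat}.

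First I would establish full faithfulness. The Yoneda embedding $y\colon\mathcal{C}\to\hat{\mathcal{C}}$ is fully faithful, hence so is the functor $sy\colon s\mathcal{C}\to s\hat{\mathcal{C}}$ obtained from it by postcomposition, since postcomposing with a fully faithful functor is fully faithful on functor $\infty$-categories (mapping spaces in $s\mathcal{C}$ are ends of mapping spaces in $\mathcal{C}$). As already noted, $sy$ carries complete Segal objects to complete Segal objects, so it restricts to a functor $\mathrm{Cat}_\infty(\mathcal{C})\to\mathrm{Cat}_\infty(\hat{\mathcal{C}})$ between full sub-$\infty$-categories, which remains fully faithful. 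Composing with the equivalence $\mathrm{Cat}_\infty(\hat{\mathcal{C}})\simeq\mathrm{Fun}(\mathcal{C}^{op},\mathrm{Cat}_\infty(\mathcal{S}))$ and then with $U_\ast$ --- an equivalence because $U\colon\mathrm{Cat}_\infty(\mathcal{S})\xrightarrow{\simeq}\mathrm{Cat}_\infty$ is one and postcomposition with an equivalence is an equivalence --- shows that $\mathrm{Ext}$ is fully faithful.

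For limit preservation I would use that limits in $s\mathcal{C}$ and in $s\hat{\mathcal{C}}$ are computed pointwise, and that $y$ preserves all limits that exist in $\mathcal{C}$ (evaluated at any object this reduces to the fact that the mapping spaces of $\mathcal{C}$ preserve limits in their second variable). Thus $sy$ preserves all limits existing in $s\mathcal{C}$; by Lemma~\ref{lemmascolimits} the inclusions $\mathrm{Cat}_\infty(\mathcal{C})\subseteq s\mathcal{C}$ and $\mathrm{Cat}_\infty(\hat{\mathcal{C}})\subseteq s\hat{\mathcal{C}}$ create limits, so the restriction of $sy$ preserves all limits that exist in $\mathrm{Cat}_\infty(\mathcal{C})$; and $U_\ast$, being an equivalence, preserves all limits. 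Hence the composite $\mathrm{Ext}$ does too.

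For the last assertion, recall that $\mathbf{Cat}_\infty(\mathcal{C})$ is the full sub-$(\infty,2)$-category of $\mathbf{Fun}(\mathcal{C}^{op},\mathbf{Cat}_\infty)$ on the objects in the essential image of $\mathrm{Ext}$. The pith --- hence the underlying quasi-category --- of a full sub-$(\infty,2)$-category is the full sub-$\infty$-category of the ambient underlying quasi-category on the same objects, and the underlying quasi-category of $\mathbf{Fun}(\mathcal{C}^{op},\mathbf{Cat}_\infty)$ is $\mathrm{Fun}(\mathcal{C}^{op},\mathrm{Cat}_\infty)$ by Example~\ref{expleunderqcatCindqcat}. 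So the underlying $\infty$-category of $\mathbf{Cat}_\infty(\mathcal{C})$ is the full sub-$\infty$-category of $\mathrm{Fun}(\mathcal{C}^{op},\mathrm{Cat}_\infty)$ spanned by the essential image of $\mathrm{Ext}$, onto which $\mathrm{Ext}$ restricts to an equivalence by the full faithfulness just proved. I expect the only point needing a touch of care to be the commutation of ``taking the pith'' with ``passing to a full sub-$(\infty,2)$-category'', which just unwinds the definitions of Section~\ref{secsubformalunderqcat} (and uses that $N_\Delta$ sends a full simplicial subcategory inclusion to a full inclusion of quasi-categories); nothing here is deep, consistent with the lemma being labeled straight-forward.
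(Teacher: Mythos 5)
Your proposal is correct and follows essentially the same route as the paper: the paper's own proof consists exactly of your full-faithfulness argument (postcomposition with the fully faithful Yoneda embedding $y$, restriction to the full sub-$\infty$-categories of complete Segal objects, then postcomposition with the equivalence $U_{\ast}$). The limit-preservation and the identification of $\mathrm{Cat}_{\infty}(\mathcal{C})$ as the underlying $\infty$-category of $\mathbf{Cat}_{\infty}(\mathcal{C})$ are left implicit in the paper, and you supply them with the intended arguments (pointwise limits plus Lemma~\ref{lemmascolimits}, and unwinding the pith of a full sub-$(\infty,2)$-category via Example~\ref{expleunderqcatCindqcat}).
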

\begin{proof}
The Yoneda embedding $y\colon\mathcal{C}\rightarrow\hat{\mathcal{C}}$ is fully faithful and hence so is the push-forward
$y_{\ast}\colon\mathrm{Fun}(I,\mathcal{C})\rightarrow\mathrm{Fun}(I,\hat{\mathcal{C}})$ for any $\infty$-category $I$. Both
$\mathrm{Cat}_{\infty}(\mathcal{C})\subset s\mathcal{C}$ and $\mathrm{Fun}(\mathcal{C}^{op},\mathrm{Cat}_{\infty}(\mathcal{S}))\subset\mathrm{Fun}(\mathcal{C}^{op},s\mathcal{S})$ are full sub-$\infty$-categories, and so the restriction
$sy\colon\mathrm{Cat}_{\infty}(\mathcal{C})\rightarrow\mathrm{Fun}(\mathcal{C}^{op},\mathrm{Cat}_{\infty}(\mathcal{S}))$ is fully 
faithful, too. Clearly, postcomposition with equivalences preserves fully faithfulness, and so $\mathrm{Ext}$ is fully faithful itself.
\end{proof}

\begin{remark}
In the homotopy theory of complete Segal spaces as a model for $\infty$-category theory, the fact that the 
externalization functor is fully faithful is effectively stated in \cite[Theorem 5.34]{rasekhcart}
(see \cite[Remark 5.8]{rs_comp}). It also can be regarded as a corollary of the Segal--Yoneda Lemma (Proposition~\ref{lemmaextyoneda}) 
stated below, just as fully faithfulness of the Yoneda embedding is a corollary of the Yoneda lemma.
\end{remark}

\begin{remark}\label{rem_intcatstandard}
The $\infty$-category $\mathrm{Cat}_{\infty}(\mathcal{S})$ of complete Segal spaces is the underlying $\infty$-category of an
$(\infty,2)$-category $\mathbf{Cat}_{\infty}(\mathcal{S})$ such that the equivalence
$U\colon\mathrm{Cat}_{\infty}(\mathcal{S})\xrightarrow{\simeq}\mathrm{Cat}_{\infty}$ gives rise to an equivalence
\[U\colon\mathbf{Cat}_{\infty}(\mathcal{S})\xrightarrow{\simeq}\mathbf{Cat}_{\infty}\]
of $(\infty,2)$-categories \cite[Proposition E.2.2]{riehlverityelements}. Because $U$ is in fact a right Quillen equivalence of
$(\mathbf{S},\mathrm{QCat})$-enriched model categories, it induces an equivalence 
\[U_{\ast}\colon\mathbf{Fun}(\mathcal{C}^{op},\mathbf{Cat}_{\infty}(\mathcal{S}))\xrightarrow{\simeq}\mathbf{Fun}(\mathcal{C}^{op},\mathbf{Cat}_{\infty})\]
of associated diagram $(\infty,2)$-categories for any $\infty$-category $\mathcal{C}$. This is standard theory whenever $\mathcal{C}$ is 
small; if $\mathcal{C}$ is large, this holds still via the detour in Example~\ref{exple_size}. Hence,
$\mathbf{Cat}_{\infty}(\mathcal{C})$ might just as well be defined directly as the full sub-$(\infty,2)$-category of
$\mathbf{Fun}(\mathcal{C}^{op},\mathbf{Cat}_{\infty}(\mathcal{S}))$ spanned by the image of the embedding (\ref{equ_ext_yon}).
%The same applies in particular to small $\infty$-categories with respect to $\mathbf{Cat}_{\infty}$ and $\mathcal{S}$.
\end{remark}

\begin{example}\label{exple_smallspaces}
For $\mathcal{C}=\mathcal{S}$, the $(\infty,2)$-category
$\mathbf{Cat}_{\infty}(\mathcal{S})$ of complete Segal spaces obtained by way of Definition~\ref{defsmallness} is the standard
$(\infty,2)$-category of complete Segal spaces as obtained from Remark~\ref{rem_intcatstandard} applied to the bottom Grothendieck universe 
of the hierarchy of Notation~\ref{notation_univ}. Indeed, the diagram
\[\xymatrix{
\mathrm{Cat}_{\infty}(\mathcal{S})\ar[r]^(.4){sy} \ar[d]_U^{\simeq} & \mathrm{Fun}(\mathcal{S}^{op},\mathrm{Cat}_{\infty}(\mathcal{S}))\ar[r]^(.55){U_{\ast}}_(.55){\simeq} & \mathrm{Fun}(\mathcal{S}^{op},\mathrm{Cat}_{\infty})\ar[d]^{\mathrm{res_{\ast}}} \\
\mathrm{Cat}_{\infty}\ar[rr]_{\simeq} & & \mathrm{Fun}(\ast,\mathrm{Cat}_{\infty}) 
}\]
of $\infty$-categories commutes trivially, where $\mathrm{res}_{\ast}$ is restriction along the Yoneda embedding
$\{\ast\}\colon\ast\rightarrow\mathcal{S}^{op}$. Both the right vertical functor $\mathrm{res}_{\ast}$ as well as the bottom horizontal embedding enhance canonically to yield functors of the corresponding $(\infty,2)$-categories. 
In particular, the $(\infty,2)$-categorical structure $\mathbf{Cat}_{\infty}(\mathcal{S})$ from Definition~\ref{defsmallness} enhances the 
vertical forgetful functor $U$ to an equivalence
$U\colon\mathbf{Cat}_{\infty}(\mathcal{S})\rightarrow\mathbf{Cat}_{\infty}$ of $(\infty,2)$-categories as well.
An analogous argument shows that for any small $\infty$-category $\mathcal{C}$ the canonical equivalence
$\mathrm{Cat}_{\infty}(\hat{\mathcal{C}})\simeq\mathrm{Fun}(\mathcal{C}^{op},\mathrm{Cat}_{\infty}(\mathcal{S}))$ induces an equivalence
$\mathbf{Cat}_{\infty}(\hat{\mathcal{C}})\simeq\mathbf{Fun}(\mathcal{C}^{op},\mathbf{Cat}_{\infty})$ of $(\infty,2)$-categories. Here, the 
structure on the left hand side is again obtained by way of Definition~\ref{defsmallness}, the structure on the right hand side is the 
standard pointwise induced one.
\end{example}

The ordinary categorical externalization of a locally small 1-category is defined and studied in \cite[Section 4]{streetintcat}, and later 
in \cite[Section 7.3]{jacobsttbook} and \cite[Section B.2.3]{elephant}. By \cite[Proposition 5.10]{rs_comp}, the $\infty$-categorical 
externalization functor in Definition~\ref{defext} is an essentially faithful generalization thereof. Indeed, we will see that the 
properties of the ordinary categorical externalization functor shown in \cite[Paragraphs (4.4) and (4.5)]{streetintcat} (and later in
in \cite[Proposition 7.3.8]{jacobsttbook}) generalize accordingly, and that they in fact can be strengthened. 
For instance, in \cite[Paragraph (4.4)]{streetintcat} it is stated that the externalization functor creates
(finite) 2-limits whenever $\mathcal{C}$ is complete (left exact). We already have seen that $\mathrm{Ext}$ is left exact, and that
$\mathrm{Cat}_{\infty}(\mathcal{C})$ itself is left exact whenever $\mathcal{C}$ is so. Moreover, we have the following proposition.

\begin{proposition}\label{proppowering}
Suppose $\mathcal{C}$ has all small (finite) limits. Then the embedding
$\mathrm{Ext}\colon\mathbf{Cat}_{\infty}(\mathcal{C})\hookrightarrow\mathbf{Fun}(\mathcal{C}^{op},\mathbf{Cat}_{\infty})$ creates
cotensors with all small (finite) $\infty$-categories. That is to say, the full sub-$(\infty,2)$-category
$\mathbf{Cat}_{\infty}(\mathcal{C})\subset\mathbf{Fun}(\mathcal{C}^{op},\mathbf{Cat}_{\infty})$ is closed under cotensors with all  
small (finite) $\infty$-categories. For any small (finite) $\infty$-category $J$ and any complete Segal object
$X\in\mathrm{Cat}_{\infty}(\mathcal{C})$, the cotensor $X^J\in\mathrm{Cat}_{\infty}(\mathcal{C})$ is given by the ``d\'{e}calage'' 
\[\{((J\times\Delta^{\ast})^{\Delta^{\bullet}})^{\simeq},X\}\]
in $\mathcal{C}$. That is, at level $n$, the
$\mathcal{S}$-weighted limit $(X^J)_n\simeq\{((J\times\Delta^{n})^{\Delta^{\bullet}})^{\simeq},X\}$ of $X$ at the weight
$((J\times\Delta^{n})^{\Delta^{\bullet}})^{\simeq}\colon\Delta^{op}\rightarrow\mathcal{S}$. 
\end{proposition}
\begin{proof}
If $J$ is small (finite) then the bisimplicial space $((J\times\Delta^{\ast})^{\Delta^{\bullet}})^{\simeq}$ is again pointwise small 
(finite); it follows that the simplicial object $\{((J\times\Delta^{\ast})^{\Delta^{\bullet}})^{\simeq},X\}$ in $\mathcal{C}$ exists as 
well. We show that $\{((J\times\Delta^{\ast})^{\Delta^{\bullet}})^{\simeq},X\}$ is a complete Segal object, and show that it satisfies the 
universal property of the cotensor $X^J$ in $\mathbf{Cat}_{\infty}(\mathcal{C})$ by way of a standard structural induction. The fact that 
$X^J$ is created by the embedding $\mathrm{Ext}$ will then follow automatically.

First, as noted in Example~\ref{exple_smallspaces}, the forgetful functor
$U\colon\mathrm{Cat}_{\infty}(\mathcal{S})\rightarrow\mathrm{Cat}_{\infty}$ induces an equivalence
$U\colon\mathbf{Cat}_{\infty}(\mathcal{S})\rightarrow\mathbf{Cat}_{\infty}$ of $(\infty,2)$-categories. Its inverse maps an
$\infty$-category $J$ to the complete Segal space $(J^{\Delta^{\ast}})^{\simeq}$ \cite[Section 4]{jtqcatvsss}. This 
equips the $\infty$-category $\mathrm{Cat}_{\infty}(\mathcal{S})$ with a cotensor $X^J$ for any complete Segal space $X$ and any
$\infty$-category $J$ given as follows:
\begin{align*}
X^J & \simeq ((U(X)^J)^{\Delta^{\ast}})^{\simeq}\\
& \simeq (U(X)^{J\times\Delta^{\ast}})^{\simeq} \\
& \simeq\mathrm{Cat}_{\infty}(J\times\Delta^{\ast},U(X)) \\
& \simeq\mathrm{Cat}_{\infty}(\mathcal{S})(((J\times\Delta^{\ast})^{\Delta^{\bullet}})^{\simeq},X)\\
& \simeq s\mathcal{S}(((J\times\Delta^{\ast})^{\Delta^{\bullet}})^{\simeq},X)\\
& \simeq \{((J\times\Delta^{\ast})^{\Delta^{\bullet}})^{\simeq},X\}.
\end{align*}
The last line is an instance of Example~\ref{exple_weightedhomspace}. We note that this chain of equivalences is natural in all parameters.
This proves the statement for $\mathcal{C}=\mathcal{S}$. 

Second, for a general $\infty$-category $\mathcal{C}$ we recall that
\[U_{\ast}\colon\mathbf{Fun}(\mathcal{C}^{op},\mathbf{Cat}_{\infty}(\mathcal{S}))\xrightarrow{\simeq}\mathbf{Fun}(\mathcal{C}^{op},\mathbf{Cat}_{\infty})\]
is an equivalence of $(\infty,2)$-categories (Remark~\ref{rem_intcatstandard}). For the same reason, the  
associated small cotensors in $\mathbf{Fun}(\mathcal{C}^{op},\mathbf{Cat}_{\infty}(\mathcal{S}))$ exist and are computed pointwise in
$\mathbf{Cat}_{\infty}(\mathcal{S})$. The same applies to the small weighted limits in
$s\hat{\mathcal{C}}\simeq\mathrm{Fun}(\mathcal{C},s\mathcal{S})$. That is to say, the evaluation functors
$\mathrm{ev}_C\colon\mathbf{Fun}(\mathcal{C}^{op},\mathbf{Cat}_{\infty}(\mathcal{S}))\rightarrow\mathbf{Cat}_{\infty}(\mathcal{S})$ and
$\mathrm{ev}_C\colon\mathrm{Fun}(\mathcal{C}^{op},s\mathcal{S})\rightarrow s\mathcal{S}$ for $C\in\mathcal{C}$ preserve all small cotensors 
and small limits, respectively. We further note that the evaluation functors
$\mathrm{ev}_C$ themselves are natural in $C\in\mathcal{C}$. Thus, let $X$ be a $\mathcal{C}$-indexed complete Segal space, and let $J$ be 
a small $\infty$-category. As both $X^J$ and $\{((J\times\Delta^{\ast})^{\Delta^{\bullet}})^{\simeq},X\}$ exist in
$\mathbf{Fun}(\mathcal{C}^{op},\mathbf{Cat}_{\infty}(\mathcal{S}))$ and in $\mathrm{Fun}(\mathcal{C}^{op},s\mathcal{S})$, respectively, we 
are only left to show that they are equivalent in $\mathrm{Fun}(\mathcal{C}^{op},s\mathcal{S})$; the latter is then necessarily a
$\mathcal{C}$-indexed complete Segal space as well. It suffices to do so representably. And indeed, via Example~\ref{exmple_ends}, for all 
$Y\in\mathrm{Fun}(\mathcal{C}^{op},s\mathcal{S})$ we obtain natural equivalences of mapping spaces as follows:
\begin{align*}
\mathrm{Fun}(\mathcal{C}^{op},s\mathcal{S})(Y,X^J) & \simeq \int_{C\in\mathcal{C}}s\mathcal{S}(Y(C),X(C)^J)\\
 & \simeq \int_{C\in\mathcal{C}}s\mathcal{S}(Y(C),\{((J\times\Delta^{\ast})^{\Delta^{\bullet}})^{\simeq},X(C)\}) \\
 & \simeq \int_{C\in\mathcal{C}}s\mathcal{S}(Y(C),\{((J\times\Delta^{\ast})^{\Delta^{\bullet}})^{\simeq},X\}(C)) \\
 & \simeq\mathrm{Fun}(\mathcal{C}^{op},s\mathcal{S})(Y,\{((J\times\Delta^{\ast})^{\Delta^{\bullet}})^{\simeq},X\}).
\end{align*}
Third, for any complete Segal object $X$ in $\mathcal{C}$ and any small (finite) $\infty$-category $J$, the simplicial object
$\{((J\times\Delta^{\ast})^{\Delta^{\bullet}})^{\simeq},X\}$ in 
$\mathcal{C}$ exists by assumption. The functor $sy(X)$ is a $\mathcal{C}$-indexed complete Segal space by Lemma~\ref{lemmasegalinternal}. 
The embedding $sy$ preserves limits, and in the second step we have seen that the limit
$\{((J\times\Delta^{\ast})^{\Delta^{\bullet}})^{\simeq},sy(X)\}$ is a $\mathcal{C}$-indexed complete Segal space as well. It follows that 
the simplicial object $\{((J\times\Delta^{\ast})^{\Delta^{\bullet}})^{\simeq},X\}$ is a complete Segal object in $\mathcal{C}$ again by 
Lemma~\ref{lemmasegalinternal}. Furthermore, we have
\[sy(\{((J\times\Delta^{\ast})^{\Delta^{\bullet}})^{\simeq},X)\})\simeq sy(X)^J\]
by the second step as well. Thus, $X^J:=\{((J\times\Delta^{\ast})^{\Delta^{\bullet}})^{\simeq},X)\}$ is the cotensor of $X$ by $J$ in
$\mathbf{Cat}_{\infty}(\mathcal{C})$ (Example~\ref{exple_sub_cotensors}). We further have
\[\mathrm{Ext}(X^J)\simeq U_{\ast}(sy(X^J))\simeq U_{\ast}(sy(X)^J)\simeq U_{\ast}(sy(X))^J\simeq\mathrm{Ext}(X)^J,\]
because $U_{\ast}$ is an equivalence of $(\infty,2)$-categories and hence preserves cotensors.
\end{proof}

\begin{remark}\label{remcotensorptwdescr}
The fact that the full sub-$(\infty,2)$-category
$\mathbf{Cat}_{\infty}(\mathcal{C})\subset\mathbf{Fun}(\mathcal{C}^{op},\mathbf{Cat}_{\infty})$ is closed under the 
stated simplicial cotensors follows directly from \cite[Corollary 5.18]{rs_comp} and \cite[Corollary 5.26]{rs_comp}. It however will be 
useful to have an explicit formula in terms of limits in $\mathcal{C}$ to compute them. In fact, in case $\mathcal{C}$ is a (bi)complete
1-category, an explicit simplicial cotensor construction associated to the ``categorical simplicial structure'' on $s\mathcal{C}$ has been 
described already in \cite{duggersimp}. The formula $X^J\simeq((J\times\Delta^{\ast})^{\Delta^{\bullet}})^{\simeq},X\}$
is a homotopy-coherent version thereof as can be made precise whenever $\mathcal{C}$ is a suitable model category, see 
Section~\ref{secsubcosmosofcsos}.
\end{remark}

\begin{remark}\label{rem2catstrdirect}
We note that the assignment of (finite) $\infty$-categorical cotensors in $\mathbf{Cat}_{\infty}(\mathcal{C})$ is functorial. 
For every $Y\in\mathrm{Cat}_{\infty}(\mathcal{C})$ we obtain a simplicial object $Y^{\Delta^{\bullet}}$ in
$\mathrm{Cat}_{\infty}(\mathcal{C})$ that can be shown to be a complete Segal object in $\mathrm{Cat}_{\infty}(\mathcal{C})$ itself. By 
Lemma~\ref{lemmasegalinternal}, it follows that the simplicial spaces
$\mathrm{Cat}_{\infty}(\mathcal{C})(X,Y^{\Delta^{\bullet}})$ are complete Segal spaces as well. Thus, via Proposition~\ref{proppowering} we 
can define an enhanced mapping $\infty$-category functor $\mathrm{Map}_{\mathrm{Cat}_{\infty}(\mathcal{C})}(\phv,\phv)$ on
$\mathrm{Cat}_{\infty}(\mathcal{C})$ as recalled in Definition~\ref{defenhmapqcats} via
\begin{align}\label{equ_rem2catstrdirect}
\mathrm{Map}_{\mathrm{Cat}_{\infty}(\mathcal{C})}(X,Y):=U(\mathrm{Cat}_{\infty}(\mathcal{C})(X,Y^{\Delta^{\bullet}}))
\end{align}
for $X,Y\in\mathrm{Cat}_{\infty}(\mathcal{C})$ as mapping $\infty$-categories. It then follows that the externalization functor 
induces an equivalence between the pair $(\mathrm{Cat}_{\infty}(\mathcal{C}),\mathrm{Map}_{\mathrm{Cat}_{\infty}(\mathcal{C})})$ and
$N_{\Delta}(\underline{\mathbf{Cat}_{\infty}(\mathcal{C})})$ equipped with its canonical enhanced mapping $\infty$-category functor.
In other words, the assignment (\ref{equ_rem2catstrdirect}) computes the mapping $\infty$-categories
$\mathbf{Cat}_{\infty}(\mathcal{C})(\phv,\phv)$. This recovers the higher cells between small $\mathcal{C}$-indexed $\infty$-categories 
more directly in terms of internal diagrams in $\mathcal{C}$ as we describe more explicitly in the next remark.
\end{remark}

\begin{remark}\label{rem3catstrdirect}
We can describe the $(\infty,2)$-categorical structure on internal $\infty$-categories in $\mathcal{C}$ in terms 
of internal diagrams and their internal natural transformations more explicitly via the enhanced mapping
$\infty$-category functor from Remark~\ref{rem2catstrdirect}. We therefore notationally identify an internal $\infty$-category 
$X\in\mathrm{Cat}_{\infty}(\mathcal{C})$ with its image $\mathrm{Ext}(X)\in\mathrm{Cat}_{\infty}(\mathcal{C})$ for the sake of 
readability.

First, the fact that $\mathrm{Cat}_{\infty}(\mathcal{C})$ is the underlying $\infty$-category of
$\mathbf{Cat}_{\infty}(\mathcal{C})$ means that for any two internal $\infty$-categories $X$ and $Y$ in
$\mathcal{C}$, the mapping space $\mathbf{Cat}_{\infty}(\mathcal{C})(X,Y)^{\simeq}$ is up to natural 
equivalence given by the space $\mathrm{Cat}_{\infty}(\mathcal{C})(X,Y)=s\mathcal{C}(X,Y)$ of natural transformations 
between simplicial objects (i.e.\ the space of ``internal functors'') from $X$ to $Y$. That means, an internal 
functor from $X$ to $Y$ is but a commutative diagram $f\colon X\rightarrow Y$ of the form
\[\xymatrix{
& \\
X_2\ar[r]^{f_2}\ar@{}[u]|{\vdots}\ar@<2ex>[d]\ar[d]\ar@<-2ex>[d] & Y_2\ar@{}[u]|{\vdots}\ar@<2ex>[d]\ar[d]\ar@<-2ex>[d]\\
X_1\ar@<1ex>[d]\ar@<-1ex>[d]\ar@<1ex>[u]\ar@<-1ex>[u]\ar[r]^{f_1} & Y_1\ar@<1ex>[d]\ar@<-1ex>[d]\ar@<1ex>[u]\ar@<-1ex>[u] \\
X_0\ar[r]^{f_0}\ar[u] & Y_0\ar[u]
}\]
in $\mathcal{C}$. Furthermore, given two such internal functors $f,g\colon X\rightarrow Y$, the space
$(\mathbf{Cat}_{\infty})_{Y\times  Y}(X,Y^{\Delta^1})^{\simeq}$ of 2-cells from $f$ to $g$ is up to natural equivalence 
given by the space of internal functors $\mathrm{Cat}_{\infty}(\mathcal{C})_{Y\times  Y}(X,Y^{\Delta^1})$ over $Y\times Y$, 
where $Y^{\Delta^1}\rightarrow Y\times Y$ is the canonical projection and $X\rightarrow Y\times Y$ is given by the 
pair $f,g$. That is, the space of ``internal natural transformations'' from $f$ to $g$. An internal natural 
transformation from $f$ to $g$ is thus a diagram
\[\xymatrix{
 & Y^{\Delta^1}\ar[d]^{(s,t)} \\
X\ar[ur]^{\alpha}\ar[r]_(.4){(f,g)} & Y\times Y
}\]
in $\mathrm{Cat}_{\infty}(\mathcal{C})$. For visualization, at level $0$, we recall that the object $(Y^{\Delta^1})_0\in\mathcal{C}$ 
represents the presheaf $\mathrm{Ext}(Y^{\Delta^1})^{\simeq}\simeq(\mathrm{Ext}(Y)^{\Delta^1})^{\simeq}$. It hence is equivalent to the 
object $Y_1$ (over $Y_0\times Y_0$) by \cite[Proposition 5.14.1]{rs_comp}. Under this equivalence, the morphism
$\alpha_0\colon X_0\rightarrow Y_1$ assigns to every object $x$ in $X$ a morphism $\alpha_0(x)\colon f_0(x)\rightarrow g_0(x)$. At level 
$1$, the object $(Y^{\Delta^1})_1$ is up to equivalence (over $Y_1\times Y_1$) the object
$(Y^{\Delta^1\times\Delta^1})_0\simeq Y_2\times_{Y_1} Y_2$ of squares in $Y$ for essentially the same reason. Thus, the morphism
$\alpha_1\colon X_1\rightarrow Y_2\times_{Y_1} Y_2$ assigns to every morphism $a\colon x\rightarrow z$ contained 
in $X_1$ a square in $Y$ of the form
\[\xymatrix{
f_0(x)\ar[r]^{\alpha_0(x)}\ar[d]_{f_1(a)}\ar@{}[dr]|{\alpha_1(a)} & g_0(x)\ar[d]^{g_1(a)} \\
f_0(z)\ar[r]_{\alpha_0(y)} & g_0(z).
}\]
\end{remark}

We thus have seen that the canonical enrichment of $\mathcal{C}$ in the $\infty$-category of spaces always induces 
a (generally locally non-discrete) enrichment of $\mathrm{Cat}_{\infty}(\mathcal{C})$ in the $\infty$-category of quasi-categories. The following proposition shows that an enrichment of a suitably complete
$\infty$-category $\mathcal{C}$ in itself (considered as a cartesian $\infty$-category) likewise induces 
an enrichment of $\mathrm{Cat}_{\infty}(\mathcal{C})$ in itself.

\begin{proposition}\label{propcartclosed}
Suppose $\mathcal{C}$ has countable limits and is cartesian closed. Then so is $\mathrm{Cat}_{\infty}(\mathcal{C})$, and
$\mathrm{Ext}\colon\mathrm{Cat}_{\infty}(\mathcal{C})\rightarrow\mathrm{Fun}(\mathcal{C}^{op},\mathrm{Cat}_{\infty})$ 
preserves exponentials. 
\end{proposition}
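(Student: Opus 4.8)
The plan is to deduce everything from the already-established cartesian closedness of the ambient $\infty$-category $\mathrm{Fun}(\mathcal{C}^{op},\mathrm{Cat}_{\infty})$ (Lemma~\ref{lemmacartclosed}) together with the fact that $\mathrm{Ext}$ is fully faithful and preserves all limits (Lemma~\ref{corsegalyonedaff}). Since $\mathrm{Ext}$ is fully faithful, $\mathrm{Cat}_{\infty}(\mathcal{C})$ is equivalent to the full subcategory of $\mathrm{Fun}(\mathcal{C}^{op},\mathrm{Cat}_{\infty})$ spanned by the small indexed $\infty$-categories, and since $\mathrm{Ext}$ preserves binary products this subcategory is closed under them; so it suffices to show it is moreover closed under exponentiation. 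Granting this, let $Z\in\mathrm{Cat}_{\infty}(\mathcal{C})$ be an object with $\mathrm{Ext}(Z)\simeq\mathrm{Ext}(Y)^{\mathrm{Ext}(X)}$; then for every $V\in\mathrm{Cat}_{\infty}(\mathcal{C})$ the defining equivalence $\mathrm{Map}(\mathrm{Ext}(V)\times\mathrm{Ext}(X),\mathrm{Ext}(Y))\simeq\mathrm{Map}(\mathrm{Ext}(V),\mathrm{Ext}(Z))$ transports, using full faithfulness and product-preservation of $\mathrm{Ext}$, to a natural equivalence $\mathrm{Map}_{\mathrm{Cat}_{\infty}(\mathcal{C})}(V\times X,Y)\simeq\mathrm{Map}_{\mathrm{Cat}_{\infty}(\mathcal{C})}(V,Z)$; this simultaneously exhibits $Z$ as the exponential $Y^{X}$ in $\mathrm{Cat}_{\infty}(\mathcal{C})$ and shows $\mathrm{Ext}$ preserves it. So the whole statement reduces to closure of the essential image of $\mathrm{Ext}$ under exponentials.

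To produce such a $Z$ I would internalize the familiar description of the internal hom of complete Segal spaces. Write $Y^{\Delta^{n}}\in\mathrm{Cat}_{\infty}(\mathcal{C})$ for the cotensor of $Y$ with the finite quasi-category $\Delta^{n}$ (which exists by Proposition~\ref{proppowering} and is functorial in $[n]$), and define $Z_{n}$ to be the ``object of internal functors $X\to Y^{\Delta^{n}}$'', namely the end $Z_{n}:=\int_{[j]\in\Delta^{op}}\bigl((Y^{\Delta^{n}})_{j}\bigr)^{X_{j}}$ formed using the internal hom of the cartesian closed $\mathcal{C}$, with the simplicial structure in $n$ inherited from $Y^{\Delta^{\bullet}}$. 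This end is a totalization, hence a sequential limit of finite limits, so it exists in $\mathcal{C}$ under the hypothesis of countable limits --- this, and nothing stronger, is where ``countable limits'' is used, which is also why ``complete'' is not needed here (contrast the second clause of Proposition~\ref{proppowering}). Since each $(-)^{X_{j}}$ is a limit-preserving right adjoint and $\mathrm{Cat}_{\infty}(\mathcal{C})$ is closed in $s\mathcal{C}$ under all limits that exist there (Lemma~\ref{lemmascolimits}), $Z$ is a complete Segal object: its Segal maps come from applying the internal-functor construction to the equivalences $Y^{\Delta^{n}}\xrightarrow{\sim}Y^{\Delta^{1}}\times_{Y^{\Delta^{0}}}\dots\times_{Y^{\Delta^{0}}}Y^{\Delta^{1}}$ (inner anodynicity of the spine inclusion) and completeness of $Z$ follows from completeness of $Y$ in the same manner; alternatively, once the next step is done one sees $\mathrm{Ext}(Z)$ lands in $\mathbf{Fun}(\mathcal{C}^{op},\mathbf{Cat}_{\infty})$ and applies conservativity and left exactness of the Yoneda embedding exactly as in the proof of Lemma~\ref{lemmasegalinternal}.

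The substantive step is to identify $\mathrm{Ext}(Z)$ with $\mathrm{Ext}(Y)^{\mathrm{Ext}(X)}$. Since $\mathrm{Ext}$ preserves limits, $\mathrm{Ext}(Z)$ is the corresponding totalization of the externalizations of the building blocks, so it suffices to check: that $\mathrm{Ext}(Y^{\Delta^{n}})\simeq\mathrm{Ext}(Y)^{\Delta^{n}}$ (contained in Proposition~\ref{proppowering}); that applying $(-)^{X_{j}}$ levelwise corresponds, after externalization, to cotensoring by the representable indexed space $\mathrm{Ext}(cX_{j})\simeq\mathcal{C}(-,X_{j})$, i.e.\ to the enriched-Yoneda identity $\mathcal{G}^{\mathrm{Ext}(cX_{j})}(D)\simeq\mathcal{G}(D\times X_{j})$ in $\mathbf{Fun}(\mathcal{C}^{op},\mathbf{Cat}_{\infty})$, which one checks from $sy(cX_{j})(D)=\mathrm{const}_{\mathcal{C}(D,X_{j})}$ and the cartesian structure of $\mathcal{C}$; and that the end over $\Delta^{op}$ is precisely what results from applying $(-)^{\mathrm{Ext}(X)}$ to the canonical colimit presentation $\mathrm{Ext}(X)\simeq\mathrm{colim}_{[j]\in\Delta^{op}}\,\mathrm{Ext}(cX_{j})\times\mathrm{const}_{[j]}$ (the image under $U_{\ast}\circ sy$ of the co-Yoneda presentation of the simplicial object $X$). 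I expect this last point to be the main obstacle, since the summands $\mathrm{Ext}(cX_{j})\times\mathrm{const}_{[j]}$ are not themselves small; the cleanest route around it is to skip the colimit presentation and instead verify the universal property of $\mathrm{Ext}(Z)$ directly against small test objects, reducing $\mathrm{Map}(\mathrm{Ext}(V)\times\mathrm{Ext}(X),\mathrm{Ext}(Y))$ to $\mathrm{Map}_{\mathrm{Cat}_{\infty}(\mathcal{C})}(V\times X,Y)=\mathrm{Map}_{s\mathcal{C}}(V\times X,Y)=\lim_{[n]\in\Delta}\mathcal{C}(V_{n}\times X_{n},Y_{n})$ via the Segal--Yoneda lemma (Proposition~\ref{lemmaextyoneda}) and comparing with $\mathrm{Map}_{s\mathcal{C}}(V,Z)$ by the adjunctions of the cartesian closed $\mathcal{C}$ and of the cotensors. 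In any case this is exactly the kind of compatibility for complete Segal objects established in the companion paper \cite{rs_comp} (which already supplies the cotensor compatibility used in Proposition~\ref{proppowering}), so in practice the argument compresses to: by Lemma~\ref{corsegalyonedaff} it suffices to show the essential image of $\mathrm{Ext}$ is closed under exponentials, which follows from the explicit internal-hom construction, after which preservation of exponentials is automatic by transposing along the fully faithful $\mathrm{Ext}$.
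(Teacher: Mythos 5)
Your first reduction (full faithfulness plus product preservation of $\mathrm{Ext}$ reduces everything to closure of the essential image under exponentials) is exactly the paper's, and your end formula is the paper's key computation: the representing object you call $Z_{n}$ is precisely the end $\int_{j\in\Delta}((Y^{\Delta^{n}})_{j})^{X_{j}}$ appearing there for $n=0,1$, and you correctly locate the use of countable limits in the existence of these $\mathrm{Tw}(\Delta)$-indexed limits. Where you diverge is in how smallness of $\mathrm{Ext}(Y)^{\mathrm{Ext}(X)}$ is certified. The paper does not build the whole simplicial object $Z$ and does not verify any Segal or completeness condition for it: it invokes the recognition criterion of \cite[Corollary 3.32, Theorem 5.15]{rs_comp}, by which an indexed $\infty$-category $F$ is small as soon as the two presheaves $F^{\simeq}$ and $((F)^{\Delta^{1}})^{\simeq}$ are representable, and then exhibits the representing objects as the ends $\int_{n}Y_{n}^{X_{n}}$ and $\int_{n}(Y^{\Delta^{1}})_{n}^{X_{n}}$ via a Yoneda computation (the chain you also sketch when testing against representables). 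This is exactly what dissolves the step you flag as the ``main obstacle'': once smallness is known, the complete Segal object presenting the exponential, its Segal conditions, and its completeness come for free, so nothing like your levelwise construction of $Z$ or the colimit presentation of $\mathrm{Ext}(X)$ is needed. Your fallback route (verify the universal property of $\mathrm{Ext}(Z)$ against small test objects via the Segal--Yoneda lemma) can be made to work, but note that space-level testing against representables $yC\simeq\mathrm{Ext}(c(C))$ only determines the cores $F(C)^{\simeq}$; to pin down the full quasi-categorical values you must also test the $\Delta^{1}$-cotensor (or use the enhanced mapping quasi-categories), which is in effect a re-proof of the two-presheaf criterion the paper simply cites. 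So: same skeleton and same decisive end computation, but the paper's use of the smallness criterion makes the argument substantially shorter, while your version, if completed, would yield an explicit levelwise description of the exponential object (which the paper only records implicitly through its $n=0,1$ ends).
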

\begin{proof}
Since $\mathrm{Fun}(\mathcal{C}^{op},\mathrm{Cat}_{\infty})$ is cartesian closed and 
$\mathrm{Cat}_{\infty}(\mathcal{C})\subset\mathrm{Fun}(\mathcal{C}^{op},\mathrm{Cat}_{\infty})$ is a full
sub-$\infty$-category which is closed under products, by Lemma~\ref{corsegalyonedaff} it suffices to show that 
the sub-$\infty$-category $\mathrm{Cat}_{\infty}(\mathcal{C})$ is closed under exponentials. Therefore, let $X$, 
$Y$ be a pair of $\infty$-categories in $\mathcal{C}$. To show that the exponential
$\mathrm{Ext}(Y)^{\mathrm{Ext}(X)}$ is again small, it suffices to show that the presheaves
$(\mathrm{Ext}(Y)^{\mathrm{Ext}(X)})^{\simeq}$ and $((\mathrm{Ext}(Y)^{\mathrm{Ext}(X)})^{\Delta^1})^{\simeq}$ in $\hat{\mathcal{C}}$ 
are representable \cite[Corollary 3.32, Theorem 5.15]{rs_comp}. For the former, we observe that there are natural equivalences as follows.
\begin{align*}
(\mathrm{Ext}(Y)^{\mathrm{Ext}(X)})^{\simeq}(C) & \simeq \mathrm{Fun}(\mathcal{C}^{op},\mathrm{Cat}_{\infty})(yC\times\mathrm{Ext}(X),\mathrm{Ext}(Y)) \\
 &\simeq\mathrm{Fun}(\mathcal{C}^{op},\mathrm{Cat}_{\infty})(\mathrm{Ext}(c(C)\times X),\mathrm{Ext}(Y))  \\
 &\simeq s\mathcal{C}(c(C)\times X,Y) \\
 &\simeq\int\limits_{n\in\Delta}\mathcal{C}((c(C)\times X)_n,Y_n)\\
 &\simeq\int\limits_{n\in\Delta}\mathcal{C}(C\times X_n,Y_n) \\
 &\simeq\int\limits_{n\in\Delta}\mathcal{C}(C,Y_n^{X_n})\\
 &\simeq \mathcal{C}(C,\int\limits_{n\in\Delta}Y_n^{X_n})
\end{align*}
The equivalence in the first line is given by the Yoneda lemma of \cite[Theorem 5.7.3]{riehlverityelements}; it also follows more 
directly from Remark~\ref{remextyoneda}. The expression of the mapping spaces of $s\mathcal{C}$ as a corresponding end construction is again 
Example~\ref{exmple_ends}. It follows that $(\mathrm{Ext}(Y)^{\mathrm{Ext}(X)})^{\simeq}$ is represented by the end
$\int_{n\in\Delta}Y_n^{X_n}$ in $\mathcal{C}$. This end exists as it is a $\mathrm{Tw}(\Delta)$-indexed (and hence 
countable) limit in $\mathcal{C}$. The same argument shows that
$((\mathrm{Ext}(Y)^{\mathrm{Ext}(X)})^{\Delta^1})^{\simeq}$ is represented by the end $\int_{n\in\Delta}(Y^{\Delta^1})_n^{X_n}$ in
$\mathcal{C}$, where $Y^{\Delta^1}$ is the corresponding cotensor constructed in Proposition~\ref{proppowering}.
\end{proof}

\begin{remark}
It may be worth to point out that the analogue to Proposition~\ref{propcartclosed} in the ordinary categorical case only requires left 
exactness of $\mathcal{C}$ rather than countably infinite completeness thereof, see \cite[Paragraph (4.5)]{streetintcat} (or
\cite[Proposition 7.2.2]{jacobsttbook}). 
Essentially, this is because the same proof in this context computes any given exponential of internal categories via the 1-truncated 
simplex category $\Delta_{\leq 1}=(\xymatrix{[0]\ar[r] & [1]\ar@<1ex>[l]\ar@<-1ex>[l])}$ instead of the entire simplex category 
$\Delta$. This however is finite, and hence so is its twisted arrow category. Indeed, note that the explicit formula for the exponential 
of two internal categories given in the proof of \cite[Proposition 7.2.2]{jacobsttbook} is just an explicit computation of exactly this end.
\end{remark}

\begin{remark}
For any $\infty$-category $\mathcal{C}$ and any pair $X,Y\in\mathbf{Cat}_{\infty}(\mathcal{C})$ there is a binatural equivalence
\begin{align*}
\mathbf{Cat}_{\infty}(X,Y) & := \mathbf{Fun}(\mathcal{C}^{op},\mathbf{Cat}_{\infty})(\mathrm{Ext}(X),\mathrm{Ext}(Y)) \\
 & \simeq\mathbf{Fun}(\mathcal{C}^{op},\mathbf{Cat}_{\infty})(\ast,\mathrm{Ext}(Y)^{\mathrm{Ext}(Y)})
\end{align*}
as $\mathbf{Fun}(\mathcal{C}^{op},\mathbf{Cat}_{\infty})$ is cartesian closed. Thus, whenever $\mathcal{C}$ is countably complete and 
cartesian closed itself, the mapping $\infty$-categories $\mathbf{Cat}_{\infty}(X,Y)$ are binaturally equivalent to the underlying
$\infty$-category $\mathrm{Ext}(Y^X)(\ast)$ of the exponential $Y^X\in\mathbf{Cat}_{\infty}(\mathcal{C})$ by 
Proposition~\ref{propcartclosed} (again via the Yoneda Lemma either in \cite[Theorem 5.7.3]{riehlverityelements}, or, alternatively, in 
Proposition~\ref{lemmaextyoneda}). This is up to terminology exactly the $\infty$-categorical enrichment of
$\mathrm{Cat}_{\infty}(\mathcal{C})$ defined and studied in \cite[Section 3]{martini_yoneda} whenever $\mathcal{C}$ is an $\infty$-topos.
\end{remark}

\begin{theorem}\label{corcosmos1}
The $(\infty,2)$-category $\mathbf{Cat}_{\infty}(\mathcal{C})$ is
\begin{enumerate}
\item a full finitary sub-$\infty$-cosmos of $\mathbf{Fun}(\mathcal{C}^{op},\mathbf{Cat}_{\infty})$ whenever $\mathcal{C}$ is left exact. 
It thus defines an $\infty$-cosmos in the sense of \cite{rvyoneda}.
\item a full sub-$\infty$-cosmos of $\mathbf{Fun}(\mathcal{C}^{op},\mathbf{Cat}_{\infty})$ whose underlying $\infty$-category is closed under 
all limits (and exponentials) whenever $\mathcal{C}$ is complete (and cartesian closed). It thus defines a (cartesian closed) $\infty$-cosmos 
in the sense of \cite{riehlverityelements}.
\end{enumerate}
\end{theorem}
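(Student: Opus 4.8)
The plan is to check, clause by clause, that the full sub-$(\infty,2)$-category $\mathbf{Cat}_{\infty}(\mathcal{C})\subseteq\mathbf{Fun}(\mathcal{C}^{op},\mathbf{Cat}_{\infty})$ satisfies the hypotheses of Definition~\ref{defsubcosmoses}. By Example~\ref{expleqcatiscosmos} together with Lemma~\ref{lemmacqcatiscc}, the ambient $(\infty,2)$-category is a cartesian closed $(\mathbf{S},\mathrm{QCat})^c$-enriched fibration category of cofibrant objects with all small products and countable sequential limits of fibrations, i.e.\ a cartesian closed $\infty$-cosmos in the sense of Example~\ref{explecosmosalt}. The collection $D$ of objects spanning $\mathbf{Cat}_{\infty}(\mathcal{C})$ is, by Definition~\ref{defsmallness}, the essential image of $\mathrm{Ext}$; in particular it is replete with respect to the class of all weak equivalences of the ambient $\infty$-cosmos, so only the stated closure conditions remain, and each will reduce to one of Lemmata~\ref{corsegalyonedaff} and \ref{lemmascolimits} or Propositions~\ref{proppowering} and \ref{propcartclosed}.

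For part (1), suppose $\mathcal{C}$ is left exact. The terminal object of $\mathbf{Fun}(\mathcal{C}^{op},\mathbf{Cat}_{\infty})$ lies in $D$ because $\mathrm{Cat}_{\infty}(\mathcal{C})$ has a terminal object (Lemma~\ref{lemmascolimits}, as $\mathcal{C}$ and hence $s\mathcal{C}$ is left exact) which $\mathrm{Ext}$ preserves as an empty limit (Lemma~\ref{corsegalyonedaff}); concretely, by the commuting diagram~(\ref{diagyonedaext}) it is $\mathrm{Ext}(c(1))$ for $1$ the terminal object of $\mathcal{C}$. For closure under pullbacks of fibrations: such a pullback is a finite limit in the ambient $\infty$-cosmos, the corresponding finite limit exists in $\mathrm{Cat}_{\infty}(\mathcal{C})\subseteq s\mathcal{C}$ by Lemma~\ref{lemmascolimits}, and $\mathrm{Ext}$ preserves it by Lemma~\ref{corsegalyonedaff}, so the pullback computed in $\mathbf{Fun}(\mathcal{C}^{op},\mathbf{Cat}_{\infty})$ again lies in $D$. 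For closure under simplicial cotensors with finitely presented simplicial sets $U$: a fibrant replacement $\hat U$ of $U$ in the Joyal model structure is a finite quasi-category, the map on cotensors of a fibrant object induced by $U\to\hat U$ is a weak equivalence by the $(\mathbf{S},\mathrm{QCat})$-enrichment of $\mathbf{Fun}(\mathfrak{C}(\mathcal{C})^{op},(\mathbf{S},\mathrm{QCat}))_{\mathrm{inj}}$, and the cotensor with $\hat U$ lies in $D$ by Proposition~\ref{proppowering}; repleteness of $D$ closes the point. Hence $\mathbf{Cat}_{\infty}(\mathcal{C})$ with its inherited fibration-category structure is a full finitary sub-$\infty$-cosmos, and so an $\infty$-cosmos in the weaker sense of \cite{rvyoneda} by the Example following Definition~\ref{defsubcosmoses}; its underlying $\infty$-category is $\mathrm{Cat}_{\infty}(\mathcal{C})$ by Lemma~\ref{corsegalyonedaff}.

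For part (2), suppose $\mathcal{C}$ is complete, so $s\mathcal{C}$ is complete and $\mathrm{Cat}_{\infty}(\mathcal{C})$ is closed under all small limits in $s\mathcal{C}$ by Lemma~\ref{lemmascolimits}; since $\mathrm{Ext}$ preserves these limits (Lemma~\ref{corsegalyonedaff}) and they are computed as conical limits of the ambient $\infty$-cosmos, $D$ is closed under small products and under countable sequential limits of fibrations. Closure of $D$ under all (not merely finitely presented) simplicial cotensors is the second part of Proposition~\ref{proppowering}, via the same fibrant-replacement argument as above. Thus $\mathbf{Cat}_{\infty}(\mathcal{C})$ is a full sub-$\infty$-cosmos of $\mathbf{Fun}(\mathcal{C}^{op},\mathbf{Cat}_{\infty})$, hence an $\infty$-cosmos in the sense of \cite{riehlverityelements} by the Example following Definition~\ref{defsubcosmoses}, with underlying $\infty$-category $\mathrm{Cat}_{\infty}(\mathcal{C})$ closed under all small limits. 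If in addition $\mathcal{C}$ is countably complete and cartesian closed, Proposition~\ref{propcartclosed} shows that $\mathrm{Ext}(Y)^{\mathrm{Ext}(X)}$ is small for all $X,Y\in\mathrm{Cat}_{\infty}(\mathcal{C})$, i.e.\ $D$ is closed under the exponentials of the cartesian closed $\infty$-cosmos $\mathbf{Fun}(\mathcal{C}^{op},\mathbf{Cat}_{\infty})$ of Lemma~\ref{lemmacqcatiscc}; a full sub-$\infty$-cosmos closed under these exponentials is again cartesian closed.

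The step I expect to need the most care is not any individual closure check but the translation between the two idioms used above: one must confirm that the ``canonical fibration-category structure inherited from $\mathbb{F}$'' endows $\mathbf{Cat}_{\infty}(\mathcal{C})$ with exactly the weak equivalences, fibrations, and simplicial (co)tensors demanded by Definition~\ref{defsubcosmoses}, and in particular that the $\infty$-categorical cotensors produced by Proposition~\ref{proppowering} (formulated inside the $(\infty,2)$-category $\mathbf{Fun}(\mathcal{C}^{op},\mathbf{Cat}_{\infty})$) are represented on the nose by the simplicial cotensors of the model $\mathbf{Fun}(\mathfrak{C}(\mathcal{C})^{op},\mathbf{QCat})$ at the relevant finitely presented simplicial sets. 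Once this dictionary is in place, each clause of Definition~\ref{defsubcosmoses} is discharged by the lemmata and propositions already available.
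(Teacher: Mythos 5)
Your proposal is correct and follows essentially the same route as the paper: reduce everything to the closure conditions of Definition~\ref{defsubcosmoses}, note that products, pullbacks of fibrations, sequential limits of fibrations, cotensors and exponentials in $\mathbf{Fun}(\mathcal{C}^{op},\mathbf{Cat}_{\infty})$ present the corresponding $\infty$-categorical constructions in the underlying $\infty$-category, and then invoke repleteness together with Lemma~\ref{lemmascolimits}, Proposition~\ref{proppowering} and Proposition~\ref{propcartclosed}. Your extra step handling cotensors with finitely presented simplicial sets that are not quasi-categories (via Joyal fibrant replacement, the enrichment, and repleteness) is a worthwhile spelling-out of a point the paper's proof leaves implicit, but it does not change the argument's substance.
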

\begin{proof}
We recall that $\mathbf{Fun}(\mathcal{C}^{op},\mathbf{Cat}_{\infty})$ admits all $\infty$-cosmological structure from 
Example~\ref{expleqcatiscosmos} and Example~\ref{exple_size} (it only fails to be an $\infty$-cosmos if $\mathcal{C}$ is large as it 
then is not locally small itself). We further recall Definition~\ref{defsubcosmoses}, and note that small products, pullbacks of fibrations 
and sequential limits of fibrations in $\mathbf{Fun}(\mathcal{C}^{op},\mathbf{Cat}_{\infty})$ compute homotopy limits altogether. As such 
they represent the corresponding limits in the underlying $\infty$-category $\mathrm{Fun}(\mathcal{C}^{op},\mathrm{Cat}_{\infty})$. 
Similarly, exponentials in $\mathbf{Fun}(\mathcal{C}^{op},\mathbf{Cat}_{\infty})$ represent the corresponding exponentials in
$\mathrm{Fun}(\mathcal{C}^{op},\mathrm{Cat}_{\infty})$ by Lemma~\ref{lemmacartclosed}. Thus, as the inclusion of
$\mathbf{Cat}_{\infty}(\mathcal{C})$ in $\mathbf{Fun}(\mathcal{C}^{op},\mathbf{Cat}_{\infty})$ is replete by definition,
the theorem follows directly from Lemma~\ref{lemmascolimits}, Proposition~\ref{proppowering} and Proposition~\ref{propcartclosed}.
\end{proof}

We hence can develop the formal $\infty$-category theory of small $\mathcal{C}$-indexed $\infty$-categories using 
the theory of \cite{riehlverityelements} as referred to in the Corollary of Section~\ref{secintro}. We point out that the higher
non-invertible structure on $\mathbf{Cat}_{\infty}(\mathcal{C})$ is only representably internal (via Remark~\ref{rem2catstrdirect} and 
Remark~\ref{rem3catstrdirect}). In Section~\ref{secmodcat} we will see that whenever $\mathcal{C}$ can be presented by a model category, 
then there is an $\infty$-cosmological structure on $\mathrm{Cat}_{\infty}(\mathcal{C})$ that can be described explicitly internally, and 
which is equivalent to the one of Theorem~\ref{corcosmos1} at least whenever $\mathcal{C}$ is presentable.

In fact, while the $\infty$-cosmological structure on $\mathrm{Cat}_{\infty}(\mathcal{C})$ is (defined so to be) compatible with
the $\infty$-cosmological structure on $\mathrm{Fun}(\mathcal{C},\mathrm{Cat}_{\infty})$, the $(\infty,2)$-category
$\mathbf{Fun}(\mathcal{C}^{op},\mathbf{Cat}_{\infty})$ also admits small $\infty$-categorical tensors. In the following, we show that
$\mathbf{Cat}_{\infty}(\mathcal{C})$ also admits some $\infty$-categorical tensors whenever $\mathcal{C}$ has enough well behaved colimits; 
these however are generally not preserved by the embedding of 
$\mathbf{Cat}_{\infty}(\mathcal{C})$ in $\mathbf{Fun}(\mathcal{C}^{op},\mathbf{Cat}_{\infty})$. This parallels the fact that the Yoneda 
embedding $\mathcal{C}\rightarrow\hat{\mathcal{C}}$ generally does not preserve $\mathcal{S}$-tensors either.
Therefore, we introduce the following terminology, following a dual definition of Gray \cite[Section 5]{graymidwest}.

\begin{definition}\label{defcorep}
Say an $(\infty,2)$-category $\mathbf{C}$ is \emph{strongly corepresentable} if there is a cosimplicial object
\[\otimes\colon\Delta\rightarrow\mathrm{Fun}(\mathbf{C},\mathbf{C})\]
such that for all $n\geq 0$, the 
endofunctor $\Delta^n\otimes(\cdot)\colon\mathbf{C}\rightarrow\mathbf{C}$ computes a functorial tensoring with $\Delta^n$ in $\mathbf{C}$.
\end{definition}

We recall that an $\infty$-category is said to be finitary lextensive if it has finite limits and finite disjoint and universal 
coproducts. In ordinary category theory finitary lextensiveness is a standard notion \cite{clwextdist}; for
$\infty$-categories it can be defined completely analogously \cite[Definition 4.1]{rs_hgst}.

\begin{proposition}\label{proptensors}
Suppose $\mathcal{C}$ is finitary lextensive. Then $\mathrm{Cat}_{\infty}(\mathcal{C})$ is strongly corepresentable.
\end{proposition}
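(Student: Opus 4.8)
The plan is to realise the tensoring with $\Delta^n$ by products with \emph{internal $n$-simplices}. Since $\mathcal{C}$ is in particular finitely cocomplete, I first introduce a cosimplicial object $\underline{\Delta}^{\bullet}\colon\Delta\to s\mathcal{C}$ by setting $(\underline{\Delta}^{n})_m:=(\Delta^n)_m\cdot 1_{\mathcal{C}}$, the copower $\coprod_{(\Delta^n)_m}1_{\mathcal{C}}$ of the terminal object by the finite set $(\Delta^n)_m$ of $m$-simplices of the simplicial set $\Delta^n$, all simplicial and cosimplicial structure maps being induced from those of $\Delta^{\bullet}\in\mathrm{sSet}$; more invariantly, $\underline{(\cdot)}$ is the levelwise-copower functor $\mathrm{sSet}^{\mathrm{fin}}\to s\mathcal{C}$ from finite simplicial sets restricted along $\Delta\hookrightarrow\mathrm{sSet}^{\mathrm{fin}}$, and the disjoint, universal finite coproducts of a lextensive $\mathcal{C}$ make it preserve finite limits and finite colimits. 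The first claim to prove is that each $\underline{\Delta}^{n}$ is a complete Segal object, so that $\underline{\Delta}^{\bullet}$ corestricts to $\Delta\to\mathrm{Cat}_{\infty}(\mathcal{C})$. As $\mathcal{C}$ is lextensive it has pullbacks, so I verify this via Lemma~\ref{lemmasegalinternal}. For the Segal condition, the target $(\underline{\Delta}^{n})_1\times_{(\underline{\Delta}^{n})_0}\cdots\times_{(\underline{\Delta}^{n})_0}(\underline{\Delta}^{n})_1$ of the $m$-th Segal map is a finite iterated pullback of the coproducts $(\Delta^n)_j\cdot 1_{\mathcal{C}}$; distributing these pullbacks over the coproducts (universality), discarding the mismatched summands (disjointness, contributing the initial object) and using $1_{\mathcal{C}}\times_{1_{\mathcal{C}}}\cdots\times_{1_{\mathcal{C}}}1_{\mathcal{C}}\cong 1_{\mathcal{C}}$, one identifies it with $\bigl(\mathrm{sSet}(S_m,\Delta^n)\bigr)\cdot 1_{\mathcal{C}}$, and since $\Delta^n$ is the nerve of the poset $[n]$ the spine inclusion induces a bijection $\mathrm{sSet}(S_m,\Delta^n)\xrightarrow{\cong}(\Delta^n)_m$ compatible with the Segal map, which is thus an isomorphism. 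Completeness follows in the same way: the analogous distributivity identifies $\mathrm{Equiv}(\underline{\Delta}^{n})$ with $\bigl(\mathrm{sSet}(\mathrm{Equiv},\Delta^n)\bigr)\cdot 1_{\mathcal{C}}$, and as $\Delta^n$ has no non-identity isomorphisms this set is $(\Delta^n)_0$, with $s_0$ the induced identity.

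With $\underline{\Delta}^{\bullet}\colon\Delta\to\mathrm{Cat}_{\infty}(\mathcal{C})$ in hand I set $\Delta^n\otimes(\cdot):=\underline{\Delta}^{n}\times(\cdot)$, the product in $\mathbf{Cat}_{\infty}(\mathcal{C})$ with the internal $\infty$-category $\underline{\Delta}^{n}$. This is well defined and assembles into a functor $\otimes\colon\Delta\to\mathrm{Fun}(\mathbf{Cat}_{\infty}(\mathcal{C}),\mathbf{Cat}_{\infty}(\mathcal{C}))$: the full subcategory $\mathbf{Cat}_{\infty}(\mathcal{C})$ is closed under finite products in $\mathbf{Fun}(\mathcal{C}^{op},\mathbf{Cat}_{\infty})$ (Lemma~\ref{lemmascolimits}), which $\mathrm{Ext}$ preserves (Lemma~\ref{corsegalyonedaff}); product with a fixed internal object is a $\mathrm{Cat}_{\infty}$-enriched endofunctor of the cartesian closed $\mathbf{Fun}(\mathcal{C}^{op},\mathbf{Cat}_{\infty})$ (Lemma~\ref{lemmacqcatiscc}) restricting to $\mathbf{Cat}_{\infty}(\mathcal{C})$; and functoriality in $n$ comes from $\underline{\Delta}^{\bullet}$. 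What remains is to show that $\underline{\Delta}^{n}\times(\cdot)$ computes the tensoring with $\Delta^n$, i.e.\ that there is a natural equivalence $\mathbf{Cat}_{\infty}(\mathcal{C})(\underline{\Delta}^{n}\times X,Y)\simeq\mathrm{Fun}(\Delta^n,\mathbf{Cat}_{\infty}(\mathcal{C})(X,Y))$.

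Since $\mathcal{C}$ is left exact, $\mathbf{Cat}_{\infty}(\mathcal{C})$ is cotensored over finite $\infty$-categories (Proposition~\ref{proppowering}), so $\mathrm{Fun}(\Delta^n,\mathbf{Cat}_{\infty}(\mathcal{C})(X,Y))\simeq\mathbf{Cat}_{\infty}(\mathcal{C})(X,Y^{\Delta^n})$, and as both hom-$\infty$-categories are presented by the evident complete Segal spaces $k\mapsto s\mathcal{C}(\underline{\Delta}^{n}\times X,Y^{\Delta^k})$ and $k\mapsto s\mathcal{C}(X,(Y^{\Delta^n})^{\Delta^k})\simeq s\mathcal{C}(X,Y^{\Delta^n\times\Delta^k})$ (Remark~\ref{rem2catstrdirect}, together with composition of cotensors), it suffices to produce a natural equivalence of mapping spaces $s\mathcal{C}(\underline{K}\times X,Y)\simeq s\mathcal{C}(X,Y^{K})$ for every finite simplicial set $K$, exhibiting $\underline{K}\times(\cdot)$ as left adjoint to the décalage powering $(\cdot)^{K}$. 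I would establish this by evaluating both sides through the externalization: using that $\mathrm{Ext}$ is fully faithful and that $\mathrm{Ext}(Y)$ carries the finite coproducts of $\mathcal{C}$ to finite products in $\mathrm{Cat}_{\infty}$, the Segal--Yoneda Lemma (Proposition~\ref{lemmaextyoneda}, Remark~\ref{remextyoneda}) rewrites the left-hand mapping space as a totalization built from the $K$-indexed powers of $\mathrm{Ext}(Y)(X_m)$, while $\mathrm{Ext}(Y^{K})\simeq\mathrm{Ext}(Y)^{K}$ (Proposition~\ref{proppowering}), the cotensor adjunction in $\mathbf{Fun}(\mathcal{C}^{op},\mathbf{Cat}_{\infty})$ and the décalage description of the cotensor (Remark~\ref{remcotensorptwdescr}) present the right-hand one as the same totalization. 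Concretely this is the statement that $\underline{(\cdot)}\times(\cdot)$ and $(\cdot)^{(\cdot)}$ form the tensor--cotensor pair of a $\mathrm{Cat}_{\infty}^{\mathrm{fin}}$-enrichment of $\mathrm{Cat}_{\infty}(\mathcal{C})$ extending the canonical one on $\mathrm{Cat}_{\infty}$ — a homotopy-coherent incarnation of Dugger's categorical simplicial structure on $s\mathbb{C}$ (\cite{duggersimp}, cf.\ Remark~\ref{remcotensorptwdescr}), for which finiteness of $K$ is precisely what the disjoint, universal finite coproducts of a lextensive $\mathcal{C}$ supply, and for $\mathcal{C}=\mathrm{Set}$ it reduces to Street's construction of the tensoring of $\mathbf{Cat}(\mathcal{C})$ \cite{streetintcat}. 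This matching of totalizations — keeping track of all the variances so that the equivalence is natural in $X$, $Y$ and $n$ — is the step I expect to be the main obstacle; everything else is either formal or a direct lextensivity computation.
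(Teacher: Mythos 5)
Your construction is, in substance, the paper's own: by universality of finite coproducts in a lextensive $\mathcal{C}$, the product $\underline{\Delta}^{n}\times X$ is levelwise $\coprod_{(\Delta^n)_m}X_m$, which is exactly the tensor $\Delta^n\otimes X$ that the paper builds via the copower bifunctor $\otimes_0\colon\mathrm{Set}^{\mathrm{fin}}\times\mathcal{C}\rightarrow\mathcal{C}$ (cf.\ Remark~\ref{remlkanexttensor}); your direct lextensivity check that $\underline{\Delta}^{n}$ is a complete Segal object plays the role of the paper's argument that $\otimes_0$ preserves pullbacks and hence descends to complete Segal objects, and that part is sound. Likewise, your reduction of the tensor property to a binatural equivalence of mapping spaces $s\mathcal{C}(\underline{\Delta}^{n}\times X,Y)\simeq s\mathcal{C}(X,Y^{\Delta^n})$ parallels the paper's reduction to Equation~(\ref{equmaptensdef}).

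The problem is that this equivalence is where essentially all of the content of the proposition lies, and you do not prove it: you sketch it and explicitly defer it as ``the main obstacle''. Moreover, the sketch's assertion that the two sides become ``the same totalization'' is not true termwise. After applying the end descriptions, the left-hand side involves at level $m$ powers of $\mathrm{Ext}(Y)(X_m)$ by the \emph{sets} $(\Delta^n)_m$ (coming from mapping out of the coproduct $\coprod_{(\Delta^n)_m}X_m$), whereas the right-hand side involves the simplicial weights $((\Delta^n\times\Delta^m)^{\Delta^{\bullet}})^{\simeq}$ coming from the d\'ecalage description of $Y^{\Delta^n}$ (Remark~\ref{remcotensorptwdescr}). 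Bridging them requires (i) the universal property of the copower together with the observation that these weights are levelwise sets because $\Delta^n\times\Delta^m$ has no non-identity isomorphisms, and (ii) a coend/co-Yoneda computation identifying $\int^{m\in\Delta^{op}}((\Delta^n\times\Delta^m)^{\Delta^{\bullet}})^{\simeq}\otimes_0 X_m$ with $\Delta^n\otimes X$, all carried out binaturally in $X$, $Y$ and $n$ (binaturality is what makes the passage from spaces back to hom-quasi-categories legitimate). This is precisely the chain (\ref{equproptensors1})--(\ref{equproptensors6}) in the paper's proof and constitutes its bulk; without it your argument produces the correct candidate tensor but does not establish that it has the required universal property.
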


\begin{proof}
%To prove the proposition, we recall the concept of relative right adjoints from Section~\ref{secpre} in this context.
%Let $\iota\colon\mathrm{Set}^{\mathrm{fin}}\hookrightarrow\mathcal{S}$ be the canonical inclusion of the category of finite sets into 
%the $\infty$-category $\mathcal{S}$ of spaces. We first note that the Yoneda embedding
%$\mathcal{C}^{op}\rightarrow\mathrm{Fun}(\mathcal{C},\mathcal{S})$ factors through the full sub-$\infty$-category
%$\iota\mathrm{RAdj}(\mathcal{C},\mathcal{S})\subseteq\mathrm{Fun}(\mathcal{C},\mathcal{S})$ as $\mathcal{C}$ has finite coproducts. Indeed, for every object $C\in\mathcal{C}$ and every finite set $X$, the copresheaf
%$\mathcal{S}(X,\mathcal{C}(C,\phv))\colon\mathcal{C}\rightarrow\mathcal{S}$ is corepresented by the $X$-fold copower
%$X\otimes_0 C:=\coprod_X C$ of $C$ in $\mathcal{C}$ (that is, the colimit of the constant functor $X\rightarrow\mathcal{C}$ with value 
%$C$). The general construction (\ref{equdefpla}) thus induces a composite
%\[\otimes_0\colon\mathcal{C}^{op}\xrightarrow{y}\iota\mathrm{RAdj}(\mathcal{C},\mathcal{S})\rightarrow\mathrm{Fun}(\mathrm{Set}^{\mathrm{fin}},\mathcal{C})^{op},\]
%or equivalently, a functor of the form
%\begin{align}\label{equdefplaspecialcase}
%\otimes_0\colon\mathrm{Set}^{\mathrm{fin}}\times\mathcal{C}\rightarrow\mathcal{C}
%\end{align}
%which assigns to a pair $(C,X)$ the $X$-fold copower $X\otimes_0 C$ binaturally.

The category $\mathrm{Set}^{\mathrm{fin}}$ of finite sets is the free finite coproduct completion of the trivial $\infty$-category $\ast$. 
That is to say, for any $\infty$-category $\mathcal{C}$ with finite coproducts, the embedding
$\ast\hookrightarrow\mathrm{Set}^{\mathrm{fin}}$ of the terminal object induces an equivalence
$\mathrm{Fun}^{\sqcup}(\mathrm{Set}^{\mathrm{fin}},\mathcal{C})\xrightarrow{\sim}\mathcal{C}$, where the left hand 
side denotes the $\infty$-category of finite coproduct preserving functors. Its inverse can be curried to give a functor of the form
\begin{align}\label{equdefplaspecialcase}
\otimes_0\colon\mathrm{Set}^{\mathrm{fin}}\times\mathcal{C}\rightarrow\mathcal{C}
\end{align}
which assigns to a pair $(C,X)$ the $X$-fold copower $X\otimes_0 C$ binaturally.

Furthermore, we can show that the functor (\ref{equdefplaspecialcase}) preserves pullbacks; for this it suffices to show that it 
preserves pullbacks in both variables (by way of the extensivity axioms satisfied by $\mathcal{C}$). Thus, first, for any given object 
$C\in\mathcal{C}$, the $\iota$-left adjoint $(\cdot)\otimes_0 C\colon\mathrm{Set}^{\mathrm{fin}}\rightarrow\mathcal{C}$ preserves 
pullbacks. To verify this, it suffices to show that the canonical factorization
$(\cdot)\otimes_0 C\colon\mathrm{Set}^{\mathrm{fin}}\rightarrow\mathcal{C}_{/C}$ is left exact (since the canonical projection
$\mathcal{C}_{/C}\rightarrow\mathcal{C}$ is pullback-preserving). The functor
$(\cdot)\otimes_0 C\colon\mathrm{Set}^{\mathrm{fin}}\rightarrow\mathcal{C}_{/C}$ evaluates a finite set $X$ at the $X$-fold copower of 
the terminal object $1_C\in\mathcal{C}_{/C}$, %and as such is the $\iota$-relative left adjoint associated to the global sections 
%functor $\Gamma\colon\mathcal{C}_{/C}\rightarrow\mathcal{S}$ in (\ref{equdefplaspecialcase}) when applied to $\mathcal{C}_{/C}$. Thus, 
and so $\ast\otimes_0 C\simeq\coprod_{\ast}1_C$ is a terminal object in $\mathcal{C}_{/C}$ by construction. One can 
show that $(\cdot)\otimes_0 C\colon\mathrm{Set}^{\mathrm{fin}}\rightarrow\mathcal{C}_{/C}$ furthermore preserves binary products and 
equalizers by hand (virtually following the ordinary categorical case as in \cite[Section VII.1, p.350]{mlmsheaves} for example, using 
that $\mathcal{C}$ is finitary extensive). As the $\infty$-category $\mathrm{Set}^{\mathrm{fin}}$ is left exact, it follows that
$(\cdot)\otimes_0 C\colon\mathrm{Set}^{\mathrm{fin}}\rightarrow\mathcal{C}_{/C}$ preserves all finite limits via
\cite[Proposition 4.4.3.2]{luriehtt}.

Second, for any given finite set $X$, the functor $X\otimes_0(\cdot)\colon\mathcal{C}\rightarrow\mathcal{C}$ preserves pullbacks again 
by finitary extensiveness of $\mathcal{C}$. 

Thus, we may consider the push-forward $\otimes_0^{s}:=s\otimes_0\colon s(\mathrm{Set}^{\mathrm{fin}}\times\mathcal{C})\rightarrow 
s\mathcal{C}$ which as a consequence preserves pullbacks as well. It hence descends to the respective full sub-$\infty$-categories
$\mathrm{Cat}_{\infty}(\mathrm{Set}^{\mathrm{fin}}\times\mathcal{C})\simeq\mathrm{Cat}_{\infty}(\mathrm{Set}^{\mathrm{fin}})\times\mathrm{Cat}_{\infty}(\mathcal{C})$ 
and $\mathrm{Cat}_{\infty}(\mathcal{C})$ of complete Segal objects. Since the category $\Delta$ is a locally finite category, its 
Yoneda embedding factors to give a left exact inclusion $y\colon\Delta\rightarrow s\mathrm{Set}^{\mathrm{fin}}$  (whose codomain 
literally denotes the category of simplicial objects in finite sets; not to be confused with the $\infty$-category of finite spaces). 
Furthermore, each $y(n)=\Delta^n\in s\mathrm{Set}^{\mathrm{fin}}$ is a Segal object in $\mathrm{Set}^{\mathrm{fin}}$; and as each 
$n\in\Delta$ is a posetal category, each $\Delta^n\in s\mathrm{Set}^{\mathrm{fin}}$ is in fact a complete Segal object
\cite[Example 3.9]{rasekhunivalence}. The Yoneda embedding of $\Delta$ thus further factors to give a left exact inclusion
$y\colon\Delta\rightarrow\mathrm{Cat}_{\infty}(\mathrm{Set}^{\mathrm{fin}})$. We thus obtain a diagram
\[\xymatrix{
\Delta\times\mathrm{Cat}_{\infty}(\mathcal{C})\ar[r]^{y\times\iota}\ar@/_1pc/[dr]_(.4){y\times 1} & s\mathrm{Set}^{\mathrm{fin}}\times s\mathcal{C}\ar[r]^(.6){\otimes_0^s} & s\mathcal{C} \\
 & \mathrm{Cat}_{\infty}(\mathrm{Set}^{\mathrm{fin}})\times\mathrm{Cat}_{\infty}(\mathcal{C})\ar[r]_(.65){\otimes_0^s}\ar@{^(->}[u] & \mathrm{Cat}_{\infty}(\mathcal{C})\ar@{^(->}[u]
}\]
and denote the induced bottom composition by
\[\otimes\colon\Delta\times\mathrm{Cat}_{\infty}(\mathcal{C})\rightarrow\mathrm{Cat}_{\infty}(\mathcal{C}).\]
We are left to show that for every $n\geq 0$ and every $X\in\mathrm{Cat}_{\infty}(\mathcal{C})$, the complete Segal object
$\Delta^n\otimes X$ is a tensor of $X$ with $\Delta^n$ in $\mathbf{Cat}_{\infty}(\mathcal{C})$. In terms of the enhanced mapping
$\infty$-category functor from Remark~\ref{rem2catstrdirect}, this means we are to show that for every
$Y\in\mathrm{Cat}_{\infty}(\mathcal{C})$ there is an equivalence
\[U(\mathrm{Cat}_{\infty}(\mathcal{C})(\Delta^n\otimes X,Y^{\Delta^{\bullet}}))\simeq U(\mathrm{Cat}_{\infty}(\mathcal{C})(X,Y^{\Delta^{\bullet}}))^{\Delta^n}\]
of $\infty$-categories natural in both the arguments $X,Y\in\mathrm{Cat}_{\infty}(\mathcal{C})$. By virtue of the existence of (functorial) 
cotensors, the fact that cotensors commute with one another, and that the equivalence $U$ preserves cotensors, this follows from the 
existence of a binatural equivalence
\[\mathrm{Cat}_{\infty}(\mathcal{C})(\Delta^n\otimes X,Y^{\Delta^{\bullet}})\simeq\mathrm{Cat}_{\infty}(\mathcal{C})(X,(Y^{\Delta^n})^{\Delta^{\bullet}})\]
of complete Segal spaces. For this it in turn suffices to construct a binatural (!) equivalence
\begin{align}\label{equmaptensdef}
\mathrm{Cat}_{\infty}(\mathcal{C})(\Delta^n\otimes X,Y)\simeq\mathrm{Cat}_{\infty}(\mathcal{C})(X,Y^{\Delta^{n}})
\end{align}
of spaces. We hence finish the proof with a construction of such an equivalence. We compute
\begin{align}
\label{equproptensors1} \mathrm{Cat}_{\infty}(\mathcal{C})(X,Y^{\Delta^{n}}) &
\simeq \int\limits_{m\in\Delta^{op}}\mathcal{C}(X_m,\{((\Delta^n\times\Delta^m)^{\Delta^{\bullet}})^{\simeq},Y\})\\
\label{equproptensors2} &\simeq \int\limits_{m\in\Delta^{op}}s\mathcal{S}(((\Delta^n\times\Delta^m)^{\Delta^{\bullet}})^{\simeq},sy(Y)(X_m)) \\
\notag &\simeq \int\limits_{(m,l)\in\Delta^{op}}\mathcal{S}(((\Delta^n\times\Delta^m)^{\Delta^{l}})^{\simeq},\mathcal{C}(X_m,Y_l))\\
\label{equproptensors3} &\simeq\int\limits_{(m,l)\in\Delta^{op}}\mathcal{C}(((\Delta^n\times\Delta^m)^{\Delta^{l}})^{\simeq}\otimes_0 X_m,Y_l) \\
\label{equproptensors4} &\simeq\int\limits_{m\in\Delta^{op}}s\mathcal{C}(((\Delta^n\times\Delta^m)^{\Delta^{\bullet}})^{\simeq}\otimes_0 X_m,Y) \\
\notag &\simeq s\mathcal{C}(\int\limits^{m\in\Delta^{op}}((\Delta^n\times\Delta^m)^{\Delta^{\bullet}})^{\simeq}\otimes_0 X_m,Y) \\
\label{equproptensors5}  &\simeq s\mathcal{C}(\Delta^n\otimes X,Y)
\end{align}
binaturally. Here, Line (\ref{equproptensors1}) follows from the pointwise description of the 
cotensor $Y^{\Delta^n}$ as a simplicial collection of accordingly weighted limits in $\mathcal{C}$ (Proposition~\ref{proppowering}). 
Line (\ref{equproptensors2}) is an instance of the universal property of weighted limits (Lemma~\ref{lemma_weights}). In Line 
(\ref{equproptensors3}) we use the universal property of the coproduct functor
$\otimes_0$, together with the fact that the spaces $((\Delta^n\times\Delta^m)^{\Delta^{l}})^{\simeq}$ are in fact sets (since the 
categories $\Delta^n\times\Delta^m$ have no non-trivial isomorphisms). In Line (\ref{equproptensors4}), the domain
$((\Delta^n\times\Delta^m)^{\Delta^{\bullet}})^{\simeq}\otimes_0 X_m$ is defined by pointwise application of $(\cdot)\otimes_0 X_m$ on 
the simplicial finite set $((\Delta^n\times\Delta^m)^{\Delta^{\bullet}})^{\simeq}$. Lastly, Line (\ref{equproptensors5}) is essentially an application of the Yoneda lemma: First, we have a natural equivalence
\begin{align}
\notag \int\limits^{m\in\Delta^{op}}((\Delta^n\times\Delta^m)^{\Delta^{\bullet}})^{\simeq}\otimes_0 X_m & \simeq ((\Delta^n)^{\Delta^{\bullet}})^{\simeq}\otimes_0^s\int\limits^{m\in\Delta^{op}}((\Delta^m)^{\Delta^{\bullet}})^{\simeq}\otimes_0 X_m \\
\label{equproptensors6} &\simeq \Delta^n \otimes \int\limits^{m\in\Delta^{op}}\Delta^m(\bullet)\otimes_0 X_m
\end{align}
in $\mathcal{C}$. And second, the right component in (\ref{equproptensors6}) is the coend of $X$ with the corepresentables
$\Delta^{op}(m,\phv)$ over $\Delta^{op}$. This can be shown to compute $X$ itself by the usual Yoneda lemma argument.
\end{proof}

\begin{remark}\label{remlkanexttensor}
By construction, the tensor $\Delta^n\otimes X$ defined in Proposition~\ref{proptensors} is a simplicial object in $\mathcal{C}$ which 
evaluates at $[m]\in\Delta^{op}$ the coproduct $\coprod_{\Delta(m,n)}X_m$. In line with Remark~\ref{remcotensorptwdescr}, this is a general
homotopy-coherent version of the simplicial tensors constructed in \cite{duggersimp}. Furthermore, by essentially the same proof one can 
show that whenever all corepresentables
\[\mathcal{C}(C,\phv)\colon\mathcal{C}\rightarrow\mathcal{S}\] 
have a pullback-preserving left adjoint (relative to the full $\infty$-category of finite spaces/$\kappa$-small spaces/\dots), then 
there is a functor
\[\otimes\colon\mathrm{Cat}_{\infty}^{(\mathrm{fin}/\kappa/\dots)}\rightarrow\mathrm{Fun}(\mathrm{Cat}_{\infty}(\mathcal{C}),\mathrm{Cat}_{\infty}(\mathcal{C}))\]
which computes tensors with all (finite/$\kappa$-small/\dots) $\infty$-categories in $\mathbf{Cat}_{\infty}(\mathcal{C})$. For example, the 
corepresentables $\mathcal{C}(C,\phv)$ have pullback-preserving left adjoints 
whenever $\mathcal{C}$ is an $\infty$-topos, as each global sections functor $\Gamma\colon\mathcal{C}_{/C}\rightarrow\mathcal{S}$ has a 
left exact left adjoint. Hence, if $\mathcal{C}$ is an $\infty$-topos then $\mathbf{Cat}_{\infty}(\mathcal{C})$ is also tensored over
$\mathrm{Cat}_{\infty}$ as stated in \cite[Section 3.4]{martini_yoneda}.
\end{remark}

\begin{remark}\label{remmappingqcatalt2}
Let $\mathcal{C}$ be a finitary lextensive $\infty$-category. Then functoriality of the tensoring $\otimes$ in 
Proposition~\ref{proptensors} together with binaturality of Equation (\ref{equmaptensdef}) gives an alternative description of the 
enhanced mapping $\infty$-category functor from Remark~\ref{remmappingqcatalt} in terms of its tensoring with the $\infty$-categories
$\Delta^n$ rather than its cotensoring:
\[\mathrm{Map}_{\mathrm{Cat}_{\infty}(\mathcal{C})}(X,Y)\simeq U(\mathrm{Cat}_{\infty}(\mathcal{C})(\Delta^{\bullet}\otimes X,Y)).\]
\end{remark}

\begin{remark}
While all cotensors in $\mathbf{Cat}_{\infty}(\mathcal{C})$ which exist are automatically preserved by the canonical inclusion
$\mathbf{Cat}_{\infty}(\mathcal{C})\subset\mathbf{Fun}(\mathcal{C}^{op},\mathbf{Cat}_{\infty})$ by Proposition~\ref{proppowering}, the 
tensors generally are not. Case in point, if $\mathcal{C}$ is finitary lextensive and $\ast$ is terminal in $\mathcal{C}$, then
$\mathrm{Ext}(\Delta^1\otimes c(\ast))$ is generally not the tensor
$\Delta^1\otimes\mathrm{Ext}(\ast)\simeq c(\Delta^1)\times \ast\simeq c(\Delta^1)$ in
$\mathbf{Fun}(\mathcal{C}^{op},\mathbf{Cat}_{\infty})$. Here, $c(\Delta^1)\colon\mathcal{C}^{op}\rightarrow\mathrm{Cat}_{\infty}$ 
denotes the constant functor with value $\Delta^1$. Indeed, if we denote by $\mathbbm{2}$ the coproduct $\ast\sqcup\ast$ in
$\mathcal{C}$, then $c(\Delta^1)(\mathbbm{2})$ is just $\Delta^1$ by definition. However
$\mathrm{Ext}(\Delta^1\otimes c(\ast))(\mathbbm{2})$ is the category $\Delta^1\sqcup\Delta^0\sqcup\Delta^0$. Explicitly, its objects are given by the four 
morphisms of type $\mathbbm{2}\rightarrow\mathbbm{2}$ in $\mathcal{C}$ together with one arrow from the pair
$(\mathrm{incl},\mathrm{incl})\colon\mathbbm{2}\rightarrow\mathbbm{2}$ of left inclusions to the pair
$(\mathrm{incr},\mathrm{incr})\colon\mathbbm{2}\rightarrow\mathbbm{2}$ of right inclusions.
\end{remark}

\begin{corollary}\label{cortensors}
Suppose $\mathcal{C}$ is finitary lextensive. Then there is a cosimplicial object
\[\otimes\colon\Delta\rightarrow\mathrm{Fun}(\mathcal{C},\mathrm{Cat}_{\infty}(\mathcal{C}))\]
such that for all objects $C\in\mathcal{C}$ and for all $Y\in\mathrm{Cat}_{\infty}(\mathcal{C})$ there is a natural equivalence
\[\mathrm{Cat}_{\infty}(\mathcal{C})(\Delta^{\bullet}\otimes c(C),Y)\simeq sy(Y)(C)\]
of $\mathcal{C}$-indexed complete Segal spaces. In particular, there is a natural equivalence
\[U(\mathrm{Cat}_{\infty}(\mathcal{C})(\Delta^{\bullet}\otimes c(C),Y))\simeq\mathrm{Ext}(Y)(C)\]
of underlying $\infty$-categories, and for every $n\in\Delta$ there is a binatural equivalence
\[\mathrm{Cat}_{\infty}(\mathcal{C})(\Delta^{n}\otimes c(C),Y)\simeq\mathcal{C}(C,Y_n)\]
of spaces. Thus, the functor $\Delta^n\otimes c\colon\mathcal{C}\rightarrow\mathrm{Cat}_{\infty}(\mathcal{C})$ is left adjoint to evaluation at 
$n$.
\end{corollary}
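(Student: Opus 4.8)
The plan is to read all parts of the statement off the tensoring functor constructed in the proof of Proposition~\ref{proptensors}, together with a single end computation; the substance is in keeping the levelwise equivalences natural so that they assemble correctly. First I would define the cosimplicial object $\otimes\colon\Delta\to\mathrm{Fun}(\mathcal{C},\mathrm{Cat}_{\infty}(\mathcal{C}))$ as the transpose of the composite
\[
\Delta\times\mathcal{C}\xrightarrow{\ \mathrm{id}\times c\ }\Delta\times\mathrm{Cat}_{\infty}(\mathcal{C})\xrightarrow{\ \otimes\ }\mathrm{Cat}_{\infty}(\mathcal{C}),
\]
where $\otimes$ on the right is the functor built in that proof and $c$ is the constant-simplicial-object inclusion. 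Since $c(C)$ is a complete Segal object (constant simplicial objects are complete Segal groupoids) and $\mathrm{Cat}_{\infty}(\mathcal{C})\subseteq s\mathcal{C}$ is full, each $\Delta^{n}\otimes c(C)$ again lies in $\mathrm{Cat}_{\infty}(\mathcal{C})$ --- it is a value of the bottom composite of the diagram in the proof of Proposition~\ref{proptensors} --- so that $\mathrm{Cat}_{\infty}(\mathcal{C})(\Delta^{n}\otimes c(C),Y)\simeq s\mathcal{C}(\Delta^{n}\otimes c(C),Y)$.

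The heart of the matter is the binatural equivalence $\mathrm{Cat}_{\infty}(\mathcal{C})(\Delta^{n}\otimes c(C),Y)\simeq\mathcal{C}(C,Y_{n})$; the remaining assertions follow from it. Here I would invoke the levelwise description of the tensor from Remark~\ref{remlkanexttensor}, $(\Delta^{n}\otimes X)_{m}\simeq\coprod_{\mathrm{Hom}_{\Delta}([m],[n])}X_{m}$, so that $(\Delta^{n}\otimes c(C))_{m}\simeq\coprod_{\Delta([m],[n])}C$, and then compute
\begin{gather*}
s\mathcal{C}(\Delta^{n}\otimes c(C),Y)\simeq\int_{m\in\Delta^{op}}\mathcal{C}\Bigl(\coprod_{\Delta([m],[n])}C,\,Y_{m}\Bigr)\simeq\int_{m\in\Delta^{op}}\mathcal{S}\bigl((\Delta^{n})_{m},\,\mathcal{C}(C,Y_{m})\bigr)\\
\simeq s\mathcal{S}\bigl(\Delta^{n},\,sy(Y)(C)\bigr)\simeq sy(Y)(C)_{n}\simeq\mathcal{C}(C,Y_{n}),
\end{gather*}
where the first step is the end-formula for mapping spaces in a diagram $\infty$-category (\cite[Proposition 5.1]{ghnlaxlimits}), the second is the universal property of the finite copower (which exists by lextensivity) together with $(\Delta^{n})_{m}=\mathrm{Hom}_{\Delta}([m],[n])$, the third reassembles the end as a mapping space in $s\mathcal{S}$ using that $sy(Y)(C)$ is by definition the simplicial space $[m]\mapsto\mathcal{C}(C,Y_{m})$, and the last is the Yoneda lemma, $\Delta^{n}$ being the representable presheaf on $\Delta$ at $[n]$.

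Running this chain functorially in $[n]\in\Delta$, in $C$ and in $Y$ then yields the equivalence $\mathrm{Cat}_{\infty}(\mathcal{C})(\Delta^{\bullet}\otimes c(C),Y)\simeq sy(Y)(C)$ of simplicial spaces; since $sy(Y)(C)$ is a complete Segal space (being $sy$ of a complete Segal object) and the equivalence is natural in $C$, it is an equivalence of $\mathcal{C}$-indexed complete Segal spaces. Applying the underlying-$\infty$-category functor $U$ and using $\mathrm{Ext}=U_{\ast}\circ sy$ (Definition~\ref{defext}) gives $U\bigl(\mathrm{Cat}_{\infty}(\mathcal{C})(\Delta^{\bullet}\otimes c(C),Y)\bigr)\simeq\mathrm{Ext}(Y)(C)$, while the levelwise equivalence, being natural in $Y$, is precisely an adjunction equivalence exhibiting $\Delta^{n}\otimes c\colon\mathcal{C}\to\mathrm{Cat}_{\infty}(\mathcal{C})$ as left adjoint to evaluation at $[n]$.

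I expect the only delicate point to be coherence: no single step above is hard, but to obtain an equivalence of (($\mathcal{C}$-indexed) complete Segal) spaces rather than merely a pointwise family of equivalences of spaces, the computation must be organized as a chain of natural equivalences between functors $\Delta^{op}\times\mathcal{C}^{op}\times\mathrm{Cat}_{\infty}(\mathcal{C})\to\mathcal{S}$, with the end-formula, the copower adjunction and the Yoneda step all verified to be natural in the simplex variable, so that the cosimplicial structure of $\Delta^{\bullet}\otimes c(C)$ matches the simplicial structure of $sy(Y)(C)$. As a consistency check one can derive the key equivalence a second way, by combining the binatural tensor--cotensor equivalence (\ref{equmaptensdef}) from the proof of Proposition~\ref{proptensors} with $(Y^{\Delta^{n}})_{0}\simeq\{\Delta^{n},Y\}\simeq Y_{n}$ (Remark~\ref{remcotensorptwdescr} and the value of a weighted limit at a representable weight) and with $s\mathcal{C}(c(C),Z)\simeq\mathcal{C}(C,Z_{0})$, the latter because $[0]$ is initial in $\Delta^{op}$.
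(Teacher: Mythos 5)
Your argument is correct, but your main route differs from the paper's. The paper proves the corollary by restricting the binatural tensor--cotensor equivalence~(\ref{equmaptensdef}) along $c$, then using the adjunction $c\adj\mathrm{ev}_0$ together with the representability fact that $(\mathrm{Ext}(Y)^{\Delta^n})^{\simeq}$ is represented by $Y_n$ (citing \cite[Proposition 5.14]{rs_comp}); naturality in the simplex variable is then cheap, because the simplicial object $Y^{\Delta^{\bullet}}$ is by construction defined through $\mathrm{Ext}(Y)^{\Delta^{\bullet}}$ and fully faithfulness of $\mathrm{Ext}$. You instead bypass the cotensor entirely and compute $s\mathcal{C}(\Delta^n\otimes c(C),Y)$ directly from the levelwise coproduct formula of Remark~\ref{remlkanexttensor}, via the end formula for mapping spaces, the copower universal property, and the Yoneda lemma in $s\mathcal{S}$ --- in effect re-running a special case of the chain (\ref{equproptensors1})--(\ref{equproptensors5}) with $X=c(C)$, but landing on $\mathcal{C}(C,Y_n)$ without routing through $Y^{\Delta^n}$ or the external representability input. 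What your route buys is a more self-contained and elementary computation (lextensivity enters only through the existence and functoriality of the finite copowers and of the tensor itself); what it costs is exactly the coherence point you flag: all three identifications must be organized as natural equivalences of functors $\Delta^{op}\times\mathcal{C}^{op}\times\mathrm{Cat}_{\infty}(\mathcal{C})\rightarrow\mathcal{S}$ so that the cosimplicial structure of $\Delta^{\bullet}\otimes c(C)$ matches the simplicial structure of $sy(Y)(C)$, a verification the paper's route largely sidesteps by leaning on the already-established binaturality of~(\ref{equmaptensdef}). Your closing ``consistency check'' is essentially the paper's actual proof, so you have both arguments in hand; the deductions of the statements about $U$, $\mathrm{Ext}(Y)(C)$ and the adjunction $\Delta^n\otimes c\dashv\mathrm{ev}_n$ from the levelwise equivalence agree with the paper.
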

\begin{proof}
The inclusion $c\colon\mathcal{C}\rightarrow\mathrm{Cat}_{\infty}(\mathcal{C})$ induces a restriction
\[\otimes\colon\Delta\rightarrow\mathrm{Fun}(\mathcal{C},\mathrm{Cat}_{\infty}(\mathcal{C}))\]
of the cosimplicial object in Proposition~\ref{proptensors}. For every $C\in\mathcal{C}$ and every
$Y\in\mathrm{Cat}_{\infty}(\mathcal{C})$, we obtain a chain of natural equivalences of $\mathcal{C}$-indexed complete Segal spaces as 
follows.
\begin{align}
\label{equcortensors1} \mathrm{Cat}_{\infty}(\mathcal{C})(\Delta^{\bullet}\otimes c(C),Y) &\simeq \mathrm{Cat}_{\infty}(\mathcal{C})(c(C),Y^{\Delta^{\bullet}}) \\
\label{equcortensors2}  & \simeq\mathcal{C}(C,\mathrm{ev}_0(Y^{\Delta^{\bullet}})) \\
\label{equcortensors3}  & \simeq\mathcal{C}(C,Y_{\bullet}) \\ 
\notag &\simeq sy(Y)(C)
\end{align}
Here, Line~(\ref{equcortensors1}) is Equation (\ref{equmaptensdef}) after restriction along
$c\colon\mathcal{C}\rightarrow\mathrm{Cat}_{\infty}(\mathcal{C})$. Line~(\ref{equcortensors2}) is part of the adjunction $c\adj \mathrm{ev}_0$, 
Line~(\ref{equcortensors3}) follows from the fact that for any given $n\geq 0$  the presheaf
$\mathrm{Ext}(Y^{\Delta^n})^{\simeq}\simeq(\mathrm{Ext}(Y)^{\Delta^n})^{\simeq}$ is represented by $Y_n$
\cite[Proposition 5.14]{rs_comp}. These equivalences are natural in $\Delta^{op}$ given that the simplicial object 
$Y^{\Delta^{\bullet}}\colon\Delta^{op}\rightarrow\mathrm{Cat}_{\infty}(\mathcal{C})$ was defined in terms of the simplicial object
$\mathrm{Ext}(Y)^{\Delta^{\bullet}}$ and the fact that
$\mathrm{Ext}\colon\mathrm{Cat}_{\infty}(\mathcal{C})\rightarrow\mathrm{Fun}(\mathcal{C}^{op},\mathrm{Cat}_{\infty})$ is fully faithful (and 
so has an inverse on its image). The other equivalences in the statement are all immediate consequences.
\end{proof}

\begin{remark}
Some individual examples of internal categories of the form
$J\otimes c(\ast)$ in a 1-category $\mathcal{C}$ with a terminal object $\ast$ -- explicitly the four internal categories $\Delta^0\otimes c(\ast)$, $\Delta^1\otimes c(\ast)$, $(\Delta^0\sqcup\Delta^0)\otimes c(\ast)$ and 
$N(\xymatrix{0\ar@<.5ex>[r]\ar@<-.5ex>[r] & 1})\otimes c(\ast)$ -- have been described in \cite[Example 7.1.2.(iii)]{jacobsttbook}.
A general theory of tensors is neither addressed here or in \cite{streetintcat} however.
\end{remark}

Our results to this point yield the following hierarchy of structural richness, which expands its ordinary categorical analogues shown 
in \cite[Section 4]{streetintcat}.
Therefore, we recall that a simplicial diagram $W\colon\mathbf{I}\rightarrow\mathbf{S}$, i.e.\ a simplicial weight, is said to be flexible 
if it is a projectively cofibrant cell complex (with respect to either $(\mathbf{S},\mathrm{QCat})$ or $(\mathbf{S},\mathrm{Kan})$, given 
that they have the same cofibrations). A flexibly weighted limit in a simplicial category $\mathbf{C}$ is the weighted limit
$\{W,F\}\in\mathbf{C}$ of a simplicial diagram $F\colon\mathbf{I}\rightarrow\mathbf{C}$ associated to a flexible weight $W$
\cite[Section 6.2]{riehlverityelements}.

%we use the definition of
%$(\infty,2)$-limits inside an $(\infty,2)$-category (when presented as an $\infty$-bicategory in the sense of \cite{kerodon, ghl2fib}) of 
%\cite[Section 5]{ghl2fib}.

\begin{theorem}\label{corcosmos2}
Let $\mathcal{C}$ be an $\infty$-category and let
$\mathrm{Ext}\colon\mathbf{Cat}_{\infty}(\mathcal{C})\hookrightarrow\mathbf{Fun}(\mathcal{C}^{op},\mathbf{Cat}_{\infty})$ be the canonical 
embedding of $(\infty,2)$-categories.
\begin{enumerate}
\item Suppose $\mathcal{C}$ is (finitely) complete. Then $\mathbf{Cat}_{\infty}(\mathcal{C})$ has all flexibly weighted limits (with 
pointwise finite weights of finite index). Furthermore, the $\infty$-bicategory canonically associated to
$\mathbf{Cat}_{\infty}(\mathcal{C})$ is (finitely) $(\infty,2)$-complete in the sense of \cite[Section 6]{ghl2fib}, and $\mathrm{Ext}$ preserves these 
limits.
%Then $\mathbf{Cat}_{\infty}(\mathcal{C})$ is
%(finitarily) $(\infty,2)$-complete and $\mathrm{Ext}$ is (finitarily) continuous in the following sense: 
%For every $(\infty,2)$-category $\mathbf{J}$ (with finitely many objects and such that
%$\mathrm{Hom}_{\mathbf{J}}(x,y)\in\mathbf{QCat}^{\mathrm{fin}}$ for all $x,y\in\mathbf{J}$), every simplicially enriched (pointwise finite) 
%weight $W\colon \mathbf{J}\rightarrow\mathbf{QCat}$, and every simplicially enriched
%diagram $F\colon \mathbf{J}\rightarrow\mathbf{Cat}_{\infty}(\mathcal{C})$, the $W$-weighted homotopy limit
%$\mathrm{holim}^W(\mathrm{Ext}(F))$ is again contained in $\mathbf{Cat}_{\infty}(\mathcal{C})$.
\item Suppose $\mathcal{C}$ is countably complete and cartesian closed. Then the underlying $\infty$-category
$\mathrm{Cat}_{\infty}(\mathcal{C})$ is cartesian closed and $\mathrm{Ext}$ preserves exponentials.
\item Suppose $\mathcal{C}$ is finitary lextensive. Then $\mathbf{Cat}_{\infty}(\mathcal{C})$ is strongly corepresentable.
%\item Suppose $\mathcal{C}$ is an $\infty$-topos. Then $\mathbf{Cat}_{\infty}(\mathcal{C})$ is an $(\infty,1)$-localic $(\infty,2)$-topos (in 
%the sense of Definition~\ref{def1loc2topos}). 
\end{enumerate}
\end{theorem}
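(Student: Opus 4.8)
The plan is to deduce all three parts from results already in hand, with Part~1 carrying essentially all of the new work. Part~2 is nothing but Proposition~\ref{propcartclosed}, which already gives both the cartesian closure of $\mathrm{Cat}_{\infty}(\mathcal{C})$ and the preservation of exponentials by $\mathrm{Ext}$ under exactly the hypothesis that $\mathcal{C}$ be countably complete and cartesian closed. Part~3 is a repackaging of Proposition~\ref{proptensors}: there we constructed a cosimplicial endofunctor $\otimes\colon\Delta\to\mathrm{Fun}(\mathrm{Cat}_{\infty}(\mathcal{C}),\mathrm{Cat}_{\infty}(\mathcal{C}))$, and by construction it is compatible with the enhanced mapping $\infty$-category functor of Remark~\ref{rem2catstrdirect} (this compatibility is recorded in Remark~\ref{remmappingqcatalt2}), so it determines a cosimplicial object in $\mathrm{Fun}(\mathbf{Cat}_{\infty}(\mathcal{C}),\mathbf{Cat}_{\infty}(\mathcal{C}))$; the binatural equivalence~(\ref{equmaptensdef}) is then precisely the statement that each $\Delta^n\otimes(\cdot)$ computes a functorial tensoring with $\Delta^n$ in the sense of Definition~\ref{defcorep}, so $\mathbf{Cat}_{\infty}(\mathcal{C})$ is strongly corepresentable.

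For Part~1 the input is Theorem~\ref{corcosmos1}, which presents $\mathbf{Cat}_{\infty}(\mathcal{C})$ as a full finitary sub-$\infty$-cosmos of $\mathbf{Fun}(\mathcal{C}^{op},\mathbf{Cat}_{\infty})$ when $\mathcal{C}$ is finitely complete, and as a full sub-$\infty$-cosmos in the sense of \cite{riehlverityelements} when $\mathcal{C}$ is complete. By the general theory of \cite{riehlverityelements} every $\infty$-cosmos admits all flexible weighted limits, built as iterated products, pullbacks of fibrations, sequential limits of fibrations, and simplicial cotensors; in the finitary setting of \cite{rvyoneda} the same construction, now using only the terminal object, pullbacks of fibrations, and finite simplicial cotensors, yields the flexible limits whose weights are pointwise finite and of finite index. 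Since by Definition~\ref{defsubcosmoses} all of these generating limits in $\mathbf{Cat}_{\infty}(\mathcal{C})$ are computed as in the ambient cosmos, it follows immediately that $\mathbf{Cat}_{\infty}(\mathcal{C})$ has all such flexible limits and that $\mathrm{Ext}$ preserves them.

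It then remains to translate the flexible-limit statement into $(\infty,2)$-completeness of the associated $\infty$-bicategory $N_{\Delta}^{\mathrm{sc}}(\mathbf{Cat}_{\infty}(\mathcal{C}))$ of Remark~\ref{remscbicat}. The generating class of $(\infty,2)$-limits of \cite{ghl2fib} (products, cotensors by finite $\infty$-categories such as the arrow $\Delta^1$, comma objects, inserters, equifiers, and so on) consists of flexible weighted limits, with finite weights of finite index in the finitely complete case, and hence all exist in $\mathbf{Cat}_{\infty}(\mathcal{C})$ by the previous step; so the associated $\infty$-bicategory is (finitely) $(\infty,2)$-complete. Functoriality of the scaled nerve, together with the fact that $\mathrm{Ext}$ preserves the underlying flexible limits, gives that $\mathrm{Ext}$ preserves these $(\infty,2)$-limits as well.

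The main obstacle is this last step, namely the identification of the bicategorical $(\infty,2)$-limits of \cite{ghl2fib} with the cosmological flexible limits of \cite{riehlverityelements}, and the verification that the scaled nerve transports the one to the other. This is a comparison between two parallel developments of the same formal theory, and is exactly the point at which the equivalence of models from \cite{lgood} is invoked; I expect only bookkeeping here rather than any substantive difficulty, which is why Remark~\ref{remscbicat} notes that the scaled nerve enters the argument only implicitly.
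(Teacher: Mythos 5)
Your Parts 2 and 3 are exactly the paper's argument (it simply cites Proposition~\ref{propcartclosed} and Proposition~\ref{proptensors}), and the first half of your Part 1 -- closure of $\mathbf{Cat}_{\infty}(\mathcal{C})$ under flexibly weighted limits, deduced from Theorem~\ref{corcosmos1} and the fact that (finitary) sub-$\infty$-cosmoses inherit flexible limits (\cite[Proposition 6.2.8]{riehlverityelements}) -- also matches. The gap is in your passage from flexible limits to $(\infty,2)$-completeness in the sense of \cite{ghl2fib}. You assert that the $(\infty,2)$-limits of \cite{ghl2fib} admit a ``generating class'' of PIE-type flexible limits (products, finite cotensors, commas, inserters, equifiers), so that completeness follows from having these generators; no such generation statement appears in \cite{ghl2fib}, you do not prove it, and it is not bookkeeping: it is an $(\infty,2)$-analogue of the 2-categorical theorem that PIE limits generate all bilimits, i.e.\ precisely the kind of claim that needs the comparison machinery you defer. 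In particular your argument, as written, only produces limits for flexible weights, while $(\infty,2)$-completeness concerns arbitrary weights.

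The paper bridges this differently, in two steps you are missing. First, since every (pointwise finite) simplicial weight $W\colon J\rightarrow\mathbf{S}$ admits a (finite) flexible cofibrant replacement and weighted homotopy limits are invariant under such replacement of the weight, closure under flexibly weighted limits already implies that for \emph{every} weight $W$ the homotopy limit $\mathrm{holim}^W(\mathrm{Ext}(F))$, computed in $\mathbf{Fun}(\mathcal{C}^{op},(\mathbf{S},\mathrm{QCat}))_{\mathrm{inj}}$, again lies in $\mathbf{Cat}_{\infty}(\mathcal{C})$; this is the step that upgrades from flexible to arbitrary weights, and it also makes preservation by $\mathrm{Ext}$ automatic, since the limits are computed in the ambient $(\infty,2)$-category. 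Second, $(\infty,2)$-completeness of the associated $\infty$-bicategory of Remark~\ref{remscbicat} is reduced to exactly this closure statement via \cite[Corollary 5.3.11]{ghl2fib}, the model comparison \cite[Remark 0.0.4]{lgood}, suitable fibrant replacements, and the identification of the fibration categories $\mathbf{QCat}$ and $(\mathbf{S}^+,\mathrm{Cart})^f$. To repair your Part 1, replace the unproved PIE-generation claim by this cofibrant-replacement-of-weights argument together with the cited comparison; the rest of your proposal stands.
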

\begin{proof}
For Part 1, we can use that (finitary) sub-$\infty$-cosmoses are generally closed under flexibly (finitely) weighted limits via
\cite[Proposition 6.2.8]{riehlverityelements}. Thus, closure of $\mathbf{Cat}_{\infty}(\mathcal{C})$ under (finite) flexibly weighted 
limits follows from Theorem~\ref{corcosmos1}. As every (finite) simplicial weight $W\colon\mathbf{I}\rightarrow\mathbf{S}$ has a (finite) 
flexible cofibrant replacement, it follows that for every (finite) simplicial diagram
$F\colon\mathbf{I}\rightarrow\mathbf{Cat}_{\infty}(\mathcal{C})$ and for every (pointwise finite) simplicial weight
$W\colon\mathbf{I}\rightarrow\mathbf{S}$, the $W$-weighted homotopy limit $\mathrm{holim}^W(\mathrm{Ext}(F))$ of 
the composition $\mathrm{Ext}(F)\colon\mathbf{I}\rightarrow\mathbf{Fun}(\mathcal{C}^{op},(\mathbf{S},\mathrm{QCat}))_{\mathrm{inj}}$ is 
again contained in $\mathbf{Cat}_{\infty}(\mathcal{C})$.
To deduce closure under general $(\infty,2)$-limits, let $(\mathbf{S}^+,\mathrm{Cart})$ denote the marked model structure on the category
$\mathbf{S}^+$ of marked simplicial sets \cite[Section 3.1]{luriehtt}. By a combination of \cite[Corollary 5.3.11]{ghl2fib},
\cite[Remark 0.0.4]{lgood}, a series of suitable fibrant replacements in the canonically induced model structure on the category of
$\mathbf{S}^+$-enriched categories, and the fact that the fibration categories $\mathbf{QCat}$ and $(\mathbf{S}^+,\mathrm{Cart})^f$ are 
isomorphic, one reduces (finite) $(\infty,2)$-completeness of $\mathbf{Cat}_{\infty}(\mathcal{C})$  (that is to say, of its canonically 
associated $\infty$-bicategory as referred to in Remark~\ref{remscbicat}) to the fact that $\mathbf{Cat}_{\infty}(\mathcal{C})$ is closed 
under all (finitely) weighted homotopy limits in $\mathbf{Fun}(\mathcal{C}^{op},(\mathbf{S},\mathrm{QCat}))_{\mathrm{inj}}$.

Parts 2 and 3 are covered by Proposition~\ref{proptensors} and Proposition~\ref{propcartclosed}.
%Part 4 will be made precise and be shown in Proposition~\ref{prop1oc2topos}.
%, suppose there is a small $\infty$-category $\mathcal{B}$ such that $\mathcal{C}$ arises as a left exact accessible 
%localization of $\hat{\mathcal{B}}$.
%
% to get infty,2-topos result only need that infty-version of Riehl book Prop. 3.7.10 (note that have explicit proof of fact that Cat(C) is bitensored over Cat). Can't I here actually use that right adjoint inclusion in fact preserves cotensors strictly and that work with strictly simplicially enriched categories? So can use Prop 3.7.10 directly and finish.
\end{proof}

\subsection{A Segal--Yoneda lemma}

We finish this section with a version of the Yoneda lemma for small indexed $\infty$-categories and some of its applications. 

\begin{lemma}\label{lemmapreyoneda}
Let $X\in s\mathcal{C}$ be a complete Segal object in $\mathcal{C}$. Let
$F\colon\mathcal{C}^{op}\rightarrow s\mathcal{S}$ be a $\mathcal{C}$-indexed simplicial space. Then there is a binatural 
equivalence
\[\mathrm{Fun}(\mathcal{C}^{op},\mathrm{Cat}_{\infty}(\mathcal{S}))(sy(X),F)\xrightarrow{\simeq}\int\limits_{n\in\Delta^{op}}F_n(X_n)\]
of spaces which is pointwise induced by the Yoneda lemma for presheaves.
\end{lemma}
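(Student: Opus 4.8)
The plan is to reduce the statement to the ordinary $\infty$-categorical Yoneda lemma for presheaves via two routine identifications: currying, and the end-formula for mapping spaces in a functor $\infty$-category. First I would unwind the left-hand side. Since $\mathrm{Cat}_{\infty}(\mathcal{S})\subseteq s\mathcal{S}$ is fully faithful, so is $\mathrm{Fun}(\mathcal{C}^{op},\mathrm{Cat}_{\infty}(\mathcal{S}))\subseteq\mathrm{Fun}(\mathcal{C}^{op},s\mathcal{S})$, and by currying the latter is equivalent to $\mathrm{Fun}(\Delta^{op},\hat{\mathcal{C}})$. Under this equivalence $F$ corresponds to the simplicial presheaf $[n]\mapsto F_n$, and $sy(X)$ corresponds to the composite $y\circ X\colon\Delta^{op}\xrightarrow{X}\mathcal{C}\xrightarrow{y}\hat{\mathcal{C}}$, because $sy(X)(C)_n\simeq\mathcal{C}(C,X_n)$, i.e.\ at level $n$ the presheaf $sy(X)$ is $y(X_n)$. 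Hence $\mathrm{Fun}(\mathcal{C}^{op},\mathrm{Cat}_{\infty}(\mathcal{S}))(sy(X),F)\simeq\mathrm{Fun}(\Delta^{op},\hat{\mathcal{C}})(y\circ X,F)$.

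Next I would apply the end-formula for mapping spaces in functor $\infty$-categories from \cite[Proposition 5.1]{ghnlaxlimits} (the same formula already used for the hom-spaces of $s\mathcal{C}$ in the proof of Proposition~\ref{propcartclosed}), which gives $\mathrm{Fun}(\Delta^{op},\hat{\mathcal{C}})(y\circ X,F)\simeq\int_{n\in\Delta^{op}}\hat{\mathcal{C}}(y(X_n),F_n)$. Finally, the Yoneda lemma for presheaves supplies a natural equivalence between the two functors $\mathcal{C}^{op}\times\hat{\mathcal{C}}\to\mathcal{S}$ given by $(A,G)\mapsto\hat{\mathcal{C}}(y(A),G)$ and by $(A,G)\mapsto G(A)$; precomposing levelwise with $X$ and $F$ identifies the integrand $\hat{\mathcal{C}}(y(X_n),F_n)$ with $F_n(X_n)$ compatibly with the (co)simplicial structure in $n$, and hence identifies the ends. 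Composing the three equivalences yields the claim, and since each is natural in both arguments the composite is binatural in $X$ and $F$; by construction it is the equivalence induced pointwise by the Yoneda lemma for presheaves.

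I do not expect a genuine obstacle: this is essentially a bookkeeping lemma. The one point needing care is the last step's ``naturality inside the end'', i.e.\ checking that the levelwise Yoneda equivalences assemble into an equivalence of the two diagrams $\Delta\times\Delta^{op}\to\mathcal{S}$ whose ends are being compared; this follows from naturality of the Yoneda lemma in both of its variables together with functoriality of $sy$. Note that none of the Segal or completeness hypotheses on $X$ and $F$ enter the argument; the same proof computes $\mathrm{Fun}(\mathcal{C}^{op},s\mathcal{S})(sy(X),F)$ for arbitrary $X\in s\mathcal{C}$, and the hypotheses are recorded only because they hold in the intended applications.
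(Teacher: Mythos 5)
Your argument is correct, and it is the same reduction in substance as the paper's: identify the mapping space via fullness of $\mathrm{Cat}_{\infty}(\mathcal{S})\subset s\mathcal{S}$, express it as an end using \cite[Proposition 5.1]{ghnlaxlimits}, and conclude by binaturality of the Yoneda lemma for presheaves. The only structural difference is how the exchange of the two indexing directions is handled. The paper unwinds the left-hand side as a double end, first over $\mathcal{C}^{op}$ and then over $\Delta^{op}$, and then invokes the $\infty$-categorical Fubini theorem (citing \cite[Theorem 2.2]{loregian_inftyfubini}) to swap them before applying Yoneda to the inner end. You instead curry first, transporting the problem to $s\hat{\mathcal{C}}=\mathrm{Fun}(\Delta^{op},\hat{\mathcal{C}})$ where $sy(X)$ becomes $y\circ X$, and apply the end formula once with values in $\hat{\mathcal{C}}$; the Fubini-type exchange is thereby absorbed into the currying equivalence of functor $\infty$-categories, which the paper itself already records in Section~\ref{subsecinf1}. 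What your route buys is one fewer cited ingredient and a single end; what the paper's route buys is that every step is an equivalence of ends of space-valued diagrams, so no presheaf-valued end formula is needed. Your closing observations are also accurate and match the paper: the final identification must be performed with the binatural (two-variable) form of the Yoneda lemma so that the levelwise equivalences assemble into an equivalence of the $\Delta$-indexed diagrams before taking ends, and neither the Segal nor the completeness hypotheses on $X$ and $F$ are actually used beyond fullness of the relevant subcategories.
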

\begin{proof}
By way of Example~\ref{exmple_ends}, the Yoneda lemma induces a sequence of binatural equivalences as follows.
\begin{align*}
\mathrm{Fun}(\mathcal{C}^{op},s\mathcal{S}))(sy(X),F) \simeq s\hat{\mathcal{C}}(sy(X),F) \simeq \int\limits_{n\in\Delta^{op}}\hat{\mathcal{C}}(sy(X)_n,F_n) \simeq \int\limits_{n\in\Delta^{op}}F_n (X_n).
\end{align*}
%\cite[Proposition 5.1]{ghnlaxlimits}, given that $\mathrm{Cat}_{\infty}(\mathcal{S})\subset s\mathcal{S}$ is a full sub-$\infty$-category. 
%The same applies to the second and the fourth equivalence. The last equivalence follows from (binaturality of) the Yoneda lemma. The 
%remaining third equivalence is a direct application of the Fubini theorem, a proof of which can be found in
%\cite[Theorem 2.2]{loregian_inftyfubini}.
\end{proof}

\begin{proposition}[Segal--Yoneda Lemma]\label{lemmaextyoneda}
Let $X\in\mathrm{Cat}_{\infty}(\mathcal{C})$ be a complete Segal object in $\mathcal{C}$. For every $\mathcal{C}$-indexed $\infty$-category $F\colon\mathcal{C}^{op}\rightarrow\mathrm{Cat}_{\infty}$ there is a binatural 
equivalence
\[\mathbf{Fun}(\mathcal{C}^{op},\mathbf{Cat}_{\infty})(\mathrm{Ext}(X),F)\xrightarrow{\simeq}\int\limits_{n\in\Delta^{op}}F(X_n)^{\Delta^n}\]
of $\infty$-categories. In particular, there is a binatural equivalence
\[\mathrm{Fun}(\mathcal{C}^{op},\mathrm{Cat}_{\infty})(\mathrm{Ext}(X),F)\xrightarrow{\simeq}\int\limits_{n\in\Delta^{op}}(F(X_n)^{\Delta^n})^{\simeq}\]
of underlying spaces. 
\end{proposition}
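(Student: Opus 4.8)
The plan is to deduce the statement from Lemma~\ref{lemmapreyoneda} by presenting both sides as complete Segal spaces and comparing them levelwise. Recall from Example~\ref{expleqcatiscosmos} that $\mathbf{Fun}(\mathcal{C}^{op},\mathbf{Cat}_{\infty})$ is an $\infty$-cosmos, hence is cotensored over $\mathrm{Cat}_{\infty}$; write $F^{\Delta^{\bullet}}$ for the simplicial object obtained by cotensoring $F$ with the cosimplicial quasi-category $\Delta^{\bullet}$. As noted after Definition~\ref{defenhmapcotensors}, these simplicial cotensors represent cotensors for the enhanced mapping functor of the underlying $\infty$-category, so that for any $G$ the simplicial space
\[ [n]\longmapsto\mathrm{Fun}(\mathcal{C}^{op},\mathrm{Cat}_{\infty})(G,F^{\Delta^n}) \]
(mapping spaces in $\mathrm{Fun}(\mathcal{C}^{op},\mathrm{Cat}_{\infty})$) is the Rezk nerve of the hom-quasi-category $\mathbf{Fun}(\mathcal{C}^{op},\mathbf{Cat}_{\infty})(G,F)$. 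I will take $G=\mathrm{Ext}(X)$ and identify this Rezk nerve with the one of $\int_{n\in\Delta^{op}}F(X_n)^{\Delta^n}$.

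First I would translate to complete Segal objects. Cotensors in $\mathbf{Fun}(\mathcal{C}^{op},\mathbf{Cat}_{\infty})$ are computed pointwise, so under the equivalence $U_{\ast}\colon\mathrm{Fun}(\mathcal{C}^{op},\mathrm{Cat}_{\infty}(\mathcal{S}))\xrightarrow{\simeq}\mathrm{Fun}(\mathcal{C}^{op},\mathrm{Cat}_{\infty})$ from Definition~\ref{defext} the functor $F^{\Delta^n}$ corresponds to the $\mathcal{C}$-indexed complete Segal space $C\mapsto U^{-1}(F(C)^{\Delta^n})$, whose $m$-th level sends $C$ to $\mathrm{Fun}(\Delta^m,F(C)^{\Delta^n})^{\simeq}\simeq\bigl(F(C)^{\Delta^n\times\Delta^m}\bigr)^{\simeq}$, since the inverse of the underlying-$\infty$-category functor $U$ is the Rezk nerve. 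Using $\mathrm{Ext}(X)=U_{\ast}(sy(X))$ and that $U_{\ast}$, being an equivalence of $\infty$-categories, induces equivalences on mapping spaces, Lemma~\ref{lemmapreyoneda} applied to this indexed complete Segal space gives, for each $[n]$, a binatural equivalence
\[ \mathrm{Fun}(\mathcal{C}^{op},\mathrm{Cat}_{\infty})(\mathrm{Ext}(X),F^{\Delta^n})\ \xrightarrow{\simeq}\ \int_{m\in\Delta^{op}}\bigl(F(X_m)^{\Delta^n\times\Delta^m}\bigr)^{\simeq}. \]

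Next I would let $n$ vary. Since $[n]\mapsto U^{-1}(F(-)^{\Delta^n})$ is the simplicial object $U_{\ast}^{-1}(F^{\Delta^{\bullet}})$ in $\mathrm{Fun}(\mathcal{C}^{op},\mathrm{Cat}_{\infty}(\mathcal{S}))$, the naturality of Lemma~\ref{lemmapreyoneda} in its indexed-complete-Segal-space variable upgrades the displayed equivalences to an equivalence of simplicial spaces, binatural in $X$ and $F$. Its source is the Rezk nerve of $\mathbf{Fun}(\mathcal{C}^{op},\mathbf{Cat}_{\infty})(\mathrm{Ext}(X),F)$ by the first paragraph. For its target, the functors $\mathrm{Fun}(\Delta^n,-)^{\simeq}\colon\mathrm{Cat}_{\infty}\to\mathcal{S}$ preserve limits (each is a composite of right adjoints) and ends are limits, whence
\[ \int_{m\in\Delta^{op}}\bigl(F(X_m)^{\Delta^n\times\Delta^m}\bigr)^{\simeq}\ \simeq\ \int_{m\in\Delta^{op}}\mathrm{Fun}\bigl(\Delta^n,F(X_m)^{\Delta^m}\bigr)^{\simeq}\ \simeq\ \mathrm{Fun}\Bigl(\Delta^n,\int_{m\in\Delta^{op}}F(X_m)^{\Delta^m}\Bigr)^{\simeq}, \]
which is the $n$-th level of the Rezk nerve of $\int_{n\in\Delta^{op}}F(X_n)^{\Delta^n}$. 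As the Rezk nerve $\mathrm{Cat}_{\infty}\hookrightarrow\mathrm{Fun}(\Delta^{op},\mathcal{S})$ is fully faithful, this levelwise equivalence of Rezk nerves underlies an equivalence of quasi-categories, which is the asserted one. The ``in particular'' clause is the evaluation at $[0]$ (equivalently, the image under the limit-preserving core functor $(\cdot)^{\simeq}$), using $\bigl(F(X_m)^{\Delta^0\times\Delta^m}\bigr)^{\simeq}=\bigl(F(X_m)^{\Delta^m}\bigr)^{\simeq}$.

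The main obstacle is the coherence bookkeeping underlying the first two paragraphs: identifying the simplicial space $[n]\mapsto\mathrm{Fun}(\mathcal{C}^{op},\mathrm{Cat}_{\infty})(\mathrm{Ext}(X),F^{\Delta^n})$ with the Rezk nerve of the hom-quasi-category, and checking that Lemma~\ref{lemmapreyoneda}, being natural in its indexed-complete-Segal-space argument, assembles over the simplicial object $U_{\ast}^{-1}(F^{\Delta^{\bullet}})$ into an honest equivalence of simplicial spaces rather than a mere family of levelwise equivalences. The remaining steps --- pointwise cotensors and ends commuting with the limit-preserving functors $\mathrm{Fun}(\Delta^n,-)^{\simeq}$ --- are routine.
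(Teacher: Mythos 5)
Your proposal is correct and follows essentially the same route as the paper: both reduce to complete Segal spaces via the Rezk nerve $((\cdot)^{\Delta^{\bullet}})^{\simeq}$ inverse to $U$, use the cotensoring $F^{\Delta^{\bullet}}$ to present the hom-quasi-category, apply Lemma~\ref{lemmapreyoneda}, and then commute the end with cotensors and cores. The only (cosmetic) difference is that you argue levelwise and assemble by naturality, whereas the paper carries the whole simplicial objects through the chain of binatural equivalences at once.
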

\begin{proof}
We recall that the inverse of the underlying $\infty$-category functor
$U\colon\mathrm{Cat}_{\infty}(\mathcal{S})\rightarrow\mathrm{Cat}_{\infty}$ is given by the functor
$((\cdot)^{\Delta^{\bullet}})^{\simeq}\colon\mathrm{Cat}_{\infty}\rightarrow\mathrm{Cat}_{\infty}(\mathcal{S})$. It thus suffices to 
construct a binatural equivalence
\[\left(\mathbf{Fun}(\mathcal{C}^{op},\mathbf{Cat}_{\infty})(\mathrm{Ext}(X),F)^{\Delta^{\bullet}}\right)^{\simeq}\xrightarrow{\simeq}\left(\left(\int\limits_{n\in\Delta^{op}}F(X_n)^{\Delta^n}\right)^{\Delta^{\bullet}}\right)^{\simeq}\]
of complete Segal spaces. Since $\mathbf{Fun}(\mathcal{C}^{op},\mathbf{Cat}_{\infty})$ is (functorially) cotensored over
$\mathrm{Cat}_{\infty}$, we first get a binatural equivalence
\[\left(\mathbf{Fun}(\mathcal{C}^{op},\mathbf{Cat}_{\infty})(\mathrm{Ext}(X),F)^{\Delta^{\bullet}}\right)^{\simeq}\xrightarrow{\simeq}\mathbf{Fun}(\mathcal{C}^{op},\mathbf{Cat}_{\infty})(\mathrm{Ext}(X),F^{\Delta^{\bullet}})^{\simeq}\]
of complete Segal spaces. By definition, the codomain is just the simplicial object of hom-spaces
$\mathrm{Fun}(\mathcal{C}^{op},\mathrm{Cat}_{\infty})(\mathrm{Ext}(X),F^{\Delta^{\bullet}})$.
Now, the $\mathcal{C}$-indexed $\infty$-category $\mathrm{Ext}(X)$ is by definition the (pointwise) underlying
$\infty$-category $U(sy(X))$ of the $\mathcal{C}$-indexed complete Segal space $sy(X)$. Thus,
\[sy(X)\simeq(\mathrm{Ext}(X)^{\Delta^{\bullet}})^{\simeq},\]
and we obtain a binatural equivalence
\[\mathrm{Fun}(\mathcal{C}^{op},\mathrm{Cat}_{\infty})(\mathrm{Ext}(X),F^{\Delta^{\bullet}})\xrightarrow{\simeq}\mathrm{Fun}(\mathcal{C}^{op},\mathrm{Cat}_{\infty}(\mathcal{S}))(sy(X),((F^{\Delta^{\bullet}})^{\Delta^{(\cdot)}})^{\simeq})\]
of complete Segal spaces. By Lemma~\ref{lemmapreyoneda}, the right hand side is binaturally equivalent to the end
\[\int\limits_{n\in\Delta^{op}}((F^{\Delta^{\bullet}}(X_n))^{\Delta^n})^{\simeq}\]
of spaces. But $\infty$-categorical cotensors of indexed $\infty$-categories are computed pointwise and commute with one another, while
ends commute both with $\infty$-categorical cotensors and the core construction. We thus obtain a binatural equivalence
\[\int\limits_{n\in\Delta^{op}}((F^{\Delta^{\bullet}}(X_n))^{\Delta^n})^{\simeq}\simeq\left(\left(\int\limits_{n\in\Delta^{op}}F(X_n)^{\Delta^n}\right)^{\Delta^{\bullet}}\right)^{\simeq}.\]
Concatenation of this sequence of binatural equivalences of complete Segal spaces finishes the proof.
\end{proof}

%\begin{remark}
%It may appear unnecessary to consider $\mathrm{Cat}_{\infty}(\mathcal{S})$ as a $\mathrm{Cat}_{\infty}(\mathcal{S})$-enriched
%$\infty$-category in the proof of the lemma. However there is something non-trivial happening in 3.18: We compute 
%the core of LHS by means of RHS which is inherently $\infty$-groupoidal. A characterization of the full hom-quasi-
%categories is precisely what is stated in the Segal--Yoneda lemma. To prove it, we hence use that the hom-quasi-
%categories of $\mathrm{Cat}_{\infty}(\mathcal{C})$ (and hence those of $\mathrm{Cat}_{\infty}$) are essentially determined 
%by a simplicial family of hom-spaces. This puts us in the position to use Lemma 3.18 pointwise without any loss of 
%information.
%\end{remark}

\begin{remark}\label{remextyoneda}
It is easy to see that the Segal--Yoneda lemma recovers the standard Yoneda lemma for the base $\mathcal{C}$ when applied to internal
$\infty$-groupoids: For any object $C\in\mathcal{C}$ and any presheaf $F$, by commutativity of Diagram~(\ref{diagyonedaext}) we have
$\mathrm{Ext}(c(C))\simeq y(C)$ and hence
\[\hat{\mathcal{C}}(yC,F)\simeq\mathrm{Fun}(\mathcal{C}^{op},\mathrm{Cat}_{\infty})(\mathrm{Ext}(c(C)),F)\simeq\int\limits_{n\in\Delta^{op}}F(C)^{\Delta^n}.\]
As each $F(C)$ is a space, the end on the right hand side is the totalization (and hence the homotopy-limit) of the constant simplicial 
space $F(C)$. This however is equivalent to $F(C)$ itself. It follows more generally that
$\mathbf{Fun}(\mathcal{C}^{op},\mathbf{Cat}_{\infty})(yC,F)\simeq F(C)$ for any given general $\mathcal{C}$-indexed $\infty$-category 
$F$.  
\end{remark}

%\int\limits_{n\in\Delta}F(C)^{\Delta^n}=\int\limits_{n\in\Delta}F(C)^{I\Delta^n} because in a tuple (f_n)_{n\geq 0} we have $f_1=s_0(f_0)$ and so on, i.e.\ all cells are degenerate. It follows that we again obtain the totalization of constant quasi-category which is but F(C) itself by Joyal-Tierney functors (I think).
The Segal--Yoneda lemma states that the small indexed $\infty$-categories $\mathrm{Ext}(X)$ are freely generated by the diagram of 
identities $1_{X_n}\in\mathrm{Ext}(X)(X_n)$ for $n\geq 0$. Indeed, informally, the equivalence in Proposition~\ref{lemmaextyoneda} maps a 
natural transformation
\[\alpha\colon\mathrm{Ext}(X)\rightarrow F\]
to the tuple
\[(\alpha_{X_{n}}(1_{X_{n}})|n\geq 0)\in\int\limits_{n\in\Delta^{op}}(F(X_n)^{\Delta^n})^{\simeq}.\]
Formally, Proposition~\ref{lemmaextyoneda} equivalently states that the following.

\begin{corollary}\label{corextyoneda}
Let $\mathcal{C}$ be an $\infty$-category and $X\in\mathrm{Cat}_{\infty}(\mathcal{C})$. Then there is a 2-cell
\[\xymatrix{
\Delta\ar[r]^{\Delta^{\bullet}}\ar[d]_{X^{op}} & \mathrm{Cat}_{\infty} \\
\mathcal{C}^{op}\ar@/_1pc/[ur]_{\mathrm{Ext}(X)}\ar@{}[ur]|(.5){\Downarrow 1_{X_{\bullet}}} &
}\]
that exhibits $\mathrm{Ext}(X)$ as the left Kan extension $(X^{op})_!(\Delta^{\bullet})$ of the canonical inclusion $\Delta^{\bullet}\colon\Delta\hookrightarrow\mathrm{Cat}_{\infty}$ along $X^{op}\colon\Delta\rightarrow\mathcal{C}$.
\end{corollary}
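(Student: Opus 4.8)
The plan is to read Corollary~\ref{corextyoneda} off the Segal--Yoneda lemma (Proposition~\ref{lemmaextyoneda}) by matching universal properties. Recall that a $2$-cell $\eta\colon\Delta^{\bullet}\Rightarrow L\circ X^{op}$ with $L\colon\mathcal{C}^{op}\to\mathrm{Cat}_{\infty}$ exhibits $L$ as the left Kan extension $(X^{op})_!(\Delta^{\bullet})$ exactly when, for every $F\colon\mathcal{C}^{op}\to\mathrm{Cat}_{\infty}$, the composite
\[\mathrm{Map}(L,F)\xrightarrow{(X^{op})^{\ast}}\mathrm{Map}(L\circ X^{op},F\circ X^{op})\xrightarrow{\eta^{\ast}}\mathrm{Map}(\Delta^{\bullet},F\circ X^{op})\]
is an equivalence, naturally in $F$; the pointwise left Kan extension exists since $\Delta$ is small, $\mathcal{C}$ locally small and $\mathrm{Cat}_{\infty}$ cocomplete, and it is characterized by this property (\cite[Section 4.3]{luriehtt}). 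Since $\mathbf{Fun}(\mathcal{C}^{op},\mathbf{Cat}_{\infty})$ and $\mathbf{Fun}(\Delta,\mathbf{Cat}_{\infty})$ are $(\infty,2)$-categories one may also work with the corresponding mapping $\infty$-categories. Now $F\circ X^{op}$ is the diagram $n\mapsto F(X_{n})$, and by the end formula for hom-objects in a functor $(\infty,2)$-category \cite[Proposition 5.1]{ghnlaxlimits} we have
\[\mathbf{Fun}(\Delta,\mathbf{Cat}_{\infty})(\Delta^{\bullet},F\circ X^{op})\simeq\int\limits_{n\in\Delta^{op}}\mathrm{Fun}(\Delta^{n},F(X_{n}))=\int\limits_{n\in\Delta^{op}}F(X_{n})^{\Delta^{n}},\]
which is precisely the right-hand side of Proposition~\ref{lemmaextyoneda} (and passing to cores recovers the ``in particular'' space-level form). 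So the content of the corollary is that, with $L=\mathrm{Ext}(X)$, the equivalence of Proposition~\ref{lemmaextyoneda} is the composite above for the specific $2$-cell $\eta=1_{X_{\bullet}}$.

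Next I would construct and identify $1_{X_{\bullet}}$. Using Corollary~\ref{cortensors} together with the description of $\mathrm{Ext}(X)(C)$ as the underlying $\infty$-category of $sy(X)(C)$, a functor $\Delta^{n}\to\mathrm{Ext}(X)(C)$ corresponds naturally to a morphism $C\to X_{n}$ in $\mathcal{C}$; specializing $C=X_{n}$ and taking $\mathrm{id}_{X_{n}}$ produces the ``universal $n$-cell'' $1_{X_{n}}\colon\Delta^{n}\to\mathrm{Ext}(X)(X_{n})$, and naturality in $[n]\in\Delta$ assembles these into a $2$-cell $1_{X_{\bullet}}\colon\Delta^{\bullet}\Rightarrow\mathrm{Ext}(X)\circ X^{op}$. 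It then remains to check that precomposition with $1_{X_{\bullet}}$ (after restriction along $X^{op}$) is the equivalence of Proposition~\ref{lemmaextyoneda}; granting this, the universal property holds and the corollary follows. To see it, I would unwind the chains of equivalences in the proofs of Lemma~\ref{lemmapreyoneda} and Proposition~\ref{lemmaextyoneda}: the only non-formal input is the Yoneda lemma for presheaves over $\mathcal{C}$, which sends a transformation $\beta\colon y(X_{n})\to F(\blank)_{n}$ to $\beta_{X_{n}}(\mathrm{id}_{X_{n}})$. Tracing this through the remaining identifications --- the Fubini exchange of ends and the passage through $U\colon\mathrm{Cat}_{\infty}(\mathcal{S})\xrightarrow{\simeq}\mathrm{Cat}_{\infty}$ with inverse $((\cdot)^{\Delta^{\bullet}})^{\simeq}$ --- shows that the total equivalence carries $\alpha\colon\mathrm{Ext}(X)\to F$ to the tuple $(\alpha_{X_{n}}(1_{X_{n}})\mid n\geq 0)$, which is exactly $(1_{X_{\bullet}})^{\ast}\circ(X^{op})^{\ast}$. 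This is the informal description already recorded immediately above the corollary, now made precise.

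The main obstacle will be this last identification of $2$-cells: showing rigorously that the ``binatural, pointwise-Yoneda-induced'' equivalences of Lemma~\ref{lemmapreyoneda} and Proposition~\ref{lemmaextyoneda} really compute evaluation on the universal cells $1_{X_{n}}$ compatibly over $n\in\Delta$. Since every map in that chain is natural, the verification reduces to the classical fact that the Yoneda equivalence $\hat{\mathcal{C}}(yC,P)\simeq P(C)$ is ``evaluation at $\mathrm{id}_{C}$'', applied levelwise with $C=X_{n}$ and $P=F(\blank)_{n}$ and then assembled over $\Delta$; its compatibility with $U$ and with the Fubini exchange is routine but must be spelled out. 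As an alternative one could bypass the mapping-object bookkeeping by computing the pointwise left Kan extension directly, $(X^{op})_!(\Delta^{\bullet})(C)\simeq\mathrm{colim}_{(n,\,C\to X_{n})}\Delta^{n}$, and exhibiting this colimit as $\mathrm{Ext}(X)(C)$ via Corollary~\ref{cortensors}; but the route through Proposition~\ref{lemmaextyoneda} is shorter and keeps the unit $2$-cell explicit.
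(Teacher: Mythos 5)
Your computation of the mapping object $\mathbf{Fun}(\Delta,\mathbf{Cat}_{\infty})(\Delta^{\bullet},F\circ X^{op})$ via the adjunction $(X^{op})_!\dashv (X^{op})^{\ast}$ and the end formula of \cite[Proposition 5.1]{ghnlaxlimits}, and its identification with the end of Proposition~\ref{lemmaextyoneda}, is exactly the paper's argument (with one glossed point: the end formula gives an end indexed over $\Delta$, i.e.\ over $\mathrm{Tw}(\Delta)$, whereas the Segal--Yoneda lemma gives one over $\Delta^{op}$; the paper reconciles the two via the equivalence $\mathrm{Tw}(\Delta^{op})\simeq\mathrm{Tw}(\Delta)$ of \cite[Remark 8.1.1.7]{kerodon}). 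Where you genuinely diverge -- and where the gap sits -- is the treatment of the $2$-cell. The paper deduces $\mathrm{Ext}(X)\simeq(X^{op})_!(\Delta^{\bullet})$ purely from the binatural-in-$F$ corepresentability of the same end by both objects, and then \emph{defines} $1_{X_{\bullet}}$ as the transformation corresponding to $1_{\mathrm{Ext}(X)}$ under the resulting equivalence of mapping objects; no further identification is required, since the exhibiting $2$-cell is then the adjunct of an equivalence by construction. You instead build $1_{X_{\bullet}}$ by hand out of the universal cells $1_{X_n}$ and must then verify that restriction along $X^{op}$ followed by precomposition with this particular $2$-cell \emph{is} the equivalence of Proposition~\ref{lemmaextyoneda}. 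That verification -- making ``Yoneda is evaluation at the identity'' coherent over $\Delta$ through the Fubini exchange and the passage along $U$ and $((\cdot)^{\Delta^{\bullet}})^{\simeq}$ -- is precisely what the paper confines to the informal remark preceding the corollary and deliberately does not prove; declaring it routine and leaving it as a sketch is a real gap, not bookkeeping. Note also that in the $\infty$-categorical setting a compatible family of cells $1_{X_n}$, one for each object of $\Delta$, is not yet a natural transformation: assembling them into an actual $2$-cell $\Delta^{\bullet}\Rightarrow\mathrm{Ext}(X)\circ X^{op}$ already requires coherence data you do not supply.

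A second, smaller problem: your construction of the universal cells invokes Corollary~\ref{cortensors}, which assumes $\mathcal{C}$ finitary lextensive, whereas Corollary~\ref{corextyoneda} is stated for an arbitrary $\infty$-category $\mathcal{C}$. The fact you actually need -- that functors $\Delta^n\to\mathrm{Ext}(X)(C)$ correspond to maps $C\to X_n$, i.e.\ that $(\mathrm{Ext}(X)^{\Delta^n})^{\simeq}\simeq y(X_n)$ -- holds with no such hypothesis (it is the statement that $((\cdot)^{\Delta^{\bullet}})^{\simeq}$ inverts $U$, already used in the proof of Proposition~\ref{lemmaextyoneda}), so this is reparable, but as written the citation imports unwarranted assumptions. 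The cleanest repair of the whole proposal is to adopt the paper's order of argument: establish $\mathrm{Ext}(X)\simeq(X^{op})_!(\Delta^{\bullet})$ from binaturality first, and let the unit $2$-cell be the one determined by the adjunction, rather than an explicitly constructed cell whose agreement with the Segal--Yoneda equivalence still has to be proved.
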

\begin{proof}
The $\infty$-category $\mathrm{Cat}_{\infty}$ is cocomplete, and so there is a global left Kan extension functor
$(X^{op})_!\colon\mathrm{Fun}(\Delta,\mathrm{Cat}_{\infty})\rightarrow\mathrm{Fun}(\mathcal{C}^{op},\mathrm{Cat}_{\infty})$.
For every $F\colon\mathcal{C}^{op}\rightarrow\mathrm{Cat}_{\infty}$ we obtain a binatural equivalence
\begin{align*}
\mathrm{Fun}(\mathcal{C}^{op},\mathrm{Cat}_{\infty})((X^{op})_!(\Delta^{\bullet}),F) & \simeq\mathrm{Fun}(\Delta,\mathrm{Cat}_{\infty})((\Delta^{\bullet},(X^{op})^{\ast}F) \\
 & \simeq\int\limits_{n\in\Delta}(F(X_n)^{\Delta^n})^{\simeq}
\end{align*}
again via Example~\ref{exmple_ends}. This end (despite the subtle difference in the indexing domain) is exactly the end that 
computes binaturally the hom-space $\mathrm{Fun}(\mathcal{C}^{op},\mathrm{Cat}_{\infty})(\mathrm{Ext}(X),F)$ by Lemma~\ref{lemmaextyoneda}.
Indeed, the diagram
\[\xymatrix{
\mathrm{Tw}(\Delta^{op})\ar@{->>}[d]\ar[r]^{\simeq} & \mathrm{Tw}(\Delta)\ar@{->>}[d]\\
\Delta\times\Delta^{op}\ar[r]^{\simeq}_{(\pi_2,\pi_1)}\ar[d]_{(X^{op},(F(\cdot)^{\Delta^{\bullet}})^{\simeq})} & \Delta^{op}\times\Delta\ar[d]^{(\Delta^{\bullet},F\circ X^{op})} \\
\mathcal{C}^{op}\times\hat{\mathcal{C}}\ar[d]_{\mathrm{ev}} & \mathrm{Cat}_{\infty}^{op}\times \mathrm{Cat}_{\infty}\ar@/^1pc/[dl]^{\mathrm{Hom}} \\
\mathcal{S}
}\]
commutes. Here, the top horizontal equivalence is given in \cite[Remark 8.1.1.7]{kerodon}. It follows that
$\mathrm{Ext}(X)\simeq (X^{op})_!(\Delta^{\bullet})$. The corresponding 2-cell $1_{X_{\bullet}}$ in the statement is given by the element in
$\mathrm{Fun}(\Delta,\mathrm{Cat}_{\infty})((\Delta^{\bullet},(X^{op})^{\ast}\mathrm{Ext}(X))$ corresponding to the identity $1_{\mathrm{Ext}(X)}$ via Proposition~\ref{lemmaextyoneda}.
\end{proof}

Corollary~\ref{corextyoneda} generalizes the fact that the ordinary Yoneda lemma equivalently states that the 2-cell
\[\xymatrix{
\ast\ar[r]^{\{\ast\}}\ar[d]_{\{C\}} & \mathcal{S} \\
\mathcal{C}^{op}\ar@/_1pc/[ur]_{yC}\ar@{}[ur]|(.5){\Downarrow 1_{C}}  &
}\]
is a left Kan extension for every $\infty$-category $\mathcal{C}$ and every object $C\in\mathcal{C}$.

\begin{remark}[Relation to the generalized Yoneda lemma of Riehl and Verity]\label{remarkrvyoneda}
The category theoretical literature is not short of formulations and generalizations of the Yoneda lemma in various contexts. One 
such generalization very much relevant for the context at hand is the generalized Yoneda lemma for cartesian 
fibrations of Riehl and Verity \cite[Theorem 5.7.3, Corollary 5.7.19]{riehlverityelements} in the standard case of 
an $\infty$-cosmos of $(\infty,1)$-categories. Applied to a simplicial diagram
$X\colon\Delta^{op}\rightarrow\mathcal{C}$ and a cartesian fibration $p$ over $\mathcal{C}$, it yields an 
equivalence
\[\mathrm{Fun}_{\mathcal{C}}^{\mathrm{cart}}(\mathcal{C}\downarrow X,p)\simeq\mathrm{Fun}_{\mathcal{C}}(X,p)\]
from the quasi-category of cartesian functors between the comma-object $\mathcal{C}\downarrow X$ and $p$ over
$\mathcal{C}$, and the quasi-category of functors between $X$ and $p$ over $\mathcal{C}$. Via
\cite[Proposition 6.9, Proposition 7.1]{ghnlaxlimits}, for every simplicial object $X\in s\mathcal{C}$ and every 
$F\colon\mathcal{C}^{op}\rightarrow\mathrm{Cat}_{\infty}$ it equivalently states the existence of a binatural 
equivalence
\[\mathbf{Fun}(\mathcal{C}^{op},\mathbf{Cat}_{\infty})(\mathrm{St}(\mathcal{C}\downarrow X),F)\simeq\int\limits_{n\in\Delta^{op}}F(X_n)^{\Delta_{/n}}\]
of quasi-categories, where the right hand side computes the oplax limit of the composition
$F\circ X^{op}\colon\Delta\rightarrow\mathrm{Cat}_{\infty}$ \cite[Definition 2.9]{ghnlaxlimits}.

In comparison to Proposition~\ref{lemmaextyoneda}, we note that the overcategories $\Delta_{/n}$ are very 
different from the categories $\Delta^n$, and generally the associated ends differ as well. Indeed, the 
externalization $\mathrm{Ext}(X)\twoheadrightarrow\mathcal{C}$ (considered as cartesian fibration over
$\mathcal{C}$ via its Unstraightening) and the comma-object $\mathcal{C}\downarrow X$ associated to $X$ are 
generally different concepts. For example, if $\mathcal{C}$ is the terminal $\infty$-category $\ast$ and 
$X=c(\ast)$ is the unique simplicial object in $\mathcal{C}$, one computes that
$\mathcal{C}\downarrow X=\Delta^{op}$ and $\mathrm{Ext}(X)=\ast$ in $\mathrm{Cart}(\ast)=\mathrm{Cat}_{\infty}$.
Indeed, for objects $C\in\mathcal{C}$ \emph{when considered as constant simplicial objects $c(C)$ in $\mathcal{C}$}, the comma-object 
$\mathcal{C}\downarrow c(C)$ does generally not even compute the representable $y(C)$. That is, because the comma-object is overloaded 
with non-contractible structure coming from $\Delta^{op}$; one rather has to consider $C$ in the contractible context
$\{C\}\colon\ast\rightarrow\mathcal{C}$ for the comma-object $\mathcal{C}\downarrow\{C\}$ to recover 
the corresponding representable. As we have seen earlier, $\mathrm{Ext}(c(C))$ on the other hand is equivalent to 
$y(C)$. In summary, we see that the two Yoneda lemmas generalize the classical Yoneda lemma to a 
classification of cartesian functors out of two different classes of fibrations. Both reduce to the same statement 
when applied to the subclass of representable right fibrations however.
\end{remark}

We finish this section with two applications of Lemma~\ref{lemmaextyoneda}. First, for later use in Section~\ref{secmodcat} we recall 
that if $\mathcal{C}$ is a complete $\infty$-category, then the externalization functor
$\mathrm{Ext}\colon\mathrm{Cat}_{\infty}(\mathcal{C})\rightarrow\mathrm{Fun}(\mathcal{C}^{op},\mathrm{Cat}_{\infty})$ induces an 
equivalence
\begin{align}\label{equcompleteextequiv}
\mathrm{Ext}\colon\mathrm{Cat}_{\infty}(\mathcal{C})\rightarrow\mathrm{RAdj}(\mathcal{C}^{op},\mathrm{Cat}_{\infty})
\end{align}
into the full sub-$\infty$-category spanned by the right adjoint functors by corestriction as shown in \cite[Proposition 5.23]{rs_comp}. 
Indeed, $\mathrm{Ext}(X)$ is the nerve of the cosimplicial object $X^{op}\colon\Delta\rightarrow\mathcal{C}^{op}$ whose 
left adjoint is in this case given by the left Kan extension of $X^{op}$ along the generating inclusion
$y\colon\Delta\rightarrow\mathrm{Cat}_{\infty}$. The adjunction $y_!(X^{op})\adj\mathrm{Ext}(X)$ for any given 
$X\in\mathrm{Cat}_{\infty}(\mathcal{C})$ restricts the equivalence
$y_!\colon \mathrm{Fun}(\Delta,\mathcal{C}^{op})\xrightarrow{\simeq}\mathrm{LAdj}(s\mathcal{S},\mathcal{C}^{op})$ to an 
equivalence
\[y_!\colon \mathrm{Cat}_{\infty}(\mathcal{C})^{op}\xrightarrow{\simeq}\mathrm{LAdj}(\mathrm{Cat}_{\infty}(\mathcal{S}),\mathcal{C}^{op})\xrightarrow{\simeq}\mathrm{LAdj}(\mathrm{Cat}_{\infty},\mathcal{C}^{op}).\]
The description of externalization as a right adjoint under the given assumption thereby holds not only pointwise but functorially
(as to be expected) as the following proposition shows.

\begin{proposition}\label{propextasnerve}
Suppose $\mathcal{C}$ is a complete $\infty$-category. Then the triangle
\begin{align}\label{diagpropextasnerve}
\begin{gathered}
\xymatrix{
\mathrm{Cat}_{\infty}(\mathcal{C})\ar[dr]_{\mathrm{Ext}}\ar[r]^(.4){y_!^{op}} & \mathrm{LAdj}(\mathrm{Cat}_{\infty},\mathcal{C}^{op})^{op}\ar[d]_{\simeq}^{\rho} \\
 & \mathrm{RAdj}(\mathcal{C}^{op},\mathrm{Cat}_{\infty})\\
}
\end{gathered}
\end{align}
commutes up to equivalence. Here, the vertical equivalence $\rho$ from left to right adjoints is as defined in 
Diagram~(\ref{equdefpla}) for $F$ the identity.
\end{proposition}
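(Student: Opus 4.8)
The plan is to deduce the functorial identification from the pointwise adjunction $y_!(X^{op})\adj\mathrm{Ext}(X)$ recalled just before the proposition, together with the fact that $\mathrm{Cat}_{\infty}$ is generated under colimits by the fully faithful $\Delta^{\bullet}\colon\Delta\hookrightarrow\mathrm{Cat}_{\infty}$, so that the two sides of the triangle, once $\rho$ is stripped off, are left adjoints out of $\mathrm{Cat}_{\infty}$ and hence are detected on $\Delta$. Since $\rho$ is an equivalence, it suffices to produce a natural equivalence $\rho^{-1}\circ\mathrm{Ext}\simeq y_!^{op}$; unwinding the $(-)^{op}$'s this amounts to showing that the two functors $\mathrm{Cat}_{\infty}(\mathcal{C})^{op}\to\mathrm{LAdj}(\mathrm{Cat}_{\infty},\mathcal{C}^{op})$ agree, namely $y_!$ (sending $X$ to $y_!(X^{op})$) and the functor $X\mapsto L_X$ sending $X$ to the left adjoint of $\mathrm{Ext}(X)$ — the latter being well defined and functorial by the equivalence~(\ref{equcompleteextequiv}).

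First I would record that restriction along $\Delta^{\bullet}$ embeds $\mathrm{LAdj}(\mathrm{Cat}_{\infty},\mathcal{C}^{op})$ fully faithfully into $\mathrm{Fun}(\Delta,\mathcal{C}^{op})$: it is the inverse of the equivalence $y_!$ appearing in the chain of equivalences stated immediately before the proposition. So it is enough to compare the two functors after restricting along $\Delta^{\bullet}$. For $y_!$ the restriction is $X\mapsto y_!(X^{op})\circ\Delta^{\bullet}\simeq X^{op}$, naturally in $X$, because restricting a left Kan extension back along a fully faithful functor recovers the original diagram.

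For $X\mapsto L_X$ I would compute the restriction pointwise via the Yoneda lemma in $\mathcal{C}$. By Definition~\ref{defext}, $\mathrm{Ext}(X)(C)=U(sy(X)(C))$ is the $\infty$-category whose complete Segal space is $sy(X)(C)$, and $sy(X)(C)_n\simeq\mathcal{C}(C,X_n)$; hence for all $n\geq 0$ and $C\in\mathcal{C}$ there is an equivalence
\[\mathcal{C}^{op}\!\left(L_X(\Delta^n),\,C\right)\;\simeq\;\mathrm{Cat}_{\infty}\!\left(\Delta^n,\,\mathrm{Ext}(X)(C)\right)\;\simeq\;sy(X)(C)_n\;\simeq\;\mathcal{C}(C,X_n),\]
binatural in $(X,C,n)$; the Yoneda lemma forces $L_X(\Delta^n)\simeq X_n=X^{op}(n)$ in $\mathcal{C}^{op}$, binaturally. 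Thus $L_{(-)}\circ\Delta^{\bullet}$ and $y_!(-)\circ\Delta^{\bullet}$ are both equivalent to the canonical inclusion $(-)^{op}\colon\mathrm{Cat}_{\infty}(\mathcal{C})^{op}\hookrightarrow\mathrm{Fun}(\Delta,\mathcal{C}^{op})$; transporting this equivalence back through the full embedding and applying $\rho$ yields $\mathrm{Ext}\simeq\rho\circ y_!^{op}$, i.e.\ the commutativity of Diagram~(\ref{diagpropextasnerve}).

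The conceptual content is small — functoriality of Kan extensions and of passage to adjoints, plus the complete Segal space description of $\mathrm{Ext}$ already used in Corollary~\ref{corextyoneda} and Proposition~\ref{lemmaextyoneda}. The main care goes into the bookkeeping of variances (the $(-)^{op}$'s on $\mathrm{Cat}_{\infty}(\mathcal{C})$, on $\mathrm{LAdj}$, and on the ``take the adjoint'' functor $\rho$) so that the pointwise equivalences above are genuinely natural, and into verifying that restriction along $\Delta^{\bullet}$ is fully faithful on left adjoints — which is precisely what licenses checking the comparison only on the simplices $\Delta^n$ even at the functorial level.
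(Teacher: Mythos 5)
Your argument is correct, but it takes a genuinely different route from the paper. You reduce along the equivalence recalled just before the proposition: since restriction along the fully faithful, dense inclusion $\Delta^{\bullet}\colon\Delta\hookrightarrow\mathrm{Cat}_{\infty}$ is inverse to $y_!$ on $\mathrm{LAdj}(\mathrm{Cat}_{\infty},\mathcal{C}^{op})$, you may compare $y_!^{op}$ with $\rho^{-1}\circ\mathrm{Ext}$ after restricting to $\Delta$, where both become the canonical inclusion $X\mapsto X^{op}$ of $\mathrm{Cat}_{\infty}(\mathcal{C})^{op}$ into $\mathrm{Fun}(\Delta,\mathcal{C}^{op})$ — the identification $L_X(\Delta^n)\simeq X_n$ coming from the corepresentability data built into Construction~(\ref{equdefpla}), the equivalence $U\dashv$-inverse $((\cdot)^{\Delta^{\bullet}})^{\simeq}$, and the ordinary (functorial) Yoneda lemma. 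The paper instead computes the whole mapping space $\mathrm{Nat}(\mathrm{Ext},\rho\circ y_!^{op})$ by an end calculation resting on the Segal--Yoneda lemma (Proposition~\ref{lemmaextyoneda}), identifies it with $\mathrm{Nat}(\mathrm{id},\mathrm{id})$, takes the transformation $\alpha$ corresponding to the identity, and then shows $\alpha_X$ is an equivalence pointwise by identifying it, via Corollary~\ref{corextyoneda} and uniqueness of adjoints, with the unit $1_{X_{\bullet}}\colon\Delta^{\bullet}\to(X^{op})^{\ast}X^{op}_!(\Delta^{\bullet})$. What each buys: the paper's route exhibits a specific canonical $2$-cell compatible with the Kan-extension description of $\mathrm{Ext}(X)$ (and incidentally classifies all transformations between the two functors), while yours is more elementary and self-contained, bypassing Proposition~\ref{lemmaextyoneda} and Corollary~\ref{corextyoneda} entirely and delegating all coherence issues to the full faithfulness of restriction on left adjoints. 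The one place where your write-up leans on an unspoken verification is the claim that the displayed chain is natural in the variable $X$ as well as in $(n,C)$; this is indeed supplied by the functorial form of (\ref{equdefpla}) and of $sy$ and $U$, as you flag, but it should be said explicitly that the adjunction equivalence $\mathcal{C}^{op}(L_X(\Delta^n),C)\simeq\mathrm{Cat}_{\infty}(\Delta^n,\mathrm{Ext}(X)(C))$ is taken as the one furnished by that construction, not chosen pointwise.
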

\begin{proof}
We compute that the space of natural transformations from $\mathrm{Ext}$ to the composition
$\rho\circ y_!^{op}$ is equivalent to 
\begin{align*}
\int\limits_{X\in\mathrm{Cat}_{\infty}(\mathcal{C})}\mathrm{Fun}(\mathcal{C}^{op},\mathrm{Cat}_{\infty})(\mathrm{Ext}(X),\rho(y_!^{op}(X)))^{\simeq} & \simeq \int\limits_{X\in\mathrm{Cat}_{\infty}(\mathcal{C})}\int\limits_{n\in\Delta^{op}}(\rho(y_!^{op}(X))(X_n)^{\Delta^n})^{\simeq} \\
 & \simeq \int\limits_{X\in\mathrm{Cat}_{\infty}(\mathcal{C})}\int\limits_{n\in\Delta^{op}}\mathrm{Cat}_{\infty}(\Delta^n,\rho(y_!^{op}(X))(X_n)) \\
& \simeq \int\limits_{X\in\mathrm{Cat}_{\infty}(\mathcal{C})}\int\limits_{n\in\Delta^{op}}\mathcal{C}^{op}(y_!^{op}(X)(\Delta^n),X_n) \\
& \simeq \int\limits_{X\in\mathrm{Cat}_{\infty}(\mathcal{C})}\int\limits_{n\in\Delta^{op}}\mathcal{C}^{op}(X_n,X_n) \\
& \simeq \int\limits_{X\in\mathrm{Cat}_{\infty}(\mathcal{C})}\int\limits_{n\in\Delta^{op}}\mathcal{C}(X_n,X_n) \\
& \simeq \int\limits_{X\in\mathrm{Cat}_{\infty}(\mathcal{C})}\mathrm{Cat}_{\infty}(\mathcal{C})(X,X) \\
& \simeq \mathrm{Fun}(\mathrm{Cat}_{\infty}(\mathcal{C}),\mathrm{Cat}_{\infty}(\mathcal{C}))(\mathrm{id},\mathrm{id}).
\end{align*}
%As $\mathcal{C}$ is potentially large, the ends indexed over $\mathrm{Cat}_{\infty}(\mathcal{C})$ above are large limits of essentially 
%small $\infty$-categories, and hence are computed in the superlarge $\infty$-category of large $\infty$-categories.
Let $\alpha\colon\mathrm{Ext}\Rightarrow\rho\circ y_!^{op}$ be the essentially unique natural transformation that corresponds to the 
identity on $\mathrm{id}$ under this chain of equivalences. We are to show that this natural transformation is a (pointwise) equivalence. 
Therefore, let $X\in\mathrm{Cat}_{\infty}(\mathcal{C})$. 
We have seen in Corollary~\ref{corextyoneda} that $\mathrm{Ext}(X)$ is the left Kan extension
$X^{op}_!(\Delta^{\bullet})$. 
\[\xymatrix{
\Delta\ar[rr]^{\Delta^{\bullet}}\ar[d]_{X^{op}} & & \mathrm{Cat}_{\infty} \\
\mathcal{C}^{op}\ar@/_1pc/[urr]|{\mathrm{Ext}(X)}\ar@{}[urr]|(.5){\Downarrow 1_{X_{\bullet}}}\ar@/_3pc/[urr]_{\rho y_!^{op}(X)}^{\Downarrow \alpha_X} &
}\]
Both $\mathrm{Ext}(X)$ and $\rho y_!^{op}(X)$ are a right adjoint to the left Kan extension $y_!(X^{op})$ each by construction, and hence 
they are equivalent to one another. Under this equivalence and the adjunction $X^{op}_!\adj(X^{op})^{\ast}$, the natural transformation
$\alpha_X\colon X^{op}_!(\Delta^{\bullet})\rightarrow X^{op}_!(\Delta^{\bullet})$ corresponds precisely to the unit
\[1_{X_{\bullet}}\colon\Delta^{\bullet}\rightarrow (X^{op})^{\ast}X^{op}_!(\Delta^{\bullet})\]
by definition of $\alpha_X$. It follows that $\alpha_X$ is a natural equivalence. 
\end{proof} 

\begin{remark}\label{rem_pressheaves}
Whenever $\mathcal{C}$ is presentable (and hence in particular complete), the Adjoint Functor Theorem and the equivalence 
(\ref{equcompleteextequiv}) together imply that an indexed $\infty$-category $F\colon\mathcal{C}^{op}\rightarrow\mathrm{Cat}_{\infty}$ is 
small if and only if it is small limit preserving. If $\mathcal{C}$ is in fact an $\infty$-topos, this identifies the $(\infty,2)$-category 
of $\infty$-categories internal to $\mathcal{C}$ with the full sub-$(\infty,2)$-category of sheaves of $\infty$-categories over
$\mathcal{C}$. This latter characterization is stated in \cite[Section 3.5]{martini_yoneda}. In loc.\ cit.\ the author also computes that 
\[\mathrm{Ext}(X)\simeq U(\mathcal{C}(\phv,(X^{\Delta^{\bullet}})_0))\simeq\mathbf{Cat}_{\infty}(\mathcal{C})(c(\cdot),X)\]
for all complete Segal objects $X$ in an $\infty$-topos $\mathcal{C}$. By way of the enhanced mapping $\infty$-category functor of 
Remark~\ref{rem2catstrdirect}, this can be more generally shown to hold for all finitely complete $\infty$-categories $\mathcal{C}$.
\end{remark}

Second, Lemma~\ref{lemmaextyoneda} can also be used to characterize the $\infty$-category of internal presheaves 
over an internal $\infty$-category $X$ in any $\infty$-category $\mathcal{C}$ with pullbacks. Therefore we denote 
the canonical indexing of $\mathcal{C}$ over itself -- defined as the Straightening of the target fibration
$t\colon\mathrm{Fun}(\Delta^1,\mathcal{C})\twoheadrightarrow\mathcal{C}$ -- by
$\mathcal{C}_{/(\cdot)}\colon\mathcal{C}^{op}\rightarrow\mathrm{Cat}_{\infty}$ (or with codomain
$\mathrm{CAT}_{\infty}$ in case $\mathcal{C}$ is itself large). 

\begin{definition}
Let $X\in\mathrm{Cat}_{\infty}(\mathcal{C})$. The $\infty$-category $[X,\mathcal{C}]$ of internal covariant presheaves over 
$X$ in $\mathcal{C}$ is the $\infty$-category
$\mathbf{Fun}(\mathcal{C}^{op},\mathbf{Cat}_{\infty})(\mathrm{Ext}(X),\mathcal{C}_{/(\cdot)})$.
\end{definition}

\begin{construction}\label{defintcore}
For every $X\in\mathrm{Cat}_{\infty}(\mathcal{C})$ there is an $X^{op}\in\mathrm{Cat}_{\infty}(X)$ defined in such a way that the externalization of $X^{op}$ 
computes the composition
\[\mathcal{C}^{op}\xrightarrow{\mathrm{Ext}(X)}\mathrm{Cat}_{\infty}\xrightarrow{(\cdot)^{op}}\mathrm{Cat}_{\infty}.\]
This can be seen quickly for instance from \cite[Theorem 5.15]{rs_comp} together with the fact that the core of the opposite of an
$\infty$-category $\mathcal{D}$ is naturally equivalent to the core of $\mathcal{D}$. The opposite $X^{op}$ also can be defined directly 
as the precomposition of $X$ with $(\cdot)^{op}\colon\Delta\rightarrow\Delta$ as in \cite[Section 1.2.1]{luriehtt}.
\end{construction}

\begin{definition}
Let $X\in\mathrm{Cat}_{\infty}(\mathcal{C})$. The $\infty$-category of internal (contravariant) presheaves over $X$ in $\mathcal{C}$ is the 
$\infty$-category $[X^{op},\mathcal{C}]$.
\end{definition}

\begin{example}
If we represent an $\infty$-category $\mathcal{C}$ as a complete Segal object $(\mathcal{C}^{\Delta^{\bullet}})^{\simeq}$ in
$\mathcal{S}$, then $\mathrm{Ext}((\mathcal{C}^{\Delta^{\bullet}})^{\simeq})$ is just the ``naive'' indexing
$\mathcal{C}^{(-)}\colon\mathcal{S}^{op}\rightarrow\mathrm{CAT}_{\infty}$ of
$\mathcal{C}$ as shown in \cite[Example 5.12]{rs_comp}. More precisely, the functor
$(\cdot)^{(-)}\colon\mathrm{CAT}_{\infty}\rightarrow\mathrm{Fun}(\mathcal{S}^{op},\mathrm{CAT}_{\infty})$ -- which assigns to an
$\infty$-category $\mathcal{C}$ the $\mathcal{S}$-indexed $\infty$-category $\mathcal{C}^{(-)}$ -- is the composition
\[\mathrm{CAT}_{\infty}\xrightarrow{((\cdot)^{\Delta^{\bullet}})^{\simeq}}\mathrm{Cat}_{\infty}(\mathcal{S})\xrightarrow{\mathrm{Ext}}\mathrm{Fun}(\mathcal{S}^{op},\mathrm{CAT}_{\infty})\]
of the equivalence $((\cdot)^{\Delta^{\bullet}})^{\simeq}$ with the externalization functor on $\mathcal{S}$. It thus is fully faithful 
by Lemma~\ref{corsegalyonedaff}, and yields a functor
$(\cdot)^{(-)}\colon\mathbf{QCAT}\rightarrow\mathbf{Fun}(\mathcal{S}^{op},\mathbf{QCAT})$ that induces 
equivalences of $\infty$-hom-categories as it preserves $\infty$-categorical cotensors as well. 
The canonical indexing $\mathcal{S}_{/(\cdot)}$ of $\mathcal{S}$ over itself is naturally equivalent to the naive indexing
$\mathcal{S}^{(\cdot)}\colon\mathcal{S}^{op}\rightarrow\mathrm{CAT}_{\infty}$ as well (by the unmarked Straightening 
construction, see \cite[Theorem 2.2.1.2]{luriehtt}). It follows that the $\infty$-category
$[(\mathcal{C}^{\Delta^{\bullet}})^{\simeq},\mathcal{S}]$ of $\mathcal{S}$-internal covariant presheaves over
$(\mathcal{C}^{\Delta^{\bullet}})^{\simeq}$ is (as to be expected) just the $\infty$-category
\[\mathbf{Fun}(\mathcal{S}^{op},\mathbf{QCAT})(\mathcal{C}^{(\cdot)},\mathcal{S}^{(\cdot)})\simeq\mathbf{QCAT}(\mathcal{C},\mathcal{S})=\mathrm{Fun}(\mathcal{C},\mathcal{S})\]
of covariant presheaves over $\mathcal{C}$. 
The same goes for contravariant presheaves over $\mathcal{C}$.
\end{example}

\begin{corollary}
Let $X\in\mathrm{Cat}_{\infty}(\mathcal{C})$. Then there are natural equivalences
\[[X,\mathcal{C}]\simeq\int\limits_{n\in\Delta^{op}}(\mathcal{C}_{/X_n})^{\Delta^n},\]
\[[X^{op},\mathcal{C}]\simeq\int\limits_{n\in\Delta^{op}}(\mathcal{C}_{/(X^{op})_n})^{\Delta^n}\]
of $\infty$-categories.\qed 
\end{corollary}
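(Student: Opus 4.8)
The plan is to read off both equivalences directly from the Segal--Yoneda lemma (Proposition~\ref{lemmaextyoneda}) by specializing the indexed $\infty$-category $F$ to the canonical self-indexing $\mathcal{C}_{/(-)}\colon\mathcal{C}^{op}\to\mathrm{Cat}_{\infty}$. By the definition of the quasi-category of internal covariant presheaves, $[X,\mathcal{C}]=\mathbf{Fun}(\mathcal{C}^{op},\mathbf{Cat}_{\infty})(\mathrm{Ext}(X),\mathcal{C}_{/(-)})$, so Proposition~\ref{lemmaextyoneda} applied to $F=\mathcal{C}_{/(-)}$ immediately yields a binatural equivalence
\[[X,\mathcal{C}]\;\simeq\;\int\limits_{n\in\Delta^{op}}\mathcal{C}_{/(-)}(X_n)^{\Delta^n}\;=\;\int\limits_{n\in\Delta^{op}}(\mathcal{C}_{/X_n})^{\Delta^n}\]
of quasi-categories, natural in $X$ because $F$ is held fixed. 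If $\mathcal{C}$ is large (so that $\mathcal{C}_{/(-)}$ takes values in $\mathrm{CAT}_{\infty}$), one invokes the evident variant of Proposition~\ref{lemmaextyoneda} with $\mathbf{CAT}_{\infty}$ in place of $\mathbf{Cat}_{\infty}$, whose proof is verbatim the same.

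For the contravariant case I would first record that $X^{op}\in\mathrm{Cat}_{\infty}(\mathcal{C})$, so that Proposition~\ref{lemmaextyoneda} applies to it as well. By Construction~\ref{defintcore}, $X^{op}$ is the precomposition of $X\colon\Delta^{op}\to\mathcal{C}$ with the order-reversing automorphism $(\cdot)^{op}$ of $\Delta$; applying $sy$ and evaluating at $C\in\mathcal{C}$ gives $sy(X^{op})(C)\simeq(sy(X)(C))^{op}$, and since the opposite of a complete Segal space is again a complete Segal space, it follows that $X^{op}$ is a complete Segal object in $\mathcal{C}$ (alternatively, $(\cdot)^{op}$ fixes each $[n]$ and carries the spine inclusions $\sigma_n$ and the weight $\mathrm{Equiv}$ to isomorphic diagrams, so it preserves the conditions of Definition~\ref{defSegalobjects} and Definition~\ref{defcompleteness}). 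Then, exactly as above, Proposition~\ref{lemmaextyoneda} applied to $X^{op}$ and $F=\mathcal{C}_{/(-)}$ gives
\[[X^{op},\mathcal{C}]=\mathbf{Fun}(\mathcal{C}^{op},\mathbf{Cat}_{\infty})(\mathrm{Ext}(X^{op}),\mathcal{C}_{/(-)})\;\simeq\;\int\limits_{n\in\Delta^{op}}(\mathcal{C}_{/(X^{op})_n})^{\Delta^n}.\]

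There is essentially no obstacle here: the corollary is a direct specialization of the Segal--Yoneda lemma, and the only point that requires a word of justification is the (standard) stability of complete Segal objects under passage to opposites. Naturality in $X$ of both equivalences is inherited from the binaturality asserted in Proposition~\ref{lemmaextyoneda}, together with the functoriality of $X\mapsto\mathrm{Ext}(X)$ and of $X\mapsto X^{op}$ (the latter being induced by the automorphism $(\cdot)^{op}$ of $\Delta$).
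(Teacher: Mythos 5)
Your proposal is correct and matches the intended argument: the paper leaves this corollary without proof (marked \qed) precisely because it is the immediate specialization of the Segal--Yoneda lemma (Proposition~\ref{lemmaextyoneda}) to $F=\mathcal{C}_{/(-)}$, applied to $X$ and to $X^{op}$ (which is a complete Segal object by Construction~\ref{defintcore}), exactly as you do. Your added remarks on the large/small size issue and on stability of complete Segal objects under $(\cdot)^{op}$ are sensible clarifications but do not change the route.
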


\begin{corollary}\label{corintpretopoi}
If $\mathcal{C}$ is an $\infty$-topos, then for every $X\in\mathrm{Cat}_{\infty}(\mathcal{C})$, the $\infty$-category
$[X,\mathcal{C}]$ of internal covariant presheaves over $X$ is again an $\infty$-topos. Trivially, the same 
applies to $[X^{op},\mathcal{C}]$.
\end{corollary}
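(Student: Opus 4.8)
The plan is to realize $[X,\mathcal{C}]$ as a limit of $\infty$-topoi taken along geometric morphisms, using the Segal--Yoneda description. First, applying Proposition~\ref{lemmaextyoneda} to the canonical self-indexing $F=\mathcal{C}_{/(-)}$ (understood with values in $\mathrm{CAT}_{\infty}$ when $\mathcal{C}$ is large) -- equivalently, invoking the preceding corollary -- gives an equivalence of quasi-categories
\[[X,\mathcal{C}]\;\simeq\;\int\limits_{n\in\Delta^{op}}(\mathcal{C}_{/X_n})^{\Delta^n},\]
so it suffices to show this end is an $\infty$-topos. By the end convention recalled before Proposition~\ref{propcartclosed}, the right-hand side is the limit in $\mathrm{CAT}_{\infty}$ of a functor $D\colon\mathrm{Tw}(\Delta^{op})\to\mathrm{CAT}_{\infty}$ that sends an arrow $\psi\colon p\to q$ of $\Delta^{op}$ to $(\mathcal{C}_{/X_p})^{\Delta^q}=\mathrm{Fun}(\Delta^q,\mathcal{C}_{/X_p})$.

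Next I would verify that $D$ lands in the (non-full) subcategory of $\mathrm{CAT}_{\infty}$ whose objects are $\infty$-topoi and whose morphisms are the inverse image functors of geometric morphisms -- equivalently, the left exact left adjoints. For the objects: slices of $\infty$-topoi are $\infty$-topoi by \cite[\S 6.3.5]{luriehtt}, and for any small $\infty$-category $K$ and any $\infty$-topos $\mathcal{E}$ the functor $\infty$-category $\mathrm{Fun}(K,\mathcal{E})$ is again an $\infty$-topos, since if $\mathcal{E}$ is a left exact accessible localization of $\mathrm{Fun}(S^{op},\mathcal{S})$ then $\mathrm{Fun}(K,\mathcal{E})$ is a pointwise -- hence left exact and accessible -- localization of $\mathrm{Fun}(K,\mathrm{Fun}(S^{op},\mathcal{S}))\simeq\mathrm{Fun}((K^{op}\times S)^{op},\mathcal{S})$. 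For the morphisms: a morphism of $\mathrm{Tw}(\Delta^{op})$ induces on $D$ a functor $\mathrm{Fun}(\Delta^q,\mathcal{C}_{/X_p})\to\mathrm{Fun}(\Delta^{q'},\mathcal{C}_{/X_{p'}})$ which factors as restriction along a functor $\Delta^{q'}\to\Delta^q$ between small $\infty$-categories, followed by postcomposition with the pullback functor $\mathcal{C}_{/X_p}\to\mathcal{C}_{/X_{p'}}$ along a morphism $X_{p'}\to X_p$ of $\mathcal{C}$. Restriction is computed pointwise and has both a left and a right adjoint (left and right Kan extension, which exist because $\mathcal{C}_{/X_p}$ is complete and cocomplete), hence is a left exact left adjoint; the pullback functor is left exact and has a left adjoint (composition) and a right adjoint (dependent product, available since every $\infty$-topos is locally cartesian closed), and postcomposition with it inherits these properties levelwise. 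As composites of left exact left adjoints are again of this kind, $D$ factors as claimed.

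Finally I would conclude with Lurie's theory of limits of $\infty$-topoi. Writing $\mathcal{R}\mathrm{Top}$ for the $\infty$-category of $\infty$-topoi and geometric morphisms, the subcategory above is $(\mathcal{R}\mathrm{Top})^{op}$, so $D$ is the same datum as a diagram $\widetilde{D}\colon\mathrm{Tw}(\Delta^{op})^{op}\to\mathcal{R}\mathrm{Top}$; since $\mathrm{Tw}(\Delta^{op})$ is a small $1$-category, $\widetilde{D}$ has a limit $\mathcal{L}$ in $\mathcal{R}\mathrm{Top}$ by \cite[\S 6.3.3]{luriehtt}, and the underlying $\infty$-category of $\mathcal{L}$ is the limit of the underlying $\infty$-categories of $\widetilde{D}$ formed along the inverse image functors, which is precisely $\lim_{\mathrm{Tw}(\Delta^{op})}D\simeq[X,\mathcal{C}]$. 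Hence $[X,\mathcal{C}]$ is an $\infty$-topos; the case of $[X^{op},\mathcal{C}]$ follows by applying the same argument to $X^{op}\in\mathrm{Cat}_{\infty}(\mathcal{C})$ from Construction~\ref{defintcore}. The main obstacle is the variance bookkeeping of the middle step: one has to check that, with the twisted-arrow indexing spelled out correctly, every transition functor of $D$ points in the direction of an inverse image functor, so that the limit formed in $\mathrm{CAT}_{\infty}$ really does compute the underlying $\infty$-category of the limit formed in $\mathcal{R}\mathrm{Top}$.
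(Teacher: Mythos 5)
Your route is the paper's route: reduce to the end $\int_{n\in\Delta^{op}}(\mathcal{C}_{/X_n})^{\Delta^n}$ via the preceding corollary, observe that the associated $\mathrm{Tw}(\Delta^{op})$-indexed diagram takes values in $\infty$-topoi and left exact left adjoints, and invoke Lurie's theory of (co)limits of $\infty$-topoi; you merely spell out the factorization through inverse-image functors (restriction along $\Delta^{q'}\to\Delta^{q}$ and postcomposition with base change) in more detail than the paper, which simply records that the diagram factors through $\mathrm{LTop}$.

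The one point that needs correcting is the variance in your concluding step. The end is a limit, in $\mathrm{CAT}_{\infty}$, of a diagram of \emph{inverse-image} functors, i.e.\ of a diagram valued in $\mathrm{LTop}$, the subcategory of left exact left adjoints, which is the \emph{opposite} of the category $\mathcal{R}\mathrm{Top}$ of geometric morphisms. The fact you need is therefore \cite[Proposition 6.3.2.3]{luriehtt}: $\mathrm{LTop}\subset\mathrm{CAT}_{\infty}$ admits, and the inclusion preserves (creates), all small limits -- equivalently, your end computes a \emph{colimit} in $\mathcal{R}\mathrm{Top}$. The statement you actually invoke -- that a \emph{limit} in $\mathcal{R}\mathrm{Top}$ (the subject of \cite[Sections 6.3.3 and 6.3.4]{luriehtt}) has underlying $\infty$-category the limit of the underlying $\infty$-categories along the inverse images -- is false in general: for instance, the product of two $\infty$-topoi in $\mathcal{R}\mathrm{Top}$ is not the product of their underlying $\infty$-categories (that product is instead their \emph{coproduct} in $\mathcal{R}\mathrm{Top}$), and in any case limits along direct images are not created by the forgetful functor to $\mathrm{CAT}_{\infty}$. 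Since all your transition maps are already written in the inverse-image direction, the fix is only to replace ``limit in $\mathcal{R}\mathrm{Top}$'' by ``limit in $\mathrm{LTop}$'' together with the citation of \cite[Proposition 6.3.2.3]{luriehtt}; with that substitution your argument coincides with the proof given in the paper, including the treatment of $[X^{op},\mathcal{C}]$.
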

\begin{proof}
If $\mathcal{C}$ is an $\infty$-topos, the diagram
\begin{align}\label{equcorintpretopoi}
\Delta^{op}\times\Delta & \rightarrow\mathrm{CAT}_{\infty} \\
\notag ([n],[m]) & \mapsto(\mathcal{C}_{/X_m})^{\Delta^n}
\end{align}
factors through the $\infty$-category $\mathrm{LTop}\subset\mathrm{CAT}_{\infty}$, where $\mathrm{LTop}$ denotes 
the opposite $\infty$-category of $\infty$-toposes and geometric morphisms \cite[Definition 6.3.1.5]{luriehtt}. But the inclusion
$\mathrm{LTop}\subset\mathrm{CAT}_{\infty}$ creates all small limits \cite[Proposition 6.3.2.3]{luriehtt}, and so
$[X,\mathcal{C}]\simeq\mathrm{lim}((\ref{equcorintpretopoi})\circ p_{\Delta})$ is an $\infty$-topos, where
$p_{\Delta}\colon\mathrm{Tw}(\Delta)\twoheadrightarrow\Delta^{op}\times\Delta$ denotes the canonical projection.
\end{proof}

\begin{remark}
The argument in Corollary~\ref{corintpretopoi} applies more generally to every (potentially super-large) countably complete
$\infty$-category $K$ of $\infty$-categories (whose inclusion $K\subset\mathrm{CAT}_{\infty}$ is fully faithful on $n$-cells for
$n\geq 2$) such that $\mathcal{C}\in K$ implies
\begin{enumerate}
\item $\mathcal{C}^{\Delta^n}\in K$ for all $n\geq 0$ and $\mathcal{C}_{/C}\in K$ for all $C\in\mathcal{C}$,
\item $\alpha^{\ast}\colon\mathcal{C}^{\Delta^m}\rightarrow\mathcal{C}^{\Delta^n}$ is in $K$ for all $\alpha\colon n\rightarrow m$, and
\item $f^{\ast}\colon\mathcal{C}_{/D}\rightarrow\mathcal{C}_{/C}$ is contained in $K$ for all morphisms
$f\colon C\rightarrow D$ in $\mathcal{C}$.
\end{enumerate}
It thus applies for instance also to the $\infty$-category of presentable $\infty$-categories with universal colimits (and cocontinuous 
functors between them).
\end{remark}

\section{Model categorical presentations}\label{secmodcat}

Whenever a given $\infty$-category $\mathcal{C}$ is the homotopy $\infty$-category of a model category $\mathbb{M}$, we can
relate the main results of Section~\ref{secformal} such as Theorem~\ref{corcosmos1} and Proposition~\ref{proptensors} to the classical 
theory of model categories. Notably, every model category $\mathbb{M}$ induces a Reedy 
model structure on the category $s\mathbb{M}$ of simplicial objects on $\mathbb{M}$, and bicompleteness of $\mathbb{M}$ furthermore 
induces a simplicial enrichment of the category $s\mathbb{M}$ (explicitly presented in \cite{duggersimp}). While the Reedy model 
structure generally does not cohere with this enrichment, we will see that the induced 
Reedy fibration category structure on the full subcategory $\mathrm{Cat}_{\infty}(\mathbb{M})\subset s\mathbb{M}$ of complete Segal 
objects in $\mathbb{M}$ is always $(\mathbf{S},\mathrm{QCat})$-enriched, and that the induced fibration category structure on the full 
subcategory $\mathrm{Gpd}_{\infty}(\mathbb{M})\subset s\mathbb{M}$ of complete Segal groupoids in $\mathbb{M}$ is always
$(\mathbf{S},\mathrm{Kan})$-enriched. The latter are the ``frames'' in the sense of \cite{hovey} and the ``complete Bousfield-Segal 
objects'' in the sense of \cite{bergner2, rs_bspaces}. The two fibration categories in particular each form an $\infty$-cosmos in the 
weaker sense of \cite{rvyoneda}. By virtue of Example~\ref{expleenrmodtofibcat}, this specialises to the fact that the associated model 
categories $(s\mathbb{M},\mathrm{Cat}_{\infty})$ \cite{rvyoneda} and $(s\mathbb{M},\mathrm{Gpd}_{\infty})$ \cite{duggersimp} are
$(\mathbf{S},\mathrm{QCat})$-enriched and $(\mathbf{S},\mathrm{Kan})$-enriched, respectively, in case $\mathbb{M}$ is combinatorial and 
left proper. 

We further define a simplicially enriched model categorical right derived externalization functor
\[\mathbb{R}\mathbf{Ext}\colon\mathbf{Cat}_{\infty}(\mathbb{M})\rightarrow\mathbf{Fun}(\mathcal{L}_{\Delta}(\mathbb{M},W)^{op},\mathbf{QCat}),\]
elaborate on instances of this functor that arise in classical simplicial homotopy theory, and 
show that it is cosmological in a suitable sense.
We show that $\mathbb{R}\mathbf{Ext}$ lifts the $\infty$-categorical externalization construction of Section~\ref{secformal} under 
additional assumptions on $\mathbb{M}$, and use this to show that $\mathrm{Cat}_{\infty}(\mathcal{C})$ is an $(\infty,1)$-localic
$(\infty,2)$-topos as to be defined in Section~\ref{subsecrdevext} whenever $\mathcal{C}$ is an $\infty$-topos.\\

We fix a model category $\mathbb{M}$ for this entire section. The homotopy $\infty$-category $\mathbb{M}[\mathcal{W}^{-1}]$ of
$\mathbb{M}$ \cite[Definition 1.3.4.15]{lurieha} will be denoted by $\mathcal{C}$.

\subsection{Complete Segal objects and model categorical externalization}\label{secsubcsointro}

We recall that the derived mapping spaces $\mathbb{M}(A,B)^h$ of $\mathbb{M}$ compute the mapping spaces $\mathcal{C}(A,B)$ and may be 
calculated in terms of coframes on its cofibrant objects $A$, or frames on its fibrant objects $B$ \cite[Chapter 5]{hovey}. 
Here, a frame on a fibrant object $B$ is a homotopically constant Reedy fibrant simplicial object $X$ in $\mathbb{M}$ 
such that $X_0\cong B$. A frame is the same thing as a complete Bousfield-Segal object in the sense of \cite[Section 6]{bergner2} by
\cite[Lemma 6.4]{rs_bspaces}. We will refer to such as \emph{complete Segal groupoids} in $\mathbb{M}$ to cohere with the
$\infty$-categorical terminology. We denote by $\mathrm{Gpd}_{\infty}(\mathbb{M})\subseteq s\mathbb{M}$ the full subcategory of complete 
Segal groupoids in $\mathbb{M}$. 
In \cite[Corollary 5.4.4]{hovey}, Hovey shows that for every fibrant object $B$ in $\mathbb{M}$ 
and every complete Segal groupoid $X$ on $B$, the left Kan extension of $X^{op}\colon\Delta\rightarrow\mathbb{M}$ along the Yoneda 
embedding $y\colon\Delta\rightarrow\mathbf{S}$ yields a Quillen adjunction
\[\xymatrix{
y_!(X^{op})\colon(\mathbf{S},\mathrm{Kan})\ar@<1ex>[r]\ar@{}[r]|(.55){\rotatebox[origin=c]{90}{$\vdash$}} & \mathbb{M}^{op}\colon\mathbb{M}(\phv,X_{\bullet})\ar@<1ex>[l]
}\]
such that $\mathbb{M}(A,X_{\bullet})\simeq\mathbb{M}(A,B)^h$ for all cofibrant objects $A$. Thus, we can assign to every 
simplicial object $X$ in $\mathbb{M}$ a right adjoint $\mathbb{M}^{op}\rightarrow\mathbf{S}$ which is a right 
Quillen functor whenever $X$ is a complete Segal groupoid. Since every fibrant object in $\mathbb{M}$ is the object component of some 
homotopically unique complete Segal groupoid \cite[Theorem 5.2.8]{hovey}, we may think of these right Quillen functors
\[\mathbb{M}(\phv,X_{\bullet})\colon\mathbb{M}^{op}\rightarrow(\mathbf{S},\mathrm{Kan})\] 
as representable $\mathbb{M}$-indexed Kan complexes.
It is not hard to show that in fact every right Quillen functor of type $\mathbb{M}^{op}\rightarrow(\mathbf{S},\mathrm{Kan})$ is exactly 
of the form $\mathbb{M}(\phv,X_{\bullet})$ for some complete Segal groupoid $X$ in $\mathbb{M}$ (see Proposition~\ref{propextrQf} 
below).

More generally, the functor $\mathbb{M}(\phv,X_{\bullet})\colon\mathbb{M}^{op}\rightarrow\mathbf{S}$ is well-defined for every 
simplicial object in $\mathbb{M}$ and features prominently under the name ``$X/\blank$ '' in the Joyal-Tierney calculus developed in
\cite[Section 7]{jtqcatvsss}. Indeed, for a simplicial object $X\in s\mathbb{M}$, the left Kan extension
\[\xymatrix{
\Delta\ar[r]^{X^{op}}\ar[d]_{y} & \mathbb{M}^{op} \\
\mathbf{S}\ar@{-->}[ur]_(.6){y_!(X^{op})},
}\]
of $X^{op}$ along the Yoneda embedding always exhibits a right adjoint, explicitly given by the nerve
$\mathbb{M}^{op}(X_{\bullet},\phv)\colon\mathbb{M}^{op}\rightarrow\mathbf{S}$. By definition, for every $X\in s\mathbb{M}$, we have natural
isomorphisms $y_!(X^{op})(\Delta^n)\cong X_n$. The left adjoint $y_!(X^{op})$ takes the $n$-th boundary 
inclusion $\delta^n\colon\partial\Delta^n\hookrightarrow\Delta^n$ to the $n$-th matching object $X_n\rightarrow M_nX$ in
$\mathbb{M}$, and the $n$-th spine inclusion $S_n\hookrightarrow\Delta^n$ to the $n$-th Segal map
$X_n\rightarrow\{S_n,X\}=X_1\times_{X_0}\dots\times_{X_0}X_1$.

\begin{definition}\label{defsegalobj}
A simplicial object $X$ in $\mathbb{M}$ is Reedy fibrant if the matching object $X_n\rightarrow M_nX$ is a fibration for all $n\geq 0$. 
A Reedy fibrant simplicial object $X$ in $\mathbb{M}$ is a Segal object if the Segal maps
\[X_n\rightarrow X_1\times_{X_0}\dots\times_{X_0}X_1\]
are homotopy equivalences for all $n\geq 2$. A Segal object $X$ in $\mathbb{M}$ is complete if the functor
$y_!(X^{op})\colon\mathbf{S}^{op}\rightarrow\mathbb{M}$ takes the endpoint inclusion
$\Delta^0\rightarrow I\Delta^1$ to an acyclic fibration. The full subcategory of complete Segal objects in $\mathbb{M}$ will be denoted 
by $\mathrm{Cat}_{\infty}(\mathbb{M})\subset s\mathbb{M}$.
\end{definition}

The definition of completeness is a straight-forward generalization of Rezk's original definition of completeness for 
Segal spaces. It is used in \cite{jtqcatvsss} to define a Quillen equivalence from Rezk's model structure
$(s\mathbf{S},\mathrm{Cat}_{\infty})$ for complete Segal spaces to the Joyal model structure
$(\mathbf{S},\mathrm{QCat})$. A comparison to the internal definition of completeness as given in Definition~\ref{defcompleteness}, and 
as used e.g.\ in \cite{lgood} and \cite{rasekhcart}, is given in \cite{rs_uc}. Definition~\ref{defsegalobj} is chosen in such a 
way that we immediately obtain the following characterization. 

\begin{lemma}\label{lemmarQuillenchar}
The left Kan extension $y_!(X^{op})\colon\mathbf{S}^{op}\rightarrow\mathbb{M}$ takes
\begin{itemize}
\item boundary inclusions (and hence all monomorphisms) in $\mathbf{S}$ to fibrations in $\mathbb{M}$ if and only if $X$ 
is Reedy fibrant;
\item furthermore inner horn inclusions (and hence all inner anodyne morphisms) in $\mathbf{S}$ to trivial fibrations if 
and only if $X$ is a Segal object;
\item furthermore the endpoint inclusion $\Delta^0\rightarrow I\Delta^1$ to a trivial fibration if and only if $X$ 
is a complete Segal object.
%\item furthermore left horn inclusions to acyclic fibrations if and only if $X$ is a Reedy 
%fibrant complete Bousfield-Segal object (we can take this as a definition).
\end{itemize}
\end{lemma}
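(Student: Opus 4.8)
The plan is to exploit that $y_!(X^{op})\colon\mathbf{S}\to\mathbb{M}^{op}$ is a left adjoint (with right adjoint the nerve $\mathbb{M}(\phv,X_{\bullet})$), hence preserves all colimits, together with the identifications recalled just above the statement: $y_!(X^{op})$ sends $\partial\Delta^n\hookrightarrow\Delta^n$, $S_n\hookrightarrow\Delta^n$ and $\Delta^0\hookrightarrow I\Delta^1$ to (the arrows of $\mathbb{M}^{op}$ underlying) the matching map $X_n\to M_nX$, the Segal map $X_n\to\{S_n,X\}$, and the map whose being an acyclic fibration in $\mathbb{M}$ is the definition of completeness. Under the opposite model structure the fibrations of $\mathbb{M}$ are exactly the cofibrations of $\mathbb{M}^{op}$ read backwards, and the trivial fibrations of $\mathbb{M}$ the trivial cofibrations of $\mathbb{M}^{op}$; in any model category both of these left classes are closed under pushout, coproduct, transfinite composition and retract. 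Since the monomorphisms of simplicial sets form the weakly saturated class generated by the boundary inclusions, and the mid anodyne maps the one generated by the inner horn inclusions, cocontinuity of $y_!(X^{op})$ reduces each ``hence all $\dots$'' clause to the corresponding assertion about generators; so it suffices to argue about generators.

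Given this, the first bullet is immediate: $y_!(X^{op})$ sends every boundary inclusion into the cofibrations of $\mathbb{M}^{op}$ if and only if each matching map $X_n\to M_nX$ is a fibration of $\mathbb{M}$, i.e.\ if and only if $X$ is Reedy fibrant. The same reduction handles the easy halves of the other two bullets. For the ``only if'' direction of the second bullet: if $y_!(X^{op})$ carries inner horn inclusions to trivial fibrations then, the spine inclusions being mid anodyne, it carries the Segal maps to trivial fibrations and a fortiori to weak equivalences, so $X$ is a Segal object --- its Reedy fibrancy being supplied by the boundary-inclusion clause included in the ``furthermore''. And both implications of the third bullet are, by Definition~\ref{defsegalobj}, nothing but a restatement of what it means for a Segal object to be complete.

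What remains, and where the genuine content lies, is the ``if'' direction of the second bullet: assuming $X$ is a Segal object, $y_!(X^{op})$ should send every inner horn inclusion to a trivial fibration. As it already sends every monomorphism to a fibration (first bullet), it is enough to show these images are weak equivalences. I would introduce the class $\mathcal{T}$ of monomorphisms $j$ of simplicial sets with $y_!(X^{op})(j)$ a weak equivalence; since this arrow is automatically a cofibration of $\mathbb{M}^{op}$, membership in $\mathcal{T}$ is equivalent to $y_!(X^{op})(j)$ being a \emph{trivial} cofibration there, whence $\mathcal{T}$ is weakly saturated and closed under two-out-of-three for composable monomorphisms. The Segal hypothesis puts every spine inclusion in $\mathcal{T}$. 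One then proves by induction on $n\geq 2$ that $\mathcal{T}$ contains each inner horn inclusion $\Lambda^n_i\hookrightarrow\Delta^n$: the base case is $\Lambda^2_1=S_2$; for $n\geq 3$, assuming $\mathcal{T}$ contains all inner horn inclusions of dimension $<n$ it contains the weakly saturated class they generate, which in turn contains the spine-into-horn inclusion $S_n\hookrightarrow\Lambda^n_i$ --- this being inner anodyne between simplicial sets of dimension $n-1$, hence a transfinite composite of pushouts of inner horn inclusions of dimension $\le n-1$ --- and then two-out-of-three applied to the factorisation $S_n\hookrightarrow\Lambda^n_i\hookrightarrow\Delta^n$ of the spine inclusion (which lies in $\mathcal{T}$) places $\Lambda^n_i\hookrightarrow\Delta^n$ in $\mathcal{T}$. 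Combined once more with the first bullet, $y_!(X^{op})$ then sends each inner horn inclusion to a fibration that is a weak equivalence, as needed. (This ``if'' direction may alternatively be extracted from the Joyal--Tierney calculus of \cite[Section 7]{jtqcatvsss}.) The only step that is not pure bookkeeping with cocontinuity and the closure properties of (trivial) cofibrations is the combinatorial fact that $S_n\hookrightarrow\Lambda^n_i$ is inner anodyne, and that is where I expect the main work.
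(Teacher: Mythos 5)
Your reductions via cocontinuity and weak saturation, the treatment of the first and third bullets, and the ``only if'' half of the second bullet all agree with the paper, whose proof is essentially a citation of the dual of \cite[Proposition 3.2]{rs_uc} together with \cite[Lemma 1.21, Lemma 3.5]{jtqcatvsss}. The problem lies exactly where you say the main work is. Your induction hinges on the claim that $S_n\hookrightarrow\Lambda^n_i$ lies in the weak saturation of the inner horn inclusions of dimension $\le n-1$, and the justification offered --- ``inner anodyne between simplicial sets of dimension $n-1$, hence a transfinite composite of pushouts of inner horn inclusions of dimension $\le n-1$'' --- does not hold up. First, you give no argument that $S_n\hookrightarrow\Lambda^n_i$ is inner anodyne at all: it does not follow from the inner anodyneness of $S_n\hookrightarrow\Delta^n$ and of $\Lambda^n_i\hookrightarrow\Delta^n$, because anodyne classes are not closed under two-out-of-three --- and the absence of precisely this closure is why you had to introduce the class $\mathcal{T}$ in the first place, so invoking it for anodynes here is close to circular. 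Second, even granting inner anodyneness, an inner anodyne map is in general only a \emph{retract} of a transfinite composite of pushouts of inner horns; the dimension bound would indeed be automatic for an honest relative cell complex (a pushout of $\Lambda^m_k\hookrightarrow\Delta^m$ creates a nondegenerate $m$-simplex, and monomorphisms preserve nondegeneracy), but it is not automatic once retracts enter, and you have not excluded them.

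So the combinatorial heart of the lemma --- precisely the content the paper outsources to Joyal--Tierney --- is asserted rather than proved. The needed claim is true and can be established by an explicit filtration (for instance, attach the faces $\Delta^{[n]\setminus\{j\}}$, $j\neq i$, to $S_n$ in a suitable order, checking at each stage that the intersection with the complex built so far is a union of simplices along consecutive overlapping vertex intervals, and that such ``generalized spine'' inclusions are themselves relative cell complexes on lower-dimensional inner horns), but that filtration is the actual work and is missing from your write-up. The quickest repair is the one you mention only in passing: replace the induction by the references the paper uses, namely \cite[Lemma 1.21, Lemma 3.5]{jtqcatvsss} (rather than Section 7 of that paper), which together yield that a cocontinuous functor sending monomorphisms to cofibrations and spine inclusions to weak equivalences sends all inner horn inclusions to trivial cofibrations; with that substitution the rest of your argument goes through and matches the paper's.
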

\begin{proof}
This is the dual to \cite[Proposition 3.2]{rs_uc}. In a nutshell, Part 1 follows directly from a saturation argument and the fact that
$y_!(X^{op})$ takes the boundary inclusions to the corresponding matching objects. Similarly, a Reedy fibrant simplicial object 
$X$ in $\mathbb{M}$ is a Segal object if and only if the functor $y_!(X^{op})\colon\mathbf{S}\rightarrow\mathbb{M}^{op}$ 
takes the spine inclusions to acyclic cofibrations in $\mathbb{M}^{op}$. The latter holds if and only if $y_!(X^{op})$ 
takes the class of inner horn inclusions to acyclic fibrations by \cite[Lemma 1.21, Lemma 3.5]{jtqcatvsss}. 
\end{proof}

The global left Kan extension functor $y_!\colon \mathrm{Fun}(\Delta,\mathbb{M}^{op})\rightarrow\mathrm{Fun}(\mathbf{S},\mathbb{M}^{op})$ 
corestricts to equivalences
\begin{align}\label{equlanequiv}
s\mathbb{M} & \xrightarrow{\simeq}\mathrm{LAdj}(\mathbf{S},\mathbb{M}^{op})^{op}\xrightarrow{\cong}\mathrm{RAdj}(\mathbb{M}^{op},\mathbf{S}) \\
\notag X & \mapsto y_!(X^{op}) \mapsto\mathbb{M}(\phv,X_{\bullet})
\end{align}
into the opposite of the full subcategory of left adjoints in $\mathrm{Fun}(\mathbf{S}^{op},\mathbb{M})$, and the full subcategory of 
right adjoints in $\mathrm{Fun}(\mathbb{M}^{op},\mathbf{S})$, respectively. This is noted already in \cite[Proposition 3.1.5]{hovey}.

\begin{definition}\label{defextmc}
Restriction of the composition (\ref{equlanequiv}) to the full subcategory $\mathrm{Cat}_{\infty}(\mathbb{M})\subset s\mathbb{M}$ of 
complete Segal objects in $\mathbb{M}$ defines the model categorical externalization functor
\[\mathrm{Ext}\colon\mathrm{Cat}_{\infty}(\mathbb{M})\rightarrow\mathrm{Fun}(\mathbb{M}^{op},\mathbf{S}).\]
We say that a functor $\mathbb{M}^{op}\rightarrow\mathbf{S}$ is \emph{small} if it is naturally isomorphic
%weakly equivalent in $(\mathbf{S},\mathrm{QCat})$
to the model categorical externalization of a complete Segal object in $\mathbb{M}$. 
\end{definition}

Against the background of Definition~\ref{defextmc} we will refer to the functor
$\mathbb{M}(\phv, X_{\bullet})\colon\mathbb{M}^{op}\rightarrow\mathbf{S}$ of a general simplicial object $X$ in $\mathbb{M}$ as the 
externalization $\mathrm{Ext}(X)$ of $X$ at times as well to simplify notation.

\begin{example}
Every complete Segal groupoid $X$ in $\mathbb{M}$ is a complete Segal object in $\mathbb{M}$. For such $X$, the functor
$\mathrm{Ext}(X)$ is a model for the derived mapping space functor $\mathbb{M}(\phv,X_0)^h\colon\mathbb{M}^{op}\rightarrow\mathbf{S}$ 
since $X$ is a simplicial frame on $X_0$.
\end{example}

\begin{example}\label{exple1ext}
If $\mathbb{C}$ is an internal category in (the underlying category of) $\mathbb{M}$, 
we can construct the internal nerve $N(\mathbb{C})\in s\mathbb{M}$ in $\mathbb{M}$, see e.g.\ \cite[Remark B.2.3.2]{elephant}. This will 
generally not be Reedy fibrant, let alone complete. If by $h$ we denote the left adjoint to the ($\mathrm{Set}$-internal) nerve functor
$N\colon\mathrm{Cat}\rightarrow\mathbf{S}$, then the push-forward
$h\circ\mathbb{M}(\phv,N(\mathbb{C})_{\bullet})\colon\mathbb{M}^{op}\rightarrow\mathbf{S}\rightarrow\mathrm{Cat}$ is exactly the 
externalization of the category object $\mathbb{C}$ in the 1-categorical sense \cite[Section 7.3]{jacobsttbook}. 
\end{example}

\begin{notation}
For any model category $\mathbb{N}$, let
$\mathrm{Qrf}(\mathbb{M}^{op},\mathbb{N})\subseteq\mathrm{Fun}(\mathbb{M}^{op},\mathbb{N})$ denote the full 
subcategory of right Quillen functors. 
\end{notation}

\begin{proposition}\label{propextrQf}
The functor $\mathrm{Ext}\colon\mathrm{Cat}_{\infty}(\mathbb{M})\rightarrow\mathrm{Fun}(\mathbb{M}^{op},\mathbf{S})$ restricts to equivalences 
\begin{enumerate}
\item $\mathrm{Ext}\colon\mathrm{Cat}_{\infty}(\mathbb{M})\xrightarrow{\simeq}\mathrm{Qrf}(\mathbb{M}^{op},(\mathbf{S},\mathrm{QCat}))$, and
\item $\mathrm{Ext}\colon\mathrm{Fr}(\mathbb{M})\xrightarrow{\simeq}\mathrm{Qrf}(\mathbb{M}^{op},(\mathbf{S},\mathrm{Kan}))$.
\end{enumerate}
\end{proposition}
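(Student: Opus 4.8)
The plan is to exploit the equivalences \eqref{equlanequiv} already in hand. We have a commutative triangle of full subcategory inclusions: $\mathrm{Cat}_{\infty}(\mathbb{M}) \subseteq s\mathbb{M}$ on the left, and $\mathrm{Qrf}(\mathbb{M}^{op},(\mathbf{S},\mathrm{QCat})) \subseteq \mathrm{RAdj}(\mathbb{M}^{op},\mathbf{S})$ on the right, with the equivalence $X \mapsto \mathbb{M}(\phv, X_{\bullet})$ of \eqref{equlanequiv} between the ambient categories. Since $\mathrm{Ext}$ is defined precisely as the restriction of that equivalence, it is automatically fully faithful, and the whole content of the proposition is to identify, under the equivalence $s\mathbb{M} \simeq \mathrm{RAdj}(\mathbb{M}^{op},\mathbf{S})$, the full subcategory of complete Segal objects with the full subcategory of right \emph{Quillen} functors into $(\mathbf{S},\mathrm{QCat})$ (and similarly frames with right Quillen functors into $(\mathbf{S},\mathrm{Kan})$). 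So the proposition reduces to a statement about \emph{objects}: for $X \in s\mathbb{M}$, the functor $\mathbb{M}(\phv, X_{\bullet})$ is right Quillen with respect to the Joyal model structure if and only if $X$ is a complete Segal object.

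First I would recall that $\mathbb{M}(\phv, X_{\bullet}) \colon \mathbb{M}^{op} \to \mathbf{S}$ is right adjoint to $y_!(X^{op}) \colon \mathbf{S} \to \mathbb{M}^{op}$, so $\mathbb{M}(\phv, X_{\bullet})$ is right Quillen for a given model structure on $\mathbf{S}$ exactly when $y_!(X^{op})$ is left Quillen for that structure, i.e.\ when $y_!(X^{op})$ sends (trivial) cofibrations of $\mathbf{S}$ to (trivial) cofibrations of $\mathbb{M}^{op}$ — equivalently, $y_!(X^{op})\colon \mathbf{S}^{op} \to \mathbb{M}$ sends (trivial) cofibrations of $\mathbf{S}^{op}$, i.e.\ the duals of (trivial) cofibrations of $\mathbf{S}$, to (trivial) fibrations of $\mathbb{M}$. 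Now Lemma~\ref{lemmarQuillenchar} gives exactly the combinatorial translation: $y_!(X^{op})$ sends all monomorphisms of $\mathbf{S}$ to fibrations iff $X$ is Reedy fibrant; it additionally sends inner horn inclusions to trivial fibrations iff $X$ is a Segal object; and it additionally sends the endpoint inclusion $\Delta^0 \to I\Delta^1$ to a trivial fibration iff $X$ is complete. For Part 1, I would then invoke a generating-set argument: the Joyal model structure $(\mathbf{S},\mathrm{QCat})$ has the monomorphisms as generating cofibrations and (a set containing) the inner horn inclusions together with $\Delta^0 \to I\Delta^1$ generating the trivial cofibrations up to saturation — this is the standard description used in \cite{jtqcatvsss} (indeed the proof of Lemma~\ref{lemmarQuillenchar} already cites \cite[Lemma~1.21, Lemma~3.5]{jtqcatvsss}). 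By a Quillen small-object/saturation argument, a left adjoint out of $\mathbf{S}$ is left Quillen iff it takes these generators to (trivial) cofibrations; combining the three bullets of Lemma~\ref{lemmarQuillenchar} shows that $y_!(X^{op})$ is left Quillen into $\mathbb{M}^{op}$ with the dual model structure precisely when $X$ is a complete Segal object, which is the desired identification of subcategories.

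For Part 2, the same mechanism applies with the Kan–Quillen model structure $(\mathbf{S},\mathrm{Kan})$ in place of the Joyal one: here the generating trivial cofibrations are the (ordinary) horn inclusions $\Lambda^n_k \hookrightarrow \Delta^n$ (all $0 \le k \le n$), which as a saturated class coincide with the anodyne maps. Sending all monomorphisms to fibrations and all horn inclusions to trivial fibrations is, by the same saturation/Reedy bookkeeping, equivalent to $X$ being a homotopically constant Reedy fibrant simplicial object, i.e.\ a frame in the sense of \cite{hovey} — which is the definition of $\mathrm{Fr}(\mathbb{M})$ recalled in Section~\ref{secsubcsointro} (and identified with complete Segal groupoids via \cite[Lemma~6.4]{rs_bspaces}); one should cross-check that ``all horn inclusions go to trivial fibrations on top of Reedy fibrancy'' is exactly homotopical constancy, which is essentially Hovey's characterization \cite[Chapter~5]{hovey}. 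Full faithfulness is again inherited from \eqref{equlanequiv}, so in both cases $\mathrm{Ext}$ restricts to an equivalence onto the displayed category of right Quillen functors. I expect the main obstacle to be purely bookkeeping: making the saturation argument precise — that checking the left adjoint $y_!(X^{op})$ on the chosen \emph{generating} (trivial) cofibrations suffices to conclude it is left Quillen, together with pinning down exactly which small generating sets present the Joyal and Kan–Quillen model structures — rather than anything conceptually deep, since Lemma~\ref{lemmarQuillenchar} has already done the real work of translating the lifting conditions into the Segal/completeness conditions.
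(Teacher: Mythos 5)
Your reduction is the same as the paper's: use the equivalence (\ref{equlanequiv}) to make $\mathrm{Ext}$ fully faithful for free, so that everything comes down to the object-level claim that $\mathbb{M}(\phv,X_\bullet)$ is right Quillen for $(\mathbf{S},\mathrm{QCat})$ (resp.\ $(\mathbf{S},\mathrm{Kan})$) precisely when $X$ is a complete Segal object (resp.\ a frame). The gap is in how you close the ``if'' direction of Part 1. You claim that the inner horn inclusions together with $\Delta^0\to I\Delta^1$ generate the trivial cofibrations of the Joyal model structure up to saturation, and then conclude left-Quillenness of $y_!(X^{op})$ by checking it on these generators. That saturation claim is false: the maps with the right lifting property against inner horns and $\Delta^0\to I\Delta^1$ are the ``naive'' (pseudo-)fibrations, i.e.\ inner fibrations which are isofibrations, and these coincide with Joyal fibrations only when the codomain is a quasi-category; in general the Joyal trivial cofibrations strictly exceed the saturation of that set, and no explicit generating set of trivial cofibrations for $(\mathbf{S},\mathrm{QCat})$ is known. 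So ``$X$ complete Segal $\Rightarrow$ $y_!(X^{op})$ takes \emph{all} Joyal trivial cofibrations to trivial fibrations in $\mathbb{M}$'' does not follow from a generators-and-saturation argument; it requires the genuinely nontrivial recognition principle for Quillen functors out of the Joyal model structure (e.g.\ via the criterion that a right adjoint is right Quillen as soon as it preserves trivial fibrations and fibrations between fibrant objects, combined with the identification of fibrations between quasi-categories with inner isofibrations, as in \cite{jtqcatvsss}). This is exactly the content packaged into Lemma~\ref{lemmarQuillenchar} via the dual of \cite[Proposition 3.2]{rs_uc}, which is why the paper can say Part 1 ``follows directly'' from that lemma; your proposal replaces this input by an invalid shortcut.

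Part 2 is in better shape, since the Kan--Quillen model structure does have the horn inclusions as generating trivial cofibrations, so checking the left adjoint on boundary and horn inclusions is legitimate there. But the step you defer (``Reedy fibrant and all horns go to trivial fibrations if and only if $X$ is homotopically constant'') still needs an argument: the paper supplies it by observing that the degeneracies are sections of the face maps, so homotopical constancy of a Reedy fibrant $X$ is equivalent to all faces $X_{n+1}\to X_n$ being trivial fibrations, and then invoking \cite[Lemma 3.7]{jtqcatvsss} to pass from faces to all anodyne maps; citing \cite{hovey} alone does not quite pin this down. So: same skeleton as the paper, Part 2 essentially fine modulo that cross-check, but Part 1 as written has a real gap at the only point where something nontrivial about the Joyal model structure must be used.
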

\begin{proof}
Since the map (\ref{equlanequiv}) is an equivalence, for Part 1 we only have to show that a simplicial object $X$ in
$\mathbb{M}$ is a complete Segal object if and only if its associated functor
$\mathrm{Ext}(X)$ is a right Quillen functor with respect to the Joyal model structure. This follows directly from 
Lemma~\ref{lemmarQuillenchar}.
For Part 2 we are left to show that a simplicial object $X$ in $\mathbb{M}$ is a complete Segal groupoid if and only if its 
associated functor $\mathrm{Ext}(X)$ is a right Quillen functor with respect to the Quillen model structure for Kan complexes. But a 
Reedy fibrant $X$ is homotopically constant if and only if all boundary maps $y_!(X^{op})(d^i)\colon X_{n+1}\rightarrow X_n$ 
are acyclic fibrations (since the degeneracies are sections thereof). 
By an application of \cite[Lemma 3.7]{jtqcatvsss} this in turn holds if and only if the left adjoint $y_!(X^{op})$ 
takes all anodyne maps in $\mathbf{S}$ to acyclic cofibrations in $\mathbb{M}^{op}$. Thus, it follows that $X$ is a 
complete Segal groupoid if and only if the functor $y_!(X^{op})\colon(\mathbf{S},\mathrm{Kan})\rightarrow\mathbb{M}^{op}$ is 
a left Quillen functor.
%This follows from Part (1) in virtue of the 
%following two observations. On the one hand, the model category $(\mathbf{S},\mathrm{Kan})$ is the left
%Bousfield localization of $(\mathbf{S},\mathrm{Qat})$ at the left horn inclusion
%$h^2_0\colon\Lambda^2_0\rightarrow\Delta^2$. On the other hand, a simplicial object $X$ in $\mathbb{M}$ is Reedy fibrant 
%and homotopically constant if and only if it is a complete Segal object such that the map $h^2_0\setminus X$ is an 
%acyclic fibration in $\mathbb{M}$.
\end{proof}

\begin{remark}
Proposition~\ref{propextrQf} states that the small (representable) $\mathbb{M}$-indexed simplicial sets are exactly the contravariant 
right Quillen functors from $\mathbb{M}$ into the Joyal (Kan) model structure on $\mathbf{S}$. This is a model categorical analogon to 
the more general $\infty$-categorical fact exploited in Proposition~\ref{propextasnerve}: Whenever $\mathcal{C}$ is a complete
$\infty$-category, the small $\mathcal{C}$-indexed $\infty$-categories (presheaves) are exactly the contravariant right adjoint functors 
from $\mathcal{C}$ to $\mathrm{Cat}_{\infty}$ (to $\mathcal{S}$).
\end{remark}

This model categorical externalization construction of complete Segal objects can be dualized to a nerve construction of (not 
necessarily reduced) interval objects in the sense of To\"{e}n \cite[Definition 3.4]{toeninftycats}. This comprises many examples 
throughout the literature.

\begin{example}\label{explenervecat}
When the category $\mathrm{Cat}$ of small categories is equipped with the canonical model structure, the ordinary nerve
\[N\colon\mathrm{Cat}\rightarrow(\mathbf{S},\mathrm{QCat})\]
is a right Quillen functor. Hence, it is a small $\mathrm{Cat}^{op}$-indexed simplicial set and as such it is given by the 
externalization of the complete Segal object $\Delta^{\bullet}$ in $\mathrm{Cat}^{op}$. Completeness of the Segal object
$\Delta^{\bullet}$ corresponds exactly to the fact that the endpoint inclusion into the freely walking isomorphism is an acyclic 
cofibration in $\mathrm{Cat}$. The simplicial object $\Delta^{\bullet}$ itself is the internal nerve of 
the internal category $\Delta_{\leq 1}$ in $\mathrm{Cat}^{op}$ given by the ``free co-composition'' on the cograph
$\xymatrix{[0]\ar@<1ex>[r]\ar@<-1ex>[r] & [1]\ar[l]}$ in $\mathrm{Cat}$. That is, the functor $[1]\rightarrow [1]\cup_{[0]}[1]$ which 
maps the non-degenerate edge to the edge freely added between the two outer endpoints. Thus, $N\cong\mathrm{Ext}(N(\Delta_{\leq 1}))$.

The internal category $\Delta_{\leq 1}$ in $\mathrm{Cat}^{op}$ can also be externalized in the ordinary categorical sense 
(Example~\ref{exple1ext}). The corresponding $\mathrm{Cat}^{op}$-indexed category
$\mathrm{Ext}(\Delta_{\leq 1})\colon\mathrm{Cat}\rightarrow\mathrm{Cat}$ is isomorphic to the identity on $\mathrm{Cat}$. Its 
Grothendieck construction $p_{\Delta_{\leq 1}}\colon\int\mathrm{Ext}(\Delta_{\leq 1})\twoheadrightarrow\mathrm{Cat}$ is 
given by pointed categories and oplax-pointed functors between them. The opfibration $p_{\Delta_{\leq 1}}$ is the universal opfibration 
in the sense that every opfibration with fibres in $\mathrm{Cat}$ is equivalent to the pullback of $p_{\Delta_{\leq 1}}$ 
along its associated 1-categorical indexing.
\end{example}

\begin{example}
Similarly, the homotopy-coherent nerve 
\[N_{\Delta}\colon(\mathbf{S}\text{-Cat},\mathrm{Bergner})\rightarrow(\mathbf{S},\text{QCat})\]
with its left adjoint $\mathfrak{C}$ yields a Quillen equivalence. In particular, $N_{\Delta}$ is the small
$(\mathbf{S}\text{-Cat}^{op})$-indexed simplicial set given by externalization of the complete Segal object
$\mathfrak{C}|_{\Delta^{op}}$ in $\mathbf{S}\text{-Cat}^{op}$. When considered as a cosimplicial object in $\mathbf{S}\text{-Cat}$, this 
is known to be the Reedy cofibrant replacement of the diagram $\Delta^{\bullet}\in\mathbf{S}\text{-Cat}^{\Delta}$. That means dually
$\mathfrak{C}|_{\Delta^{op}}$ is the Reedy fibrant replacement of $\Delta^{\bullet}$ in $s(\mathbf{S}\text{-Cat}^{op})$. As the co-Segal 
maps of $\Delta^{\bullet}\in\mathbf{S}\text{-Cat}^{\Delta}$ are still isomorphisms, the Segal object $\Delta^{\bullet}$ is the internal 
nerve of the (levelwise locally discrete) internal category $\Delta_{\leq 1}$ in $\mathbf{S}\text{-Cat}$. It follows that
$\mathfrak{C}\simeq\mathrm{Ext}(\mathbb{R}(N(\Delta_{\leq 1}))$ where $\mathbb{R}$ denotes a corresponding Reedy fibrant replacement 
functor.

Analogously one can show that Dwyer and Kan's $\bar{W}$-construction for simplicial groupoids \cite[Section 3]{dksgrpds} is 
equivalent to $\mathrm{Ext}(\mathbb{R}(N(I\Delta_{\leq 1}))$ for $I\Delta_{\leq 1}$ in $\mathbf{S}\text{-Gpd}^{op}$ given by the 
levelwise push-forward of $\Delta_{\leq 1}$ by the free groupoid functor $I(\cdot)\colon\mathrm{Cat}\rightarrow\mathrm{Gpd}$ left 
adjoint to the canonical inclusion.
\end{example}

\begin{example}\label{exmplenervetoen}
More generally, in \cite{toeninftycats} To\"en considers criteria on a model category $\mathbb{M}$ for the existence 
of an indexed simplicial space $(\mathbb{M}^{op})^{op}\rightarrow(s\mathbf{S},\mathrm{Cat}_{\infty})$ which is the right part of a 
Quillen equivalence with respect to Rezk's model structure for complete Segal spaces. Therefore, he gives a general construction of 
functors $S_X\colon\mathbb{M}\rightarrow s\mathbf{S}$ associated to cosimplicial objects $X$ in $\mathbb{M}$ which 
is part of a Quillen equivalence if and only if the pair $(\mathbb{M},X)$ is a ``theory of $(\infty,1)$-categories'' 
\cite[Definition 4.2]{toeninftycats}. One essential part 
of this definition is that $X$ is a complete Segal object in $\mathbb{M}^{op}$ with contractible base $X_0$, there 
called an \emph{interval} in $\mathbb{M}$ \cite[Definition 3.4]{toeninftycats}. Given a theory of $(\infty,1)$-categories
$(\mathbb{M},X)$, he constructs in the proof of \cite[Theorem 5.1]{toeninftycats} the Quillen equivalence
\[S_X\colon\mathbb{M}\rightarrow (s\mathbf{S},\mathrm{Cat}_{\infty})\]
exactly such that its postcomposition with the underlying-quasi-category functor
$U\colon(s\mathbf{S},\mathrm{Cat}_{\infty})\rightarrow(\mathbf{S},\text{QCat})$ yields the externalization of 
the complete Segal object $X$ in $\mathbb{M}^{op}$. Since $U$ is a Quillen equivalence itself, To\"en's theorem can be rephrased stating 
that a pair $(\mathbb{M},X)$ is a theory of $(\infty,1)$-categories if and only if the externalization
$\mathrm{Ext}(X^{op})\colon\mathbb{M}\rightarrow(\mathbf{S},\mathrm{QCat})$ is a Quillen equivalence.
\end{example}

Since every Quillen pair between model categories induces an adjunction on underlying $\infty$-categories
\cite[Theorem 2.1]{mazelgeequadj}, every complete Segal object $X$ in $\mathbb{M}$ induces a $\mathcal{C}$-indexed $\infty$-category
\begin{align}\label{equlemmaext2ext1}
\mathrm{Ho}_{\infty}(\mathrm{Ext}(X))\colon\mathcal{C}^{op}\rightarrow\mathrm{Cat}_{\infty}
\end{align}
via 	Proposition~\ref{propextrQf}. It also induces a Segal object
$\mathrm{Ho}_{\infty}(X)$ in $\mathcal{C}$ by postcomposition of $X\colon\Delta^{op}\rightarrow\mathbb{M}$ with the $\infty$-categorical 
localization functor $\mathbb{M}\rightarrow\mathcal{C}$.
%Whenever $\mathbb{M}$ is right-proper,
This Segal object in $\mathcal{C}$ is complete and hence an internal $\infty$-category via \cite[Theorem 4.6, Remark 4.8]{rs_uc}\footnote{Theorem 4.6 in \cite{rs_uc} is stated under the assumption of right properness (as made explicit in Remark 4.8). This however 
is not necessary; Remark 4.8, and hence Theorem 4.6, apply to any model category.\label{footnrp}}. 
%For a Segal object $X$ in $\mathbb{M}$, the indexed Kan complex $\mathrm{Ext}^{\simeq}(X)$ can be thought of as 
%the $\infty$-groupoid of fully faithful embeddings $Y\rightarrow X$ of Segal objects into $X$ and natural equivalences 
%between them. Then $X$ is complete if and only if these fully faithful embeddings are fully determined by the space of 
%objects over $X_0$, given by $\mathbb{M}(\blank,X_0)$.

\begin{proposition}\label{lemmaext2ext}
%Suppose $\mathbb{M}$ is right-proper. Then
For every complete Segal object $X$ in a model category $\mathbb{M}$ the functor
\[\mathrm{Ho}_{\infty}(\mathrm{Ext}(X))\colon\mathcal{C}^{op}\rightarrow\mathrm{Cat}_{\infty}\]
is naturally equivalent to the $\infty$-categorical externalization $\mathrm{Ext}(\mathrm{Ho}_{\infty}(X))$.
\end{proposition}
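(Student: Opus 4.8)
The plan is to exhibit both functors as the right adjoint determined by one and the same left adjoint, and to pin that right adjoint down by computing its values on the simplices $\Delta^n$.

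First I would invoke Proposition~\ref{propextrQf}: since $X$ is a complete Segal object, $\mathrm{Ext}(X)=\mathbb{M}(\phv,X_\bullet)\colon\mathbb{M}^{op}\to(\mathbf{S},\mathrm{QCat})$ is a right Quillen functor, and by construction its left Quillen adjoint is the left Kan extension $y_!(X^{op})\colon(\mathbf{S},\mathrm{QCat})\to\mathbb{M}^{op}$ of $X^{op}\colon\Delta\to\mathbb{M}^{op}$ along the Yoneda embedding $y\colon\Delta\to\mathbf{S}$. By \cite[Theorem 2.1]{mazelgeequadj} this Quillen adjunction descends to an adjunction of $\infty$-categories $\mathbb{L}\dashv\mathrm{Ho}_\infty(\mathrm{Ext}(X))$ with $\mathbb{L}:=\mathrm{Ho}_\infty(y_!(X^{op}))\colon\mathrm{Cat}_\infty\to\mathcal{C}^{op}$; in particular the functor $\mathrm{Ho}_\infty(\mathrm{Ext}(X))\colon\mathcal{C}^{op}\to\mathrm{Cat}_\infty$ from the statement is a right adjoint to $\mathbb{L}$, so that for every $n\ge 0$ and every $C\in\mathcal{C}$ the adjunction counit/unit yield $\mathrm{Map}_{\mathrm{Cat}_\infty}(\Delta^n,\mathrm{Ho}_\infty(\mathrm{Ext}(X))(C))\simeq\mathcal{C}^{op}(\mathbb{L}(\Delta^n),C)$, naturally in $[n]\in\Delta$ and in $C\in\mathcal{C}^{op}$. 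Because every simplicial set, in particular $\Delta^n$, is cofibrant in $(\mathbf{S},\mathrm{QCat})$, the derived functor $\mathbb{L}$ agrees with $y_!(X^{op})$ on $\Delta^n$, and $y_!(X^{op})(\Delta^n)\cong X_n$; hence $\mathbb{L}\circ\Delta^\bullet\simeq\gamma_{\mathbb{M}^{op}}\circ X^{op}=\mathrm{Ho}_\infty(X)^{op}$ as functors $\Delta\to\mathcal{C}^{op}$, where $\gamma$ denotes localization and I use that localizing the opposite model structure gives the opposite $\infty$-category. Combining these, $\mathrm{Map}_{\mathrm{Cat}_\infty}(\Delta^n,\mathrm{Ho}_\infty(\mathrm{Ext}(X))(C))\simeq\mathcal{C}^{op}(\mathrm{Ho}_\infty(X)_n,C)\simeq\mathcal{C}(C,\mathrm{Ho}_\infty(X)_n)\simeq\hat{\mathcal{C}}(yC,y\mathrm{Ho}_\infty(X)_n)$, naturally.

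On the other side, unwinding Definition~\ref{defext} gives $\mathrm{Ext}(\mathrm{Ho}_\infty(X))(C)=U\big(sy(\mathrm{Ho}_\infty(X))(C)\big)$, and $sy(\mathrm{Ho}_\infty(X))(C)$ is the complete Segal space $[n]\mapsto\hat{\mathcal{C}}(yC,y\mathrm{Ho}_\infty(X)_n)$ — here I use that $\mathrm{Ho}_\infty(X)$ is indeed a complete Segal object in $\mathcal{C}$, as recorded above via \cite{rs_uc}. Since $U\colon\mathrm{Cat}_\infty(\mathcal{S})\to\mathrm{Cat}_\infty$ is an equivalence with quasi-inverse $((\phv)^{\Delta^\bullet})^{\simeq}$, we get $\mathrm{Map}_{\mathrm{Cat}_\infty}(\Delta^n,\mathrm{Ext}(\mathrm{Ho}_\infty(X))(C))\simeq\hat{\mathcal{C}}(yC,y\mathrm{Ho}_\infty(X)_n)$. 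Now both $\mathrm{Ho}_\infty(\mathrm{Ext}(X))$ and $\mathrm{Ext}(\mathrm{Ho}_\infty(X))$ are functors $\mathcal{C}^{op}\to\mathrm{Cat}_\infty$, and postcomposition with the fully faithful $((\phv)^{\Delta^\bullet})^{\simeq}\colon\mathrm{Cat}_\infty\xrightarrow{\simeq}\mathrm{Cat}_\infty(\mathcal{S})\hookrightarrow s\mathcal{S}$ is conservative on such functors; by the two computations their images are the same $\mathcal{C}^{op}$-indexed simplicial space $C\mapsto\big([n]\mapsto\mathcal{C}(C,\mathrm{Ho}_\infty(X)_n)\big)$, which yields the asserted natural equivalence. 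Equivalently, one may phrase the argument more structurally: density of $\Delta^\bullet$ in $\mathrm{Cat}_\infty$ (every $\infty$-category is the colimit of its simplices) together with $\mathbb{L}$ being a left adjoint gives $\mathbb{L}\simeq(\Delta^\bullet)_!(\mathrm{Ho}_\infty(X)^{op})$, so by Corollary~\ref{corextyoneda} and uniqueness of adjoints $\mathrm{Ho}_\infty(\mathrm{Ext}(X))$ is equivalent to the right adjoint of $(\Delta^\bullet)_!(\mathrm{Ho}_\infty(X)^{op})$, which is $(\mathrm{Ho}_\infty(X)^{op})_!(\Delta^\bullet)=\mathrm{Ext}(\mathrm{Ho}_\infty(X))$ — the model-independent analogue of Proposition~\ref{propextasnerve} applied to the case at hand, now available without assuming $\mathcal{C}$ complete precisely because Mazel-Gee already supplies the right adjoint.

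The \emph{main obstacle} is not any individual step but the coherence bookkeeping: one must carry three compatible naturalities — in the cosimplicial variable $[n]$, in $C\in\mathcal{C}^{op}$, and for the Mazel-Gee adjunction data — through simultaneously, so that the level-wise equivalences on simplices assemble into an equivalence of functors $\mathcal{C}^{op}\to\mathrm{Cat}_\infty(\mathcal{S})$ rather than merely a pointwise family. I would handle this by organising the computation as a single chain of natural equivalences between functors already valued in $\mathrm{Cat}_\infty(\mathcal{S})$, only invoking the embedding $\mathrm{Cat}_\infty(\mathcal{S})\hookrightarrow s\mathcal{S}$ (and the equivalence $U$ and its inverse) at the outset, so that no non-canonical choice ever enters.
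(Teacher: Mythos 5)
Your argument is correct, and your primary route is organized differently from the paper's. The paper compares the two \emph{left} adjoints: it identifies $y_!(\mathrm{Ho}_{\infty}(X)^{op})$ as the left adjoint of $\mathrm{Ext}(\mathrm{Ho}_{\infty}(X))$ via \cite[Proposition 5.23]{rs_comp}, notes that both left adjoints preserve colimits and that $\Delta$ generates $\mathrm{Cat}_{\infty}$ under colimits, and then checks that both restrict along $y\colon\Delta\to\mathrm{Cat}_{\infty}$ to $\mathrm{Ho}_{\infty}(X^{op})$ -- the latter by the same cofibrancy-of-all-simplicial-sets observation you make (in the paper this is the commuting square of localizations). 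You instead compare the two \emph{right} adjoints directly, by postcomposing with the inverse $((\phv)^{\Delta^{\bullet}})^{\simeq}$ of $U$ and showing both become the indexed complete Segal space $sy(\mathrm{Ho}_{\infty}(X))$, using the binatural adjunction equivalence from Mazel-Gee on one side and the very definition $\mathrm{Ext}=U_{\ast}\circ sy$ on the other; density of $\Delta$ enters for you through fully faithfulness of the nerve $\mathrm{Cat}_{\infty}\hookrightarrow s\mathcal{S}$ rather than through determination of colimit-preserving functors by their restriction to $\Delta$ (note it is full faithfulness, not mere conservativity, that you need to transport an equivalence of images back, as you in effect acknowledge). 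What your route buys is that you never need to know that $\mathrm{Ext}(\mathrm{Ho}_{\infty}(X))$ is itself a right adjoint (i.e.\ no appeal to completeness of $\mathcal{C}$ via \cite[Proposition 5.23]{rs_comp}), only the unwound definition of the $\infty$-categorical externalization; what the paper's route buys is that the comparison happens at the level of functors from the start, so no pointwise-to-global assembly of naturalities has to be managed. Your closing ``structural'' rephrasing via Corollary~\ref{corextyoneda} and uniqueness of adjoints is essentially the paper's own argument, so the two proofs share their technical core, namely the identification $\mathrm{Ho}_{\infty}(y_!(X^{op}))\circ\Delta^{\bullet}\simeq\mathrm{Ho}_{\infty}(X)^{op}$.
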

\begin{proof}
Every Quillen pair between model categories induces an adjunction on underlying $\infty$-categories. Thus, to show that the right 
adjoints $\mathrm{Ext}(\mathrm{Ho}_{\infty}(X))$ and $\mathrm{Ho}_{\infty}(\mathrm{Ext}(X))$ are naturally equivalent, it suffices to 
show that so are the left adjoints $y_!(\mathrm{Ho}_{\infty}(X)^{op})$ and $\mathrm{Ho}_{\infty}(y_!(X^{op}))$ which we consider as 
functors of type $\mathrm{Cat}_{\infty}\rightarrow\mathcal{C}^{op}$. The former left adjoint is the 
left Kan extension of $\mathrm{Ho}_{\infty}(X)^{op}\colon\Delta\rightarrow\mathcal{C}^{op}$ along the canonical embedding 
$y\colon\Delta\rightarrow\mathrm{Cat}_{\infty}$ \cite[Proposition 5.23]{rs_comp}. As both left adjoints preserve colimits and the 
embedding $y\colon\Delta\rightarrow\mathrm{Cat}_{\infty}$ generates $\mathrm{Cat}_{\infty}$ under colimits, it suffices to show that the 
two restrictions $y_!(\mathrm{Ho}_{\infty}(X^{op}))\circ y$ and $\mathrm{Ho}_{\infty}(y_!(X^{op}))\circ y$ of type
$\Delta\rightarrow\mathcal{C}^{op}$ are naturally equivalent. The former is naturally equivalent to $\mathrm{Ho}_{\infty}(X^{op})$ since 
$y$ is fully faithful \cite[Section 4.3.2]{luriehtt}. Regarding the latter, we have a commutative diagram of the form
\[\xymatrix{
\Delta\ar[d]_y\ar@/^/[drr]^{X^{op}} & \\
(\mathbf{S},\mathrm{QCat})\ar[rr]_(.6){y_!(X^{op})}\ar[d] & &\mathbb{M}^{op}\ar[d] \\
\mathrm{Cat}_{\infty}\ar[rr]_(.55){\mathrm{Ho}_{\infty}(y_!(X^{op}))}& & \mathcal{C}^{op}
}\]
simply by the definition of the two vertical functors. Here, the two unlabelled vertical arrows denote the respective
$\infty$-categorical localization functors. The left vertical composition is exactly the generating canonical embedding
$y\colon\Delta\rightarrow\mathrm{Cat}_{\infty}$. Thus, the outer square yields an equivalence
$\mathrm{Ho}_{\infty}(y_!(X^{op}))\circ y\simeq\mathrm{Ho}_{\infty}(X^{op})$ as well.
\end{proof}

It may be interesting to note that both the $\infty$-categorical externalization construction in Section~\ref{secformal} as well as the 
model categorical externalization construction in this section are canonically induced from the Yoneda embedding associated to the 
corresponding base structure. However, the Yoneda embedding associated to a plain model category $\mathbb{M}$ has no $\infty$-categorical 
content. In this sense, opposed to its $\infty$-categorical counterpart, the model categorical externalization construction is an
$\infty$-categorical primitive.
%Indeed, we will show a global version of Proposition~\ref{lemmaext2ext} only under additional assumptions
%on $\mathbb{M}$ in Theorem~\ref{propglobrepext1}.

\begin{example}\label{explerqcore}
In Section~\ref{secsubformalunderqcat} we recalled the existence of a right Quillen functor
\[k^!\colon(\mathbf{S},\text{QCat})\rightarrow(\mathbf{S},\text{Kan})\]
which comes with a natural weak equivalence to the core functor $(\cdot)^{\simeq}\colon\mathrm{QCat}\rightarrow\mathrm{Kan}$ when 
restricted to the full subcategory of quasi-categories. Postcomposition of a small $\mathbb{M}$-indexed simplicial set
$\mathrm{Ext}(X)\colon\mathbb{M}^{op}\rightarrow(\mathbf{S},\mathrm{QCat})$ with $k^!$ yields a right Quillen functor
$k^!\circ\mathrm{Ext}(X)\colon\mathbb{M}^{op}\rightarrow(\mathbf{S},\mathrm{Kan})$.
Hence, by Proposition~\ref{propextrQf}, there is a complete Segal groupoid $X^{\simeq}$ in $\mathbb{M}$ such that
\[\text{Ext}(X^{\simeq})\cong k^!\circ\text{Ext}(X).\]
%Whenever $\mathbb{M}$ is right-proper,
It follows from Proposition~\ref{lemmaext2ext} that the two complete Segal groupoids
$\mathrm{Ho}_{\infty}(X^{\simeq})$ and $\mathrm{Ho}_{\infty}(X)^{\simeq}$ from Definition~\ref{defcompleteness} are naturally 
equivalent. One may therefore refer to $X^{\simeq}$ as the core of the complete Segal object $X$ in $\mathbb{M}$. It is explicitly 
constructed in \cite[Lemma 5.6]{rs_bspaces} for $\mathbb{M}=(\mathbf{S},\mathrm{Kan})$.
\end{example}

\begin{remark}\label{remmsforcs}
Dugger showed in \cite{duggersimp} that whenever $\mathbb{M}$ is left proper and combinatorial there is a model structure
$(s\mathbb{M},\mathrm{Gpd}_{\infty})$ whose fibrant objects are exactly the complete Segal groupoids, and such that the inclusion
$\Delta\colon\mathbb{M}\rightarrow(s\mathbb{M},\mathrm{Gpd}_{\infty})$ is the left part of a Quillen equivalence. In particular, the 
composition
\[\mathbb{M}\xrightarrow{\Delta}s\mathbb{M}\xrightarrow{\simeq}\mathrm{RAdj}(\mathbb{M}^{op},\mathbf{S})\] 
with the equivalence (\ref{equlanequiv}) yields a Quillen equivalence from $\mathbb{M}$ to a model structure on the functor category
$\mathrm{RAdj}(\mathbb{M}^{op},\mathbf{S})$ whose fibrant objects are exactly the right Quillen functors into
$(\mathbf{S},\mathrm{Kan})$ by Proposition~\ref{propextrQf}.

The same observation under the same assumptions on $\mathbb{M}$ induces a model structure for right Quillen functors into the Joyal 
model structure $(\mathbf{S},\mathrm{QCat})$ via the model structure for complete Segal objects on $s\mathbb{M}$ constructed for example 
in \cite[Proposition 2.2.9]{rvyoneda}.
\end{remark}

\subsection{The \texorpdfstring{$\infty$}{infinity}-cosmos of complete Segal objects in a model category}\label{secsubcosmosofcsos}

While the existence of a model structure on $s\mathbb{M}$ for complete Segal groupoids and a model structure on $s\mathbb{M}$ for 
complete Segal objects requires additional assumptions on $\mathbb{M}$, both notions always come equipped with a fibrational structure 
automatically. In the following, the fibration category $s\mathbb{M}^f$ will denote the category of Reedy fibrant objects in 
$s\mathbb{M}$. 

\begin{proposition}\label{propfibcatstr}
The full subcategories $\mathrm{Cat}_{\infty}(\mathbb{M})$ and $\mathrm{Gpd}_{\infty}(\mathbb{M})$ of $s\mathbb{M}^f$ are closed under 
small products, pullbacks along Reedy fibrations, transfinite towers of Reedy fibrations, and Reedy cofibrant replacements. They both 
are replete with respect to the class of weak equivalences in $s\mathbb{M}^f$. In particular, they both inherit the fibration category 
structure (with cofibrant replacements) of $s\mathbb{M}^f$ such that the inclusions
$\mathrm{Gpd}_{\infty}(\mathbb{M})\hookrightarrow\mathrm{Cat}_{\infty}(\mathbb{M})\hookrightarrow s\mathbb{M}^f$ are exact.
\end{proposition}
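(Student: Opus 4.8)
The plan is to verify each closure property by reducing it to the corresponding property of Reedy fibrant objects in $s\mathbb{M}$ together with the characterization of (complete) Segal objects in terms of the left adjoint $y_!(X^{op})$ from Lemma~\ref{lemmarQuillenchar}. First I would recall that $s\mathbb{M}^f$ is a fibration category with cofibrant replacements: this is a standard consequence of $\mathbb{M}$ being a model category, with Reedy fibrations, Reedy weak equivalences, and Reedy cofibrant (= levelwise cofibrant, degenerate-included) replacements supplied by the Reedy model structure on $s\mathbb{M}$. The two classes $\mathrm{Cat}_\infty(\mathbb{M})$ and $\mathrm{Gpd}_\infty(\mathbb{M})$ are both full subcategories of $s\mathbb{M}^f$, so repleteness with respect to weak equivalences amounts to showing that the defining conditions (the Segal maps being weak equivalences, the completeness condition, and for groupoids the homotopical constancy) are invariant under Reedy weak equivalences between Reedy fibrant objects — this follows because those conditions are phrased in terms of weak equivalences between the relevant matching/Segal/endpoint objects, all of which are built by homotopy-meaningful constructions (pullbacks along fibrations of fibrant objects, etc.) that preserve weak equivalences.

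Next I would handle the three types of limits in turn, using the dual description via $y_!(X^{op})\colon\mathbf{S}^{op}\to\mathbb{M}$. For a small product $\prod_i X^{(i)}$ of Reedy fibrant simplicial objects, the associated left adjoint is (the opposite of) a product of left adjoints, hence takes boundary inclusions to products of matching-object fibrations, spine inclusions to products of Segal maps, and the endpoint inclusion $\Delta^0\to I\Delta^1$ to a product of the corresponding maps; since fibrations and trivial fibrations in $\mathbb{M}$ are closed under small products, Lemma~\ref{lemmarQuillenchar} shows each of $\mathrm{Cat}_\infty$ and $\mathrm{Gpd}_\infty$ is closed under products. The same bookkeeping applies to pullbacks of a Reedy fibration $X\twoheadrightarrow Z$ along any map $Y\to Z$ with $Y$ Reedy fibrant: matching objects, Segal objects and the relevant comma/endpoint objects are all formed as finite limits, and pullback of a (trivial) fibration along an arbitrary map in $\mathbb{M}$ is again a (trivial) fibration, so the characterizing maps for the pullback are pullbacks of those for $X$ and $Z$, hence remain (trivial) fibrations. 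For a transfinite tower of Reedy fibrations, the limit is again Reedy fibrant (a standard Reedy-model-structure fact), and the characterizing maps of the limit are inverse limits of the characterizing maps of the stages along towers of fibrations; since fibrations and trivial fibrations in a model category are closed under such limits of towers, the Segal and completeness (and constancy) conditions pass to the limit. Closure under Reedy cofibrant replacement is immediate once one notes that such a replacement is a levelwise weak equivalence of Reedy fibrant objects, so it follows from repleteness already established.

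Finally, the last sentence is a formal consequence: any full subcategory of a fibration category that is replete under weak equivalences and closed under finite limits of the required shapes (terminal object, pullbacks along fibrations) inherits the fibration category structure, and if it is moreover closed under cofibrant replacements it inherits a fibration category with cofibrant replacements; the inclusions $\mathrm{Gpd}_\infty(\mathbb{M})\hookrightarrow\mathrm{Cat}_\infty(\mathbb{M})\hookrightarrow s\mathbb{M}^f$ are then exact because they preserve fibrations, trivial fibrations, the terminal object and pullbacks along fibrations essentially by construction (they are full inclusions that do not change the ambient limits), so they satisfy \cite[Definition 1.6]{szumilococomplqcats}. I expect the main obstacle to be the transfinite-towers case together with checking that matching objects interact correctly with these limits — one must be slightly careful that forming matching objects (a finite limit) commutes with the relevant products, pullbacks and sequential limits so that the characterizing fibrations of the limit really are the limits of the characterizing fibrations; but this is a routine interchange-of-limits argument, and the conceptual content is entirely carried by Lemma~\ref{lemmarQuillenchar} and the closure properties of (trivial) fibrations in the model category $\mathbb{M}$.
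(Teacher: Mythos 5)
Your proposal is correct and takes essentially the same route as the paper's proof: closure under products, pullbacks along Reedy fibrations and towers is reduced to the closure properties of (trivial) fibrations in $\mathbb{M}$ via the $y_!(X^{op})$ characterization of Lemma~\ref{lemmarQuillenchar}, and the fibration-category structure with cofibrant replacements (in particular the factorization axiom) is extracted from repleteness exactly as the paper does, the paper settling invariance of completeness by citing \cite[Lemma 4.4.2]{rs_uc} where you give a brief homotopy-invariance sketch. The only slight imprecision is that for the pullback and tower cases the Segal/completeness map of the limit is not literally a pullback or limit of those of the inputs — one passes through the relative (Leibniz) gap maps of the Reedy fibrations, which are trivial fibrations by two-out-of-three — but this is the routine interchange you flag yourself, and the paper gives no more detail for those cases either.
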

\begin{proof}
We formulate the proof for $\mathrm{Cat}_{\infty}(\mathbb{M})$; it is completely analogous (in fact even more straight-forward) for
$\mathrm{Gpd}_{\infty}(\mathbb{M})$. The fact that $\mathrm{Cat}_{\infty}(\mathbb{M})\subset s\mathbb{M}$ is closed under small products 
follows directly from the fact that the class of trivial fibrations in $\mathbb{M}$ is closed under small products (as well as the fact 
that the Segal map of a product of simplicial objects is the product of corresponding Segal maps). The proofs regarding pullbacks along  
fibrations and transfinite towers of fibrations are similarly straight-forward.

For the fibration category structure on $\mathrm{Cat}_{\infty}(\mathbb{M})$ we define a morphism $f\colon X\rightarrow Y$ between 
complete Segal objects $X$ and $Y$ in $\mathbb{M}$ to be a fibration (weak equivalence) if it is a fibration (weak equivalence) in the 
fibration category $s\mathbb{M}^f$. To verify that this defines the structure of a fibration category, one essentially is only left to 
verify that every morphism $f\colon X\rightarrow Y$ in $\mathrm{Cat}_{\infty}(\mathbb{M})$ factors into a weak equivalence followed by a 
fibration. Thus, given a morphism $f\colon X\rightarrow Y$ in $\mathrm{Cat}_{\infty}(\mathbb{M})$, let $j\colon X\rightarrow Z$ be a 
weak equivalence and $p\colon Z\twoheadrightarrow Y$ be a fibration in $s\mathbb{M}^f$ such that $pj=f$. Then the pair $(j,p)$ is a 
factorization in $\mathrm{Cat}_{\infty}(\mathbb{M})$ as desired if $Z$ is again contained in $\mathrm{Cat}_{\infty}(\mathbb{M})$. The 
simplicial object $Z$ is Reedy fibrant by assumption. As $j\colon X\rightarrow Z$ is a (pointwise) weak equivalence, $Z$ is again a 
complete Segal object in $\mathbb{M}$. Indeed, validation of the Segal conditions is immediate. Validation of completeness follows from
\cite[Lemma 4.4.2]{rs_uc}\footnote{As referred to in Footnote~\ref{footnrp}, the lemma does in fact not make use of the ambient 
assumption of right properness of $\mathbb{M}$ contrary to what is stated there.}. Exactness of the inclusion
$\mathrm{Cat}_{\infty}(\mathbb{M})\subset s\mathbb{M}$ follows trivially.
Closure under cofibrant replacements follows in the same way.
\end{proof}

%\begin{remark}
%Assumption of $\mathbb{M}$ a fibration category itself to get a fibration category structure on CS(M) does not work as sM doesn't even have one because of problem with degeneracies. WOuld have to work with semi-simplicial objects. 
%\end{remark}

We recall from \cite[Section 4.1]{duggersimp} that the category $s\mathbb{M}$ is always simplicially enriched and that it is furthermore 
both tensored and cotensored over $\mathbf{S}$. In summary, for $K\in\mathbf{S}$ and $X\in s\mathbb{M}$, define
$K\otimes X\in s\mathbb{M}$ via
\[(K\otimes X)_n:=\coprod_{K_n}X_n,\]
and
\begin{align}\label{defmccotensor}
X^K:=y_!(X^{op})(K\times\Delta^{\bullet}).
\end{align}
The latter is Dugger's original formula up to an explicit pointwise description of the left Kan extension $y_!(X^{op})$. For 
$X$ and $Y$ in $s\mathbb{M}$ this induces the definition of a mapping object
\[\mathrm{Map}_{s\mathbb{M}}(X,Y)=s\mathbb{M}(\Delta^{\bullet}\otimes X,Y)\cong s\mathbb{M}(X,Y^{\Delta^{\bullet}}).\]
We recall that these mapping objects are generally not Kan complexes for Reedy bifibrant simplicial objects $X$ and $Y$. However, they
do induce a canonical 
$(\mathbf{S},\mathrm{QCat})$-enrichment of the fibration category $\mathrm{Cat}_{\infty}(\mathbb{M})$ and a canonical 
$(\mathbf{S},\mathrm{Kan})$-enrichment of the fibration category $\mathrm{Gpd}_{\infty}(\mathbb{M})$ instead.

\begin{proposition}\label{propfibcatstrextra}
The full simplicially enriched subcategories $\mathbf{Cat}_{\infty}(\mathbb{M})$ and $\mathbf{Gpd}_{\infty}(\mathbb{M})$ in
$s\mathbb{M}$ are cotensored over $\mathbf{S}$ as well.
\end{proposition}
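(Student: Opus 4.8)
The goal is to show that $\mathbf{Cat}_{\infty}(\mathbb{M})$ and $\mathbf{Gpd}_{\infty}(\mathbb{M})$ are closed under the cotensor $(\cdot)^K$ for $K\in\mathbf{S}$ inside $s\mathbb{M}$. Since these are full simplicially enriched subcategories, it suffices to check that for $X$ a complete Segal object (resp.\ complete Segal groupoid) in $\mathbb{M}$ and $K$ a simplicial set, the object $X^K = y_!(X^{op})(K\times\Delta^{\bullet})$ is again a complete Segal object (resp.\ groupoid). The plan is to run everything through the left Kan extension characterization in Lemma~\ref{lemmarQuillenchar}, which translates the Reedy-fibrancy/Segal/completeness conditions on $X$ into lifting properties of the left adjoint $y_!(X^{op})\colon\mathbf{S}^{op}\rightarrow\mathbb{M}$.

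First I would record the key formal fact: for any $K\in\mathbf{S}$ and any $X\in s\mathbb{M}$, the left adjoint associated to $X^K$ is the composite $y_!((X^K)^{op})\cong y_!(X^{op})\circ(K\times(\cdot))$, where $K\times(\cdot)\colon\mathbf{S}\rightarrow\mathbf{S}$ is the product functor (equivalently: $(X^K)_n = y_!(X^{op})(K\times\Delta^n)$ by the defining formula \eqref{defmccotensor}, and one checks this cosimplicial object has the left Kan extension $y_!(X^{op})\circ(K\times(\cdot))$ as its left adjoint because $K\times(\cdot)$ preserves colimits and $K\times\Delta^\bullet$ corepresents). Consequently, the lifting behaviour of $y_!((X^K)^{op})$ against a morphism $i$ in $\mathbf{S}$ is the lifting behaviour of $y_!(X^{op})$ against $K\times i$. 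Now I invoke the standard closure properties of the relevant saturated classes in $(\mathbf{S},\mathrm{QCat})$ and $(\mathbf{S},\mathrm{Kan})$: the class of monomorphisms, the class of mid-anodyne maps, and — for the Kan case — the class of anodyne maps are each closed under taking products $K\times(\cdot)$ with an arbitrary simplicial set $K$ (the first is elementary; the latter two are standard stability properties of these weak saturated classes, and for the endpoint inclusion $\Delta^0\to I\Delta^1$ one uses that $K\times(\Delta^0\to I\Delta^1)$ is again an anodyne map, hence in particular lands in the class for which $y_!(X^{op})$ of a complete Segal object yields trivial fibrations).

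With these ingredients, the argument is short. If $X$ is a complete Segal object, then by Lemma~\ref{lemmarQuillenchar}, $y_!(X^{op})$ sends monos to fibrations, mid-anodyne maps to trivial fibrations, and $\Delta^0\to I\Delta^1$ to a trivial fibration; applying it to $K\times i$ for $i$ a boundary inclusion (resp.\ inner horn inclusion, resp.\ the endpoint inclusion) and using the stability of these classes under $K\times(\cdot)$, we conclude that $y_!((X^K)^{op})$ has exactly the same three properties, hence $X^K$ is a complete Segal object by the converse direction of Lemma~\ref{lemmarQuillenchar}. The groupoid case is identical, replacing the third condition by ``sends all anodyne maps to trivial cofibrations in $\mathbb{M}^{op}$'' as in the proof of Proposition~\ref{propextrQf}, again using that anodyne maps are closed under $K\times(\cdot)$. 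It also follows that $\mathbf{Cat}_{\infty}(\mathbb{M})\hookrightarrow s\mathbb{M}$ and $\mathbf{Gpd}_{\infty}(\mathbb{M})\hookrightarrow s\mathbb{M}$ preserve these cotensors, since they are computed by the same formula.

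The main obstacle I anticipate is purely bookkeeping: one has to be careful that $X^K$ is automatically Reedy fibrant (it is: $K\times(\cdot)$ sends boundary inclusions to monomorphisms, which $y_!(X^{op})$ sends to fibrations) and that the identification $y_!((X^K)^{op})\cong y_!(X^{op})\circ(K\times(\cdot))$ is the genuine left adjoint of the cosimplicial object $(X^K)^{op}$ rather than merely agreeing on representables — this is where one spends a sentence checking that both sides are colimit-preserving functors out of $\mathbf{S}$ agreeing on the $\Delta^n$. No genuinely hard step is involved once the translation to left-Kan-extension language is in place; the substance is entirely carried by Lemma~\ref{lemmarQuillenchar} together with the stability of the three weak saturated classes under Cartesian product with an arbitrary simplicial set.
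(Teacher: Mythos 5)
Your overall architecture is the same as the paper's: identify $y_!((X^K)^{op})\cong y_!(X^{op})\circ(K\times(\cdot))$ (both cocontinuous, agreeing on $\Delta$) and then check that this composite still witnesses $X^K$ as a complete Segal object (resp.\ groupoid). The Reedy-fibrancy and Segal steps of your class-by-class check are fine: $K\times(\cdot)$ preserves monomorphisms, and $K\times i=(\emptyset\to K)\mathbin{\hat{\times}}i$ is inner anodyne when $i$ is, so Lemma~\ref{lemmarQuillenchar} applies. The groupoid case is also fine, since anodyne maps are stable under $K\times(\cdot)$ and complete Segal groupoids are exactly those $X$ for which $y_!(X^{op})$ sends anodynes to trivial cofibrations in $\mathbb{M}^{op}$.

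The gap is in your treatment of completeness in the $\mathbf{Cat}_{\infty}(\mathbb{M})$ case. You argue that $K\times(\Delta^0\to I\Delta^1)$ is anodyne (true, in the Kan--Quillen sense) and ``hence lands in the class for which $y_!(X^{op})$ of a complete Segal object yields trivial fibrations.'' That inference is false: for a complete Segal object $X$ which is not groupoidal, $y_!(X^{op})$ does \emph{not} send Kan-anodyne maps to trivial fibrations --- e.g.\ the outer horn $\Lambda^2_0\to\Delta^2$ is sent to a non-equivalence in general; sending all anodynes to trivial fibrations is precisely the characterization of complete Segal \emph{groupoids} in the proof of Proposition~\ref{propextrQf}. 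Nor does $K\times(\Delta^0\to I\Delta^1)$ obviously lie in the weak saturation of the inner anodynes together with $\Delta^0\to I\Delta^1$, which is the only class Lemma~\ref{lemmarQuillenchar} directly controls; that class is notoriously smaller than the Joyal trivial cofibrations, so ``standard stability'' does not cover this step. The correct repair is exactly the paper's argument: by Proposition~\ref{propextrQf}, $y_!(X^{op})\colon(\mathbf{S},\mathrm{QCat})\to\mathbb{M}^{op}$ is left Quillen, and $K\times(\cdot)$ is a left Quillen endofunctor of $(\mathbf{S},\mathrm{QCat})$ because the Joyal model structure is cartesian; hence the composite $y_!((X^K)^{op})$ is left Quillen, and in particular sends the Joyal trivial cofibration $\Delta^0\to I\Delta^1$ to a trivial cofibration. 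Once you phrase it this way, the case-by-case check of the three classes becomes unnecessary, which is why the paper's proof is a two-line composition-of-left-Quillen-functors argument.
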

\begin{proof}
We again show the case for $\mathbf{Cat}_{\infty}(\mathbb{M})$ only. The case for $\mathbf{Gpd}_{\infty}(\mathbb{M})$ is analogous.
We are to show that for every $X\in\mathrm{Cat}_{\infty}(\mathbb{M})$ and every simplicial set $K\in\mathbf{S}$, the cotensor
$X^K\in s\mathbb{M}$ 
is again a complete Segal object. By Proposition~\ref{propextrQf} we therefore have to show that
\[y_! ((X^K)^{op})\colon(\mathbf{S},\mathrm{QCat})\rightarrow\mathbb{M}^{op}\]
is a left Quillen functor. But $y_! ((X^K)^{op})\cong y_!(X^{op})(K\times(\cdot))$ given that both functors are 
cocontinuous and restrict to the same functor on $\Delta$. The latter is the composition of the left Quillen endofunctor
$K\times(\cdot)$ on $(\mathbf{S},\mathrm{QCat})$ with the left Quillen functor $y_!(X^{op})$. As such it is a left Quillen 
functor itself.
\end{proof}

\begin{remark}\label{remmappingqcatalt}
In analogy to the formula of Remark~\ref{rem2catstrdirect}, it follows that the mapping objects of $\mathbf{Cat}_{\infty}(\mathbb{M})$ 
can be computed directly in $\mathrm{Cat}_{\infty}(\mathbb{M})$ by
\[\mathbf{Cat}_{\infty}(\mathbb{M})(X,Y)=\mathrm{Cat}_{\infty}(\mathbb{M})(X,Y^{\Delta^{\bullet}}).\]
\end{remark}

\begin{corollary}\label{corfibcatenr}
Let $\mathbb{M}$ be a model category.
\begin{enumerate}
\item The fibration category $\mathbf{Cat}_{\infty}(\mathbb{M})$ is $(\mathbf{S},\mathrm{QCat})^c$-enriched.
\item The fibration category $\mathbf{Gpd}_{\infty}(\mathbb{M})$ is $(\mathbf{S},\mathrm{Kan})^c$-enriched.
\end{enumerate}
\end{corollary}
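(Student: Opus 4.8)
The plan is to reduce Corollary~\ref{corfibcatenr} to the abstract characterization of $\mathbb{C}$-enriched fibration categories in Definition~\ref{defenrfibcat} by verifying the single gap-map condition in each case. Recall that by Propositions~\ref{propfibcatstr} and~\ref{propfibcatstrextra} we already know that $\mathbf{Cat}_{\infty}(\mathbb{M})$ is a cotensored $\mathbf{S}$-enriched subcategory of $s\mathbb{M}$ whose underlying category carries the fibration category structure inherited from $s\mathbb{M}^f$, and likewise for $\mathbf{Gpd}_{\infty}(\mathbb{M})$. Since $(\mathbf{S},\mathrm{QCat})^c$ is a monoidal cofibration category via the cartesian product (Example~\ref{explecofcatkappasmallss} and the ambient discussion), and $(\mathbf{S},\mathrm{Kan})^c$ is so as well, the only thing left to check is that for every fibration $p\colon X\twoheadrightarrow Y$ in $\mathrm{Cat}_{\infty}(\mathbb{M})$ (resp.\ $\mathrm{Gpd}_{\infty}(\mathbb{M})$) and every cofibration $j\colon A\rightarrow B$ in $(\mathbf{S},\mathrm{QCat})$ (resp.\ $(\mathbf{S},\mathrm{Kan})$), the induced gap map $p^{j}\colon X^{B}\rightarrow X^{A}\times_{Y^{A}}Y^{B}$ is a fibration in the ambient fibration category, which is trivial whenever $p$ or $j$ is.

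The key observation is that this gap-map condition is already available one level up, in $s\mathbb{M}$ itself, where it is the dual of the pushout-product axiom for the Reedy model structure combined with Dugger's cotensor formula. More precisely, for $X\in s\mathbb{M}$ the cotensor $X^{K}=y_{!}(X^{op})(K\times\Delta^{\bullet})$ is built from the left Kan extension $y_{!}(X^{op})\colon\mathbf{S}^{op}\rightarrow\mathbb{M}$, and by Lemma~\ref{lemmarQuillenchar} / Proposition~\ref{propextrQf} a simplicial object $X$ is a complete Segal object exactly when $y_{!}(X^{op})\colon(\mathbf{S},\mathrm{QCat})\rightarrow\mathbb{M}^{op}$ is left Quillen, and a complete Segal groupoid exactly when $y_{!}(X^{op})\colon(\mathbf{S},\mathrm{Kan})\rightarrow\mathbb{M}^{op}$ is left Quillen. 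Dualizing, a Reedy fibration $p\colon X\twoheadrightarrow Y$ between complete Segal objects corresponds to a natural transformation $y_{!}(X^{op})\rightarrow y_{!}(Y^{op})$ whose components assemble into a left Quillen bifunctor behavior: the gap map $p^{j}$ is, levelwise in the $\Delta^{\bullet}$ direction, the map $y_{!}(X^{op})(B\times\Delta^{n})\rightarrow y_{!}(X^{op})(A\times\Delta^{n})\times_{y_{!}(Y^{op})(A\times\Delta^{n})}y_{!}(Y^{op})(B\times\Delta^{n})$, i.e.\ the value of the pushout-product of $j$ (resp.\ $j\times\Delta^{n}$, handled by a saturation argument) against the map of left adjoints determined by $p$. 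First I would make this identification precise, using that $y_{!}$ is cocontinuous and that $K\times\Delta^{\bullet}$ exhausts $\mathbf{S}$ in the appropriate sense.

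Concretely, the steps are: (1) reduce the gap-map condition for $\mathbf{Cat}_{\infty}(\mathbb{M})$ to the corresponding gap-map condition in $s\mathbb{M}^f$ using that the inclusion is exact and closed under the relevant limits (Proposition~\ref{propfibcatstr}) and that the cotensors agree (Proposition~\ref{propfibcatstrextra}); (2) show that $s\mathbb{M}$ with its $\mathbf{S}$-enrichment and Dugger's cotensor satisfies the $(\mathbf{S},\mathrm{QCat})$-gap-map condition, by dualizing to the statement that $y_{!}((\blank)^{op})$ converts Reedy fibrations to maps that are ``left Quillen relative'' — this follows from the SM7-type axiom for the Reedy structure on $s\mathbb{M}$ together with the fact that $K\times(\blank)$ is left Quillen on $(\mathbf{S},\mathrm{QCat})$ (used already in the proof of Proposition~\ref{propfibcatstrextra}), via the standard argument that $p^{j}$ is a retract of a transfinite composite of pushouts when $j$ ranges over generating cofibrations and a saturation argument extends this to all cofibrations; (3) repeat verbatim with $(\mathbf{S},\mathrm{Kan})$ in place of $(\mathbf{S},\mathrm{QCat})$ and $\mathrm{Gpd}_{\infty}$ in place of $\mathrm{Cat}_{\infty}$, using the second equivalence of Proposition~\ref{propextrQf}; (4) conclude by invoking Definition~\ref{defenrfibcat}. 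The main obstacle is step (2): one must carefully package Dugger's levelwise cotensor formula so that the gap map $p^{j}$ is recognized, after dualizing, as a pushout-product in $\mathbb{M}^{op}$ of the Reedy-(acyclic-)cofibration $y_{!}(X^{op})(j\times\Delta^{\bullet})\rightarrow\dots$ with the map $y_{!}(p^{op})$, and then verify that this is a Reedy fibration (resp.\ acyclic Reedy fibration) in $s\mathbb{M}$ — i.e.\ one genuinely needs the interaction between the Reedy structure on $s\mathbb{M}$ and the bifunctoriality of the left Kan extension $y_{!}$, which is a slightly delicate bookkeeping exercise but no new mathematical input beyond what is in Proposition~\ref{propextrQf} and the Reedy pushout-product axiom.
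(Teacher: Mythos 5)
Your overall skeleton (reduce to the gap-map condition of Definition~\ref{defenrfibcat}, get the fibrancy statements from the Reedy-level compatibility, and feed in Proposition~\ref{propextrQf} for the Joyal/Kan direction) is the right one, but your step (2) contains a claim that is false as stated and would make the argument collapse if taken literally. The assertion that ``the gap-map condition is already available one level up, in $s\mathbb{M}$ itself'' cannot hold for the acyclicity-in-$j$ clause: for a general Reedy fibrant $X\in s\mathbb{M}$ and the inner horn inclusion $j\colon\Lambda^2_1\hookrightarrow\Delta^2$ (a trivial cofibration in $(\mathbf{S},\mathrm{QCat})$), the map $X^j\colon X^{\Delta^2}\rightarrow X^{\Lambda^2_1}$ is a weak equivalence at level $0$ precisely when the Segal map $X_2\rightarrow X_1\times_{X_0}X_1$ is one, and similarly the endpoint inclusion $\Delta^0\rightarrow I\Delta^1$ detects completeness (and horn inclusions in the Kan case detect the groupoid condition). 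So the Reedy structure on $s\mathbb{M}$ satisfies only the ``$p^j$ is a fibration, trivial when $p$ is'' half of the condition; the ``trivial when $j$ is'' half is exactly where membership in $\mathrm{Cat}_{\infty}(\mathbb{M})$ (resp.\ $\mathrm{Gpd}_{\infty}(\mathbb{M})$) must enter, and no amount of SM7-type bookkeeping for the Reedy structure plus ``$K\times(\cdot)$ is left Quillen'' can supply it. Relatedly, your plan to handle arbitrary (trivial) cofibrations by saturation from generating ones is problematic in the Joyal direction: the Joyal trivial cofibrations are not the saturation of the inner horns together with $\Delta^0\rightarrow I\Delta^1$ (these only pseudo-generate, detecting fibrations between fibrant objects), so the cellular argument you sketch does not reach all trivial cofibrations.

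The repair is exactly where the paper goes, and it is shorter than what you propose. The Reedy half (gap map is a Reedy fibration, trivial if $p$ is, for every monomorphism $j$ of simplicial sets) is imported from \cite[Propositions 3.2 and 4.4]{duggersimp} -- valid for any $\mathbb{M}$, since the left-properness hypothesis there is removable by \cite[Corollary 7.13]{jtqcatvsss} -- rather than re-derived; this is the ``delicate bookkeeping'' you flag as your main obstacle, and there is no need to redo it. For the remaining half one does not need any Leibniz calculus in two variables: if $j$ is a trivial cofibration in $(\mathbf{S},\mathrm{QCat})$ and $X$ is a complete Segal object, then $X^j$ is, levelwise, $y_!(X^{op})(j\times\Delta^n)$, which is a trivial fibration in $\mathbb{M}$ because $y_!(X^{op})$ is left Quillen for the Joyal structure (Proposition~\ref{propextrQf}) and $j\times\Delta^n$ is again a Joyal trivial cofibration; hence $X^j$ and $Y^j$ are trivial Reedy fibrations, the leg $X^A\times_{Y^A}Y^B\rightarrow X^A$ is a pullback of $Y^j$, and the gap map $p^j$ is a weak equivalence by two-out-of-three. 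The groupoid case is verbatim with $(\mathbf{S},\mathrm{Kan})$ and the second equivalence of Proposition~\ref{propextrQf}. So: same strategy as the paper in outline, but as written your proposal both overstates what holds in $s\mathbb{M}$ and routes the acyclicity-in-$j$ step through a saturation argument that is neither available nor needed.
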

\begin{proof}
We again consider $\mathbf{Cat}_{\infty}(\mathbb{M})$ only. By \cite[Proposition 3.2 and Proposition 4.4]{duggersimp}
\footnote{Proposition 3.2 in \cite{duggersimp} is stated under the assumption of left properness of $\mathbb{M}$. This however is used 
only to reduce the left lifting property against all cofibrations to the left lifting property against all cofibrations between 
cofibrant objects. This in fact is valid in any model category as shown in \cite[Corollary 7.13]{jtqcatvsss}.} we are only left to show 
that for every $X\in\mathrm{Cat}_{\infty}(\mathbb{M})$ and every acyclic cofibration $j\colon A\rightarrow B$ in
$(\mathbf{S},\mathrm{QCat})$ the fibration $X^j\colon X^B\rightarrow X^A$ is trivial in $s\mathbb{M}$. But $X^j\cong y_!(X^{op})(j)$ is 
a trivial fibration because $y_!(X^{op})\colon(\mathbf{S},\mathrm{QCat})\rightarrow\mathbb{M}^{op}$ is a left Quillen functor by 
Proposition~\ref{propextrQf}.
\end{proof}

\begin{corollary}\label{corinftycosmos}
Let $\mathbb{M}$ be a model category. Then the Reedy model structure on $s\mathbb{M}$ equips both simplicially enriched subcategories
$\mathbf{Cat}_{\infty}(\mathbb{M})$ and $\mathbf{Gpd}_{\infty}(\mathbb{M})$ with the structure of an $\infty$-cosmos in the weaker sense 
of \cite{rvyoneda}. Nevertheless, they both have all cosmological limits from \cite[Definition 1.2.1 (i)]{riehlverityelements}, that is, 
all simplicial cotensors (instead finitely presented ones only), small products, pullbacks of fibrations and countable towers 
of fibrations.\qed
\end{corollary}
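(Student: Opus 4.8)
The plan is to assemble the structural results already established and match them against the axioms of an $\infty$-cosmos in the weaker sense of \cite{rvyoneda}; no new argument is needed beyond this bookkeeping. Recall that such an $\infty$-cosmos amounts to a $(\mathbf{S},\mathrm{QCat})^c$-enriched fibration category with cofibrant replacements which contains the terminal object and is closed under pullbacks of fibrations and simplicial cotensors with finitely presented simplicial sets (cf.\ Definition~\ref{defsubcosmoses} and Example~\ref{explecosmosalt}).

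First I would invoke Proposition~\ref{propfibcatstr}: both $\mathbf{Cat}_{\infty}(\mathbb{M})$ and $\mathbf{Gpd}_{\infty}(\mathbb{M})$ carry the fibration category structure with cofibrant replacements inherited from the Reedy fibration category $s\mathbb{M}^f$, and this structure is closed under small products, pullbacks along Reedy fibrations, and transfinite --- in particular countable --- towers of Reedy fibrations; moreover the constant simplicial object on the terminal object of $\mathbb{M}$ is Reedy fibrant and a complete Segal groupoid, hence lies in both subcategories and serves as terminal object there. Next, Corollary~\ref{corfibcatenr} gives that $\mathbf{Cat}_{\infty}(\mathbb{M})$ is $(\mathbf{S},\mathrm{QCat})^c$-enriched and $\mathbf{Gpd}_{\infty}(\mathbb{M})$ is $(\mathbf{S},\mathrm{Kan})^c$-enriched; since the two cofibration categories have the same cofibrations and every $(\mathbf{S},\mathrm{QCat})$-trivial cofibration is $(\mathbf{S},\mathrm{Kan})$-trivial, the latter enrichment in particular restricts to a $(\mathbf{S},\mathrm{QCat})^c$-enrichment, so both subcategories qualify. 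Finally, Proposition~\ref{propfibcatstrextra} shows that both are cotensored over all of $\mathbf{S}$, the cotensor being the restriction of Dugger's $X^K=y_!(X^{op})(K\times\Delta^{\bullet})$; in particular the finitely presented cotensors demanded by the weaker $\infty$-cosmos axioms exist, but so do all of them.

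Putting these together, $\mathbf{Cat}_{\infty}(\mathbb{M})$ and $\mathbf{Gpd}_{\infty}(\mathbb{M})$ are $\infty$-cosmoses in the weaker sense of \cite{rvyoneda}, and the ``nevertheless'' clause --- all simplicial cotensors, small products, pullbacks of fibrations, and countable towers of fibrations, i.e.\ all the cosmological limits of \cite[Definition 1.2.1 (i)]{riehlverityelements} --- is exactly the conjunction of Propositions~\ref{propfibcatstr} and \ref{propfibcatstrextra}. I do not expect a genuine obstacle here: the content has already been discharged in the preceding results, and the only reason these fail to be $\infty$-cosmoses in the unqualified sense of \cite{riehlverityelements} is that their objects need not be cofibrant (the Reedy cofibrant complete Segal objects forming a proper subclass), which is precisely the point of passing to the weaker notion.
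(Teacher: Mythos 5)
Your proposal is correct and follows essentially the same route as the paper, which indeed states the corollary with no further argument because it is exactly the conjunction of Proposition~\ref{propfibcatstr}, Corollary~\ref{corfibcatenr} and Proposition~\ref{propfibcatstrextra} (with the observation, which you make explicitly, that a $(\mathbf{S},\mathrm{Kan})^c$-enrichment yields a $(\mathbf{S},\mathrm{QCat})^c$-enrichment since the cofibrations agree and the Joyal trivial cofibrations form a subclass of the Kan ones). Your closing remark about cofibrancy being the only obstruction to the unqualified notion of \cite{riehlverityelements} also matches the paper's stated reason for passing to the weaker sense of \cite{rvyoneda}.
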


\begin{remark}\label{remmscofgencase}
Corollary~\ref{corinftycosmos} can be understood as a decompression of the main result in \cite{duggersimp} into two parts. 
First, the fibration category $\mathrm{Gpd}_{\infty}(\mathbb{M})$ comes equipped with an enrichment over
$(\mathbf{S},\mathrm{Kan})$ for every model category $\mathbb{M}$. The evaluation functor
$\mathrm{ev}_0\colon\mathrm{Gpd}_{\infty}(\mathbb{M})\rightarrow\mathbb{M}^f$ is always exact, and in fact it is easy to show that it is 
a weak equivalence of fibration categories in the sense of \cite[Definition 1.7]{szumilococomplqcats}. Thus, every model category can be 
replaced by a simplicially enriched fibration category. And second, this fibration category underlies an (automatically simplicially 
enriched) model structure on $s\mathbb{M}$ whenever the model category $\mathbb{M}$ is furthermore combinatorial and left proper. In 
this case, the weak equivalence $\mathrm{ev}_0\colon\mathrm{Gpd}_{\infty}(\mathbb{M})\rightarrow\mathbb{M}^f$ is the underlying exact 
functor of a Quillen equivalence.

Under the same additional assumptions on $\mathbb{M}$, there is an intermediate (combinatorial and 
left proper) model structure $\mathrm{Cat}_{\infty}$ on $s\mathbb{M}$ obtained by left Bousfield localization of the Reedy model 
structure on $s\mathbb{M}$ as well, such that $(s\mathbb{M},\mathrm{Cat}_{\infty})^f=\mathrm{Cat}_{\infty}(\mathbb{M})$, and such that
$(s\mathbb{M},\mathrm{Cat}_{\infty})$ is a $(\mathbf{S},\mathrm{QCat})$-enriched model category. This is
\cite[Proposition 2.2.9]{rvyoneda}.
%In particular, in this case $\mathbf{Cat}_{\infty}(\mathbb{M})$ is also tensored over $(\mathbf{S},\mathrm{QCat})$.
If $\mathbb{M}$ even is a Cisinski model category (to be recalled in Remark~\ref{remcisinskimc}), then all simplicial objects in
$\mathbb{M}$ are Reedy cofibrant. It follows that $\mathbf{Cat}_{\infty}(\mathbb{M})$ is an $\infty$-cosmos (of cofibrant objects) as 
defined in \cite{riehlverityelements}, see \cite[Proposition E.3.7]{riehlverityelements}.
\end{remark}

\section{The right derived externalization functor}\label{subsecrdevext}

In this section we prove various exactness properties of the model categorical externalization functor of
Definition~\ref{defextmc}, and show that it recovers the $\infty$-categorical externalization functor from Section~\ref{secformal} 
whenever the model category $\mathbb{M}$ satisfies some suitable additional properties. We give an application in
Proposition~\ref{prop1oc2topos} which shows that $\mathrm{Cat}_{\infty}(\mathcal{C})$ is an $(\infty,1)$-localic $(\infty,2)$-topos as 
defined in Definition~\ref{def1loc2topos} whenever $\mathcal{C}$ is an $\infty$-topos.

\begin{notation}\label{notationrderextV}
Following up on Notation~\ref{notation_univ}, we will assume the existence of a Grothendieck universe $V$ of small categories as well as
a Grothendieck universe $V^+$ of large categories, the latter of which
%$V$ (via a suitable inaccessible cardinal $\nu$)
contains the model category $\mathbb{M}$.
%In particular, we assume that $\mathbb{M}$ is a model category in this Grothendieck universe. 
%$V$ (so it has at most $\nu$-many objects and all hom-sets have less than $\nu$-many objects).
In particular, $\mathbb{M}$ has all small limits, and all results of 
Sections~\ref{secsubcsointro} and \ref{secsubcosmosofcsos} apply both in the context of the cartesian closed model category
$\mathbf{S}$ of small simplicial sets, as well as in the context of the cartesian closed superlarge model category $\mathbf{S}^+$ of large 
simplicial sets. In particular, the fibration category $\mathbf{Cat}_{\infty}(\mathbb{M})$ remains $(\mathbf{S},\mathrm{QCat})^c$-enriched, 
and $\mathbf{Gpd}_{\infty}(\mathbb{M})$ remains $(\mathbf{S},\mathrm{Kan})^c$-enriched. 
Technically, the two Grothendieck universes $V$ and $V^+$ are given by the class of $\nu$-small and $\nu^+$-small sets for suitable 
inaccessible cardinals 
$\nu<\nu^+$. Without loss of generality the cardinals $\nu<\nu^+$ can be chosen in such a way that the underlying $\infty$-category 
of the model categories $(\mathbf{S},\mathrm{QCat})$ and $(\mathbf{S}^+,\mathrm{QCat})$ of $\nu$-small and $\nu^+$-small simplicial sets 
are the $\infty$-categories $\mathrm{Cat}_{\infty}$ and $\mathrm{CAT}_{\infty}$ of $\nu$-small and $\nu^+$-small $\infty$-categories, 
respectively \cite[Corollary 3.16]{rs_small}. For $\mathrm{CAT}_{\infty}$ to be a quasi-category, we will implicitly assume the 
existence of a third even larger Grothendieck universe as we did in Notation~\ref{notation_univ}, but this will not require further 
explicit reference. We denote the composition 
\[\mathrm{Cat}_{\infty}(\mathbb{M})\xrightarrow{\mathrm{Ext}}\mathrm{Fun}(\mathbb{M}^{op},\mathbf{S})\xrightarrow{\iota_{\ast}}\mathrm{Fun}(\mathbb{M}^{op},\mathbf{S}^+)\]
with the push-forward along the canonical inclusion $\iota\colon\mathbf{S}\hookrightarrow\mathbf{S}^+$ by $\mathrm{Ext}$ as well. As usual, 
a functor will be said to be continuous if it preserves all small limits.
\end{notation}

Consider the category $\mathbb{M}$ as a discrete simplicial category and let 
$\mathbf{Fun}(\mathbb{M}^{op},\mathbf{S}^+)$ be the simplicially enriched category of simplicial presheaves. Its underlying category
$\mathbf{Fun}(\mathbb{M}^{op},\mathbf{S}^+)_0$ is exactly $\mathrm{Fun}(\mathbb{M}^{op},\mathbf{S}^+)$. Let
$\mathrm{Fun}(\mathbb{M}^{op},(\mathbf{S}^+,\mathrm{QCat}))_{\mathrm{proj}}$ denote the category
$\mathrm{Fun}(\mathbb{M}^{op},\mathbf{S}^+)$ equipped with the corresponding
projective model structure. Then $\mathbf{Fun}(\mathbb{M}^{op},(\mathbf{S}^+,\mathrm{QCat}))_{\mathrm{proj}}$
is a $(\mathbf{S}^+,\mathrm{QCat})$-enriched model category \cite[Proposition A.3.3.2]{luriehtt}. We denote its underlying
$(\mathbf{S}^+,\mathrm{QCat})^c$-enriched fibration category by
\[\mathbf{Fun}(\mathbb{M}^{op},\mathbf{QCAT})_{\mathrm{proj}}:=\mathbf{Fun}(\mathbb{M}^{op},(\mathbf{S}^+,\mathrm{QCat}))_{\mathrm{proj}}^f.\]
%Analogously, $\mathbf{Fun}(\mathbb{M}^{op},(\mathbf{S},\mathrm{QCat}))_{\mathrm{proj}}$ is a $(\mathbf{S},\mathrm{QCat})$-enriched model 
%category (with small limits and colimits). This can for instance be seen by simply restricting the projective 
%model structure on $\mathbf{Fun}(\mathbb{M}^{op},(\mathbf{S}^+,\mathrm{QCat}))$ by way of \cite[Proposition 2.3]{duggersmallpres}.
Its full simplicial subcategory spanned by the pointwise small quasi-categories is a $(\mathbf{S},\mathrm{QCat})^c$-enriched fibration 
category and will be denoted by $\mathbf{Fun}(\mathbb{M}^{op},\mathbf{QCat})_{\mathrm{proj}}$. 

Let $\lambda\colon\mathbb{M}\rightarrow\mathbb{M}$ be a functorial cofibrant replacement functor. By
\cite[Proposition A.3.3.7, Example A.3.2.23]{luriehtt} restriction along $\lambda$ induces an $(\mathbf{S}^+,\mathrm{QCat})$-enriched right 
Quillen functor 
\[\lambda^{\ast}\colon\mathbf{Fun}(\mathbb{M}^{op},(\mathbf{S}^+,\mathrm{QCat}))_{\mathrm{proj}}\rightarrow\mathbf{Fun}(\mathbb{M}^{op},(\mathbf{S}^+,\mathrm{QCat}))_{\mathrm{proj}}.\]
In particular, $\lambda^{\ast}$ preserves projectively fibrant presheaves and hence trivially restricts to an
$(\mathbf{S}^+,\mathrm{QCat})$-enriched functor
\[\lambda^{\ast}\colon\mathbf{Fun}(\mathbb{M}^{op},\mathbf{QCat})_{\mathrm{proj}}\rightarrow\mathbf{Fun}(\mathbb{M}^{op},\mathbf{QCat})_{\mathrm{proj}}.\]
The same constructions and conventions apply to $(\mathbf{S}^+,\mathrm{Kan})$ replacing $(\mathbf{S}^+,\mathrm{QCat})$.
We recall the model categorical externalization functor
$\mathrm{Ext}\colon\mathrm{Cat}_{\infty}(\mathbb{M})\rightarrow\mathrm{Fun}(\mathbb{M}^{op},\mathbf{S})$ from Definition~\ref{defextmc}.

\begin{proposition}\label{propextcosm1}
The pointwise right derived externalization $\lambda^{\ast}\circ\mathrm{Ext}$ gives rise to transfinitely $\mathbf{S}$-exact 
functors (Definition~\ref{defCexact})
\begin{align}\label{equpropextcosm1}
\mathbb{R}\mathbf{Ext}\colon\mathbf{Cat}_{\infty}(\mathbb{M})\rightarrow\mathbf{Fun}(\mathbb{M}^{op},\mathbf{QCat})_{\mathrm{proj}}
\end{align}
and
\[\mathbb{R}\mathbf{Ext}\colon\mathbf{Gpd}_{\infty}(\mathbb{M})\rightarrow\mathbf{Fun}(\mathbb{M}^{op},\mathbf{Kan})_{\mathrm{proj}}.\]
%In particular, they are functors of $\infty$-cosmoses in the sense of \cite[Definition 2.1.9]{rvyoneda}.
\end{proposition}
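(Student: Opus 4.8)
The plan is to verify the three defining conditions of a transfinitely $\mathbf{S}_V$-exact functor from Definition~\ref{defCexact}: that the underlying functor is exact between the underlying fibration categories, that it preserves $\mathbf{S}_V$-cotensors, and that it preserves small products and countable sequential limits of fibrations. I would treat the case of $\mathbb{R}\mathbf{Ext}\colon\mathbf{Cat}_{\infty}(\mathbb{M})\rightarrow\mathbf{Fun}(\mathbb{M}^{op},\mathbf{QCat})_{\mathrm{proj}}$ in detail and then remark that the $\mathbf{Gpd}_{\infty}$ case is entirely parallel, replacing $(\mathbf{S}_V,\mathrm{QCat})$ by $(\mathbf{S}_V,\mathrm{Kan})$ and invoking part~2 of Proposition~\ref{propextrQf} in place of part~1. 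Since $\mathbb{R}\mathbf{Ext}$ is by construction the composite $\lambda^{\ast}\circ\mathrm{Ext}$, and $\lambda^{\ast}$ is already known to be an $(\mathbf{S},\mathrm{QCat})$-enriched right Quillen functor (hence induces an $\mathbf{S}_V$-exact functor of underlying fibration categories that preserves all limits), the whole problem reduces to establishing the corresponding properties for the \emph{un}derived externalization functor $\mathrm{Ext}\colon\mathbf{Cat}_{\infty}(\mathbb{M})\rightarrow\mathbf{Fun}(\mathbb{M}^{op},\mathbf{QCat})_{\mathrm{proj}}$ — more precisely, that $\mathrm{Ext}$ sends Reedy fibrations to projective fibrations and Reedy trivial fibrations to projective trivial fibrations, preserves finite limits (in particular pullbacks of fibrations), preserves $\mathbf{S}_V$-cotensors, and preserves small products and sequential limits of Reedy fibrations.

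First I would handle exactness. A Reedy fibration $p\colon X\twoheadrightarrow Y$ between complete Segal objects corresponds, under the equivalence \eqref{equlanequiv} and Proposition~\ref{propextrQf}, to a natural transformation $\mathbb{M}(\phv,X_{\bullet})\Rightarrow\mathbb{M}(\phv,Y_{\bullet})$ of right Quillen functors into $(\mathbf{S},\mathrm{QCat})$; I would check it is a pointwise Joyal fibration using that $\mathrm{Ext}(p)$ evaluated at a cofibrant $A\in\mathbb{M}$ is essentially $y_!(X^{op})$-related to the Reedy-fibration data, i.e.\ I would show the gap-map criterion for $\mathrm{Ext}(p)(A)\to\mathrm{Ext}(p)(M)$ against boundary inclusions $\partial\Delta^n\hookrightarrow\Delta^n$ follows from Reedy fibrancy of $p$ via adjunction, exactly as in the proof of Lemma~\ref{lemmarQuillenchar}. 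Triviality is detected on weak equivalences, which are pointwise. Thus $\mathrm{Ext}$ is exact; preservation of finite limits is immediate since limits in both $\mathrm{Cat}_{\infty}(\mathbb{M})\subset s\mathbb{M}$ (by Proposition~\ref{propfibcatstr}) and in $\mathrm{Fun}(\mathbb{M}^{op},\mathbf{S})$ are computed pointwise, and $\mathrm{Ext}(X)(A)=\mathbb{M}(A,X_{\bullet})$ is $\mathrm{Hom}(A,\phv)$ applied levelwise, which preserves limits in $X$. The same pointwise argument, now applied to arbitrary small products and sequential towers, gives transfiniteness; one only needs that $\mathrm{Cat}_{\infty}(\mathbb{M})$ is closed under these, which is Proposition~\ref{propfibcatstr} again, and that products/towers of Joyal fibrations in the projective model structure are again fibrations (they are, being computed pointwise).

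The step I expect to be the main obstacle — or at least the one requiring the most care — is preservation of $\mathbf{S}_V$-cotensors. By Proposition~\ref{propfibcatstrextra} the cotensor $X^K$ in $\mathbf{Cat}_{\infty}(\mathbb{M})$ is given by \eqref{defmccotensor}, namely $X^K=y_!(X^{op})(K\times\Delta^{\bullet})$, while the simplicial cotensor in $\mathbf{Fun}(\mathbb{M}^{op},\mathbf{S})_{\mathrm{proj}}$ is computed pointwise as an honest simplicial-set cotensor $\mathrm{Ext}(X)(\phv)^K$. So the required identity is a natural isomorphism $\mathrm{Ext}(X^K)\cong\mathrm{Ext}(X)^K$ of simplicial presheaves, i.e.\ $\mathbb{M}(A,(X^K)_{\bullet})\cong \mathbb{M}(A,X_{\bullet})^K$ naturally in $A\in\mathbb{M}$ and $K\in\mathbf{S}_V$. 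I would prove this by the same device used in the proof of Proposition~\ref{propfibcatstrextra}: $y_!((X^K)^{op})\cong y_!(X^{op})(K\times(\phv))$ because both are cocontinuous functors $\mathbf{S}^{op}\to\mathbb{M}$ agreeing on $\Delta$, and then passing to right adjoints and unwinding the nerve formula $\mathrm{Ext}(X)(A)_n=\mathbb{M}(A,X_n)=\mathbb{M}(A,y_!(X^{op})(\Delta^n))$ together with the standard identity $(\mathrm{Ext}(X)(A))^K_n=\mathbf{S}(\Delta^n\times K,\mathrm{Ext}(X)(A))$. Naturality in $\lambda$ (for the passage to the derived functor) is automatic since $\lambda^{\ast}$ is enriched. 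Finally I would note that by Example~\ref{explecosmosalt} a transfinitely $(\mathbf{S},\mathrm{QCat})^c$-exact functor between $\infty$-cosmoses is precisely a cosmological functor, hence the last sentence of the statement follows — with the caveat that here the source $\infty$-cosmoses are only ``of not-necessarily-cofibrant objects'', so the appropriate reference is \cite[Definition 2.1.9]{rvyoneda} as stated, which is exactly the transfinitely-exact enriched-functor condition just verified.
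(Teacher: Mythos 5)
Your overall skeleton does track the paper's: preservation of $\mathbf{S}_V$-cotensors via the isomorphism $y_!((X^K)^{op})\cong y_!(X^{op})(K\times(\cdot))$ and passage to right adjoints, preservation of conical limits by pointwise computation, and checking (trivial) fibrancy only after evaluating at cofibrant objects. But the exactness step has a genuine gap. To show that $\mathbb{R}\mathrm{Ext}(p)$ is a projective fibration for a Reedy fibration $p\colon X\twoheadrightarrow Y$ in $\mathrm{Cat}_{\infty}(\mathbb{M})$, you must verify the right lifting property of $\mathrm{Ext}(p)(\lambda M)$ against \emph{all trivial cofibrations} $j$ of $(\mathbf{S},\mathrm{QCat})$; these are not the boundary inclusions (boundary inclusions detect \emph{trivial} fibrations), and the Joyal model structure has no convenient explicit generating set of trivial cofibrations. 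By adjunction such a lifting problem transposes to lifting the cofibrant object $\lambda M$ against the gap map $(p^j)_0$ in $\mathbb{M}$, and for that one needs $(p^j)_0$ to be a \emph{trivial} fibration; Reedy fibrancy of $p$ alone only makes $(p^{\delta^n})_0$ a fibration, against which cofibrant objects have no lifting property. The missing input is precisely Corollary~\ref{corfibcatenr} (the $(\mathbf{S},\mathrm{QCat})^c$-enrichment of the fibration category $\mathbf{Cat}_{\infty}(\mathbb{M})$), which gives that $p^j$ is a trivial Reedy fibration whenever $p$ is a fibration and $j$ a trivial cofibration, or $p$ is a trivial fibration and $j$ any cofibration — this is exactly how the paper argues. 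Your sketch ``against boundary inclusions, exactly as in Lemma~\ref{lemmarQuillenchar}'' at best handles the trivial-fibration half, and ``triviality is detected on weak equivalences, which are pointwise'' is not a substitute for verifying pointwise trivial fibrancy.

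A second, related slip: the claimed reduction to the \emph{un}derived $\mathrm{Ext}$ being exact into $\mathbf{Fun}(\mathbb{M}^{op},\mathbf{QCat})_{\mathrm{proj}}$ is false as stated. For non-cofibrant $M$ the simplicial set $\mathbb{M}(M,X_{\bullet})$ need not even be a quasi-category, so $\mathrm{Ext}$ does not land in that fibration category, and $\mathrm{Ext}(p)$ need not be a pointwise fibration at all objects; this is exactly why one precomposes with $\lambda$. The correct reduction — and the one the paper makes — is that projective (trivial) fibrancy of $\lambda^{\ast}\mathrm{Ext}(p)$ only requires $\mathrm{Ext}(p)(M)$ to be a (trivial) fibration for \emph{cofibrant} $M$, which is where your later evaluation ``at a cofibrant $A$'' belongs. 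With the Leibniz input from Corollary~\ref{corfibcatenr} inserted at that point, the rest of your outline (cotensors, limits, products and towers of fibrations, and the cosmological-functor conclusion) goes through as in the paper.
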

\begin{proof}
We show the statement for $\mathbf{Cat}_{\infty}(\mathbb{M})$; the case $\mathbf{Gpd}_{\infty}(\mathbb{M})$ is completely 
analogous.

First, let us show that $\mathrm{Ext}\colon\mathrm{Cat}_{\infty}(\mathbb{M})\rightarrow\mathrm{Fun}(\mathbb{M}^{op},\mathbf{S})$ 
preserves small simplicial cotensors. Therefore, let $X\in\mathrm{Cat}_{\infty}(\mathbb{M})$ 
and $S\in\mathbf{S}$. In the proof of Proposition~\ref{propfibcatstrextra} we noted that the left adjoint
$y_!((X^S)^{op})\colon\mathbf{S}\rightarrow\mathbb{M}^{op}$ is naturally isomorphic to the composition
$y_!(X^{op})(S\times(\cdot))$ of left adjoints. It follows that the right adjoint $\mathrm{Ext}(X^S)$ is naturally isomorphic 
to the composition $\mathrm{Ext}(X)^S$ of respective right adjoints. This however computes the corresponding cotensor of $\mathrm{Ext}(X)$ 
in $\mathbf{Fun}(\mathbb{M}^{op},\mathbf{S})$.
%as well as in the large simplicial category $\mathbf{Fun}(\mathbb{M}^{op},\mathbf{S})$ accordingly.
In particular, for any two $X,Y\in\mathrm{Cat}_{\infty}(\mathbb{M})$ we obtain a sequence
\begin{align}\label{equpropextcosm2}
\notag \mathbf{Cat}_{\infty}(\mathbb{M})(X,Y) & \cong \mathrm{Cat}_{\infty}(\mathbb{M})(X,Y^{\Delta^{\bullet}}) \\
%\notag  \cong\mathrm{Fun}(\mathbb{M}^{op},\mathbf{S}_{V})(\mathrm{Ext}(X),\mathrm{Ext}(Y^{\Delta^{\bullet}}))\\ 
\notag & \cong\mathrm{Fun}(\mathbb{M}^{op},\mathbf{S})(\mathrm{Ext}(X),\mathrm{Ext}(Y^{\Delta^{\bullet}}))\\ 
\notag  & \cong\mathrm{Fun}(\mathbb{M}^{op},\mathbf{S})(\mathrm{Ext}(X),\mathrm{Ext}(Y)^{\Delta^{\bullet}}) \\
\notag  & \cong\mathbf{Fun}(\mathbb{M}^{op},\mathbf{S})(\mathrm{Ext}(X),\mathrm{Ext}(Y))
\end{align}
of natural isomorphisms of simplicial sets. Here the first isomorphism is given in Remark~\ref{remmappingqcatalt}, and the second one is 
given by the natural action of $\mathrm{Ext}$ on morphisms which is levelwise an isomorphism (e.g.\ by the equivalence 
(\ref{equlanequiv})). The functor
$\mathrm{Ext}\colon\mathrm{Cat}_{\infty}(\mathbb{M})\rightarrow\mathrm{Fun}(\mathbb{M}^{op},\mathbf{S})$ furthermore preserves all 
ordinary categorical limits that exist in $\mathrm{Cat}_{\infty}(\mathbb{M})$, given that it is is naturally isomorphic to the 
composition $\mathrm{Cat}_{\infty}(\mathbb{M})\hookrightarrow s\mathbb{M}\xrightarrow{sy} s\mathrm{Fun}(\mathbb{M}^{op},\mathrm{Set})$ 
of continuous functors.
It follows that $\mathrm{Ext}$ gives rise to a simplicially enriched functor $\mathbf{Ext}$ which preserves all conical limits as well 
as all small simplicial cotensors. Hence, so does the composition $\mathbb{R}\mathbf{Ext}=\lambda^{\ast}\circ\mathbf{Ext}$ in 
(\ref{equpropextcosm1}).

We are left to show that $\mathbb{R}\mathbf{Ext}$ preserves fibrations and trivial fibrations.
Therefore let $p\colon X\twoheadrightarrow Y$ be a (trivial) fibration in $\mathrm{Cat}_{\infty}(\mathbb{M})$. To show that
$\mathbb{R}\mathrm{Ext}(p)\colon\mathbb{R}\mathrm{Ext}(X)\rightarrow\mathbb{R}\mathrm{Ext}(Y)$ is a 
projective (trivial) fibration in $\mathrm{Fun}(\mathbb{M}^{op},(\mathbf{S},\mathrm{QCat}))_{\mathrm{proj}}$, it suffices to show that 
for all cofibrant objects $M\in\mathbb{M}$, the map $\mathrm{Ext}(p)(M)\colon\mathrm{Ext}(X)(M)\rightarrow\mathrm{Ext}(Y)(M)$ is a 
(trivial) fibration in $(\mathbf{S},\mathrm{QCat})$. Thus, let $M\in\mathbb{M}$ be cofibrant and let $j\colon A\hookrightarrow B$ be a 
(trivial) cofibration in $(\mathbf{S},\mathrm{QCat})$. We are to show that the gap map
\[(\mathrm{Ext}(p)(M)^j)_0\colon(\mathrm{Ext}(X)(M)^B)_0\rightarrow(\mathrm{Ext}(X)(M)^A)_0\times_{(\mathrm{Ext}(Y)(M)^A)_0}(\mathrm{Ext}(Y)(M)^B)_0\]
of sets has a section. By the above, this map is isomorphic to the map
\[\mathrm{Ext}(p^j)(M)_0\colon\mathbb{M}(M,(X^B)_0)\rightarrow\mathbb{M}(M,(X^A\times_{Y^A}Y^B)_0).\]
The morphism $p^j\colon X^B\rightarrow X^A\times_{Y^A}Y^B$ is a trivial fibration in $\mathrm{Cat}_{\infty}(\mathbb{M})$ by 
Corollary~\ref{corfibcatenr}. Hence, so is $(p^j)_0\colon (X^B)_0\rightarrow(X^A\times_{Y^A}Y^B)_0$ in $\mathbb{M}$. Given that $M$ 
is cofibrant in $\mathbb{M}$, we obtain the desired section.
\end{proof}

\begin{remark}\label{remcofreplchoice}
When all objects in $\mathbb{M}$ are cofibrant, the cofibrant replacement functor $\lambda\colon\mathbb{M}\rightarrow\mathbb{M}$ can 
without loss of generality be chosen to be the identity. In that case the right derived externalization functor $\mathbb{R}\mathbf{Ext}$ 
is just the externalization
$\mathbf{Ext}\colon\mathbf{Cat}_{\infty}(\mathbb{M})\rightarrow\mathbf{Fun}(\mathbb{M}^{op},\mathbf{QCat})_{\mathrm{proj}}$ itself (and 
hence is an embedding of simplicial categories).
\end{remark}

Let $W\subseteq\mathbb{M}^{\Delta^1}$ denote the class of weak equivalences in $\mathbb{M}$. As the
$(\mathbf{S}^+,\mathrm{QCat})$-enriched model category $\mathbf{Fun}(\mathbb{M}^{op},(\mathbf{S}^+,\mathrm{QCat}))_{\mathrm{proj}}$ is left
proper and combinatorial (in the Grothendieck universe of superlage categories), we may consider its 
$(\mathbf{S}^+,\mathrm{QCat})$-enriched left Bousfield localization
\[\mathcal{L}_{y[W]}\mathbf{Fun}(\mathbb{M}^{op},(\mathbf{S}^+,\mathrm{QCat}))_{\mathrm{proj}},\]
as well a its $(\mathbf{S}^+,\mathrm{Kan})$-enriched left Bousfield localization
$\mathcal{L}_{y[W]}\mathbf{Fun}(\mathbb{M}^{op},(\mathbf{S}^+,\mathrm{Kan}))_{\mathrm{proj}}$.
By the (simplicially enriched) Yoneda lemma, a projectively fibrant simplicial presheaf
$F\colon\mathbb{M}^{op}\rightarrow\mathbf{S}^+$ is $y[W]$-local if and only if it takes weak equivalences in $\mathbb{M}$ to equivalences 
of quasi-categories (Kan complexes). 
The simplicial full subcategory of
\[\mathcal{L}_{y[W]}\mathbf{Fun}(\mathbb{M}^{op},\mathbf{QCAT})_{\mathrm{proj}}:=\mathcal{L}_{y[W]}\mathbf{Fun}(\mathbb{M}^{op},(\mathbf{S}^+,\mathrm{QCat}))_{\mathrm{proj}}^f\]
spanned by the pointwise small quasi-categories is again a $(\mathbf{S},\mathrm{QCat})^c$-enriched fibration 
category which we denote by $\mathcal{L}_{y[W]}\mathbf{Fun}(\mathbb{M}^{op},\mathbf{QCat})_{\mathrm{proj}}$. The same applies to
$(\mathbf{S}^+,\mathrm{Kan})$.

\begin{proposition}\label{propextcosm2}
The pointwise right derived externalization functor factors through transfinitely $\mathbf{S}$-exact functors
\begin{align}\label{equpropextcosm21}
\mathbb{R}\mathbf{Ext}\colon\mathbf{Cat}_{\infty}(\mathbb{M})\rightarrow\mathcal{L}_{y[W]}\mathbf{Fun}(\mathbb{M}^{op},\mathbf{QCat})_{\mathrm{proj}}
\end{align}
and
\begin{align}\label{equpropextcosm212}
\mathbb{R}\mathbf{Ext}\colon\mathbf{Gpd}_{\infty}(\mathbb{M})\rightarrow\mathcal{L}_{y[W]}\mathbf{Fun}(\mathbb{M}^{op},\mathbf{Kan})_{\mathrm{proj}}.
\end{align}
\end{proposition}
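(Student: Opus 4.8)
The plan is to deduce this from Proposition~\ref{propextcosm1}. That proposition already furnishes transfinitely $\mathbf{S}_{V}$-exact functors $\mathbb{R}\mathbf{Ext}\colon\mathbf{Cat}_{\infty}(\mathbb{M})\to\mathbf{Fun}(\mathbb{M}^{op},\mathbf{QCat})_{\mathrm{proj}}$ and $\mathbb{R}\mathbf{Ext}\colon\mathbf{Gpd}_{\infty}(\mathbb{M})\to\mathbf{Fun}(\mathbb{M}^{op},\mathbf{Kan})_{\mathrm{proj}}$, and the fibration category $\mathcal{L}_{y[W]}\mathbf{Fun}(\mathbb{M}^{op},(\mathbf{S},\mathrm{QCat}))_{\mathrm{proj}}^f$ is precisely the full simplicial subcategory of $\mathbf{Fun}(\mathbb{M}^{op},\mathbf{QCat})_{\mathrm{proj}}$ spanned by the $y[W]$-local objects (and likewise in the Kan case). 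So the only two things to establish are that $\mathbb{R}\mathbf{Ext}$ takes values in the $y[W]$-local presheaves, and that the resulting corestriction remains transfinitely $\mathbf{S}_{V}$-exact.

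For the locality, I would argue as follows. Given a complete Segal object $X$ in $\mathbb{M}$, Proposition~\ref{propextrQf}.1 says that $\mathrm{Ext}(X)\cong\mathbb{M}(-,X_{\bullet})\colon\mathbb{M}^{op}\to(\mathbf{S},\mathrm{QCat})$ is a right Quillen functor, hence sends weak equivalences between fibrant objects of $\mathbb{M}^{op}$ --- that is, between cofibrant objects of $\mathbb{M}$ --- to equivalences of quasi-categories. Since the functorial cofibrant replacement $\lambda$ lands in cofibrant objects, the presheaf $\mathbb{R}\mathrm{Ext}(X)=\mathrm{Ext}(X)\circ\lambda$ sends every weak equivalence $f$ of $\mathbb{M}$ to the equivalence $\mathrm{Ext}(X)(\lambda f)$; by the enriched Yoneda lemma recalled in the discussion preceding the statement, and since $\mathbb{R}\mathrm{Ext}(X)$ is projectively fibrant by Proposition~\ref{propextcosm1}, this is exactly the assertion that $\mathbb{R}\mathrm{Ext}(X)$ is fibrant in $\mathcal{L}_{y[W]}\mathbf{Fun}(\mathbb{M}^{op},(\mathbf{S},\mathrm{QCat}))_{\mathrm{proj}}$. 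The case of $\mathbf{Gpd}_{\infty}(\mathbb{M})$ is identical, with Proposition~\ref{propextrQf}.2 and Kan complexes in place of quasi-categories.

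For the exactness of the corestriction, I would observe that the inclusion of the full subcategory of $y[W]$-local objects into $\mathbf{Fun}(\mathbb{M}^{op},\mathbf{QCat})_{\mathrm{proj}}$ is transfinitely $\mathbf{S}_{V}$-exact: a simplicially enriched left Bousfield localization leaves the cofibrations unchanged, and between fibrant local objects its fibrations and weak equivalences coincide with those of the ambient projective model structure, so the class of $y[W]$-local objects is closed in $\mathbf{Fun}(\mathbb{M}^{op},\mathbf{QCat})_{\mathrm{proj}}$ under conical limits, small products, pullbacks of fibrations, towers of fibrations and simplicial cotensors --- here one uses that $y[W]$ is a set of maps of projectively cofibrant presheaves (the discrete representables), which forces $y[W]$-local objects to be closed under arbitrary limits and under cotensoring. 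Since $\mathbb{R}\mathbf{Ext}$ into $\mathbf{Fun}(\mathbb{M}^{op},\mathbf{QCat})_{\mathrm{proj}}$ is transfinitely $\mathbf{S}_{V}$-exact and factors through this inclusion by the previous paragraph, the corestriction to $\mathcal{L}_{y[W]}\mathbf{Fun}(\mathbb{M}^{op},(\mathbf{S},\mathrm{QCat}))_{\mathrm{proj}}^f$ is transfinitely $\mathbf{S}_{V}$-exact as well, and the ``in particular'' clause follows as in Proposition~\ref{propextcosm1}, a transfinitely $\mathbf{S}_{V}$-exact functor between the $\infty$-cosmoses in play being a functor of $\infty$-cosmoses in the sense of \cite[Definition 2.1.9]{rvyoneda}.

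I expect the only genuinely non-formal step to be the locality verification in the second paragraph: the subtlety is the interplay between being right Quillen into $(\mathbf{S},\mathrm{QCat})$ and inverting the weak equivalences of $\mathbb{M}$, and in particular the fact that it is precomposition with a \emph{cofibrant} replacement $\lambda$ --- not a fibrant one --- that does the job, which is precisely why the right-derived externalization, rather than the plain externalization of Definition~\ref{defextmc}, appears in the statement. Everything else is bookkeeping with the standard properties of enriched left Bousfield localizations.
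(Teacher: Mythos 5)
Your proposal is correct and follows essentially the same route as the paper: reduce everything to the pointwise claim that $\mathbb{R}\mathrm{Ext}(X)$ is $y[W]$-local, using that $\mathcal{L}_{y[W]}\mathbf{Fun}(\mathbb{M}^{op},(\mathbf{S},\mathrm{QCat}))_{\mathrm{proj}}^f$ is a full simplicial subcategory of $\mathbf{Fun}(\mathbb{M}^{op},\mathbf{QCat})_{\mathrm{proj}}$ with the induced fibration category structure (so exactness of the corestriction is automatic from Proposition~\ref{propextcosm1}), and then verify locality from the fact that $\mathrm{Ext}(X)$ is right Quillen (Proposition~\ref{propextrQf}) together with the Yoneda-lemma characterization of $y[W]$-local projectively fibrant presheaves. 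The only difference is cosmetic: you spell out Ken Brown's lemma and the role of the cofibrant replacement $\lambda$ where the paper simply invokes that $\mathbb{R}\mathrm{Ext}(X)$ is the right derived functor of a Quillen right adjoint (and your passing claim of closure under ``arbitrary limits'' should be read as closure under the cosmological limits actually in play).
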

\begin{proof}
The identity
\[\mathrm{id}\colon\mathcal{L}_{y[W]}\mathbf{Fun}(\mathbb{M}^{op},(\mathbf{S}^+,\mathrm{QCat}))_{\mathrm{proj}}\rightarrow\mathbf{Fun}(\mathbb{M}^{op},(\mathbf{S}^+,\mathrm{QCat}))_{\mathrm{proj}}\]
induces an inclusion 
\begin{align}\label{equpropextcosm22}
\mathbb{R}\mathrm{id}\colon\mathcal{L}_{y[W]}\mathbf{Fun}(\mathbb{M}^{op},\mathbf{QCAT})_{\mathrm{proj}}\rightarrow\mathbf{Fun}(\mathbb{M}^{op},\mathbf{QCAT})_{\mathrm{proj}}
\end{align}
of a full simplicially enriched subcategory whose $(\mathbf{S}^+,\mathrm{QCat})$-enriched fibration category structure is induced from 
that of $\mathbf{Fun}(\mathbb{M}^{op},\mathbf{QCAT})_{\mathrm{proj}}$. This inclusion hence reflects all corresponding 
limits, all cotensors, and all fibrations and trivial fibrations. By Proposition~\ref{propextcosm1}, we thus only have to show that
$\mathbb{R}\mathbf{Ext}\colon\mathbf{Cat}_{\infty}(\mathbb{M})\rightarrow\mathbf{Fun}(\mathbb{M}^{op},\mathbf{QCat})_{\mathrm{proj}}$ 
factors pointwise trough the inclusion (\ref{equpropextcosm22}).
But for any $X\in\mathrm{Cat}_{\infty}(\mathbb{M})$, the functor
$\mathbb{R}\mathrm{Ext}(X)\colon\mathbb{M}^{op}\rightarrow(\mathbf{S},\mathrm{QCat})$ is the right derived functor of the Quillen right 
adjoint $\mathrm{Ext}(X)$. It hence is projectively fibrant, and takes weak equivalences in $\mathbb{M}^{op}$ to weak equivalences in
$(\mathbf{S},\mathrm{QCat})$. Thus, it is $y[W]$-local.
\end{proof}

For the following, we recall that to every relative category $(\mathbb{C},W)$ one may associate its simplicial localization
$\mathcal{L}_{\Delta}(\mathbb{C},W)$ \cite{dksimploc}. It computes the $\infty$-categorical localization of $\mathbb{C}$ at $W$. 
Furthermore, we recall that weak equivalences in the Bergner model structure on the category of simplicial categories are often referred to 
as DK-equivalences.

\begin{corollary}\label{corcosmemblin}
For every model category $\mathbb{M}$ and every simplicial category $\mathbf{M}$ which is DK-equivalent to the simplicial 
localization $\mathcal{L}_{\Delta}(\mathbb{M},W)$ (of the underlying category of $\mathbb{M}$ at $W$) the pointwise right derived
externalization induces transfinitely $\mathbf{S}$-exact functors
\[\mathbb{R}\mathbf{Ext}\colon\mathbf{Cat}_{\infty}(\mathbb{M})\rightarrow\mathbf{Fun}(\mathbf{M}^{op},\mathbf{QCat})_{\mathrm{proj}}\]
and
\[\mathbb{R}\mathbf{Ext}\colon\mathbf{Gpd}_{\infty}(\mathbb{M})\rightarrow\mathbf{Fun}(\mathbf{M}^{op},\mathbf{Kan})_{\mathrm{proj}}.\]
%In particular, they are functors of $\infty$-cosmoses in the sense of \cite[Definition 2.1.9]{rvyoneda}.
\end{corollary}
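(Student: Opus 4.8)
The plan is to reduce the statement to Proposition~\ref{propextcosm2} by transporting the target $\infty$-cosmos along the Dwyer–Kan equivalence $\mathbf{M}\simeq\mathcal{L}_{\Delta}(\mathbb{M},W)$. First I would recall that for any small simplicial category $\mathbf{M}$ the projective model structure $\mathbf{Fun}(\mathbf{M}^{op},(\mathbf{S},\mathrm{QCat}))_{\mathrm{proj}}$ exists, is $(\mathbf{S},\mathrm{QCat})$-enriched, combinatorial and left proper (by \cite[Proposition A.3.3.2]{luriehtt}), and that a DK-equivalence $\varphi\colon\mathbf{M}\to\mathcal{L}_{\Delta}(\mathbb{M},W)$ of simplicial categories induces a Quillen equivalence
\[
\varphi^{\ast}\colon\mathbf{Fun}(\mathcal{L}_{\Delta}(\mathbb{M},W)^{op},(\mathbf{S},\mathrm{QCat}))_{\mathrm{proj}}\rightleftarrows\mathbf{Fun}(\mathbf{M}^{op},(\mathbf{S},\mathrm{QCat}))_{\mathrm{proj}}
\]
by restriction (again \cite[Proposition A.3.3.7, Example A.3.2.23]{luriehtt}), which moreover is $(\mathbf{S},\mathrm{QCat})$-enriched and whose right adjoint $\varphi^{\ast}$ is right Quillen. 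On underlying $\infty$-cosmoses of fibrant objects, the right Quillen functor $\varphi^{\ast}$ induces a transfinitely $\mathbf{S}_V$-exact functor, and since $\varphi^{\ast}$ is the right adjoint of a Quillen equivalence it is in fact a cosmological equivalence of $\infty$-cosmoses in the sense of \cite{rvyoneda}. The same applies verbatim with $(\mathbf{S},\mathrm{Kan})$ in place of $(\mathbf{S},\mathrm{QCat})$.

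Next I would identify the target of Proposition~\ref{propextcosm2}. The localization functor $\mathbb{M}\to\mathcal{L}_{\Delta}(\mathbb{M},W)$ (regarded as a simplicial functor out of the discrete category $\mathbb{M}$) induces, by restriction, a right Quillen functor from $\mathcal{L}_{y[W]}\mathbf{Fun}(\mathbb{M}^{op},(\mathbf{S},\mathrm{QCat}))_{\mathrm{proj}}$ into $\mathbf{Fun}(\mathcal{L}_{\Delta}(\mathbb{M},W)^{op},(\mathbf{S},\mathrm{QCat}))_{\mathrm{proj}}$, and a standard argument (the universal property of the simplicial localization, as in \cite[Proposition A.3.3.8]{luriehtt} applied to the Bousfield-localized source) shows that this is a Quillen equivalence: the $y[W]$-local objects of the projective model structure on simplicial presheaves over $\mathbb{M}$ are precisely those carried over from projectively fibrant simplicial presheaves on $\mathcal{L}_{\Delta}(\mathbb{M},W)$. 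Passing to $\infty$-cosmoses of fibrant objects, we obtain a transfinitely $\mathbf{S}_V$-exact equivalence
\[
\mathcal{L}_{y[W]}\mathbf{Fun}(\mathbb{M}^{op},(\mathbf{S},\mathrm{QCat}))_{\mathrm{proj}}^{f}\;\xrightarrow{\ \simeq\ }\;\mathbf{Fun}(\mathcal{L}_{\Delta}(\mathbb{M},W)^{op},\mathbf{QCat})_{\mathrm{proj}},
\]
and again the same with $\mathrm{Kan}$ in place of $\mathrm{QCat}$ and $\mathbf{Gpd}_{\infty}(\mathbb{M})$ in place of $\mathbf{Cat}_{\infty}(\mathbb{M})$.

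Composing the transfinitely $\mathbf{S}_V$-exact functor $\mathbb{R}\mathbf{Ext}$ of \eqref{equpropextcosm21} with these two transfinitely $\mathbf{S}_V$-exact equivalences — first into $\mathbf{Fun}(\mathcal{L}_{\Delta}(\mathbb{M},W)^{op},\mathbf{QCat})_{\mathrm{proj}}$, then along $\varphi^{\ast}$ into $\mathbf{Fun}(\mathbf{M}^{op},\mathbf{QCat})_{\mathrm{proj}}$ — yields the desired functor, since by the remark following Definition~\ref{defCexact} the composition of transfinitely $\mathbb{C}$-exact functors is again transfinitely $\mathbb{C}$-exact. The groupoidal case is identical. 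The main obstacle I anticipate is the bookkeeping around the two different enrichment bases and the Bousfield localization: one must check that the comparison right Quillen functor out of the localized presheaf category really lands in (and detects) the $y[W]$-local objects, i.e.\ that $y[W]$-locality is exactly the condition of factoring through $\mathcal{L}_{\Delta}(\mathbb{M},W)$ up to homotopy. This is where I would be most careful to invoke the correct form of the universal property of the hammock/Dwyer–Kan localization (e.g.\ via \cite{lurieha} or the cited results in \cite{luriehtt}), rather than just hand-waving; everything else is a routine assembly of functors already shown to be transfinitely $\mathbf{S}_V$-exact.
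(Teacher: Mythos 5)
Your overall strategy — reduce to Proposition~\ref{propextcosm2} and transport the target along the Dwyer--Kan equivalence — is the same as the paper's, but there is a genuine gap in the way you realize the transport. Restriction along the localization functor $\gamma\colon\mathbb{M}\rightarrow\mathcal{L}_{\Delta}(\mathbb{M},W)$ is a functor $\gamma^{\ast}\colon\mathrm{Fun}(\mathcal{L}_{\Delta}(\mathbb{M},W)^{op},\mathbf{S})\rightarrow\mathrm{Fun}(\mathbb{M}^{op},\mathbf{S})$, i.e.\ it goes \emph{into} the presheaf category on $\mathbb{M}$, not out of it. The Quillen equivalence coming from the universal property of the simplicial localization therefore has its right adjoint pointing from $\mathbf{Fun}(\mathcal{L}_{\Delta}(\mathbb{M},W)^{op},(\mathbf{S},\mathrm{QCat}))_{\mathrm{proj}}$ \emph{into} $\mathcal{L}_{y[W]}\mathbf{Fun}(\mathbb{M}^{op},(\mathbf{S},\mathrm{QCat}))_{\mathrm{proj}}$ — the wrong direction to postcompose with $\mathbb{R}\mathbf{Ext}$, which also lands in the localized category. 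You cannot repair this by passing to the derived left adjoint $\gamma_!$, since a left Quillen functor does not preserve fibrations, trivial fibrations, cotensors or the cosmological limits, so its derived functor is not transfinitely $\mathbf{S}_{V}$-exact in the sense needed. This is precisely the point where the paper's proof invokes an extra device: all the natural comparison functors only assemble into a \emph{zig-zag} of simplicially enriched right Quillen equivalences between $\mathcal{L}_{y[W]}\mathbf{Fun}(\mathbb{M}^{op},(\mathbf{S},\mathrm{QCat}))_{\mathrm{proj}}$ and $\mathbf{Fun}(\mathbf{M}^{op},(\mathbf{S},\mathrm{QCat}))_{\mathrm{proj}}$, and the paper then cites a result (\cite[Proposition 3.14]{rs_small}) that any such zig-zag of enriched Quillen equivalences between combinatorial enriched model categories can be replaced by a \emph{single} simplicially enriched right Quillen equivalence in the required direction, whose right derivation is then the transfinitely $\mathbf{S}_{V}$-exact functor one composes with.

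A secondary, related issue: the hypothesis only says $\mathbf{M}$ is DK-equivalent to $\mathcal{L}_{\Delta}(\mathbb{M},W)$, which in general means a zig-zag of DK-equivalences rather than a single simplicial functor $\varphi\colon\mathbf{M}\rightarrow\mathcal{L}_{\Delta}(\mathbb{M},W)$; so even your second leg (restriction along $\varphi$, which is otherwise in the correct direction by \cite[Proposition A.3.3.8]{luriehtt}) needs the same zig-zag-straightening argument. Your closing remark correctly identifies that the universal property of the hammock localization is the delicate input, but the missing idea is not the verification of $y[W]$-locality (which is straightforward, as in the paper's Proposition~\ref{propextcosm2}); it is how to convert the resulting wrong-way equivalences into one strictly enriched, exact functor out of the localized presheaf category.
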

\begin{proof}
We again only treat the case for $\mathbf{Cat}_{\infty}(\mathbb{M})$; the $\infty$-groupoidal case is completely analogous.
By Proposition~\ref{propextcosm2}, externalization gives rise to a transfinitely $\mathbf{S}$-exact functor
\begin{align}\label{equcorcosmemblin1}
\mathbb{R}\mathbf{Ext}\colon\mathbf{Cat}_{\infty}(\mathbb{M})\rightarrow\mathcal{L}_{y[W]}\mathbf{Fun}(\mathbb{M}^{op},\mathbf{QCat})_{\mathrm{proj}}.
\end{align}
Via Dwyer and Kan's work on simplicial localizations of homotopical categories \cite{dksimploc, dkequivs}, as well as Lurie's work on 
enriched model categories \cite[Section A.3.3]{luriehtt},
%of universal simplicially enriched functor
%$j\colon F_{\Delta}[\mathcal{C}]\rightarrow\mathcal{L}_{\Delta}(\mathcal{C},V)$ which inverts the arrows contained in $F_{\Delta}[V]$. 
%Furthermore, there is a canonical projection $\varphi\colon F_{\Delta}(\mathcal{C})\rightarrow\mathcal{C}$ in $\mathbf{S}\text{-Cat}$, 
%where on the right hand side we consider $\mathcal{C}$ as a discrete simplicial category.
there is a zig-zag of simplicially enriched right Quillen equivalences between the
$(\mathbf{S}^+,\mathrm{QCat})$-enriched model categories
$\mathcal{L}_{y[W]}\mathbf{Fun}(\mathbb{M}^{op},(\mathbf{S}^+,\mathrm{QCat}))_{\mathrm{proj}}$ and  
$\mathbf{Fun}(\mathcal{L}_{\Delta}(\mathbb{M},W)^{op},(\mathbf{S}^+,\mathrm{QCat}))_{\mathrm{proj}}$.
By assumption, there is a zig-zag of DK-equivalences between $\mathcal{L}_{\Delta}(\mathbb{M},W)$ and $\mathbf{M}$, and so we 
furthermore obtain a zig-zag of simplicially enriched right Quillen equivalences between the $(\mathbf{S}^+,\mathrm{QCat})$-enriched model 
categories $\mathbf{Fun}(\mathcal{L}_{\Delta}(\mathbb{M},W)^{op},(\mathbf{S}^+,\mathrm{QCat}))_{\mathrm{proj}}$ and  
$\mathbf{Fun}(\mathbf{M}^{op},(\mathbf{S}^+,\mathrm{QCat}))_{\mathrm{proj}}$. Every such (composite) zig-zag of simplicially enriched 
Quillen equivalences can be replaced by a single simplicially enriched right Quillen equivalence
\begin{align}\label{equcorcosmemblin2}
\mathcal{L}_{y[W]}\mathbf{Fun}(\mathbb{M}^{op},(\mathbf{S}^+,\mathrm{QCat}))_{\mathrm{proj}}\rightarrow\mathbf{Fun}(\mathbf{M}^{op},(\mathbf{S}^+,\mathrm{QCat}))_{\mathrm{proj}}
\end{align}
by \cite[Proposition 3.14]{rs_small}\footnote{The proposition is phrased for simplicial model categories, but it in fact applies to any 
monoidal model category $\mathcal{V}$ and any pair of combinatorial $\mathcal{V}$-enriched model categories $\mathbb{M}$ and
$\mathbb{N}$.}. 
%Note: Apply Prop. 3.14 to LHS, not RHS. (A priori can apply it to either as both are of presheaf form.)
The resulting postcomposition of the functor (\ref{equcorcosmemblin1}) with the right derivation of this right Quillen 
equivalence yields a functor
\[\mathbb{R}\mathbf{Ext}\colon\mathbf{Cat}_{\infty}(\mathbb{M})\rightarrow\mathbf{Fun}(\mathbf{M}^{op},\mathbf{QCAT})_{\mathrm{proj}},\]
which corestricts to a functor as stated.
\end{proof}

\begin{remark}
Whenever $\mathbb{M}$ is itself an $(\mathbf{S},\mathrm{Kan})$-enriched model category, the simplicial localization
$\mathcal{L}_{\Delta}(\mathbb{M},W)$ is DK-equivalent to the simplicial category $\mathbb{M}^{cf}$ of bifibrant objects in $\mathbb{M}$
\cite[Theorem 1.8]{bksimploc}.
%, given that the two simplicial categories both represent the underyling $\infty$-categorical localization of the category $\mathbb{M}$ at the class $W$. 
Thus, in this case $\mathbf{M}$ can be taken to be $\mathbb{M}^{cf}$.
%This is noteworthy in as much as the left hand side of (\ref{corcosmemblin0}) does not make 
%any use of the simplicial enrichment of $\mathbb{M}$ while the right hand side does. 
\end{remark}

For the following theorem we recall that every $(\mathbf{S},\mathrm{QCat})^c$-exact functor between
$(\mathbf{S},\mathrm{QCat})^c$-enriched fibration categories gives rise to a functor of associated $(\infty,2)$-categories 
(Remark~\ref{remfibcatsinfty2}) whenever the codomain comes equipped with a simplicially enriched cofibrant replacement functor. 
We further note that for any simplicial category $\mathbf{C}$, the simplicial functor category
$\mathbf{Fun}(\mathbf{C}^{op},\mathbf{QCat})_{\mathrm{proj}}$ does have simplicially enriched cofibrant 
replacements indeed. This follows from, first, the fact that $\mathbf{Fun}(\mathbf{C}^{op},(\mathbf{S}^+,\mathrm{QCat}))_{\mathrm{proj}}$ 
is combinatorial (again in the Grothendieck universe of large categories), and so we can apply
\cite[Corollary 13.2.4]{riehlcthty}. And second, every pointwise small presheaf therein has a pointwise small cofibrant replacement by
\cite[Proposition 2.3]{duggersimp}. Thus, the model categorical externalization functor
from Corollary~\ref{corcosmemblin} induces a functor of associated $(\infty,2)$-categories, and so in particular it induces a functor of 
underlying $\infty$-categories (Definition~\ref{defpith}).

\begin{theorem}\label{propglobrepext1}
Suppose $\mathbb{M}$ is a left proper, combinatorial, and $(\mathbf{S},\mathrm{Kan})$-enriched model category. Let $\mathcal{C}$ denote the 
homotopy $\infty$-category $\mathrm{Ho}_{\infty}(\mathbb{M})$ of $\mathbb{M}$, let $\mathbb{M}_0$ denote its underlying ordinary model 
category, and $\mathbf{M}:=\mathbb{M}^{cf}$. Then the model categorical externalization functor
\[\mathbb{R}\mathbf{Ext}\colon\mathbf{Cat}_{\infty}(\mathbb{M}_0)\rightarrow\mathbf{Fun}(\mathbf{M}^{op},\mathbf{QCat})_{\mathrm{proj}}\]
from Corollary~\ref{corcosmemblin} returns the externalization functor 
\begin{align}\label{equpropglobrepextpre}
\mathrm{Ext}\colon\mathrm{Cat}_{\infty}(\mathcal{C})\rightarrow\mathrm{Fun}(\mathcal{C}^{op},\mathrm{Cat}_{\infty})
\end{align}
from Section~\ref{secformal} on underlying $\infty$-categories.
\end{theorem}
\begin{proof}
The cofibrant replacement functor $\lambda$ on $\mathbb{M}_0$ can be 
lifted to a simplicially enriched cofibrant replacement functor $\lambda$ on $\mathbb{M}$ by \cite[Corollary 13.2.4]{riehlcthty} as noted 
above. Furthermore, the simplicial enrichment of $\mathbb{M}$ induces a transfinitely $\mathbf{S}$-exact Yoneda embedding
\[\mathbb{M}\xrightarrow{y}\mathbf{Fun}(\mathbb{M}^{op},(\mathbf{S}^+,\mathrm{Kan}))_{\mathrm{proj}}\xrightarrow{\lambda^{\ast}}\mathbf{Fun}(\mathbb{M}^{op},(\mathbf{S}^+,\mathrm{Kan}))_{\mathrm{proj}}\]
of $(\mathbf{S},\mathrm{Kan})$-enriched model categories. This in turn induces a transfinitely $s\mathbf{S}$-exact embedding
\begin{align}\label{equpropglobrepextpre1}
(s\mathbb{M},\mathrm{Cat}_{\infty})\xrightarrow{sy}\mathbf{Fun}(\mathbb{M}^{op},(s\mathbf{S}^+,\mathrm{Cat}_{\infty}))_{\mathrm{proj}}\xrightarrow{\lambda^{\ast}}\mathbf{Fun}(\mathbb{M}^{op},(s\mathbf{S}^+,\mathrm{Cat}_{\infty}))_{\mathrm{proj}}
\end{align}
of $(s\mathbf{S},\mathrm{Cat}_{\infty})$-enriched model categories. Here,we recall that $(s\mathbf{S}^+,\mathrm{Cat}_{\infty})$ is a left 
Bousfield localization of the Reedy model structure on $s\mathbf{S}^+$. It is a cartesian model structure and as such induces an  
$(s\mathbf{S}^+,\mathrm{Cat}_{\infty})$-enriched model structure on the codomain of (\ref{equpropglobrepextpre1}). The domain of
$(\ref{equpropglobrepextpre1})$ is an analogous Bousfield localization of the Reedy model structure on $s\mathbb{M}$. It is
$(s\mathbf{S}^+,\mathrm{Cat}_{\infty})$-enriched as well by way of the mapping objects
$s\mathbb{M}(X,Y)^{\Delta^{\bullet}}\in s\mathbf{S}$.
The two projections
$(\cdot)_0\colon(s\mathbf{S},\mathrm{Cat}_{\infty})\rightarrow(\mathbf{S},\mathrm{Kan})$ and
$U\colon(s\mathbf{S},\mathrm{Cat}_{\infty})\rightarrow(\mathbf{S},\mathrm{QCat})$ onto the first column and the first row, respectively, 
are right Quillen functors \cite{jtqcatvsss}. They hence induce an $(\mathbf{S},\mathrm{Kan})$-enrichment and an
$(\mathbf{S},\mathrm{QCat})$-enrichment of (\ref{equpropglobrepextpre1}) each, respectively. We will denote the former induced enrichment 
by an uppercase $v$, and the latter by an uppercase $h$, respectively.

Let $\iota\colon\mathbb{M}_0\rightarrow\mathbb{M}$ be the canonical embedding of simplicial categories, where the domain is considered to 
be locally discrete. We obtain commutative squares of simplicial functors as follows (whose vertices are decorated with simplicial model 
structures for later use).

\begin{align}\label{diagpropglobrepextpre1}
\begin{gathered}
\xymatrix{
(s\mathbb{M},\mathrm{Cat}_{\infty})^h\ar[r]^(.35){sy^h} & \mathbf{Fun}(\mathbb{M}^{op},(s\mathbf{S}^+,\mathrm{Cat}_{\infty}))_{\mathrm{proj}}^h\ar[r]^{U_{\ast}} & \mathbf{Fun}(\mathbb{M}^{op},(\mathbf{S}^+,\mathrm{QCat}))_{\mathrm{proj}}\ar[d]^{\iota^{\ast}}\\
\mathbf{Cat}_{\infty}(\mathbb{M}_0)\ar@{^(->}[u]\ar[rr]_{\mathrm{Ext}} & & \mathcal{L}_{y[W]}\mathbf{Fun}(\mathbb{M}^{op}_0,(\mathbf{S}^+,\mathrm{QCat}))_{\mathrm{proj}}
}
\end{gathered}
\end{align}
\begin{align}\label{diagpropglobrepextpre1.5}
\begin{gathered}
\xymatrix{
\mathbf{Fun}(\mathbb{M}^{op},(\mathbf{S}^+,\mathrm{QCat}))_{\mathrm{proj}}\ar[d]^{\iota^{\ast}}\ar[r]^{\lambda^{\ast}} & \mathbf{Fun}(\mathbb{M}^{op},(\mathbf{S}^+,\mathrm{QCat}))_{\mathrm{proj}}\ar[d]^{\iota^{\ast}} \\
\mathcal{L}_{y[W]}\mathbf{Fun}(\mathbb{M}^{op}_0,(\mathbf{S}^+,\mathrm{QCat}))_{\mathrm{proj}}\ar[r]_{\lambda^{\ast}} & \mathcal{L}_{y[W]}\mathbf{Fun}(\mathbb{M}_0^{op},(\mathbf{S}^+,\mathrm{QCat}))_{\mathrm{proj}}
}
\end{gathered}
\end{align}

In (\ref{diagpropglobrepextpre1}), the simplicial category $\mathbf{Cat}_{\infty}(\mathbb{M}_0)$ is precisely the full simplicial 
subcategory of $(s\mathbb{M},\mathrm{Cat}_{\infty})_h$ spanned by the fibrant objects. The vertical left hand side embedding thus denotes 
the canonical inclusion. The square (\ref{diagpropglobrepextpre1}) then commutes by definition of
$\mathrm{Ext}(X):=\mathbb{M}(\phv,X_{\bullet})_0=U_{\ast}(sy_h(X))$. The square (\ref{diagpropglobrepextpre1.5}) commutes by definition as 
well.

Now, if by $\beta\colon\mathbb{M}\rightarrow\mathbb{M}^{cf}$ we denote a simplicially enriched bifibrant replacement 
functor (again by \cite[Corollary 13.2.4]{riehlcthty}),  then the composition $\beta\circ\iota\colon\mathbb{M}_0\rightarrow\mathbb{M}^{cf}$ 
exhibits $\mathbb{M}^{cf}$ up to DK-equivalence as the simplicial localization of $\mathbb{M}_0$ at its class of weak equivalences by
\cite[Proposition A.3.4.13]{luriehtt}. As $\beta$ itself is a DK-equivalence, it follows that in the sequence
\begin{align}\label{equpropglobrepextpre2}
\mathbf{Fun}((\mathbb{M}^{cf})^{op},(\mathbf{S}^+,\mathrm{QCat}))_{\mathrm{proj}}\xrightarrow{\beta^{\ast}}\mathbf{Fun}(\mathbb{M}^{op},(\mathbf{S}^+,\mathrm{QCat}))_{\mathrm{proj}}\xrightarrow{\iota^{\ast}}\mathcal{L}_{y[W]}\mathbf{Fun}(\mathbb{M}_0^{op},(\mathbf{S}^+,\mathrm{QCat}))_{\mathrm{proj}}
\end{align}
of right Quillen functors the left hand side is a right Quillen equivalence. The composition is a right Quillen equivalence as well as 
reasoned in the proof of Corollary~\ref{corcosmemblin}. It follows that the right Quillen functor $\iota^{\ast}$ is a right Quillen 
equivalence, too. Furthermore, via Corollary~\ref{corcosmemblin}, we obtain a lift of
$\mathbb{R}\mathbf{Ext}\colon\mathbf{Cat}_{\infty}(\mathbb{M}_0)\rightarrow\mathcal{L}_{y[W]}\mathbf{Fun}(\mathbb{M}_0^{op},\mathbf{QCAT})$ 
along the composition (\ref{equpropglobrepextpre2}). This in turn yields a lift
\begin{align}\label{diagpropglobrepextpre2}
\begin{gathered}
\xymatrix{
(s\mathbb{M},\mathrm{Cat}_{\infty})^h\ar[rr]^(.4){\lambda^{\ast}\circ U_{\ast}\circ sy^h} & & \mathbf{Fun}(\mathbb{M}^{op},(\mathbf{S}^+,\mathrm{QCat}))_{\mathrm{proj}}\ar[d]^{\iota^{\ast}} \\
\mathbf{Cat}_{\infty}(\mathbb{M}_0)\ar@{^(->}[u]\ar[rr]_(.35){\lambda^{\ast}\circ\mathrm{Ext}}\ar@{-->}[urr]|{\mathbb{R}\mathbf{Ext}} & &  \mathcal{L}_{y[W]}\mathbf{Fun}(\mathbb{M}_0^{op},(\mathbf{S}^+,\mathrm{QCat}))_{\mathrm{proj}}
}
\end{gathered}
\end{align}
to the horizontal composition of the squares (\ref{diagpropglobrepextpre1}) and (\ref{diagpropglobrepextpre1.5}). Here, the lower triangle 
commutes up to homotopy by construction. The upper triangle then commutes up to homotopy as well, because
$\iota^{\ast}$ is a Quillen equivalence. We are to show that the diagonal lift $\mathbb{R}\mathbf{Ext}$ in (\ref{diagpropglobrepextpre2}) 
induces the externalization functor (\ref{equpropglobrepextpre}) on underlying $\infty$-categories. Therefore it suffices to show that so 
does the top horizontal composition $\lambda^{\ast}\circ U_{\ast}\circ sy^h=U_{\ast}\circ\lambda^{\ast}\circ sy^h$.

Thus, let $\mathbb{L}$ be a simplicially enriched projective cofibrant replacement functor on the 
respective simplicial functor categories. If we recall the definition of the underlying $\infty$-category of an $(\infty,2)$-category 
(Definition~\ref{defpith}), we are left to show that the composition
\begin{align}
\label{equpropglobrepextpre3}
N_{\Delta}(\underline{\mathbf{Cat}_{\infty}(\mathbb{M}_0)^{c}}) & \xrightarrow{N_{\Delta}(\underline{\mathbb{L}\lambda^{\ast}sy^h})}N_{\Delta}(\underline{(\mathbf{Fun}(\mathbb{M}^{op},(s\mathbf{S}^+,\mathrm{Cat}_{\infty}))_{\mathrm{proj}}^h)^{cf}}) \\
\label{equpropglobrepextpre3.5}
& \xrightarrow{N_{\Delta}(\mathbb{L}\underline{U_{\ast}})} N_{\Delta}(\underline{\mathbf{Fun}(\mathbb{M}^{op},\mathbf{QCAT})_{\mathrm{proj}}^{c}}) %\\
%\label{equpropglobrepextpre4} & \xrightarrow{N_{\Delta}(\underline{\mathbb{L}\lambda^{\ast}})}N_{\Delta}(\underline{\mathbf{Fun}(\mathbb{M}^{op},\mathbf{QCAT})}_{\mathrm{proj}}^{c})
\end{align}
is equivalent to the % externalization functor (\ref{equpropglobrepextpre}). % The cofibrant replacement functor $\lambda$ is naturally weakly equivalent to the identity, and hence so is the restriction functor $\lambda^{\ast}$. It follows that the functor (\ref{equpropglobrepextpre4}) is equivalent to the identity. We are thus to show that the composition of (\ref{equpropglobrepextpre3}) and (\ref{equpropglobrepextpre3.5}) is equivalent to the
composition
\[\mathrm{Cat}_{\infty}(\mathcal{C})\xrightarrow{sy}\mathrm{Fun}(\mathcal{C}^{op},\mathrm{Cat}_{\infty}(\mathcal{S}))\xrightarrow{U_{\ast}}\mathrm{Fun}(\mathcal{C}^{op},\mathrm{Cat}_{\infty}).\]
We recall that $\mathcal{C}:=\mathrm{Ho}_{\infty}(\mathbb{M}_0)$ is generally 
defined as the $\infty$-categorical localization $N(\mathbb{M}_0)[W^{-1}]$. We can naturally identify this $\infty$-category with the
homotopy-coherent nerve $N_{\Delta}(\mathbb{M}^{cf})$ by way of \cite[Proposition 1.3.4.7]{lurieha} for any
$(\mathbf{S},\mathrm{Kan})$-enriched model category $\mathbb{M}$. Furthermore, the bifibrant replacement
$\beta\circ\iota\colon\mathbb{M}^{cf}\rightarrow\mathbb{M}^{cf}$ restricted to the simplicial category $\mathbb{M}^{cf}$ of bifibrant 
objects is homotopic to the identity. It follows that the functor
\[\xymatrix{
N_{\Delta}(\underline{(\mathbf{Fun}((\mathbb{M}^{cf})^{op},(s\mathbf{S}^+,\mathrm{Cat}_{\infty}))_{\mathrm{proj}}^h)^{cf}})\ar[rr]^{N_{\Delta}(\underline{\mathbb{L}U_{\ast}})}\ar[d]_{\beta^{\ast}} & & N_{\Delta}(\underline{\mathbf{Fun}((\mathbb{M}^{cf})^{op},\mathbf{QCAT})_{\mathrm{proj}}^{c}}) \\
N_{\Delta}(\underline{(\mathbf{Fun}(\mathbb{M}^{op},(s\mathbf{S}^+,\mathrm{Cat}_{\infty}))_{\mathrm{proj}}^h)^{cf}})\ar[rr]_{N_{\Delta}(\underline{\mathbb{L}U_{\ast}})} & & N_{\Delta}(\underline{\mathbf{Fun}(\mathbb{M}^{op},\mathbf{QCAT})_{\mathrm{proj}}^{c}})
\ar[u]^{\iota{\ast}}}\]
is the functor
\[U_{\ast}\colon\mathrm{Fun}(\mathcal{C}^{op},\mathrm{Cat}_{\infty}(\mathcal{S}))\rightarrow\mathrm{Fun}(\mathcal{C}^{op},\mathrm{Cat}_{\infty})\]
essentially by definition. Lastly, given a complete Segal space $X$, the homotopy equivalence
$s_0\colon X_0\rightarrow\mathrm{Equiv}(X)$ from Remark~\ref{remcsspb} induces a natural equivalence
$(\cdot)_0\rightarrow\{\mathrm{Equiv},\phv\}$ of functors from the category of complete Segal spaces to the category of
quasi-categories. Furthermore, by \cite[Theorem 6.2]{rezk}, there is a natural equivalence
$\{\mathrm{Equiv},\phv\}\rightarrow U(\cdot)^{\simeq}$. We obtain a composite natural equivalence $(\cdot)_0\rightarrow U(\cdot)^{\simeq}$, 
which in turn induces a natural equivalence $N_{\Delta}((\cdot)^v)\rightarrow N_{\Delta}(\underline{(\cdot)^h})$ of functors from the 
category of $(s\mathbf{S},\mathrm{Cat}_{\infty})^f$-enriched categories to the category of quasi-categories. In particular, we obtain an 
equivalence as follows.
\[\xymatrix{
N_{\Delta}(((s\mathbb{M},\mathrm{Cat}_{\infty})^v)^{cf})\ar[rr]^(.4){N_{\Delta}(\mathbb{L}\lambda^{\ast}sy^v)}\ar[d]_{\simeq} & & N_{\Delta}(\underline{(\mathbf{Fun}(\mathbb{M}^{op},(s\mathbf{S}^+,\mathrm{Cat}_{\infty}))^v)^{cf}})\ar[d]^{\simeq} \\
N_{\Delta}((\underline{(s\mathbb{M},\mathrm{Cat}_{\infty})^h)^{cf}})\ar[rr]_(.4){N_{\Delta}(\underline{\mathbb{L}\lambda^{\ast}sy^h})} & & N_{\Delta}(\underline{(\mathbf{Fun}(\mathbb{M}^{op},(s\mathbf{S}^+,\mathrm{Cat}_{\infty}))^h)^{cf}})
}\]
The simplicial functor $sy^v$ is the restriction of the push-forward
$sy\colon s\mathbb{M}\rightarrow\mathbf{Fun}(\mathbb{M}^{op},s\mathbf{S}^+)$
of the simplicial Yoneda embedding $y\colon\mathbb{M}\rightarrow\mathbf{Fun}(\mathbb{M}^{op},\mathbf{S}^+)$. It follows that the functor 
$N_{\Delta}(\mathbb{L}\lambda^{\ast}sy^v)$ is equivalent to
$sy\colon\mathrm{Cat}_{\infty}(\mathcal{C})\xrightarrow{sy}\mathrm{Fun}(\mathcal{C}^{op},\mathrm{Cat}_{\infty}(\mathcal{S}))$ after 
corestriction to pointwise small Kan complexes by way of \cite[Proposition A.3.4.13]{luriehtt}. This finishes the proof.

\end{proof}

\begin{remark}
Theorem~\ref{propglobrepext1} can also be shown without the assumption of $(\mathbf{S},\mathrm{Kan})$-enrichment of the left 
proper and combinatorial model category $\mathbb{M}$. This can either be done by replacing any such $\mathbb{M}$ suitably by an
$(\mathbf{S},\mathrm{Kan})$-enrichment model category \cite{duggersimp}, or potentially directly by way of the alternative formula for the
$\infty$-categorical externalization functor from Remark~\ref{rem_pressheaves}. In the latter case, one is to show that the simplicially enriched hom-functor
\[\mathrm{Hom}\colon(\mathbf{Cat}_{\infty}(\mathbb{M})^c)^{op}\times\mathbf{Cat}_{\infty}(\mathbb{M})^c\rightarrow\mathbf{QCat}\]
from Section~\ref{secsubcosmosofcsos} returns the enhanced mapping $\infty$-category functor of Remark~\ref{rem2catstrdirect} on underlying 
$\infty$-categories directly.
\end{remark}

\begin{corollary}\label{cormcextff}
Suppose $\mathbb{M}$ is a left proper, combinatorial and $(\mathbf{S},\mathrm{Kan})$-enriched model category. Let $\mathbb{M}_0$ denote its 
underlying ordinary model category, and suppose $\mathbf{M}$ is a simplicial category which is DK-equivalent to $\mathbb{M}^{cf}$. Then the 
model categorical externalization functor
\[\mathbb{R}\mathbf{Ext}\colon\mathbf{Cat}_{\infty}(\mathbb{M}_0)\rightarrow\mathbf{Fun}(\mathbf{M}^{op},\mathbf{QCat})_{\mathrm{proj}}\]
from Corollary~\ref{corcosmemblin} induces a fully faithful functor of $(\infty,2)$-categories. That is to say, for every pair of Reedy 
cofibrant complete Segal objects $X,Y\in\mathbf{Cat}_{\infty}(\mathbb{M})$, and any simplicially enriched projective cofibrant replacement 
functor $\mathbb{L}$, the induced morphism
\begin{align}\label{equcorextlocequiv}
\mathbb{R}\mathbf{Ext}\colon\mathbf{Cat}_{\infty}(\mathbb{M}_0)(X,Y)\rightarrow\mathbf{Fun}(\mathbf{M}^{op},\mathbf{QCat})_{\mathrm{proj}}(\mathbb{L}\mathbb{R}\mathbf{Ext}(X),\mathbb{L}\mathbb{R}\mathbf{Ext}(Y))
\end{align}
between hom-objects is an equivalence of quasi-categories.
\end{corollary}
\begin{proof}
Via Corollary~\ref{corcosmemblin}, without loss of generality we can assume that $\mathbf{M}=\mathbb{M}^{cf}$.
Given Reedy cofibrant complete Segal objects $X,Y$ in $\mathbb{M}$, to show that the functor (\ref{equcorextlocequiv}) is an equivalence of
quasi-categories it suffices to show that the corresponding functor
\[((\mathbb{R}\mathbf{Ext}(X,Y))^{\Delta^{\bullet}})^{\simeq}\colon(\mathbf{Cat}_{\infty}(\mathbb{M})(X,Y)^{\Delta^{\bullet}})^{\simeq}\rightarrow(\mathbf{Fun}(\mathbf{M}^{op},\mathbf{QCat})_{\mathrm{proj}}(\mathbb{L}\mathbb{R}\mathbf{Ext}(X),\mathbb{R}\mathbf{Ext}(Y))^{\Delta^{\bullet}})^{\simeq}\]
is a (pointwise) equivalence of complete Segal spaces. Indeed, $\mathbb{R}\mathbf{Ext}(Y)$ is projectively fibrant, and so the weak 
equivalence $\mathbb{L}\mathbb{R}\mathbf{Ext}(Y)\rightarrow\mathbb{R}\mathbf{Ext}(Y)$ induces a homotopy equivalence of respective
hom-spaces. By virtue of the fact that both sides are cotensored over finite simplicial sets 
and that $\mathbb{R}\mathbf{Ext}$ preserves them, this is to show that for any $n\geq 0$ the map
\[(\mathbb{R}\mathbf{Ext}(X,Y^{\Delta^n}))^{\simeq}\colon\mathbf{Cat}_{\infty}(\mathbb{M})(X,Y^{\Delta^{n}})^{\simeq}\rightarrow\mathbf{Fun}(\mathbf{M}^{op},\mathbf{QCat})_{\mathrm{proj}}(\mathbb{L}\mathbb{R}\mathbf{Ext}(X),\mathbb{R}\mathbf{Ext}(Y^{\Delta^{n}}))^{\simeq}\]
is an equivalence of hom-spaces. By Theorem~\ref{propglobrepext1} this functor is equivalent to the action
\[\mathrm{Ext}(X,Y^{\Delta^n})\colon\mathrm{Cat}_{\infty}(\mathcal{C})(X,Y^{\Delta^{n}})\rightarrow\mathrm{Fun}(\mathcal{C}^{op},\mathrm{Cat}_{\infty})(\mathrm{Ext}(X),\mathrm{Ext}(Y^{\Delta^{n}}))\]
of the $\infty$-categorical externalization functor on corresponding hom-spaces. The latter is an equivalence as externalization is fully 
faithful (Lemma~\ref{corsegalyonedaff}).
\end{proof}

For the following concluding corollary we recall the convention to write $\mathbf{Fun}(\mathcal{C}^{op},\mathbf{QCat})$ for 
the $(\infty,2)$-category $\mathbf{Fun}(\mathfrak{C}(\mathcal{C})^{op},\mathbf{QCat})$ whenever $\mathcal{C}$ is an $\infty$-category as 
fixed in Section~\ref{secsubformalmodelindep}. We apply the same convention to the projective case.

\begin{corollary}\label{corcompareinftycosmoses}
Suppose $\mathbb{M}$ is a left proper, combinatorial and $(\mathbf{S},\mathrm{Kan})$-enriched model category. Then the
$(\infty,2)$-categories $\mathbf{Cat}_{\infty}(\mathbb{M}_0)^c$ and $\mathbf{Cat}_{\infty}(\mathcal{C})$ are equivalent. This is to say 
they are connected by a zig-zag of equivalences in the $(\mathbf{S},\mathrm{Cat})$-induced model structure on the category of simplicial 
categories.
\end{corollary}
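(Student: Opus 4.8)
The statement asserts that, for a Cisinski model category $\mathbb{M}$, the $(\infty,2)$-categories $\mathbf{Cat}_{\infty}(\mathbb{M})$ and $\mathbf{Cat}_{\infty}(\mathcal{C})$ are equivalent, i.e.\ connected by a zig-zag of equivalences in the $(\mathbf{S},\mathrm{Cat})$-induced model structure on simplicial categories (equivalently, in the Bergner-type model structure on $(\mathbf{S},\mathrm{QCat})$-enriched categories). The plan is to assemble this from the work already done: Corollary~\ref{corcosmemblin} gives a transfinitely $\mathbf{S}_V$-exact functor $\mathbb{R}\mathbf{Ext}\colon\mathbf{Cat}_{\infty}(\mathbb{M})\rightarrow\mathbf{Fun}(\mathbf{M}^{op},\mathbf{QCat})_{\mathrm{proj}}$ for $\mathbf{M}$ any simplicial category DK-equivalent to $\mathcal{L}_{\Delta}(\mathbb{M},W)$, Corollary~\ref{cor} shows this functor is \emph{locally} an equivalence (induces equivalences on derived hom-quasi-categories), and Theorem~\ref{propglobrepext1} identifies it on homotopy $\infty$-categories with the $\infty$-categorical externalization $\mathrm{Ext}\colon\mathrm{Cat}_{\infty}(\mathcal{C})\rightarrow\mathrm{Fun}(\mathcal{C}^{op},\mathrm{Cat}_{\infty})$, which is fully faithful with essential image $\mathbf{Cat}_{\infty}(\mathcal{C})$ by definition (Definition~\ref{defsmallness}, Lemma~\ref{corsegalyonedaff}).

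First I would fix a choice of $\mathbf{M}$: since $\mathbb{M}$ is Cisinski, all objects are cofibrant and $\mathbb{M}$ is combinatorial, so one may take $\mathbf{M}$ to be any simplicial category presenting $\mathcal{C}$; to align with the notational convention of Section~\ref{secsubformalmodelindep}, take $\mathbf{M}=\mathfrak{C}(\mathcal{C})$, so that $\mathbf{Fun}(\mathbf{M}^{op},\mathbf{QCat})_{\mathrm{proj}}$ has underlying $\infty$-category $\mathrm{Fun}(\mathcal{C}^{op},\mathrm{Cat}_{\infty})$ and $\mathbf{Fun}(\mathcal{C}^{op},\mathbf{QCat})$ (the injective version used in Section~\ref{secformal}) is connected to the projective version by the identity Quillen equivalence, hence their underlying $(\infty,2)$-categories are equivalent. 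Next, by Corollary~\ref{cor} the functor $\mathbb{R}\mathbf{Ext}$ (with a projective cofibrant replacement prepended, which is a weak equivalence of simplicial categories onto $\mathbf{Cat}_{\infty}(\mathbb{M})$) is fully faithful as a functor of $(\mathbf{S},\mathrm{QCat})$-enriched categories, i.e.\ induces equivalences of hom-quasi-categories. Therefore it exhibits $\mathbf{Cat}_{\infty}(\mathbb{M})$ as equivalent to the full simplicial subcategory of $\mathbf{Fun}(\mathbf{M}^{op},\mathbf{QCat})_{\mathrm{proj}}$ spanned by the objects in its essential image. It remains to identify that essential image. By Theorem~\ref{propglobrepext1}, on homotopy $\infty$-categories $\mathbb{R}\mathbf{Ext}$ is the $\infty$-categorical externalization $\mathrm{Ext}\colon\mathrm{Cat}_{\infty}(\mathcal{C})\rightarrow\mathrm{Fun}(\mathcal{C}^{op},\mathrm{Cat}_{\infty})$, whose essential image is by Definition~\ref{defsmallness} precisely the small $\mathcal{C}$-indexed $\infty$-categories, i.e.\ the objects of $\mathbf{Cat}_{\infty}(\mathcal{C})$. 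Since essential image of an enriched functor is detected on the underlying $\infty$-category, the full simplicial subcategory of $\mathbf{Fun}(\mathbf{M}^{op},\mathbf{QCat})_{\mathrm{proj}}$ spanned by this essential image is, after translating between projective and injective presentations, exactly $\mathbf{Cat}_{\infty}(\mathcal{C})$ (in the sense of Definition~\ref{defsmallness}); note both are full sub-$(\infty,2)$-categories, so the hom-quasi-categories agree on the nose once the objects are matched.

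Concatenating, I obtain a zig-zag $\mathbf{Cat}_{\infty}(\mathbb{M}) \xleftarrow{\lambda}\mathbb{L}\mathbf{Cat}_{\infty}(\mathbb{M})\xrightarrow{\mathbb{R}\mathbf{Ext}} \mathbf{Cat}_{\infty}(\mathcal{C})$ of $(\mathbf{S},\mathrm{QCat})$-enriched functors in which the first leg is a weak equivalence of fibration categories of simplicial categories and the second is an equivalence onto a full subcategory that is all of $\mathbf{Cat}_{\infty}(\mathcal{C})$; together with the projective-to-injective comparison this is the desired zig-zag of equivalences in the $(\mathbf{S},\mathrm{Cat})$-induced (Bergner) model structure. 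The main obstacle I anticipate is the bookkeeping of the essential-image identification: making precise that ``$\mathbb{R}\mathbf{Ext}$ is locally fully faithful and its homotopy functor is $\mathrm{Ext}$'' together force essential surjectivity onto the full subcategory $\mathbf{Cat}_{\infty}(\mathcal{C})\subseteq\mathbf{Fun}(\mathcal{C}^{op},\mathbf{Cat}_{\infty})$, and that this full-subcategory structure is the same whether read off the injective model of Section~\ref{secformal} or the projective model arising here — but this is exactly the content of the Quillen equivalence $\mathrm{id}\colon\mathbf{Fun}(\mathbf{M}^{op},(\mathbf{S},\mathrm{QCat}))_{\mathrm{proj}}\rightleftarrows\mathbf{Fun}(\mathbf{M}^{op},(\mathbf{S},\mathrm{QCat}))_{\mathrm{inj}}$ restricted to fibrant objects, which is harmless because fullness of a sub-$(\infty,2)$-category is preserved under such an equivalence. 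Everything else is a direct invocation of the cited results.
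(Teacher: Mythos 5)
Your proposal is correct and follows essentially the same route as the paper's proof: local fully-faithfulness of the (derived) externalization from Corollary~\ref{cor}, identification of the essential images of $\mathbf{Cat}_{\infty}(\mathbb{M})$ and $\mathbf{Cat}_{\infty}(\mathcal{C})$ via Theorem~\ref{propglobrepext1}, and the injective-to-projective translation by cofibrant replacement, so that both $(\infty,2)$-categories are exhibited as equivalent to one and the same full replete sub-$(\infty,2)$-category of $\mathbf{Fun}(\mathfrak{C}(\mathcal{C})^{op},\mathbf{QCat})_{\mathrm{proj}}^{cf}$. The only cosmetic discrepancy is where the cofibrant replacement sits: since $\mathbb{M}$ is Cisinski, $\mathbf{Cat}_{\infty}(\mathbb{M})$ itself needs no replacement, and the paper instead postcomposes $\mathbf{Ext}$ with a simplicially enriched projectively cofibrant replacement functor $\lambda$ on the presheaf side, which is exactly what your appeal to Corollary~\ref{cor} amounts to.
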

\begin{proof}
We have the following diagram of simplicial categories, where $\mathbb{L}$ denotes any simplicially enriched projectively cofibrant 
replacement functor.
\begin{align}\label{diagcorcompareinftycosmoses}
\begin{gathered}
\xymatrix{
 & \mathbf{Cat}_{\infty}(\mathcal{C})\ar@{^(->}[d] & \\
& \mathbf{Fun}(\mathcal{C}^{op},\mathbf{QCat})\ar@{^(->}[d] & \\
\mathbf{Cat}_{\infty}(\mathbb{M}_0)^c \ar[r]^(.4){\mathbb{R}\mathbf{Ext}} & \mathbf{Fun}(\mathcal{C}^{op},\mathbf{QCat})_{\mathrm{proj}}
\ar[r]^(.5){\mathbb{L}}_(.5){\simeq} & \mathbf{Fun}(\mathcal{C}^{op},\mathbf{QCat})_{\mathrm{proj}}^c
}
\end{gathered}
\end{align}
The functor $\mathbb{R}\mathbf{Ext}$ on the bottom left is defined by way of Corollary~\ref{corcosmemblin} for
$\mathbf{M}=\mathfrak{C}(\mathcal{C})$. The two essential images of $\mathbf{Cat}_{\infty}(\mathbb{M}_0)$ and
$\mathbf{Cat}_{\infty}(\mathcal{C})$ in $\mathbf{Fun}(\mathcal{C}^{op},\mathbf{QCat})_{\mathrm{proj}}^c$ coincide by Theorem~\ref{propglobrepext1}. %In particular, they also do so in
%$\mathbf{Fun}(\mathcal{C}^{op},\mathbf{QCat})_{\mathrm{proj}}^{c}$ as cofibrant replacement preserves weak equivalences.
The horizontal composition $\mathbb{L}\mathbb{R}\mathbf{Ext}$ in (\ref{diagcorcompareinftycosmoses}) induces 
equivalences on hom-quasi-categories by Corollary~\ref{cormcextff} applied to $\mathbf{M}=\mathfrak{C}(\mathcal{C})$.
The inclusion of injectively fibrant to projectively fibrant presheaves followed by projective cofibrant replacement is easily seen to 
induce equivalences on hom-quasi-categories as well.
%The top vertical inclusion of (\ref{diagcorcompareinftycosmoses}) is as stated by Theorem~\ref{corcosmos1}.2.
It follows that both 
$\mathbf{Cat}_{\infty}(\mathbb{M}_0)$ and $\mathbf{Cat}_{\infty}(\mathcal{C})$ are equivalent to the same full (and replete)
sub-$(\infty,2)$-category of $\mathbf{Fun}(\mathcal{C}^{op},\mathbf{QCat})_{\mathrm{proj}}^c$ \cite[Definition A.3.2.1]{luriehtt}.
\end{proof}

\begin{remark}\label{remcisinskimc}
We recall that a model category $\mathbb{M}$ is commonly referred to as a \emph{Cisinski model category} if it is cofibrantly 
generated, its cofibrations are exactly the monomorphisms, and the underlying category of $\mathbb{M}$ is a Grothendieck topos. In 
particular, if $\mathbb{M}$ is a Cisinski model category then all objects in $\mathbb{M}$ are cofibrant, and so the model categorical 
externalization functor
$\mathbf{Ext}\colon\mathbf{Cat}_{\infty}(\mathbb{M})\rightarrow\mathbf{Fun}(\mathbb{M}^{op},\mathbf{QCat})_{\mathrm{proj}}$ is 
automatically pointwise right derived. Furthermore, in this case every object in $s\mathbb{M}$ is Reedy cofibrant as well, and so
$\mathbf{Cat}_{\infty}(\mathbb{M})=\mathbf{Cat}_{\infty}(\mathbb{M})^c$ is an $\infty$-cosmos. This makes the constructions above more 
straight-forward. In fact, every presentable $\infty$-category $\mathcal{C}$ has a model categorical presentation $\mathbb{M}$ that is an
$(\mathbf{S},\mathrm{Kan})$-enriched Cisinski model category by \cite[Proposition A.3.7.6]{luriehtt} (or by its proof rather). 
Thus, in this case the $\infty$-cosmos $\mathbf{Cat}_{\infty}(\mathcal{C})$ defined via the $\infty$-categorical 
externalization functor in Section~\ref{secformal} can always be presented internally in such an $\mathbb{M}$.
\end{remark}

We end this section with a proof of Theorem~\ref{thm2}.4 as stated in Section~\ref{secintro} when formulated in the given context of
strict $(\infty,2)$-categories. Therefore, we once again recall Rezk's model structure $(s\mathbf{S},\mathrm{Cat}_{\infty})$ for (Reedy 
fibrant) complete Segal spaces \cite{rezk} as well as its Quillen equivalence $U$ to $(\mathbf{S},\mathrm{QCat})$ as an
$(\mathbf{S},\mathrm{QCat})$-enriched model category \cite{jtqcatvsss, riehlverityelements}.

\begin{definition}\label{def1loc2topos}
Let $\mathbb{M}$ be an $(\mathbf{S},\mathrm{QCat})$-enriched model category. We refer to the $\mathbf{QCat}$-enriched category
$\mathbb{M}^{cf}$ of bifibrant objects as the \emph{underlying $(\infty,2)$-category} of $\mathbb{M}$.
Say that an $(\infty,2)$-category $\mathbf{C}$ is an \emph{$(\infty,1)$-localic $(\infty,2)$-topos} if 
%an $(\mathbf{S},\mathrm{QCat})$-enriched model category $\mathbb{M}$ is an \emph{$(\infty,1)$-localic model $2$-topos} if
there is a small $\mathbf{Kan}$-enriched category $\mathbf{B}$ together with a set $T\subset\mathbf{Fun}(\mathbf{B}^{op},s\mathbf{S})$ of 
morphisms such that the following two conditions hold.
\begin{enumerate}
\item The left Bousfield localization
$\mathbf{Fun}(\mathbf{B}^{op},(s\mathbf{S},\mathrm{Cat}_{\infty}))_{\mathrm{inj}}\rightarrow\mathcal{L}_T\mathbf{Fun}(\mathbf{B}^{op},(s\mathbf{S},\mathrm{Cat}_{\infty}))_{\mathrm{inj}}$ at $T$ preserves homotopy-cartesian squares and finite simplicial cotensors of fibrant 
objects. The latter is to say that $T$-local fibrant replacement of injectively fibrant objects preserves finite simplicial cotensors up to 
$T$-local equivalence.
\item The underlying $(\infty,2)$-category of $\mathcal{L}_T\mathbf{Fun}(\mathbf{B}^{op},(s\mathbf{S},\mathrm{Cat}_{\infty}))_{\mathrm{inj}}$
is equivalent to $\mathbf{C}$ (in the $(\mathbf{S},\mathrm{Cat})$-induced model structure on the category of simplicial categories).
%can be connected by a chain of simplicially enriched Quillen equivalences.
\end{enumerate}
%an $(\infty,2)$-category $\mathbf{C}$ is an \emph{$(\infty,1)$-localic $(\infty,2)$-topos} if it is equivalent to the underlying
%$(\infty,2)$-category of an $(\infty,1)$-localic model $2$-topos.
\end{definition}

\begin{remark}
In \cite[Definition 9.2.1]{am_2topos}, an $(\infty,2)$-category $\mathbf{C}$ is defined to be an $(\infty,1)$-localic $(\infty,2)$-topos if 
there is an $\infty$-topos $\mathcal{C}$ such that $\mathbf{C}$ is equivalent to the full sub-$(\infty,2)$-category of
$\mathbf{Fun}(\mathcal{C}^{op},\mathbf{Cat}_{\infty})$ spanned by the small limit preserving functors. As $\infty$-toposes are 
presentable, the latter is precisely the $(\infty,2)$-category $\mathbf{Cat}_{\infty}(\mathcal{C})$ (Remark~\ref{rem_pressheaves}). Against 
this background, the following proposition is an $(\infty,1)$-localic version of \cite[Theorem 7.4.1]{am_2topos}, and shows that 
Definition~\ref{def1loc2topos} is equivalent to \cite[Definition 9.2.1]{am_2topos}.
\end{remark}

\begin{proposition}\label{prop1oc2topos}
Suppose $\mathcal{C}$ is an $\infty$-topos. Then $\mathbf{Cat}_{\infty}(\mathcal{C})$ is an $(\infty,1)$-localic $(\infty,2)$-topos. 
Vice versa, every $(\infty,1)$-localic $(\infty,2)$-topos is of this form.
\end{proposition}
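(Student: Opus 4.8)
The plan is to combine the Cisinski presentation of an $\infty$-topos $\mathcal{C}$ with the comparison results just established. Since $\mathcal{C}$ is an $(\infty,1)$-topos, it is presentable, so by \cite[Proposition A.3.7.6]{luriehtt} (or its proof) there is a Cisinski model category $\mathbb{M}$ presenting $\mathcal{C}$; moreover, being an $\infty$-topos, $\mathcal{C}$ admits such a presentation of the form $\mathbb{M}=\mathcal{L}_{S}\mathbf{Fun}(\mathbf{B}^{op},(\mathbf{S},\mathrm{Kan}))_{\mathrm{inj}}$ for a small simplicial category $\mathbf{B}$ and a set $S$ of monomorphisms, where the left Bousfield localization is left exact (this is Rezk's/Lurie's characterization of $\infty$-topos presentations; \cite[Proposition 6.1.0.1, Section 6.5.4]{luriehtt}). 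We may take $\mathbf{B}$ to be $\mathbf{Kan}$-enriched (replacing its hom-objects by their cores, or by fibrant replacement, does not change the localization up to Quillen equivalence). By Corollary~\ref{corcompareinftycosmoses}, the $(\infty,2)$-category $\mathbf{Cat}_{\infty}(\mathcal{C})$ is equivalent to $\mathbf{Cat}_{\infty}(\mathbb{M})$ in the $(\mathbf{S},\mathrm{Cat})$-induced model structure, so it suffices to realize $\mathbf{Cat}_{\infty}(\mathbb{M})$ as the underlying $(\infty,2)$-category of a localized diagram category of complete Segal spaces as in Definition~\ref{def1loc2topos}.

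The second step is to identify $\mathbf{Cat}_{\infty}(\mathbb{M})$ with the underlying $(\infty,2)$-category of an explicit $(\mathbf{S},\mathrm{QCat})$-enriched model category of the required shape. Recall from Section~\ref{secsubcosmosofcsos} and Remark~\ref{remmscofgencase} that for $\mathbb{M}$ combinatorial and left proper there is a model structure $(s\mathbb{M},\mathrm{Cat}_{\infty})$ with $(s\mathbb{M},\mathrm{Cat}_{\infty})^f=\mathrm{Cat}_{\infty}(\mathbb{M})$, obtained as a left Bousfield localization of the Reedy model structure on $s\mathbb{M}$; since $\mathbb{M}$ is Cisinski all objects are Reedy cofibrant, so $(s\mathbb{M},\mathrm{Cat}_{\infty})^{cf}=\mathbf{Cat}_{\infty}(\mathbb{M})$. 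Now unwind $s\mathbb{M}$: since $\mathbb{M}=\mathcal{L}_{S}\mathbf{Fun}(\mathbf{B}^{op},(\mathbf{S},\mathrm{Kan}))_{\mathrm{inj}}$, simplicial objects in $\mathbb{M}$ are, after Currying, exactly $\mathbf{B}$-indexed bisimplicial sets, i.e.\ objects of $\mathbf{Fun}(\mathbf{B}^{op},s\mathbf{S})$; and the Reedy model structure on $s\mathbb{M}$, followed by the complete-Segal localization, is the $S'$-localization of $\mathbf{Fun}(\mathbf{B}^{op},(s\mathbf{S},\mathrm{Cat}_{\infty}))_{\mathrm{inj}}$ at the union $T$ of (the images of) $S$ with the complete-Segal-space maps imposed levelwise. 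This uses that $(s\mathbf{S},\mathrm{Cat}_{\infty})$ is the Rezk model structure, that injective-projective comparisons and localizations of diagram categories commute with Currying, and that the Reedy model structure on $\mathbb{M}^{\Delta^{op}}$ agrees with the injective one when $\mathbb{M}$ is Cisinski (\cite[Proposition 3.9]{brtheta}, as used in Theorem~\ref{propglobrepext1}). Taking $\mathbf{B}$ as the base and $T$ as indicated then gives a candidate $(\mathbf{B},T)$.

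The third step is verifying the two clauses of Definition~\ref{def1loc2topos} for this $(\mathbf{B},T)$. Clause (2) is then immediate: the underlying $(\infty,2)$-category of $\mathcal{L}_T\mathbf{Fun}(\mathbf{B}^{op},(s\mathbf{S},\mathrm{Cat}_{\infty}))_{\mathrm{inj}}$ is $(s\mathbb{M},\mathrm{Cat}_{\infty})^{cf}=\mathbf{Cat}_{\infty}(\mathbb{M})\simeq\mathbf{Cat}_{\infty}(\mathcal{C})$ by Corollary~\ref{corcompareinftycosmoses}. Clause (1) is the main obstacle: one must show the localization functor preserves homotopy-cartesian squares and finite simplicial cotensors of fibrant objects. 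Preservation of homotopy-cartesian squares follows because the $S$-part of $T$ is a left exact localization (the $\infty$-topos hypothesis is exactly what delivers left exactness), and the complete-Segal part of $T$ is also known to be compatible with homotopy pullbacks (the complete Segal space localization is a left exact Bousfield localization in the relevant sense, cf.\ the fibration-category closure properties in Proposition~\ref{propfibcatstr} and the fact that $\mathrm{Cat}_{\infty}(\mathbb{M})$ is closed under pullbacks of fibrations); combining two such localizations preserves the intersection of their respective classes of preserved squares, which here is all homotopy-cartesian squares. Preservation of finite simplicial cotensors of fibrant objects is precisely Corollary~\ref{corfibcatenr} together with Proposition~\ref{propfibcatstrextra}: the $(\mathbf{S},\mathrm{QCat})$-enrichment of $\mathbf{Cat}_{\infty}(\mathbb{M})$ says exactly that the cotensor of a $T$-local (complete Segal) object with a finite simplicial set is again $T$-local, so $T$-local fibrant replacement commutes with finite cotensors up to $T$-local equivalence. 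Assembling these verifications completes the proof. The delicate point to get right will be bookkeeping the two Bousfield localizations — the topos-theoretic one from $\mathbb{M}$ and the complete-Segal one — simultaneously, and checking that their combined preserved-square class is genuinely all homotopy-cartesian squares rather than a proper subclass; the $\infty$-topos hypothesis on $\mathcal{C}$ is what makes this work.
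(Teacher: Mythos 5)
Your overall strategy is the same as the paper's: present $\mathcal{C}$ as a left exact Bousfield localization of $\mathbf{Fun}(\mathbf{B}^{op},(\mathbf{S},\mathrm{Kan}))_{\mathrm{inj}}$ for a small Kan-enriched $\mathbf{B}$, identify complete Segal objects in the localized model category with a further localization of $\mathbf{Fun}(\mathbf{B}^{op},(s\mathbf{S},\mathrm{Cat}_{\infty}))_{\mathrm{inj}}$ by commuting the two Bousfield localizations, and invoke Corollary~\ref{corcompareinftycosmoses} for clause (2) of Definition~\ref{def1loc2topos}. The genuine gap is in your verification of the finite-cotensor half of clause (1). Corollary~\ref{corfibcatenr} and Proposition~\ref{propfibcatstrextra} only say that the \emph{local} objects (complete Segal objects in the localized topos presentation) are closed under finite simplicial cotensors; i.e.\ if $X$ is already $T$-local fibrant then $X^K$ is again local. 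Clause (1) asks for more: for an \emph{arbitrary} injectively fibrant presheaf of complete Segal spaces $X$, not assumed local, the comparison $X^K\rightarrow\rho(X)^K$ (with $\rho$ the local fibrant replacement) must be a local equivalence, so that $\rho(X)^K$ computes $\rho(X^K)$. Knowing that $\rho(X)^K$ is local does not give this, and applying $(\cdot)^K$ to the local equivalence $X\rightarrow\rho(X)$ is precisely the point at issue, so your reduction is circular. The paper closes this gap by an argument you do not supply: writing $X^K\cong y_!(X^{op})(K\times\Delta^{\bullet})$ as in (\ref{defmccotensor}), observing that $\rho^{op}y_!(X^{op})$ and $y_!((\rho X)^{op})$ agree on $\Delta$, that finite simplicial sets are generated from $\Delta$ under finite homotopy colimits, and that $\rho$ preserves finite homotopy limits \emph{because the topos localization is left exact}. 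Notice that your proposal never uses left exactness for the cotensor step at all, which is a sign the step is missing.

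A secondary problem is your treatment of homotopy-cartesian squares. The completion localization (from Reedy bisimplicial presheaves to presheaves of complete Segal spaces) is \emph{not} left exact in general, and Proposition~\ref{propfibcatstr} -- closure of the fibrant objects under pullbacks of fibrations -- does not imply that a localization functor preserves homotopy-cartesian squares; likewise, "the intersection of the preserved classes of two localizations" is not a valid general principle. The way this is avoided, consistent with Definition~\ref{def1loc2topos} and with the paper's proof, is that the complete-Segal structure is built into the \emph{base} model category, so the only localization one must control is the pointwise topos localization $sT$; one then has to check (again using left exactness of $T$-localization, via pointwise fibrant replacement) that $sT$-local replacement of a presheaf of complete Segal spaces is still a presheaf of complete Segal spaces and preserves homotopy pullbacks. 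Your bookkeeping of the two localizations should be reorganized along these lines; as written, the CS-maps you add to $T$ are redundant and the justifications attached to them do not hold.
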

\begin{proof}
Let $\mathcal{C}$ is an $\infty$-topos. Via \cite[Section 6]{rezkhtytps} and \cite[Section A3.7]{luriehtt}, there is a small
$\mathbf{Kan}$-enriched category $\mathbf{B}$ together with a set $T\subset\mathbf{Fun}(\mathbf{B}^{op},\mathbf{S})$ of morphisms such that
\begin{enumerate}
\item the left Bousfield localization $\mathbf{Fun}(\mathbf{B}^{op},(\mathbf{S},\mathrm{Kan}))_{\mathrm{inj}}\rightarrow\mathcal{L}_T\mathbf{Fun}(\mathbf{B}^{op},(\mathbf{S},\mathrm{Kan}))_{\mathrm{inj}}$ at $T$ preserves homotopy-cartesian squares, and
\item the $\infty$-topos $\mathbf{C}$ is equivalent to the underlying $(\infty,1)$-category of
$\mathcal{L}_T\mathbf{Fun}(\mathbf{B}^{op},(\mathbf{S},\mathrm{Kan}))_{\mathrm{inj}}$.
\end{enumerate}
We use this localization to construct a left Bousfield localization as required in Definition~\ref{def1loc2topos} basically by specifying it 
pointwise. Therefore, in the following we denote the $(\mathbf{S},\mathrm{Kan})$-enriched model category
$\mathbf{Fun}(\mathbf{B}^{op},(\mathbf{S},\mathrm{Kan}))_{\mathrm{inj}}$ by $\hat{\mathbb{B}}$. 
The simplicially enriched left exact left Bousfield localization $\hat{\mathbb{B}}\rightarrow\mathcal{L}_T\hat{\mathbb{B}}$ at the set $T$ 
induces a simplicially enriched left exact left Bousfield localization
$(\hat{\mathbb{B}}^{\Delta^{op}})_{\mathrm{inj}}\rightarrow(\mathcal{L}_T\hat{\mathbb{B}})^{\Delta^{op}}_{\mathrm{inj}}$.
Furthermore, let $\mathrm{CS}\in\mathbf{S}^{\Delta^1}$ be the set of spine inclusions together with the 
endpoint-inclusion of the walking isomorphism (as recalled in Section~\ref{subsecinf1}). Let $B\in\mathbf{S}^{\Delta^1}$ be the class of 
standard boundary inclusions. Then, for any set $I$ of generating cofibrations of $\hat{\mathbb{B}}$, left Bousfield localization at the set
\[\mathrm{CS(\hat{\mathbb{B}})}:=\{(f\hat{\times}b)\hat{\otimes}i\mid f\in\mathrm{CS},b\in B, i\in I\}\]
of corresponding Leibniz tensor products yields the simplicially enriched model categories
$\mathcal{L}_{\mathrm{CS}(\hat{\mathbb{B}})}(\hat{\mathbb{B}}^{\Delta^{op}})_{\mathrm{inj}}$ of complete Segal objects in
$\hat{\mathbb{B}}$, and $\mathcal{L}_{\mathrm{CS}(\hat{\mathbb{B}})}(\mathcal{L}_{T}\hat{\mathbb{B}})^{\Delta^{op}}_{\mathrm{inj}}$ of 
complete Segal objects in $\mathcal{L}_T\hat{\mathbb{B}}$, respectively, by \cite[Proposition E.3.7]{riehlverityelements}.
We obtain a square of simplicially enriched left Bousfield localizations as follows.
\begin{align}\label{diagprop1oc2topos}
\begin{gathered}
\xymatrix{
(\hat{\mathbb{B}}^{\Delta^{op}})_{\mathrm{inj}}\ar[d]\ar[r] & \mathcal{L}_{\mathrm{CS}(\hat{\mathbb{B}})}(\hat{\mathbb{B}}^{\Delta^{op}})_{\mathrm{inj}}\ar@{-->}[d] \\
(\mathcal{L}_{T}\hat{\mathbb{B}})^{\Delta^{op}}_{\mathrm{inj}}\ar[r] & \mathcal{L}_{\mathrm{CS}(\hat{\mathbb{B}})}(\mathcal{L}_{T}\hat{\mathbb{B}})^{\Delta^{op}}_{\mathrm{inj}}
}
\end{gathered}
\end{align}
Indeed, the left Bousfield localization
$(\hat{\mathbb{B}}^{\Delta^{op}})_{\mathrm{inj}}\rightarrow(\mathcal{L}_T\hat{\mathbb{B}})^{\Delta^{op}}_{\mathrm{inj}}$
is the left Bousfield localization of $(\hat{\mathbb{B}}^{\Delta^{op}})_{\mathrm{inj}}$ at a set $sT$ of natural transformations between 
simplicial objects.
By construction, a simplicial object $X$ is $sT$-local if and only if each $X_n$ is $T$-local.
%which are pointwise contained in $T$. To see this one verifies that the fibrant objects in
%$\mathcal{L}_{sT}(\hat{\mathbb{B}}^{\Delta^{op}})_{\mathrm{inj}}$ are exactly the injectively fibrant (or, equivalently, the Reedy fibrant) 
%simplicial objects in $\hat{\mathbb{B}}$ that are pointwise $T$-local.
Thus, the dashed vertical arrow on the right hand side of (\ref{diagprop1oc2topos}) exists and is again a left Bousfield 
localization by the fact that left Bousfield localizations at different sets of morphisms commute with one another:
\begin{align}\label{equprop1oc2topos0}
\mathcal{L}_{\mathrm{CS}(\hat{\mathbb{B}})}(\mathcal{L}_{T}\hat{\mathbb{B}})^{\Delta^{op}}_{\mathrm{inj}}=\mathcal{L}_{\mathrm{CS}(\hat{\mathbb{B}})}(\mathcal{L}_{sT}(\hat{\mathbb{B}}^{\Delta^{op}})_{\mathrm{inj}})=\mathcal{L}_{sT}(\mathcal{L}_{\mathrm{CS}(\hat{\mathbb{B}})}(\hat{\mathbb{B}}^{\Delta^{op}})_{\mathrm{inj}}).
\end{align}
The left Bousfield localization 
$(\hat{\mathbb{B}}^{\Delta^{op}})_{\mathrm{inj}}\rightarrow(\mathcal{L}_{T}\hat{\mathbb{B}})^{\Delta^{op}}_{\mathrm{inj}}$ furthermore 
preserves $\mathrm{CS}(\hat{\mathbb{B}})$-locality of fibrant objects. That is, because fibrant replacement of a Reedy fibrant object
$X\in\hat{\mathbb{B}}^{\Delta^{op}}$ in $(\mathcal{L}_{T}\hat{\mathbb{B}})^{\Delta^{op}}_{\mathrm{inj}}$ can be computed by pointwise 
fibrant replacement in $\mathcal{L}_{T}\hat{\mathbb{B}}$, the latter of which preserves homotopy-pullbacks by assumption. In particular, the Bousfield localization $(\hat{\mathbb{B}}^{\Delta^{op}})_{\mathrm{inj}}\rightarrow(\mathcal{L}_{T}\hat{\mathbb{B}})^{\Delta^{op}}_{\mathrm{inj}}$ preserves homotopy-pullbacks as well, and hence so does the left Bousfield localization
\begin{align}\label{equprop1oc2topos}
\mathcal{L}_{\mathrm{CS}(\hat{\mathbb{B}})}(\hat{\mathbb{B}}^{\Delta^{op}})_{\mathrm{inj}}\rightarrow\mathcal{L}_{\mathrm{CS}(\hat{\mathbb{B}})}((\mathcal{L}_{T}\hat{\mathbb{B}})^{\Delta^{op}}_{\mathrm{inj}}).
\end{align}
All of this can be verified easily on underlying $\infty$-categories.
The left Bousfield localization (\ref{equprop1oc2topos}) is $(\mathbf{S},\mathrm{QCat})$-enriched if both model categories are equipped with 
their respective canonical $(\mathbf{S},\mathrm{QCat})$-enrichment from \cite[Proposition E.3.7]{riehlverityelements}. 
By Remark~\ref{remmscofgencase}, the underlying $(\infty,2)$-category of
$\mathcal{L}_{\mathrm{CS}(\hat{\mathbb{B}})}((\mathcal{L}_{T}\hat{\mathbb{B}})^{\Delta^{op}}_{\mathrm{inj}})$ is precisely the
$\infty$-cosmos $\mathbf{Cat}_{\infty}(\mathcal{L}_T\hat{\mathbb{B}})$, which is equivalent to the $(\infty,2)$-category
$\mathbf{Cat}_{\infty}(\mathcal{C})$ by Corollary~\ref{corcompareinftycosmoses}.
The $(\mathbf{S},\mathrm{QCat})$-enriched model category
$\mathcal{L}_{\mathrm{CS}(\hat{\mathbb{B}})}(\hat{\mathbb{B}}^{\Delta^{op}})_{\mathrm{inj}}$ is isomorphic to the (accordingly)
$(\mathbf{S},\mathrm{QCat})$-enriched model category $\mathbf{Fun}(\mathbf{B}^{op},(s\mathbf{S},\mathrm{Cat}_{\infty}))_{\mathrm{inj}}$.
We are hence only left 
to show that the localization (\ref{equprop1oc2topos}) preserves finite simplicial cotensors of fibrant objects up to $sT$-local equivalence. 

Thus, let $X$ be a complete Segal object in $\hat{\mathbb{B}}$ and let $K$ be a finite simplicial set. As the $sT$-local 
fibrant replacement of a complete Segal object can be computed by its pointwise $T$-local fibrant replacement, we are to 
show that $\rho_{\ast}(X^S)\simeq\rho_{\ast}(X)^S$ in $\mathbf{Cat}_{\infty}(\mathcal{L}_T\hat{\mathbb{B}})$ for
$\rho\colon\mathcal{L}_T\hat{\mathbb{B}}\rightarrow\mathcal{L}_T\hat{\mathbb{B}}$ any simplicially enriched fibrant replacement functor.
%The accordingly $\mathbf{S}$-enriched Quillen 
%equivalence between $\mathcal{L}_{\mathrm{CS}}(\mathbf{S}^{\Delta^{op}})_{\mathrm{inj}}$ and $(\mathbf{S},\mathrm{QCat})$ induces an
%$\mathbf{S}$-enriched Quillen equivalence between the left Bousfield localization
%$\mathcal{L}_{\mathrm{CS}(\hat{\mathbb{B}})}(\hat{\mathbb{B}}^{\Delta^{op}})_{\mathrm{inj}}\rightarrow\mathcal{L}_{\mathrm{CS}(\hat{\mathbb{B}})}(\mathcal{L}_{sT}(\hat{\mathbb{B}}^{\Delta^{op}})_{\mathrm{inj}})$
%and 
%$\mathcal{L}_{\bar{T}}\mathbf{Fun}(\mathbf{B}^{op},(\mathbf{S},\mathrm{QCat}))_{\mathrm{inj}}\rightarrow\mathcal{L}_{\bar{T}}\mathbf{Fun}(\mathbf{B}^{op},(\mathbf{S},\mathrm{QCat}))_{\mathrm{inj}}$. Here, $\bar{T}$ is the set of natural transformation that are pointwise contained in  The underlying $(\infty,2)$-category of the latter is equivalent to $\mathbf{Cat}_{\infty}(\mathcal{C})$.
%
Therefore, consider the following (generally non-commutative) diagram.
\[\xymatrix{
\Delta\ar[d]_y\ar[r]^{X^{op}} & \hat{\mathbb{B}}^{op}\ar[r]^{\rho^{op}} & \mathcal{L}_T\hat{\mathbb{B}}^{op} \\
\mathbf{S}\ar@/_/[ur]|{y_!(X^{op})}\ar@/_1pc/[urr]_{y_!(\rho X^{op})} & & 
}\]
By definition of the simplicial cotensor $X^S$ in (\ref{defmccotensor}), we are to exhibit a natural homotopy equivalence between the 
functors $\rho^{op} y_!(X^{op})\colon\mathbf{S}\rightarrow\mathcal{L}_T\hat{\mathbb{B}}^{op}$ and
$y_!(\rho X^{op})\colon\mathbf{S}\rightarrow\mathcal{L}_T\hat{\mathbb{B}}^{op}$ when restricted to the subcategory
$\mathbf{S}^{\mathrm{fin}}\subset\mathbf{S}$ of (homotopically) finite simplicial sets. Existence of such an equivalence now follows from the 
fact that both left Quillen functors agree when restricted to $\Delta$, that $y\colon\Delta\rightarrow\mathbf{S}^{\mathrm{fin}}$ generates
$\mathbf{S}^{\mathrm{fin}}$ under finite homotopy colimits, and that $\rho^{op}$ preserves finite homotopy colimits by assumption.

Vice versa, suppose $\mathbf{C}$ is an $(\infty,1)$-localic $(\infty,2)$-topos. Let $\mathbf{B}$ be a small $\mathbf{Kan}$-enriched 
category together with a set $T\subset\mathbf{Fun}(\mathbf{B}^{op},s\mathbf{S})$ of morphisms as in Definition~\ref{def1loc2topos}.
We obtain a commutative diagram of $(\mathbf{S},\mathrm{QCat})$-enriched model categories as follows.
\begin{align}\label{diagprop1oc2topos1}
\begin{gathered}
\xymatrix{
\mathbf{Fun}(\mathbf{B}^{op},(s\mathbf{S},\mathrm{Cat}_{\infty}))_{\mathrm{inj}}\ar[r]^{U_{\ast}} & \mathbf{Fun}(\mathbf{B}^{op},(\mathbf{S},\mathrm{QCat}))_{\mathrm{inj}}\ar[r]^{(k^!)_{\ast}} & \mathbf{Fun}(\mathbf{B}^{op},(\mathbf{S},\mathrm{Kan}))_{\mathrm{inj}} \\
\mathcal{L}_T\mathbf{Fun}(\mathbf{B}^{op},(s\mathbf{S},\mathrm{Cat}_{\infty}))_{\mathrm{inj}}\ar[r]_{U_{\ast}}\ar[u] & \mathcal{L}_{\bar{T}}\mathbf{Fun}(\mathbf{B}^{op},(\mathbf{S},\mathrm{QCat}))_{\mathrm{inj}}\ar[r]_{(k^!)_{\ast}}\ar[u] & \mathcal{L}_{\bar{T}}\mathbf{Fun}(\mathbf{B}^{op},(\mathbf{S},\mathrm{Kan}))_{\mathrm{inj}}\ar[u] 
}
\end{gathered}
\end{align}
Here, the horizontal arrow $U_{\ast}$ is a right Quillen equivalence, induced by the horizontal projection functor
$U\colon s\mathbf{S}\rightarrow\mathbf{S}$ recalled above. The horizontal arrow $k^!$ is the right homotopy 
localization of \cite[Section 1]{jtqcatvsss} that models the core functor
$(\cdot)^{\simeq}\colon\mathrm{Cat}_{\infty}\rightarrow\mathcal{S}$ (see the beginning of Section~\ref{secsubformalunderqcat}). We define
$\bar{T}$ to be (a small set generating) the preimage of $T$ under the left adjoint of $U_{\ast}$ \cite[Theorem 3.3.20]{hirschhorn03}. The 
left adjoint of the leftmost vertical 
arrow in the diagram preserves homotopy-cartesian squares by assumption. It follows that the left adjoint of the rightmost vertical arrow 
then does as well. In other words, the Bousfield localization $\hat{\mathbb{B}}\rightarrow\mathcal{L}_{\bar{T}}{\hat{\mathbb{B}}}$ is left 
exact. Thus, as noted in the beginning of the proof, the simplicial model category $\mathcal{L}_T{\hat{\mathbb{B}}}$ presents an
$\infty$-topos $\mathcal{C}$. Furthermore, we have equivalences as follows.
\begin{align}
\label{equprop1oc2topos11} \mathbf{Cat}_{\infty}(\mathcal{C}) & \simeq\mathbf{Cat}_{\infty}(\mathcal{L}_{\bar{T}}{\hat{\mathbb{B}}}) \\
\label{equprop1oc2topos12} &\simeq\left(\mathcal{L}_{\mathrm{CS}(\hat{\mathbb{B}})}(\mathcal{L}_{\bar{T}}\hat{\mathbb{B}})^{\Delta^{op}}_{\mathrm{inj}}\right)^f \\
\label{equprop1oc2topos13} &\simeq \left(\mathcal{L}_{s\bar{T}}(\mathcal{L}_{\mathrm{CS}(\hat{\mathbb{B}})}(\hat{\mathbb{B}}^{\Delta^{op}})_{\mathrm{inj}})\right)^f \\
\label{equprop1oc2topos14} &\simeq \left(\mathcal{L}_T(\mathcal{L}_{\mathrm{CS}(\hat{\mathbb{B}})}(\hat{\mathbb{B}}^{\Delta^{op}})_{\mathrm{inj}}))\right)^f \\
\notag &\simeq\left(\mathcal{L}_T\mathbf{Fun}(\mathbf{B}^{op},(s\mathbf{S},\mathrm{Cat}_{\infty}))_{\mathrm{inj}}\right)^f \\
\label{equprop1oc2topos15} & \simeq \mathbf{C}
\end{align}
Here, Line~(\ref{equprop1oc2topos11}) follows again by Corollary~\ref{corcompareinftycosmoses}, and Line~(\ref{equprop1oc2topos12}) by 
Remark~\ref{remmscofgencase}. Line~(\ref{equprop1oc2topos13}) is an instance of Equation~(\ref{equprop1oc2topos0}). For the equivalence 
(\ref{equprop1oc2topos14}) we are to show that the two sets $T$ and $s\bar{T}$ generate the same left Bousfield localization. Therefore it 
suffices to show that a Reedy fibrant complete Segal object $X\in\hat{\mathbb{B}}^{\Delta^{op}}$ is $T$-local if and only if it is 
pointwise $\bar{T}$-local. And indeed, the horizontal arrows in Diagram~(\ref{diagprop1oc2topos1}) are right Quillen functors, and
$(k^!\circ U)_{\ast}(X)\simeq(U_{\ast}(X))^{\simeq}\simeq X_0$ whenever $X$ is a $\mathbf{B}$-indexed complete Segal space.
In particular, if $X$ is $T$-local, then $X_0$ is $\bar{T}$-local. Furthermore, for any $[n]\in\Delta^{op}$, we have
$(X^{\Delta^n})_0\simeq X_n$ in $\hat{\mathbb{B}}^{\Delta^{op}}$, and the localization at $T$ preserves finite simplicial cotensors by 
assumption. It follows that $X_n\in\hat{\mathbb{B}}$ is $\bar{T}$-local for all $[n]\in\Delta^{op}$. The other direction follows similarly.
Lastly, the equivalence (\ref{equprop1oc2topos15}) is given by assumption.
\end{proof}

\section*{Acknowledgments}
This paper was written while being a guest at the Max Planck Institute for Mathematics in Bonn, Germany, whose hospitality is greatly 
appreciated. The author thanks Mathieu Anel, Steve Awodey, Jonas Frey, Nicola Gambino, Calum Hughes, Felix Loubaton, Adrian Miranda, 
Viktoriya Ozornova, and Emily Riehl for their time and accommodation to discuss various parts of this paper during the time 
it was being written.

\bibliographystyle{plain}
\bibliography{BSBib}

\newcommand{\etalchar}[1]{$^{#1}$}
\newcommand{\noopsort}[1]{}
\providecommand{\bysame}{\leavevmode\hbox to3em{\hrulefill}\thinspace}
\providecommand{\MR}{\relax\ifhmode\unskip\space\fi MR }
% \MRhref is called by the amsart/book/proc definition of \MR.
\providecommand{\MRhref}[2]{%
  \href{http://www.ams.org/mathscinet-getitem?mr=#1}{#2}
}
\providecommand{\href}[2]{#2}
\begin{thebibliography}{BDG{\etalchar{+}}16}

\bibitem[AM24]{am_2topos}
F.~Abell\'{a}n and L.~Martini, \emph{$(\infty,2)$-{T}opoi and descent},
  \url{https://arxiv.org/abs/2410.02014}, 2024, v1, uploaded 02 Oct 2024.

\bibitem[BDG{\etalchar{+}}16]{barwicketalparahct}
C.~Barwick, E.~Dotto, S.~Glasman, D.~N., and J.~Shah, \emph{Parametrized higher
  category theory and higher algebra: {E}xpos\'{e} {I} - {E}lements of
  parametrized higher category theory}, \url{https://arxiv.org/abs/1608.03657},
  2016.

\bibitem[Ber08]{bergner2}
J.E. Bergner, \emph{Adding inverses to diagrams {II}: {I}nvertible homotopy
  theories are spaces}, Homology, Homotopy and Applications \textbf{10} (2008),
  no.~2, 175–193.

\bibitem[Ber18]{bergnerbook}
\bysame, \emph{The {H}omotopy {T}heory of $(\infty,1)$-{C}ategories}, LMS
  Student Texts, no.~90, Cambridge University Press, 2018.

\bibitem[BK12a]{bksimploc}
C.~Barwick and D.~M. Kan, \emph{A characterization of simplicial localization
  functors and a discussion of {DK} equivalences}, Indagationes Mathematicae
  \textbf{23} (2012), no.~1--2, 69--79.

\bibitem[BK12b]{bkrelcat}
\bysame, \emph{Relative categories: Another model for the homotopy theory of
  homotopy theories}, Indagationes Mathematicae \textbf{23} (2012), no.~1--2,
  42--68.

\bibitem[Bou10]{bourkethesis}
J.~Bourke, \emph{Codescent objects in $2$-dimensional universal algebra}, Ph.D.
  thesis, University of Sydney, 2010.

\bibitem[Bro73]{brown}
K.~Brown, \emph{Abstract homotopy theory and generalized sheaf cohomology},
  Transactions of the American Mathematical Society \textbf{186} (1973),
  419--458.

\bibitem[BSP21]{bsp_inf_n}
C.~Barwick and C.~Schommer-Pries, \emph{On the unicity of the theory of higher
  categories}, J. Amer. Math. Soc. \textbf{34} (2021), 1011--1058.

\bibitem[CLW93]{clwextdist}
A.~Carboni, S.~Lack, and R.F.C. Walters, \emph{Introduction to extensive and
  distributive categories}, Journal of Pure and Applied Algebra \textbf{84}
  (1993), 145--158.

\bibitem[Cor82]{cordiernerve}
J.M. Cordier, \emph{Sur la notion de diagramme homotopiquement coh\'{e}rent},
  Cah. Topol. Geom. Differ. \textbf{23} (1982), no.~1, 93--112.

\bibitem[DK80]{dksimploc}
W.G. Dwyer and D.~M. Kan, \emph{Simplicial localizations of categories},
  Journal of Pure and Applied Algebra \textbf{17} (1980), 267--284.

\bibitem[DK84]{dksgrpds}
\bysame, \emph{Homotopy theory and simplicial groupoids}, Indagationes
  Mathematicae (Proceedings) \textbf{87} (1984), no.~4, 379--385.

\bibitem[DK87]{dkequivs}
\bysame, \emph{Equivalences between homotopy theories of diagrams}, Algebraic
  Topology and Algebraic K-theory, Princeton University Press (1987), 180--204.

\bibitem[DKS89]{dkshcdiags}
W.G. Dwyer, D.~M. Kan, and J.H. Smith, \emph{Homotopy commutative diagrams and
  their realizations}, J. of Pure and Appl. Algebra \textbf{57} (1989), 5--24.

\bibitem[Dug01]{duggersimp}
D.~Dugger, \emph{Replacing model categories with simplicial ones}, Transactions
  of the American Mathematical Society \textbf{353} (2001), no.~12, 5003--5027.

\bibitem[GHL20]{ghl2fib}
A.~Gagna, Y.~Harpaz, and E.~Lanari, \emph{Fibrations and lax limits of
  $(\infty,2)$-categories}, \url{https://arxiv.org/abs/2012.04537}, 2020, v2
  uploaded 10 Mar 2021.

\bibitem[GHN17]{ghnlaxlimits}
D.~Gepner, R.~Haugseng, and T.~Nikolaus, \emph{Lax colimits and free fibrations
  in $\infty$-categories}, Documenta Mathematica \textbf{22} (2017),
  1225--1266.

\bibitem[Gra71]{graymidwest}
J.W. Gray, \emph{The meeting of the {M}idwest {C}ategory {S}eminar in
  {Z}\"{u}rich {A}ugust 24–30}, Reports of the Midwest Category Seminar V,
  Lecture Notes in Mathematics, vol. 195, Springer,Berlin, Heidelberg, 1971,
  pp.~248--255.

\bibitem[Hir03]{hirschhorn03}
P.S. Hirschhorn, \emph{Model categories and their localizations}, Mathematical
  Surveys and Monographs, no.~99, American Mathematical Society, Providence,
  R.I., 2003.

\bibitem[Hov99]{hovey}
M.~Hovey, \emph{Model categories}, Mathematical Surveys and Monographs,
  vol.~63, American Mathematical Society, 1999.

\bibitem[Jac99]{jacobsttbook}
B.~Jacobs, \emph{Categorical logic and type theory}, Studies in Logic and the
  Foundations of Mathematics, vol. 141, Elsevier Science B.V., 1999.

\bibitem[Joh03]{elephant}
P.T. Johnstone, \emph{Sketches of an elephant: A topos theory compendium},
  Oxford Logic Guides, vol.~43, Clarendon Press, 2003.

\bibitem[JT06]{jtqcatvsss}
A.\ Joyal and M.\ Tierney, \emph{Quasi-categories vs {S}egal spaces},
  Categories in {A}lgebra, {G}eometry and {M}athematical {P}hysics, AMS, 2006,
  pp.~277--326.

\bibitem[LM92]{mlmsheaves}
S.~Mac Lane and I.~Moerdijk, \emph{Sheaves in {G}eometry and {L}ogic: {A} first
  introduction to topos theory}, Universitext, Springer-Verlag New York, Inc.,
  1992.

\bibitem[Lura]{lurieha}
J.~Lurie, \emph{Higher algebra},
  \url{http://www.math.harvard.edu/~lurie/papers/HA.pdf}, Last update September
  2017.

\bibitem[Lurb]{kerodon}
\bysame, \emph{Kerodon}, \url{https://kerodon.net/}, Version from 22 September
  2023.

\bibitem[Lur09a]{luriehtt}
\bysame, \emph{Higher topos theory}, Annals of Mathematics Studies, no. 170,
  Princeton University Press, 2009.

\bibitem[Lur09b]{lgood}
\bysame, \emph{$(\infty,2)$-categories and the {G}oodwillie calculus {I}},
  \url{https://arxiv.org/abs/0905.0462}, 2009, [Online, last revised 08 May
  2009].

\bibitem[Mar21]{martini_yoneda}
L.~Martini, \emph{Yoneda's lemma for internal higher categories},
  \url{https://arxiv.org/abs/2103.17141}, 2021, v3, uploaded 1 Apr 2022.

\bibitem[MG16]{mazelgeequadj}
A.~Mazel-Gee, \emph{Quillen adjunctions induce adjunctions of quasicategories},
  New {Y}ork {J}ournal of {M}athematics \textbf{22} (2016), 57–93.

\bibitem[MW21]{martiniwolf_lim}
L.~Martini and S.~Wolf, \emph{Colimits and cocompletions in internal higher
  category theory}, \url{https://arxiv.org/abs/2111.14495}, 2021, v3 13 Feb
  2024.

\bibitem[MW22a]{martiniwolf_straight}
\bysame, \emph{Cocartesian fibrations and straightening internal to an
  $\infty$-topos}, \url{https://arxiv.org/abs/2204.00295}, 2022, v2 25 May
  2022.

\bibitem[MW22b]{martiniwolf_pres}
\bysame, \emph{Presentable categories internal to an $\infty$-topos},
  \url{https://arxiv.org/abs/2209.05103}, 2022, v1 12 Sep 2022.

\bibitem[MW23]{martiniwolf_ihtt}
\bysame, \emph{Internal higher topos theory},
  \url{https://arxiv.org/abs/2303.06437}, 2023, v1, uploaded 11 Mar 2023.

\bibitem[Ras21]{rasekhunivalence}
N.~Rasekh, \emph{{U}nivalence in {H}igher {C}ategory {T}heory},
  \url{https://arxiv.org/abs/2103.12762}, 2021, [Online, v2 accessed 29 March
  2018].

\bibitem[Ras22]{rasekhcart}
\bysame, \emph{Cartesian fibrations and representability}, Homology, Homotopy
  and Applications \textbf{24} (2022), no.~2, 135 -- 161.

\bibitem[Rez99]{rezk}
C.~Rezk, \emph{A model for the homotopy theory of homotopy theories},
  Transactions of the American Mathematical Society (1999), 973--1007.

\bibitem[Rez10]{rezkhtytps}
C.~Rezk, \emph{Toposes and homotopy toposes (version 0.15)},
  \url{https://www.researchgate.net/publication/255654755_Toposes_and_homotopy_toposes_version_015},
  2010.

\bibitem[Rie14]{riehlcthty}
E.~Riehl, \emph{{C}ategorical {H}omotopy {T}heory}, New Mathematical
  Monographs, vol.~24, Cambridge University Press, 2014.

\bibitem[Rov21]{rovelliweights}
M.~Rovelli, \emph{Weighted limits in an $(\infty,1)$-category}, Applied
  {C}ategorical {S}tructures \textbf{29} (2021), 1019 -- 1062.

\bibitem[RV17]{rvyoneda}
E.\ Riehl and D.\ Verity, \emph{Fibrations and {Y}oneda’s lemma in an
  $\infty$-cosmos}, J. Pure Appl. Algebra \textbf{221} (2017), no.~3,
  499–564.

\bibitem[RV22]{riehlverityelements}
\bysame, \emph{Elements of $\infty$-category theory}, Cambridge Studies in
  Advanced Mathematics, vol. 194, Cambridge University Press, 2022.

\bibitem[SC19]{cs_condensed}
P.~Scholze and D.~Clausen, \emph{Lectures on {C}ondensed {M}athematics},
  \url{https://www.math.uni-bonn.de/people/scholze/Condensed.pdf}, 2019.

\bibitem[Sha22]{shahparahctII}
J.~Shah, \emph{Parametrized higher category theory {II}: Universal
  constructions}, \url{https://arxiv.org/pdf/2109.11954}, 2022, [v2 8 Jan
  2022].

\bibitem[Sha23]{shahparahctI}
\bysame, \emph{Parametrized higher category theory}, Algebraic \& Geometric
  Topology \textbf{23} (2023), no.~2, 509--644.

\bibitem[Ste22a]{rs_bspaces}
R.~Stenzel, \emph{Bousfield-segal spaces}, Homology, Homotopy and Applications
  \textbf{4} (2022), no.~1, 217--243.

\bibitem[Ste22b]{rs_hgst}
\bysame, \emph{Higher geometric sheaf theories},
  \url{https://arxiv.org/abs/2205.08646}, 2022.

\bibitem[Ste23a]{rs_small}
\bysame, \emph{On notions of compactness, object classifiers, and weak {T}arski
  universes}, Mathematical Structures in Computer Science \textbf{33} (2023),
  no.~8, 661 -- 678.

\bibitem[Ste23b]{rs_uc}
\bysame, \emph{Univalence and completeness of {S}egal objects}, Journal of Pure
  and Applied Algebra \textbf{227} (2023), no.~4.

\bibitem[Ste25]{rs_comp}
\bysame, \emph{$(\infty,1)$-{C}ategorical comprehension schemes}, Theory and
  Applications of Categories \textbf{43 Lawvere Festschrift} (2025), no.~6,
  108--180.

\bibitem[Str80]{streetintcat}
R.~Street, \emph{Cosmoi of internal categories}, Transactions of the American
  Mathematical Society \textbf{258} (1980), no.~2, 271--318.

\bibitem[Szu17]{szumilococomplqcats}
K.~Szumi\l{}o, \emph{Homotopy theory of cocomplete quasicategories}, Algebraic
  and Geometric Topology \textbf{17} (2017), no.~2, 765--791.

\bibitem[To{\"{e}}05]{toeninftycats}
B.~To{\"{e}}n, \emph{Vers une axiomatisation de la th\'{e}orie des
  cat\'{e}gories sup\'{e}rieures}, K Theory, Springer Verlag \textbf{34}
  (2005), no.~3, 233--263.

\end{thebibliography}
\end{document}